\numberwithin{equation}{section}
\newtheorem{theorem}{Theorem}
\newtheorem{lemma}{Lemma}
\newtheorem{proposition}[lemma]{Proposition}
\newtheorem{corollary}[lemma]{Corollary}
\theoremstyle{definition}
\newtheorem{remark}[lemma]{Remark}
\newtheorem{definition}[lemma]{Definition}
\numberwithin{lemma}{section}
\newcommand{\mc}{\mathcal}
\newcommand{\mb}{\mathbf}
\newcommand{\mbb}{\mathbb}
\newcommand{\mr}{\mathrm}
\newcommand{\mf}{\mathfrak}
\renewcommand{\Re}{\operatorname{Re}}
\renewcommand{\Im}{\operatorname{Im}}
\DeclareMathOperator{\sgn}{sgn}
\newcommand{\bpf}{\begin{proof}}
\newcommand{\epf}{\end{proof}}
\newcommand{\Rl}{\mbb R}
\newcommand{\C}{\mbb C}
\newcommand{\Strip}{S}
\newcommand{\Schwartz}{\mc S}
\newcommand{\mfH}{{\mf H}}
\renewcommand{\H}{{\mathcal H}}
\newcommand{\bmo}{\mr{bmo}}
\newcommand{\BMO}{\mr{BMO}}
\newcommand{\Til}{\mc T}
\newcommand{\A}{\mb A}
\newcommand{\la}{\langle}
\newcommand{\ra}{\rangle}
\renewcommand{\P}{{\mathbf P}}
\renewcommand{\i}{\mathbf{i}}
\renewcommand{\j}{\mathbf{j}}
\newcommand{\Tilh}{\mc T_h}
\newcommand{\Ph}{\mb P_h}
\newcommand{\Hh}{\mc H_h}
\newcommand{\End}{E^{n,(2)}}
\newcommand{\Ent}{E^{n,(3)}}
\newcommand{\Ennf}{E^{n}_{NF}}
\newcommand{\tEnnf}{\tilde E^{n}_{NF}}
\newcommand{\cK}{{\mathcal K}}
\newcommand{\errw}{\text{\bf err}(L^2)}
\newcommand{\errr}{\text{\bf err}( H^\frac12)}
\newcommand{\norm}{{\mathbf N}}
\newcommand\R{\mathbf R}
\newcommand{\fw}{{\mathfrak{w}}}
\newcommand{\fr}{{\mathfrak{r}}}
\newcommand{\mfa}{{\mathfrak{a}}}
\newcommand{\D}{L}
\newcommand{\high}{{high}}
\newcommand{\low}{{low}}
\DeclareMathOperator{\cosech}{cosech}
\DeclareMathOperator{\sech}{sech}
\newcommand{\M}{\mf M}
\newcommand{\cG}{\mc G}
\newcommand{\tA}{\tilde{A}}
\newcommand{\tB}{\tilde{B}}
\newcommand{\tW}{{\tilde W}}
\newcommand{\tQ}{{\tilde Q}}
\newcommand{\W}{{\mathbf W}}
\newcommand{\lin}{\mr{lin}}
\newcommand{\ao}{a_1}
\newcommand{\esym}{\overset{\text{sym}}{=}}
\newcommand{\K}{\mc K}
\newcommand{\opA}{\mf A}
\newcommand{\opB}{\mf B}
\newcommand{\opC}{\mf C}
\newcommand{\Poly}[1]{\Lambda^{#1}}
\title[Finite depth gravity water waves]{Finite depth gravity water waves in holomorphic coordinates}
\author[B.~Harrop-Griffiths]{Benjamin Harrop-Griffiths}
\address{Courant Institute of Mathematical Sciences, New York University}
\email{benjamin.harrop-griffiths@cims.nyu.edu}
\thanks{The first author was supported by a Junior Fellow award from the Simons Foundation.}
\author[M.~Ifrim]{Mihaela Ifrim}
\address{Department of Mathematics, University of California at Berkeley}
\thanks{The second author was supported by the Simons Foundation.}
\email{ifrim@math.berkeley.edu}
\author[D.~Tataru]{Daniel Tataru}
\address{Department of Mathematics, University of California at Berkeley}
\thanks{The third author was partially supported by the NSF grant DMS-1266182
as well as by the Simons Foundation.}
\email{tataru@math.berkeley.edu}
\begin{document}

\begin{abstract}
  In this article we consider irrotational gravity water waves with
  finite bottom. Our goal is two-fold. First, we represent the
  equations in holomorphic coordinates and discuss the local
  well-posedness of the problem in this context. Second, we consider
  the small data problem and establish cubic lifespan bounds for the
  solutions. Our results are uniform in the infinite depth limit, and match our earlier
infinite depth result in \cite{HIT}.
\end{abstract}

\maketitle
\setcounter{tocdepth}{1}
\tableofcontents

\section{Introduction}
This article is
devoted to the study of the two dimensional finite bottom gravity
water wave equations. Precisely, we consider an inviscid,
incompressible, irrotational fluid evolving in the presence of
gravity. The fluid occupies a time dependent domain $\Omega(t) \subset
\Rl^2$ which has flat finite bottom $\{ y = - h \}$ and a free upper
boundary $\Gamma(t)$ which is asymptotically flat to $y \approx 0$.
The  two parameters in the problem, i.e., the gravity $g$
and the depth $h$, are allowed to be arbitrary positive numbers. 
However, our results are only uniform in the range $g \lesssim h$,
which includes the infinite depth limit but not the zero depth limit.

The fluid evolution is modeled by the incompressible Euler equations
in $\Omega(t)$,
\begin{equation}\label{Euler}
  \left\{
    \begin{aligned}
      & u_t + u \cdot \nabla u = \nabla p - g \j
      \\
      & \text{div } u = 0
      \\
      & u(0,x) = u_0(x).
    \end{aligned}
  \right.
\end{equation}
On the bottom we have the boundary conditions for the velocity, namely
\begin{equation}\label{bottom-bc}
  u \cdot \j = 0,  \qquad y = -h .
\end{equation}
On the free boundary $\Gamma(t)$, on the other hand, we have the
dynamic boundary condition
\begin{equation}\label{dynamic-bc}
  p = 0  \ \ \text{ on } \Gamma (t ),
\end{equation}
and the kinematic boundary condition
\begin{equation}\label{kinematic-bc}
  \partial_t+ u \cdot \nabla \text{ is tangent to } \bigcup_t \Gamma (t).
\end{equation}


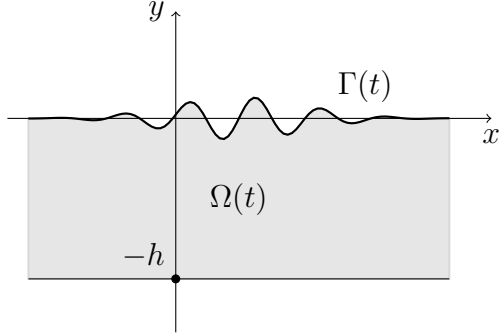
\begin{figure}
  \centering

  \begin{tikzpicture}
    \begin{axis}[ xmin=-12, xmax=12.5, ymin=-2.5, ymax=1.5, axis x
      line = none, axis y line = none, samples=100 ]

      \addplot+[mark=none,domain=-10:10,stack plots=y]
      {0.2*sin(deg(2*x))*exp(-x^2/20)};
      \addplot+[mark=none,fill=gray!20!white,draw=gray!30,thick,domain=-10:10,stack
      plots=y] {-1.5-0.2*sin(deg(2*x))*exp(-x^2/20)} \closedcycle;
      \addplot+[black, thick,mark=none,domain=-10:10,stack plots=y]
      {1.5+0.2*sin(deg(2*x))*exp(-x^2/20)}; \addplot+[black,
      mark=none,domain=-10:10,stack plots=y]
      {-1.5-0.2*sin(deg(2*x))*exp(-x^2/20)};

      \draw[->] (axis cs:-3,-2) -- (axis cs:-3,1) node[left] {\(y\)};
      \draw[->] (axis cs:-11,0) -- (axis cs:12,0) node[below] {\(x\)};
      \filldraw (axis cs:-3,-1.5) circle (1.5pt) node[above left]
      {\(-h\)}; \node at (axis cs:0,-0.75) {\(\Omega(t)\)}; \node at
      (axis cs:6,0.3) {\(\Gamma(t)\)};

    \end{axis}
  \end{tikzpicture}
  \caption{The fluid domain.}
\end{figure}


Under the additional assumption that the flow is irrotational, we can
write $u$ in terms of a velocity potential $\phi$ as $u = \nabla
\phi$, where $\phi$ is a harmonic function whose normal derivative is
zero on the bottom.  Thus $\phi$ is determined by its trace \(\psi = \phi|_{\Gamma(t)}\) on the
free boundary $\Gamma (t)$. Denote by $\eta$ the height of the water
surface as a function of the horizontal coordinate. Then the fluid
dynamics can be expressed in terms of a one-dimensional evolution of
the free interface, Precisely, for the pairs of variables
$(\eta,\psi)$ we have
\begin{equation}
  \left\{
    \begin{aligned}
      & \partial_t \eta - G(\eta) \psi = 0 \\
      & \partial_t \psi + g\eta +\frac12 |\nabla \psi|^2 - \frac12
      \frac{(\nabla \eta \cdot\nabla \psi +G(\eta) \psi)^2}{1 +|\nabla
        \eta|^2} = 0 ,
    \end{aligned}
  \right.
\end{equation}
where $G$ represents the Dirichlet to Neumann map on the free boundary
$\Gamma(t)$ associated to the Laplace equation inside the fluid
domain with zero Neumann boundary condition on the bottom.  This is
the Eulerian formulation of the gravity water wave equations.  The
second equation above is known as Bernoulli's law.

While the above Eulerian formulation is easy to write, it is not so convenient to use
due to the presence of the Dirichlet to Neumann map associated to the moving 
domain $\Omega(t)$.  Instead, viewing the choice of the parametrization of the free
boundary as a form of gauge freedom, we employ the 
holomorphic coordinates here. These are obtained using the so-called  conformal method,
where the domain $\Omega(t)$ is viewed as conformally equivalent to a strip.
This will significantly simplify the analysis.

This system has received considerable attention over the years.
The first steps toward understanding the local theory 
were due to Ovsjannikov, see \cite{ov}, who used conformal coordinates
in order to prove local well-posedness in spaces of analytic functions.
Around the same time, in closely related work, Nalimov~\cite{n}  proved the first small 
data result in Sobolev spaces in the infinite depth case. Somewhat later, 
his approach was extended to the finite bottom problem by Yosihara~\cite{y}.

The Eulerian form of the equations, described above, emerged in
\cite{zak,CSS}; however, it was only much later that this led
to a satisfactory local theory.  For a good description of this we
refer the reader to the more recent paper of
Alazard-Burq-Zuily~\cite{abz} as well as to Lannes's book
~\cite{L-book}.

Returning to the conformal method, the evolution equations restricted
to the boundary were independently written by Wu~\cite{wu2} and
Dyachenko-Kuznetsov-Spector-Zakharov~\cite{zakharov}
in the infinite bottom case  in slightly different
forms. Of these, it was  Wu's paper ~\cite{wu2} where this
formulation was fully exploited to prove local well-posedness in the
large data problem. Later, in \cite{CC} Choi and Camassa re-derive the equations for a perfect fluid in the finite depth case when taking both the gravity and capillary force into account. Their method is based on a direct manipulation of the Euler equations, whereas the method of Dyachenko et al. \cite{zakharov} is based on a variational approach. Holomorphic coordinates
have been used subsequently in several other works, for example
\cite{MR2085415,1996PlPhR..22..829D}.

One key feature of this evolution, which led to a very large body of
work, is that it admits soliton solutions, which at low
frequency/small amplitude are close to the KdV solitons. In the
periodic regime these waves are called Stokes waves and
there is a continuous family of such waves up to the maximum height
wave, which has a profile with a 120 degree angle at the top.
As this is only tangentially relevant to the present work,
we simply refer the reader to the recent books of Lannes~\cite{L-book}
for a good description of the KdV approximation, and of Constantin~\cite{C-book}
for the study of solitary waves.

Our goal here is somewhat different, namely  to initiate the study of
the long time dynamics for the small data problem. As mentioned before,
one difficulty in this regard is the presence of the Dirichlet to Neumann map in the
Eulerian formulation of the equations. In order to bypass this
difficulty we consider the equation in holomorphic coordinates, using
a conformal map of the fluid domain into a flat strip. This strategy
was previously implemented by the last two authors is several deep
water scenarios, namely for gravity waves \cite{HIT}, capillary waves
\cite{ITCapillary} and constant vorticity gravity waves
\cite{ITVorticity}.

As this is the first article fully developing the holomorphic
coordinates in the finite bottom scenario, in the first part of the
paper we carefully present the functional setting for our problem, and
then derive the corresponding formulation for the water wave equations in this setting. In this article we only consider the case of the infinite strip.   However, the periodic case is equally interesting, and has received perhaps more  attention in the literature over the years. 

In the holomorphic setting the coordinates are denoted by
$\alpha+i\beta \in \Strip:= \Rl \times (-h,0)$, and the fluid domain 
is parametrized using the conformal map
\[
z: \Strip \to \Omega(t),
\]
which takes the bottom $\Rl - ih$ into the bottom, and the top $\Rl$ 
into the top $\Gamma(t)$. As such, the restriction to the real line
$Z(\alpha) = z(\alpha -  i0)$ can be viewed as a parametrization of 
the free boundary $\Gamma(t)$.

Our variables are the
function $Z(\alpha) = \alpha + W(\alpha)$, which parameterizes the free
surface, and the trace $Q(\alpha)$ of the holomorphic velocity potential on the free surface. Both $(W,Q)$ are what we call here holomorphic
functions, i.e., the trace on the upper boundary $\beta = 0$ of
holomorphic functions in the strip $\Strip$, which are purely real in
the lower boundary $\beta = -1$. The space of holomorphic functions is
a real algebra.

To algebraically describe the space of holomorphic functions we use
the operator $\Til_h$, which is the finite bottom analogue of
the Hilbert transform arising in the description of the Dirichlet to
Neumann map in the canonical domain. Precisely, $\Til_h$ is the
multiplier with symbol $-i \tanh (h\xi)$ and real kernel $-\frac{1}{2h}
\cosech(\frac{\pi}{2h} \alpha)$, interpreted in the principal value
sense.  Then the holomorphic functions are described by the relation
\[
\Im u = - \Til_h \Re u.
\]
The complex conjugates of holomorphic functions will be called
antiholomorphic functions, and are described by the relation $\Im u =
\Til_h \Re u$. Arbitrary functions can be expressed as sums of
holomorphic and antiholomorphic functions,
\[
u = \P_h u + \bar \P_h u.
\]
Here $\Ph$ projects onto the space of holomorphic functions and
its complement ${\bar \P_h = I-\P_h}$ projects onto the space of
antiholomorphic functions. Both can be viewed as orthogonal
projections in the Hilbert space $\mfH_h$ with inner product
\[
\langle u, v\rangle_{\mfH_h} = \int \left( \Til_h \Re u \cdot \Til_h \Re v
  + \Im u \cdot \Im v \right) \, d\alpha.
\]
We note that $\mfH_h$ is not a space of distributions as the $\mfH_h$ norm
does not see real constants. However, it can be viewed as a
quotient space of distributions modulo real constants.

The water wave equations in holomorphic coordinates, derived in
Section~\ref{sect:Derivation}, have the form
\begin{equation}\label{FullSystem-re}
  \begin{cases}
    W_t + F(1+W_\alpha) = 0\vspace{0.1cm}\\
    Q_t + FQ_\alpha - g\Tilh[W] + \P_h
    \left[\dfrac{|Q_\alpha|^2}{J}\right] = 0,
  \end{cases}
\end{equation}
where
\[
J = |1+W_\alpha|^2, \qquad F = \P_h\left[\frac{Q_\alpha-\bar
    Q_\alpha}{J}\right].
\]
We note here that one can freely add real constants to both $W$ and
$Q$; thus these equations are consistent with the low frequency
structure of the space $\mfH_h$.

The above system has a Hamiltonian structure. The Hamiltonian is the
total energy of the system, which is closely related to the above
inner product,
\begin{align*}
  \mc E &= \frac g4\langle W,W\rangle - \frac14\langle
  Q,\Tilh^{-1}[Q_\alpha]\rangle + \frac g2\langle WW_\alpha,W\rangle.
\end{align*}
As written it is not immediately obvious that at low frequency the last term can be controlled by the $\mfH$ norm of $W$. However, a direct computation shows that 
the Hamiltonian can be expressed in the form
\begin{align}
\label{reprezentare}
  \mc E &= \frac g4\langle W,W\rangle - \frac14\langle
  Q,\Tilh^{-1}[Q_\alpha]\rangle +  \frac g2 \int |\Im W|^2 \Re W_\alpha \, d\alpha .
\end{align}
Here one can also see that $\mc E $ remains positive definite for as long as the 
curve $\Gamma (t)$ (i.e., the range of $W+\alpha$) remains non-intersecting. 

For later use here it is will be useful to symmetrize the $Q$ part of the above energy 
by introducing the positive self-adjoint operator 
\[
\D_h = (- \Tilh^{-1} \partial_\alpha)^\frac12,
\]
so that the quadratic part of the energy is given by
\[
E_0(w,r):=g\langle w,w \rangle +  \langle \D_h r, \D_h r \rangle .
\]
It is then natural to look for solutions $(W,Q)$ in the Sobolev space
$\Hh$ with norm
\[
\| (W,Q) \|_{\Hh}^2 := g \|W\|_{\mfH_h}^2 + \|
L_h Q\|_{\mfH_h}^2,
\]
which is similar to $\mfH_h$ for both components at low frequency, and
to $L^2 \times \dot H^\frac12$ at high frequency.

For higher regularity we will use the spaces $\Hh^k = \la D\ra_h^{-k}\Hh$, where \(\la D\ra_h = h^{-1}\la h D\ra\). However, these will
not be applied directly to $(W,Q)$. This is for the same reasons as in
our previous work \cite{HIT}, namely that, after differentiation, the
system for $(W,Q)$ has a degenerate hyperbolic structure, so one needs
to diagonalize it and work with diagonal variables instead. This is a
well known feature of the water wave equation, and we refer the reader
to \cite{abz} and \cite{L-book} for the Eulerian version of this
diagonalization, which is often carried out in a paradifferential
fashion. In our case, as in \cite{HIT}, a convenient choice for the diagonal variables is given by
\[
(\W, R) : = \left(W_\alpha, \frac{Q_{\alpha}}{1+W_\alpha}\right).
\]
These are also physical variables that describe the slope of the free
surface (given by $1+\W$), respectively the fluid velocity of the free
surface.

Indeed, after differentiation one obtains a self-contained diagonal
system in $(\W,R)$:
\begin{equation}\label{DiagonalQuasiSystem-re}
  \begin{cases}
    \W_t + b\W_\alpha + \dfrac{1+\W}{1+\bar \W}R_\alpha = (1+\W)M\vspace{0.1cm}\\
    R_t + b R_\alpha = i\dfrac{g\W - \mfa}{1+\W},
  \end{cases}
\end{equation}
where the double speed (advection velocity) $b$ is given by
\begin{equation}\label{def:b}
b = 2\Re\left[R - \P_h[R\bar Y]\right], \qquad Y = \frac{\W}{1+\W}.
\end{equation}
The other (real) parameters $\mfa$ and $M$ above are given by
\begin{align}
  \mfa =&\ 2\Im\P_h[R\bar R_\alpha] + g(1+ \Til^2_h) \Re \W, \label{def:a}
  \\
  M =&\ 2\Re\P_h[R\bar Y_\alpha - \bar R_\alpha Y]. \label{def:m}
\end{align}

The parameter $\mfa$ also has a physical interpretation, in that $g+\mfa$ is
the normal derivative of the pressure on the free surface. It will be informative to write it in the form
\[
\mfa = a + \ao,
\]
where the quadratic term
\[
a := 2\Im\P_h[R\bar R_\alpha],
\]
remains in the infinite depth limit (see \cite{HIT}) whereas the linear term
\[
\ao := g(1+ \Til^2_h) \Re \W,
\]
is solely a feature of the finite depth case. The positivity of \(g+\mfa\) is also critical as a necessary well-posedness condition
for the above system (the Taylor stability condition):
\begin{equation}\label{TS}
  \left.-\frac{\partial p}{\partial\nu}\right|_{\Gamma(t)} = \frac{g + \mfa}{J} >0.
\end{equation}

The necessity of this condition is not immediately clear from the form of the system
\eqref{DiagonalQuasiSystem-re} above, as this is still a quasilinear
system. However, it will become clear once we consider the linearized
system in Section~\ref{s:linearization}.  In
Section~\ref{sect:TaylorStability} we prove that this positivity
condition remains satisfied as long as the free surface \(\Gamma(t)\)
remains a positive distance above the bottom; this provides 
an alternate,  Fourier-based proof, of the similar result obtained 
in \cite{L-book} in the Eulerian setting using a maximum principle based 
argument. Further, our proof does not depend on the fact that $\Gamma (t)$
is non-intersecting.

In the sequel we will consider solutions $(W,Q)$ for the system
\eqref{FullSystem-re} with the regularity properties
\[
(W,Q) \in \Hh, \qquad (\W,R) \in \Hh^k, \qquad k \geq 1.
\]
To describe the lifespan of these solutions we introduce two control
norms, namely
\begin{equation}
  \label{A-def}
  A := \|\W\|_{L^\infty}+\| Y\|_{L^\infty} + g^{-\frac12} \|\langle D\rangle_h^\frac12 R\|_{L^{\infty} \cap B^{0,\infty}_{2}},
\end{equation}
respectively
\begin{equation}
  \label{B-def}
  B :=g^\frac12 \|\langle D\rangle_h^\frac12 \W\|_{\bmo_h} + \|\langle D\rangle_h R\|_{\bmo_h},
\end{equation}
where, decomposing \(f = f_{<h^{-1}}+ f_{\geq h^{-1}}\) by frequency, the
inhomogeneous space \(\bmo_h\) is given by the norm
\[
\|f\|_{\bmo_h} = \|f_{<h^{-1}}\|_{L^\infty} + \|f_{\geq h^{-1}}\|_{\BMO},
\]
where \(\BMO\) is the usual space of functions of bounded mean
oscillation.

At high frequencies (i.e., larger than $h^{-1}$), these norms coincide with 
the norms in \cite{HIT}.  Here at least for small data $A$ and $B$ are  
controlled by the corresponding Sobolev norms 
of $(W,Q)$ and $(\W,R)$ as follows:
\begin{gather}
\label{sobolev-A}
A \lesssim  g^{-\frac12} \left( \| (\W,R)\|_{\Hh} + h^{-1} \| (W,Q)\|_{\Hh}\right)^\frac12 \left( \| (\W_\alpha,R_\alpha)\|_{\Hh} + h^{-1} \| (\W,R)\|_{\Hh}\right)^\frac12,
\\
\label{sobolev-B}
B \lesssim  \|(\W_\alpha,R_\alpha)\|_{\Hh} +h^{-1} \|(\W,R)\|_{\Hh} + h^{-2} \| (W,Q)\|_{\Hh}.
\end{gather}
For large data some additional care is required due to the need to independently control $Y$ uniformly in $L^\infty$.

Before discussing well-posedness, we remark that as stated the problem
\eqref{FullSystem-re} does not have unique solutions due to the gauge
freedom
\[
(W(t,\alpha),Q(t,\alpha)) \to (W(t,\alpha+\alpha_0(t)) + \alpha_0(t),
Q(t,\alpha+\alpha_0(t)) + q_0(t)),
\]
which corresponds to $F \to F + \alpha'_0(t)$ and a similar choice
involving $q_0'(t)$ for the projector in the second equation.

At the initial time we cannot do more than make an arbitrary choice
(unless we assume more decay at infinity for the initial data).
However, we can fix the choice of $\alpha_0$ and $q_0$ at later times
by requiring that both $F$ and the projector in the second equation
have limit $0$ at $-\infty$. This is allowed because the arguments of
$\Ph$ are not only in $L^2$, but also in $L^1$.

Now we can state our local well-posedness result:

\begin{theorem}\label{t:loc}~

a)   The system \eqref{FullSystem-re} is locally well-posed for all
  initial data $(W_0,Q_0)$ with regularity
  \[
  (W_0,Q_0) \in \Hh, \qquad (\W_0,R_0) \in \Hh^1,\qquad Y_0 \in L^\infty.
  \]
Further, the solutions can be continued as long as our control
parameter $A(t)$ remains finite, and $\displaystyle\int B(t)\, dt$ remains finite.

b) This result is uniform with respect to our choice of parameters $g \lesssim h$
as follows. If for a large parameter $C$ the initial data satisfies 
\[
g^{-1}h^{-1} \| (W_0,Q_0)\|_{\Hh} + g^{-1} \| (\W_0,R_0)\|_{\Hh} + \| (\W_{0,\alpha},R_{0,\alpha})\|_{\Hh} + \| Y_0\|_{L^\infty} \leq C,
\]
then there exists some $T = T(C)$, independent of $g,h$, so that the solution 
exists on $[-T,T]$ with similar bounds.
\end{theorem}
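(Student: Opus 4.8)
The plan is to establish local well-posedness for the quasilinear diagonal system \eqref{DiagonalQuasiSystem-re} by the standard energy method, in a form that makes the dependence on $g$ and $h$ fully explicit, and then to transfer the result back to \eqref{FullSystem-re}. The first step is to construct, for each integer $k \geq 1$, a coercive energy functional $E^k(\W,R)$ which is equivalent (uniformly in $g \lesssim h$, for small enough data or for data with controlled $Y$) to $g\|\W\|_{\Hh^k}^2 + \|R\|_{\Hh^k}^2$, and whose time derivative along solutions is bounded by $(A + B)\, E^k$. Because the linearization of \eqref{DiagonalQuasiSystem-re} has a wave-like principal part with the Taylor coefficient $g + \mfa$ sitting in front of the ``potential'' term (visible in the $R_t + bR_\alpha = i(g\W - \mfa)/(1+\W)$ equation once it is paired with the $\W$ equation), the natural energy has the schematic shape
\[
E^k = \int (g+\mfa)\, |\langle D\rangle_h^k \W|^2\, \frac{d\alpha}{J} + \int |\langle D\rangle_h^k \D_h R|^2 \, d\alpha + \text{(lower order, l.o.t.)},
\]
and its coercivity is exactly where the Taylor stability bound from Section~\ref{sect:TaylorStability} enters. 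The key estimates feeding the energy bound are: (i) the algebra and Moser-type bounds for the holomorphic projections $\Ph$ and products in $\Hh^k$, uniform in $h$, which are needed to control $b$, $\mfa$, $M$, $Y$; (ii) commutator estimates of the form $\|[\langle D\rangle_h^k, b]\partial_\alpha u\|_{\Hh} \lesssim B\|u\|_{\Hh^k}$; and (iii) the observation that the linear term $\ao = g(1+\Til_h^2)\Re\W$ in $\mfa$, while new to the finite-depth problem, is actually smoothing — the symbol $1 + \tanh^2(h\xi)$ of $1 + \Til_h^2$ is bounded and decays like $h^{-2}\xi^{-2}$ at high frequency — so it is harmless and in fact contributes favorably at low frequency.

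Second, I would run the usual iteration/regularization scheme: mollify the data, solve the regularized (now essentially linear, or at least locally Lipschitz) equations, use the uniform energy estimates to get a uniform existence time $T$ depending only on the control quantity $C$ in part (b), and pass to the limit. Uniqueness and continuous dependence follow from an energy estimate for the difference of two solutions, carried out at one lower level of regularity (i.e. in $\Hh$ rather than $\Hh^1$), which is standard once the $\Hh^1$-bounds on each solution are in hand; here one must be slightly careful because the $\Hh$-level difference estimate loses a derivative that is only recovered using the $\Hh^1$ bound on the coefficients. The continuation criterion — that the solution persists as long as $A(t)$ stays finite and $\int_0^t B(s)\, ds$ stays finite — is exactly the statement that the energy growth is controlled by $A + B$ with $B$ appearing linearly (hence integrably), which is what the energy estimate in the first step delivers; this is the genuinely nonlinear refinement beyond a soft local existence statement and requires the frequency-localized $\bmo_h$ and Besov norms in $A$, $B$ rather than $L^\infty$-based norms.

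Third, one must return from $(\W,R) \in \Hh^k$ to the original variables $(W,Q) \in \Hh$ and recover the gauge-normalized solution of \eqref{FullSystem-re}. Given $(\W, R)$ one reconstructs $W_\alpha = \W$, $Q_\alpha = (1+\W)R$, then integrates using the equations themselves (and the prescribed decay at $-\infty$ fixing the gauge freedom, as discussed before the theorem) to recover $W, Q$ and their time evolution; the $L^1$ integrability of the arguments of $\Ph$ is what makes this well defined. The uniformity claim in part (b) is then simply the bookkeeping statement that every constant in steps one and two was tracked in powers of $g$ and $h$, and that the hypothesis normalizes precisely the combinations $g^{-1}h^{-1}\|(W_0,Q_0)\|_{\Hh}$, $g^{-1}\|(\W_0,R_0)\|_{\Hh}$, $\|(\W_{0,\alpha},R_{0,\alpha})\|_{\Hh}$, $\|Y_0\|_{L^\infty}$ that control $A(0)$ and $B(0)$ via \eqref{sobolev-A}–\eqref{sobolev-B}.

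The main obstacle I expect is the construction of the coercive, almost-conserved energy $E^k$ with the correct uniform-in-$h$ behavior: one has to arrange the lower-order corrections so that the bad top-order terms produced by differentiating the quasilinear system cancel (the usual ``cancellation structure'' of water waves, coming from the fact that $b$ is a real transport velocity and the principal symbol is self-adjoint), while simultaneously keeping track of two independent scaling parameters and handling the $h \to \infty$ limit so that the estimates reduce continuously to those of \cite{HIT}. The new low-frequency features — the space $\Hh$ not seeing constants, the operator $\D_h = (-\Til_h^{-1}\partial_\alpha)^{1/2}$ degenerating like $|\xi|^{1/2}$ only at high frequency and behaving like $h^{-1/2}$ at low frequency, and the extra linear term $\ao$ in $\mfa$ — all require care but, as indicated above, are ultimately favorable rather than obstructive.
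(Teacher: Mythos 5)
Your overall strategy — energy estimates for the diagonal system \eqref{DiagonalQuasiSystem-re} with a $(g+\mfa)$-weighted quadratic form whose coercivity comes from Taylor stability, a regularization/limit scheme, a difference estimate at one lower level of regularity, and explicit tracking of powers of $g,h$ — is the same as the paper's. But the proposal has a genuine gap exactly where you yourself flag ``the main obstacle'': you never actually produce the energy $E^k$ whose time derivative is controlled at top order. This is the substance of the paper's Proposition~\ref{t:en=large}, and it is not achieved by ``arranging lower-order corrections'': after differentiating $(n-1)$ times one is left with terms such as $-n\bigl(\tfrac{R_\alpha}{1+\bar\W}+\tfrac{\bar R_\alpha}{1+\W}\bigr)R^{(n-1)}$ which are \emph{not} bounded by $B\,\norm_n$ in the energy spaces. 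The paper removes them by a specific sequence of algebraic substitutions — the diagonal variable $\R=(1+\W)R^{(n-1)}$, the further correction $\tilde\R=\R-R_\alpha\W^{(n-2)}+(2n-1)\W_\alpha R^{(n-2)}$, conjugation by $e^{n\phi}$ with $\phi=-2\Re\log(1+\W)$, and projection by $\P$ — so as to land exactly on the model system \eqref{ModelLinSys}, whose estimate \eqref{est1} then gives $\frac{d}{dt}E\lesssim_A B\,\norm_n^2$. Note also that your claimed bound $\frac{d}{dt}E^k\lesssim(A+B)E^k$ is too weak for the stated continuation criterion: you need the growth rate to be $B$ alone (with $A$-dependent constants), otherwise continuation would require $\int A\,dt<\infty$ rather than merely $\sup A<\infty$.

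Two further points. First, continuous dependence in the existence topology $\Hh\times\Hh^1$ does not follow from the lower-regularity difference estimate alone in a quasilinear problem; the paper needs a Bona–Smith/frequency-envelope argument (Step 5 of its proof, Lemma~\ref{data-cut} and the subsequent lemma) to show the regularized solutions converge in $\Hh^1$ and inherit the data's frequency envelope. Your proposal asserts continuous dependence without this step. Moreover the difference/linearized estimate itself is not ``standard'' here: unlike in the infinite-depth case, $\|(\P\cG,\P\cK)\|_{\H}$ cannot be controlled by $A,B$ alone and requires the Sobolev norms of $(\W,R)$ together with the $\P S_0\colon L^1\to L^\infty$ mapping (Proposition~\ref{LinQuadWP}). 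Second, closing the bootstrap for part (b) requires propagating not only the Sobolev norms but also $\|Y\|_{L^\infty}$ and the lower bound $1+\Im W\geq c>0$ (the surface staying away from the bottom, which is what makes $g+\mfa\geq g(c+h)>0$ and hence the energy coercive); your proposal does not address how these pointwise quantities are continued in time. A small correction as well: the symbol of $1+\Til_h^2$ is $1-\tanh^2(h\xi)=\sech^2(h\xi)$, which decays exponentially (it is Schwartz), not $1+\tanh^2(h\xi)$ with polynomial decay — your conclusion that $\ao$ is harmless is right, but for a different reason than stated.
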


Here well-posedness should be interpreted in the sense of Hadamard as
follows:

\begin{itemize} 
\item Existence of solutions $(W,Q) \in C([-T,T];\Hh)$, $(\W,R)
  \in C([-T,T];\Hh^1)$.

\item Uniqueness of solutions in the same class.

\item Continuous dependence on the initial data in the same topology.

\item Higher regularity: If the initial data has additional regularity
  (e.g. $\Hh^k$) then the solution has additional regularity as well.

\end{itemize}

Our second goal is to establish lifespan bounds for the small data
problem.  Given a generic quasilinear problem with data of size
$\epsilon$ and quadratic interactions, the standard result is to
obtain quadratic lifespan bounds, i.e., $T_{max} \gtrsim
\epsilon^{-1}$. Here we show that for our problem, despite the
presence of quadratic interactions, the lifespan is nevertheless
cubic, i.e., $T_{max} \gtrsim \epsilon^{-2}$.

\begin{theorem} \label{t:long}
  Consider the system \eqref{FullSystem-re} with small initial data
  $(W_0,Q_0)$,
  \[
  g^{-1}h^{-1} \| (W,Q)(0)\|_{\Hh} + g^{-1} \| (\W,R)(0)\|_{\Hh} + \| (\W_\alpha,R_\alpha)(0)\|_{\Hh}  \leq \epsilon .
  \]
  Then the solution $(W,Q)$ exists and satisfies similar bounds on a
  time interval $[-T_\epsilon,T_\epsilon]$ with $T_\epsilon \gtrsim
  \epsilon^{-2}$.  In addition, higher regularity also propagates
  uniformly on the same scale, i.e., for solutions as above we have
  \[
  \| (\W,R) \|_{C([-T_\epsilon,T_\epsilon];\Hh^k)} \lesssim
   \| (\W,R)(0) \|_{\Hh^k} + \epsilon h^{1-k},
  \]
  whenever the right hand side is finite.
\end{theorem}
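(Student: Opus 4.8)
The proof has two independent ingredients which are then combined via a bootstrap argument. The first ingredient is an \emph{energy estimate}: one constructs, for each $k\geq 1$, an energy functional $E^k(t) \approx \|(\W,R)\|_{\Hh^k}^2$ (plus the appropriate low-frequency piece controlling $\|(W,Q)\|_{\Hh}$) whose time derivative is cubic in the solution, of the schematic form
\[
\frac{d}{dt} E^k \lesssim (A + B) \, B \, E^k.
\]
This already improves the naive quadratic estimate $\frac{d}{dt}E^k \lesssim B \cdot E^k$ by one power of the small control norm $A$, and is the source of the cubic lifespan. Obtaining this requires the normal form / diagonalization machinery: one works with the $(\W,R)$ system \eqref{DiagonalQuasiSystem-re}, symmetrizes using $\Ez$ and the operator $\D_h$, and tracks cancellations carefully so that the quadratic terms in $\frac{d}{dt}E^k$ integrate to something cubic. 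The second ingredient is a \emph{cubic dispersive/pointwise decay estimate} showing that the control norms themselves decay: $A(t) \lesssim \epsilon$ and $\int_0^t B(s)^2\, ds \lesssim \epsilon^2 \log(2+t)$ or similar, on the time scale $t \lesssim \epsilon^{-2}$, using the dispersive properties of the linearized flow (finite-depth gravity waves, with dispersion relation $\omega^2 = g\xi\tanh(h\xi)$) together with a normal-form transformation removing the non-resonant quadratic interactions.

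\textbf{Step 1: the linearized equation and the normal-form transformation.} First I would analyze the linearization of \eqref{FullSystem-re} (or of the $(\W,R)$ system) about a given solution, identify the paradifferential principal part, and check that the Taylor sign condition \eqref{TS}—which by Section~\ref{sect:TaylorStability} holds as long as $\Gamma(t)$ stays above the bottom, hence certainly for small data—makes it well-posed and symmetrizable. Then I would construct the quadratic normal-form correction $(\tilde\W,\tilde R) = (\W,R) + \text{(quadratic terms)}$ designed to cancel the non-resonant quadratic nonlinearities, so that the corrected variables satisfy an equation with only cubic nonlinear terms plus resonant quadratic terms that are either absent or have favorable structure. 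Crucially, one must verify the normal-form transformation is bounded and invertible on the relevant spaces (this is where the $\bmo_h$ / $B^{0,\infty}_2$-type norms and the control parameter $A$ enter), so that bounds on the corrected variables transfer back to $(\W,R)$. The finite-depth subtlety here is the extra linear term $\ao = g(1+\Til_h^2)\Re\W$ in $\mfa$, which is $O(h^{-2})$-type at low frequency; one must check it does not spoil the normal-form construction and is responsible for the $h$-dependence in the final higher-regularity bound.

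\textbf{Step 2: energy estimates at all regularities.} Using the diagonalized system, I would run the energy estimate at level $\Hh^k$ for the corrected (or directly for the original diagonal) variables, proving $\frac{d}{dt}E^k \lesssim (A+B)\, B\, E^k$ with $E^k \approx \|(\W,R)\|_{\Hh^k}^2 + h^{-2k}\|(W,Q)\|_{\Hh}^2$. Combined with the Sobolev bounds \eqref{sobolev-A}, \eqref{sobolev-B} relating $A,B$ to these energies at the base level $k=1$, this closes a bootstrap on $[-T_\epsilon, T_\epsilon]$ provided one can control $\int B\, dt$. For the base level one shows $A(t)\lesssim\epsilon$ and $\int_0^{T_\epsilon} B(t)\,dt \lesssim \epsilon$ on the cubic time scale, using the dispersive decay of the normal-form variables; then Grönwall on $E^1$ gives the lifespan $T_\epsilon\gtrsim\epsilon^{-2}$, and Grönwall on $E^k$ for $k>1$ propagates higher regularity with the stated $\epsilon h^{1-k}$ loss.

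\textbf{The main obstacle} I expect is \textbf{the dispersive decay estimate for the control norm $A$, uniformly in the depth parameter $h$}. The finite-depth dispersion relation $\omega(\xi)^2 = g\xi\tanh(h\xi)$ interpolates between the $|\xi|^{1/2}$ behavior at high frequency (where it matches \cite{HIT}) and a degenerate, KdV-like behavior $\omega \approx \sqrt{gh}\,\xi$ at low frequency; the group velocity is nearly constant near $\xi = 0$, so there is essentially no dispersion at the largest scales, and one cannot expect genuine decay there. The resolution is that the $\bmo_h$/$B^{0,\infty}_2$ structure of the norms is specifically designed so that the low-frequency piece ($|\xi| < h^{-1}$) is controlled \emph{by energy alone} (no decay needed), while the high-frequency piece ($|\xi|\geq h^{-1}$) behaves exactly as in the infinite-depth problem and enjoys the same dispersive decay. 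Making this split rigorous—showing the normal-form transformation respects it, that the resonant set contributes only harmlessly, and that all constants are genuinely $h$-uniform in the regime $g\lesssim h$—is the technical heart of the argument and is precisely why the theorem is phrased with those particular norms and that particular scaling.
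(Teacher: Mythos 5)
Your overall architecture is not the one the paper uses, and two of your key ingredients would fail. First, your Step 1 proposes to construct the quadratic normal form correction and then ``verify the normal-form transformation is bounded and invertible on the relevant spaces'' so that bounds transfer back to $(\W,R)$. This cannot be done: the symbols of the normal form transformation (computed in Section~\ref{s:nf}) contain the factor $\Omega^{-1}$, and $\Omega$ vanishes quadratically on the lines $\xi=0$, $\eta=0$, $\zeta=0$, so the transformation is singular precisely when the output frequency is zero and is not bounded on $\Hh$. The paper's entire point is to \emph{avoid} applying the transformation to the solution: one instead builds the cubic \emph{normal form energy} $E^n_{NF}$ (Section~\ref{s:nf-en}), where repeated symmetrizations of the trilinear symbols cancel the zero-frequency singularities, and then adds quasilinear corrections (Section~\ref{s:ee}) to obtain $E^{n,(3)}$ satisfying $\frac{d}{dt}E^{n,(3)}\lesssim_A AB\,\norm_n^2$. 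Relatedly, the paper explicitly cannot prove cubic bounds for the linearized equation here (only quadratic ones), because the symmetrization cancellations are lost when $(w,q)\neq(W_\alpha,Q_\alpha)$; your plan implicitly assumes the linearized/normal-form picture works as in the infinite-depth case.

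Second, your ``cubic dispersive/pointwise decay estimate'' for the control norms is neither used nor available. The paper never proves decay of $A$ or $B$: both are bounded uniformly by $C\epsilon$ via the Sobolev embeddings \eqref{sobolev-A}--\eqref{sobolev-B} and the bootstrap hypothesis, and the cubic lifespan comes solely from integrating $\frac{d}{dt}E^{n,(3)}\lesssim AB\,\norm_n^2\lesssim\epsilon^2\norm_n^2$ over $|t|\leq T$, which stays $O(\epsilon^2)$ exactly when $T\lesssim\epsilon^{-2}$. Your closing step ``$\int_0^{T_\epsilon}B\,dt\lesssim\epsilon$ \dots then Gr\"onwall'' would require genuine $t^{-1/2}$-type decay of $B$ over the cubic timescale; with only the energy bound one gets $\int_0^{T_\epsilon}B\,dt\lesssim\epsilon^{-1}$, and a true decay estimate is obstructed at low frequency by the small solitary waves that the finite-depth problem admits (as the introduction notes). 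So the ``main obstacle'' you identify is not part of the proof, while the actual main obstacle --- the zero-frequency singularity of the normal form and its cancellation at the energy level --- is the step your outline would get stuck on.
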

We emphasize again  that our results are uniform in the range of parameters  $g \lesssim h$.
In particular in the infinite depth limit it agrees with the result in \cite{HIT}.
Furthermore, our setting and our results are also invariant with respect to the scaling 
\begin{equation} \label{scaling}
(W(t,x), Q(t,x)) \to  ( \lambda^{-1} W(t, \lambda x), \lambda^{-1} Q(t, \lambda x)),
\end{equation}
which corresponds to our parameters changing according to the law
\[
(g,h) \to (\lambda g, \lambda h).
\]
Because of this, in the proofs we can freely fix one of the parameters. Precisely, 
after deriving the equations we choose to fix $h = 1$ and work with $g$ in the range $g \lesssim 1$. We will also write \(\Til = \Til_1\) and similarly for other operators and function spaces.

We remark that one can also rescale time for a second degree of freedom in the choice
of the parameters $g$ and $h$. However, our results are not invariant with respect to this second scaling. 

This result formally mirrors  earlier results of the last two authors
 in \cite{HIT} (together with John Hunter) in
the infinite bottom case, as well as \cite{ITCapillary} for infinite bottom 
capillary waves, and \cite{ITVorticity} for constant vorticity gravity waves in deep water.

A common idea in all these papers is the use of the ``quasilinear modified energy method,''
first introduced in \cite{BH}, in order to establish long time bounds.
This can be viewed as  proxy for Shatah's  normal form method \cite{s}, which
cannot be directly implemented in quasilinear problems. Instead
of correcting the quadratic terms in the equation via a normal form transformation,
the basis of our ``quasilinear modified energy method'' is the idea that one can more readily 
modify the energy functional. 

Despite the formal similarity to \cite{HIT}, the analysis here is considerably more 
difficult due to several crucial  differences.
In the infinite bottom case the null condition for resonant quadratic
interactions is satisfied in a stronger form, i.e., the normal form
transformation is nonsingular at zero frequency. Consequently, we are
also able to obtain long time bounds for the linearized equation, and
implicitly for the differences of solutions.  By contrast, only short
time bounds for the linearized equation are obtained in the present
paper.

Another key difference between the two problems has to do with the
existence of solitons, i.e., localized traveling waves. While the
infinite bottom problem admits no solitons, in the finite bottom
problem there are small solitons. This is most readily seen via the
KdV approximation at low frequencies, which is widely discussed in the
literature, see e.g. Lannes's book \cite{L-book}. While these solitons do not play
a significant  role in the present paper, they are expected to be essential
elements of any investigation of the nonlinear shallow water dynamics
on any  longer time scales.

We note that without surface tension these difficulties are essentially unique to the \(2d\) problem and that the additional dispersion in \(3d\) makes the analysis somewhat more straightforward. In the \(3d\) case both enhanced lifespan bounds \cite{MR2372806} and global well-posedness for small, smooth, localized initial data \cite{2015arXiv150806223W,2015arXiv150806227W} have been established previously.

We conclude the introduction with a brief overview of the paper.  We
begin in the next section with a detailed description of the conformal
coordinates, as well the corresponding spaces of ``holomorphic
functions'' where the evolution takes place. The fully nonlinear water
wave system \eqref{FullSystem-re} is derived in
Section~\ref{sect:Derivation}, together with the differentiated quasilinear
system \eqref{DiagonalQuasiSystem-re}.  We also discuss the
Hamiltonian formalism there, as well as the Taylor stability
condition~\ref{TS}.

In Section~\ref{s:model} we study a model linear problem, which captures the quasilinear effects in our problem, but not the quadratic semilinear interactions.
We will subsequently apply the estimates established here to both the linearized and differentiated equations.

The linearized problem is studied in Section~\ref{s:linearization}.
Unlike in our prior work on the infinite bottom problem, here we are 
only able to prove quadratic and not cubic bounds for the linearization.
Thus, the estimates here are only useful for local well-posedness
and not for the cubic lifespan result. 

The study of the long time dynamics begins in the earnest in
Section~\ref{s:nf} with the normal form computation. As one can see there,
the resonant interactions at zero frequency produce a zero frequency
singularity in the the normal form transformation; thus one cannot use
it directly even in the low frequency analysis. In Section~\ref{s:nf-en} we compute the associated normal form energy, where
repeated symmetrizations lead to cancellations of the singular
part. This is the first step in the implementation of our modified
energy method.

In Section~\ref{s:ee} we show 
that the normal form energies admit good quasilinear modifications,
which can be used to prove the long time bounds for the solutions. Finally, our main result is proved in the last section.

Many of the more technical estimates in the paper are relegated to the
Appendix in order to keep the main arguments more clear and
streamlined. This includes a number of Coifman-Meyer type commutator
estimates, as well as their consequences for the various parameters in our water wave 
system.

\section{Holomorphic coordinates}

\subsection{Holomorphic functions in the canonical domain}
We start by considering solutions to the Laplace equation in the strip \(\Strip = \Rl\times(-h,0)\) with mixed boundary conditions,
\begin{equation}\label{MixedBCProblem}
  \begin{cases}
    -\Delta u = 0\qquad \text{in}\ \ \Strip\vspace{0.1cm}\\
    u(\alpha,0) = f\vspace{0.1cm}\\
    \partial_\beta u(\alpha,-h) = 0.
  \end{cases}
\end{equation}

The solution may be written in the form
\[
u(\alpha,\beta) = \frac{1}{\sqrt{2\pi}}\int p(\xi,\beta)\hat
f(\xi)e^{i\alpha\xi}\,d\xi,
\]
where the Fourier multiplier $p$ is given by %
\[
p(\xi,\beta) = \frac{\cosh((\beta+h)\xi)}{\cosh(h\xi)}.
\]
We note that \(p(D,\beta)f\) is well-defined for any
\(f\in\Schwartz'(\Rl)\) and that
\[
\partial_\beta^kp(\xi,\beta) = O(|\xi|^ke^{\frac\beta h\langle h\xi\rangle}).
\]
Given a real-valued solution \(u\) to \eqref{MixedBCProblem} we may
find a harmonic conjugate \(v\) by solving the Cauchy-Riemann
equations,
\[
u_\alpha = v_\beta,\qquad u_\beta = -v_\alpha.
\]
A solution is given by
\[
v(\alpha,\beta) = \frac{1}{\sqrt{2\pi}}\int q(\xi,\beta)\hat
f(\xi)e^{i\alpha\xi}\,d\xi,
\]
where the Fourier multiplier \(q(\xi,\beta)\) is given by
\[
q(\xi,\beta) = \frac{i\sinh((\beta+h)\xi)}{\cosh(h\xi)}.
\]
On the boundary \(\{\beta=0\}\) we have
\[
v(\alpha,0) = -\Tilh f(\alpha),
\]
where the Tilbert transform is
\[
\Tilh f(\alpha) =
-\frac{1}{2h}\lim\limits_{\epsilon\downarrow0}\int_{|\alpha-\alpha'|>\epsilon}
\cosech\left(\frac\pi{2h}(\alpha-\alpha')\right)f(\alpha')\,d\alpha',
\]
is given by the Fourier multiplier \(-i\tanh(h\xi)\). We remark that it
takes real-valued functions to real-valued functions. We denote the
inverse Tilbert transform by \(\Tilh^{-1}\). As discussed above there
is some ambiguity in its definition. For concreteness we define it to
be given by the Fourier multiplier \(i\coth(h\xi+i0)\) such that
\(\Tilh^{-1}f\) vanishes at \(-\infty\) whenever \(f\in L^1\cap L^2\).

We will call functions on the line holomorphic if they
are the restriction to the real line of holomorphic functions in the
strip and satisfy the boundary condition on the bottom. This consists of functions $u$ which satisfy
\[
\Im u = - \Tilh \Re u,
\]
and forms a real algebra as can be seen from a simple
application of the product formula
\begin{equation}\label{SummationFormula}
  u\Tilh[v] + \Tilh[u]v = \Tilh\big[uv - \Tilh[u]\Tilh[v]\big],
\end{equation}
which follows from the corresponding identity for \(\tanh\xi\). The complex conjugates of holomorphic functions are called
antiholomorphic.

\subsection{Sobolev spaces}
On the space of all complex valued functions we define the real inner product
\begin{equation}\label{InnerProduct}
\la u,v \ra = \frac12\Re\int (1-\Tilh^2) u \cdot \bar v - (1+\Tilh^2) u \cdot v \, d \alpha ,
\end{equation}
where we note that $- \Tilh^2 $ is a non-negative operator.
The corresponding Hilbert space is denoted by $\mfH_h$.
Its norm can be rewritten in the form
\[
\| u\|_{\mfH_h}^2 = \int \left(\Tilh\Re u \cdot \Tilh\Re u + \Im u \cdot \Im u \right) \, d\alpha ,
\]
where one can easily see that this is non-negative, and thus a norm.

We denote by $\mfH_h^{(h)}$, respectively $\mfH_h^{(a)}$ the subspaces of $\mfH_h$
consisting of holomorphic, respectively antiholomorphic functions.
The interesting observation, which is in effect the motivation for
our introducing the space $\mfH_h$, is that its holomorphic and antiholomorphic 
subspaces are orthogonal complements of each other. We remark that, restricted to either $\mfH_h^{(h)}$ or $\mfH_h^{(a)}$, the $\mfH_h$ norm can be rewritten as
\[
\| u\|_{\mfH_h}^2 = \int \left(|u|^2 - \frac12u^2 - \frac12\bar u^2 \right)\,d \alpha .
\]

We will also need the associated orthogonal projections, which are denoted
by $\Ph$, respectively $\bar \P_h$. These are operators which are conjugated 
via the standard complex conjugation. We can define these two operators
in two equivalent ways. In a real fashion, we can set
\[
\Ph u = \frac12 \left[(1 - i \Tilh) \Re u + i (1+ i\Tilh^{-1}) \Im u           \right] ,
\]
\[
\bar \P_h u = \frac12 \left[(1 + i \Tilh) \Re u + i (1- i\Tilh^{-1}) \Im u           \right].
\]
In a complex fashion, we can write
\[
\begin{split}
\Ph u = & \  \frac14\left[ ( 2 - i \Tilh + i \Tilh^{-1}) u - i(\Tilh + \Tilh^{-1}) \bar u \right]  
\\ = & \ \frac14\left[ ( 1 - i \Tilh)(1 + i \Tilh^{-1}) u + (1-i\Tilh)( 1 - i\Tilh^{-1}) \bar u \right] ,
\end{split}
\]
respectively
\[
\begin{split}
\bar \P_h u = & \ \frac14\left[ ( 2 + i \Tilh - i \Tilh^{-1}) u + i(\Tilh + \Tilh^{-1}) \bar u \right] 
\\ =  & \ \frac14\left[ ( 1 + i \Tilh)(1 - i \Tilh^{-1}) u - (1-i\Tilh)( 1 - i\Tilh^{-1}) \bar u \right] .
\end{split}
\]

\subsection{Conformal mappings}\label{sect:ConformalMap}
Given a fluid domain $\Omega =\Omega(t)$ with upper boundary $\Gamma
=\Gamma(t)$ with a prescribed Sobolev regularity, and lower boundary $y = -h$, our goal here is to obtain a conformal map
\[
z: \Strip \to \Omega
\]
with similar regularity. Here we do not assume that $\Gamma$ is a
graph, only that it is the upper boundary of a simply connected domain
$\Omega$ which admits a parametrization with a suitable Sobolev
regularity. Precisely, we represent the boundary $\Gamma(t)$ as a
parametrized curve
\[
s \to z(s)
\]
with the following properties:

\begin{itemize}
\item[(i)] Sobolev regularity: $z(s) - s \in H^k_h := \la D\ra_h^{-k}L^2$.

\item[(ii)] Nondegenerate and non-intersecting: The map $s \to z(s)$
  is surjective and nondegenerate, $z'(s) \neq 0$.

\item[(iii)] Does not touch the bottom: $\Im z > - h$.
\end{itemize}
Then we have:

\medskip
\begin{theorem}~

  a) Let $\Omega$ be a simply connected domain whose lower boundary
  consists of the line $\Im z = -h$ and whose upper boundary is a
  curve $\Gamma$ as above, with $k > \frac32$. Then there exists a
  conformal map
  \[
  z: \Strip \to \Omega
  \]
  taking the line $\beta = - h$ into itself and the line $\beta = 0$
  into $\Gamma$.  Further, the restriction of $z$ to the upper
  boundary $\beta = 0$ has the regularity $z-\alpha \in \mfH^k_h$ and is
  unique up to horizontal translations.

  b) If in addition $\Gamma$ admits a parametrization which satisfies the smallness
 condition
\[
h^{-\frac32}( \| z\|_{L^2} + h^k \| z\|_{H^k_h}) \ll 1 ,
\] 
then it is a graph $y = y(x)$ satisfying similar $H^k_h$ bounds, and the following
  norms are comparable:
  \[
  h^{-j}\|y\|_{L^2}+ \| y\|_{H^j_h} \approx  h^{-j}\| z-\alpha \|_{L^2}+ \| z-\alpha\|_{\mfH^j_h}, \qquad 0 \leq j \leq k.
  \]
\end{theorem}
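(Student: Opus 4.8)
The plan is to construct the conformal map by reducing to a standard boundary-value problem for a holomorphic function on the strip, using the Tilbert transform machinery already set up. Since $\Gamma$ is the upper boundary of a simply connected domain $\Omega$ bounded below by $\{\Im z = -h\}$, the Riemann mapping theorem (in the version for strip-like domains) gives \emph{existence and uniqueness up to horizontal translation} of a conformal map $z:\Strip\to\Omega$ sending $\beta=-h$ to itself and $\beta=0$ to $\Gamma$; the content of part (a) is the \emph{regularity} statement $z-\alpha\in\mfH^k_h$. For this I would argue as follows. Let $Z(\alpha)=z(\alpha-i0)$ be the boundary trace. Because $z$ maps the strip to $\Omega$ conformally and is real-translation-normalized, $Z-\alpha$ is a holomorphic function in our sense, so it suffices to control $\Re W$ where $W = Z-\alpha$. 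The curve $\Gamma$ has a given $H^k_h$ parametrization $s\mapsto \zeta(s)$; the conformal parametrization $Z$ is a reparametrization $Z(\alpha)=\zeta(\sigma(\alpha))$ for some increasing diffeomorphism $\sigma$. The equation determining $\sigma$ comes from the holomorphicity constraint: writing $\zeta(s)=s+w(s)$ with $w\in H^k_h$, the condition $\Im W = -\Tilh\Re W$ becomes a quasilinear (in fact, fixed-point) equation for $\sigma$, schematically $\sigma(\alpha)+\Im w(\sigma(\alpha)) = -\Tilh[\,\cdot+\Re w(\sigma(\cdot))-\mathrm{id}\,](\alpha)$. One then solves this by a fixed point / continuity argument: the linearization at $w=0$ is invertible (it is essentially $I$ on the relevant quotient space modulo constants, since $\Tilh$ of the identity term produces the needed balance), and the smallness hypothesis in part (b), or a homotopy in $k'$ and in the size of the data for part (a), closes the iteration. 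The regularity $z-\alpha\in\mfH^k_h$ is then inherited from $w\in H^k_h$ and the fact that composition with the $H^k_h$-close-to-identity diffeomorphism $\sigma$ preserves $H^k_h$ (Moser-type estimates, valid since $k>\tfrac32$ so $H^k_h\hookrightarrow C^1$).

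For part (b), assuming the smallness condition $h^{-3/2}(\|z\|_{L^2}+h^k\|z\|_{H^k_h})\ll 1$, I would first show $\Gamma$ is a graph: the tangent $Z'(\alpha) = 1 + W_\alpha$ has $\|W_\alpha\|_{L^\infty}\lesssim h^{-3/2}(\|W\|_{L^2}+h^k\|W\|_{H^k_h})\ll 1$ by Sobolev embedding $\mfH^k_h\hookrightarrow C^1$ with the correct $h$-scaling (using $k>3/2$), so $\Re Z'(\alpha)>0$ everywhere and the horizontal coordinate $x(\alpha)=\alpha+\Re W(\alpha)$ is a global increasing diffeomorphism of $\Rl$. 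Then $y(x) = \Im W(\alpha(x))$ where $\alpha(x)$ is the inverse diffeomorphism. The comparability of norms $h^{-j}\|y\|_{L^2}+\|y\|_{H^j_h}\approx h^{-j}\|z-\alpha\|_{L^2}+\|z-\alpha\|_{\mfH^j_h}$ for $0\le j\le k$ then follows from two facts: (i) composition with the diffeomorphisms $x(\cdot)$ and $\alpha(\cdot)$, both of which are $O(\ll 1)$ perturbations of the identity in $\mfH^k_h$, is bounded on $H^j_h$ for $0\le j\le k$ with constants uniform in $h$ (again Moser/chain-rule estimates, tracking the $h$-weights carefully), and (ii) the map $W\mapsto(\Re W,\Im W)$ together with the holomorphy relation $\Im W=-\Tilh\Re W$ shows all components are equivalent since $\Tilh$ and $\Tilh^{-1}$ are bounded on every $\dot H^j$ (being multipliers of modulus $\le 1$ away from zero frequency, with the low-frequency behavior absorbed by the quotient-modulo-constants structure of $\mfH_h$). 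One inclusion gives $\|y\|\lesssim\|z-\alpha\|$ directly by composition; the reverse follows by composing back, using that the smallness is preserved under inversion.

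The main obstacle I expect is the \textbf{uniformity in $h$} (equivalently, via the scaling \eqref{scaling}, the behavior as $g/h$ and the normalized data vary), which is the whole point of stating the theorem with explicit $h$-weights rather than just fixing $h=1$. Concretely: the Moser/composition estimates for $H^k_h$ under a near-identity diffeomorphism must be proved with constants independent of $h$, which forces one to track exactly how the $h$-adapted norms $\|f\|_{H^k_h}=\|\la hD\ra^k f\|_{L^2}$ interact with dilations and with the nonlocal operator $\Tilh$ (whose kernel $-\tfrac1{2h}\cosech(\tfrac{\pi}{2h}\alpha)$ scales in $h$); and the fixed-point/continuity argument for $\sigma$ must be set up in these $h$-weighted spaces so that the contraction constant does not degenerate. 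The low-frequency subtlety — that $\mfH_h$ does not see real constants, so all equations live on a quotient space — also needs care precisely at the step where one inverts the linearized map for $\sigma$ and where one inverts $\Tilh$. Once the scaling-covariant bookkeeping of the norms is in place, the argument is a fairly standard inverse-function-theorem-on-a-diffeomorphism-group computation; the difficulty is entirely in making every constant scale-invariant.
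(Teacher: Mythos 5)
Your route is genuinely different from the paper's. The paper constructs the \emph{inverse} map $\Omega\to\Strip$ by solving an elliptic boundary value problem in the physical domain: it takes $\beta$ to be the bounded harmonic function in $\Omega$ equal to $-h$ on the bottom and $0$ on $\Gamma$, obtains nondegeneracy ($\nabla\beta\neq 0$) from maximum-principle arguments, recovers $\alpha$ as the harmonic conjugate, and then gets $\frac{d\alpha}{ds}-1\in H^{k-1}$ from Dirichlet-to-Neumann elliptic regularity on an $H^k$ boundary, finishing with the chain rule and the algebra property of $H^{k-1}$. You instead stay on the boundary and recast holomorphy of $W=Z-\alpha$ as a Tilbert-transform fixed-point equation for the reparametrization $\sigma$. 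That formulation is legitimate and is the natural one in this paper's framework, but it buys you less than the interior argument, for the reason below.

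The genuine gap is in part (a), which is a \emph{large-data} statement: $\Gamma$ is only assumed regular, non-self-intersecting, and above the bottom, with no smallness. Your fixed-point argument closes only for small $w$, and the two remedies you offer do not obviously work. A homotopy $t\mapsto s+tw(s)$ need not stay within the admissible class for intermediate $t$ (the curve can self-intersect or dip below $y=-h$), so the path may leave the set where the problem is even well posed. More seriously, a continuity method needs (i) closedness, i.e.\ a priori bounds along the path that do not degenerate, which requires a uniform lower bound on $\sigma'$ — equivalently the nondegeneracy $Z'\neq 0$ of the conformal parametrization — and the boundary formulation gives you no access to this; it is precisely what the paper extracts from the maximum principle (Hopf lemma) applied to $\beta$ in the interior; and (ii) openness, i.e.\ invertibility of the linearized operator at a general large $\sigma$, which is a variable-coefficient perturbation of $\Tilh$ and not "essentially $I$". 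Even at $w=0$ the linearization of your equation as written is $\Tilh$, not $I$, and $\Tilh^{-1}$ is unbounded at low frequency ($\sim i/(h\xi)$), so the contraction must be set up for $\sigma'-1$ on the appropriate quotient space rather than for $\sigma-\mathrm{id}$; you flag this but do not resolve it, and the resolution interacts with the composition operator $w\circ\sigma$, which gains no regularity and therefore does not by itself support a bootstrap from mere continuity of $\sigma$. Part (b) and the norm comparability under near-identity diffeomorphisms are fine as sketched, since there the smallness hypothesis makes everything perturbative and the $h$-uniformity is a matter of scaling bookkeeping, consistent with the rescaling to $h=1$ used in the paper.
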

\medskip

\begin{remark}
  We remark here on a minor downside to the use of holomorphic
  coordinates in the strip, namely that there is no canonical way to
  remove the horizontal translation symmetry (unless $z(s)-s$ has some
  $L^1$ integrability perhaps). We address this issue dynamically in
  our study of the water wave equations. Precisely, we make an
  arbitrary choice at the initial time, but we define a unique way to
  propagate this choice to later times.
\end{remark}
\medskip

\begin{proof}
  By rescaling it suffices to assume \(h=1\). To clarify the geometric context, we note that the $L^2$
  integrability condition on the parametrization guarantees that
  outside a compact set, the boundary $\Gamma$ is the graph of a small
  $H^s$ function.

  It is easier to construct the inverse map
  \[
  \Omega \ni z \to \zeta \in S.
  \]
  For this we begin with the function $\beta(z)$, which is defined as
  the unique bounded solution to the elliptic boundary value problem
  \[
  \left\{
    \begin{aligned}
      &\Delta_{x,y} \beta = 0 \qquad \text{in} \ \ \Omega
      \\
      & \beta(x,-1) = -1
      \\
      & \beta (x,y) = 0 \qquad \text{on} \ \ \Gamma .
    \end{aligned}
  \right.
  \]
  Maximum principle type arguments show that $\beta$ is of class $C^1$
  in $\Omega$, and also that it has no critical points. Since $\Gamma$
  is asymptotically flat, it also easily follows that
  \[
  \lim_{x \to \pm \infty} \nabla \beta(x,y) = (0,1).
  \]
  Once we have the function $\beta$, its harmonic conjugate $\alpha$
  is determined via the Cauchy-Riemann equations, and satisfies
  \[
  \lim_{x \to \pm \infty} \frac{\alpha(x,y)}{x} = 0.
  \]
  It is clear that $\alpha$ is uniquely determined up to constants.

  The generated map $ z \to \alpha + i \beta$ will then be a
  diffeomorphism from $\Omega$ to $S$. It remains to establish the
  regularity properties of this map restricted to $\Gamma$, and then
  of its inverse.

  Our goal here is to show that the map
  \[
  s \mapsto \frac{d \alpha}{ds}
  \]
  (which so far is bounded, continuous and nonzero) has the regularity
  \begin{equation} \label{da-ds} \frac{d \alpha}{ds} - 1 \in H^{k-1}.
  \end{equation}
  As $k -1 > \frac12$, inverting we also have
  \[
  \frac{ds}{d\alpha} -1 \in H^{k-1} .
  \]
  Hence by the chain rule we get
  \[
  \frac{dz}{d\alpha} -1 \in H^{k-1}, \qquad \Im z(\alpha) \in H^k,
  \]
  as desired.

  To prove \eqref{da-ds} we use the Cauchy-Riemann equations to
  rewrite this in terms of the normal derivative of $\beta$, namely
  \[
  \frac{d \alpha}{ds} = \frac{dz}{ds} \cdot \frac{d\beta}{d \nu}.
  \]
  Hence we still need to show that
  \[
  \frac{d\beta}{d \nu} - 1 \in H^{k-1}.
  \]
  The function $\beta - y$ solves the Laplace equation in $\Omega$
  with $H^k$ Dirichlet data on the top $\Gamma$ and zero Dirichlet
  data on the top. In addition, $\Gamma$ also has $H^k$ regularity
  (which implies also $C^1$ as $k \geq \frac32$.  Then we want its
  normal derivative on $\Gamma$ to be in $H^{k-1}$. But this follows
  from standard elliptic theory; for an exposition of this which
  exactly fits the strip type of domains here we refer the reader to
  Chapter 3 of Lannes's book \cite{L-book}.

\end{proof}


\section{Derivation of the equations}\label{sect:Derivation}

\subsection{Derivation of the fully nonlinear system}
In this section we derive the fully nonlinear system
\eqref{FullSystem-re} from the Euler equations \eqref{Euler}, and the
boundary conditions \eqref{bottom-bc}, \eqref{dynamic-bc} and
\eqref{kinematic-bc}.

We start by defining the holomorphic function \(w\) by
\[
w(t,\alpha,\beta) = z(t,\alpha,\beta) - (\alpha+i\beta),
\]
where \(z = x + iy\colon \Strip\rightarrow\Omega(t)\) is the conformal
map constructed in Section~\ref{sect:ConformalMap}
As \(z\) is holomorphic we have the Cauchy-Riemann equations
\[
x_\alpha = y_\beta,\qquad x_\beta = -y_\alpha.
\]

Let \(\phi(t,x,y)\) be the velocity potential in Euclidean coordinates
and take the potential in holomorphic coordinates to be
\[
\psi(t,\alpha,\beta) = \phi(t,x(t,\alpha,\beta),y(t,\alpha,\beta)).
\]
We take \(\theta\) to be the harmonic conjugate of \(\psi\) and
define \(q = \psi+i\theta\). Applying the chain rule, the velocity \(u
= \nabla\phi\) is given by
\begin{equation}\label{uvExp}
  u = \frac1j(x_\alpha\psi_\alpha + x_\beta\psi_\beta,y_\alpha\psi_\alpha+y_\beta\psi_\beta),
\end{equation}
where the Jacobian $j$ has the form
\[
j = x_\alpha y_\beta - x_\beta y_\alpha = x_\alpha^2 + y_\alpha^2.
\]

In this section we will use capital letters to denote the trace of functions on the boundary \(\{\beta =
0\}\). In particular, by a slight abuse of notation, we will write \(Y(t,\alpha) = y(t,\alpha,0)\). We then have that \(W(t,\alpha) = w(t,\alpha,0)\) and
\(Q(t,\alpha) = q(t,\alpha,0)\) are holomorphic and hence
\begin{equation}\label{Rel}
  Y = -\Tilh[X - \alpha],\qquad Y_\alpha = -\Tilh[X_\alpha],\qquad \Theta = -\Tilh\Psi.
\end{equation}

We observe that \(1-Z_\alpha^{-1} =
\dfrac{W_\alpha}{1+W_\alpha}\) is holomorphic, so by comparing real and
imaginary parts we obtain
\begin{equation}\label{FracRel}
  \frac{Y_\alpha}{J} = \Tilh\left[\frac{X_\alpha}{J} - 1\right] = \Tilh\left[\frac{X_\alpha}{J}\right],
\end{equation}
where \(J(t,\alpha) = j(t,\alpha,0) = |1+W_\alpha|^2\).

Using the normal \((-Y_\alpha,X_\alpha)\) to the free boundary we
write the kinematic boundary condition \eqref{kinematic-bc} in the
form
\[
(X_t,Y_t)\cdot(-Y_\alpha,X_\alpha) = U\cdot(-Y_\alpha,X_\alpha),
\]
where \(U(t,\alpha) = u(t,\alpha,0)\) is the restriction of the
velocity to the free boundary. Using the expression \eqref{uvExp} for
the velocity in holomorphic coordinates and the Cauchy-Riemann
equations we simplify the right hand side to obtain
\begin{equation}\label{Kinematic-}
  X_\alpha Y_t - Y_\alpha X_t = -\Theta_\alpha.
\end{equation}
Using \eqref{Rel} and \eqref{FracRel} we write this in the form
\[
\frac{X_\alpha}{J}\Tilh[X_t] + \Tilh\left[\frac{X_\alpha}{J}\right]X_t =
\frac{\Theta_\alpha}{J}.
\]
Applying the product formula \eqref{SummationFormula} to the left-hand side we
obtain
\begin{equation}\label{Kinematic+}
  \Tilh\left[\frac{X_\alpha X_t+Y_\alpha Y_t}{J}\right] = \frac{\Theta_\alpha}{J}.
\end{equation}

Combining \eqref{Kinematic-} and \eqref{Kinematic+} we solve for
\(X_t,Y_t\) to obtain
\[
\begin{cases}
  X_t = \dfrac{\Theta_\alpha}{J}Y_\alpha + \Tilh^{-1}\left[\dfrac{\Theta_\alpha}{J}\right]X_\alpha\vspace{0.1cm}\\
  Y_t = -\dfrac{\Theta_\alpha}{J}X_\alpha +
  \Tilh^{-1}\left[\dfrac{\Theta_\alpha}{J}\right]Y_\alpha.
\end{cases}
\]
In terms of the holomorphic function \(W = (X - \alpha) + iY\) we have
\[
W_t = X_t + iY_t =
-i(1+i\Tilh^{-1})\left[\frac{\Theta_\alpha}{J}\right](1+W_\alpha).
\]
If we define
\[
F = \Ph\left[\frac{Q_\alpha-\bar Q_\alpha}{J}\right],
\]
then we may write this in the form
\begin{equation}\label{FullSystem-part1}
  W_t + F(1+W_\alpha) = 0.
\end{equation}

Next we use \eqref{Euler} to obtain the Bernoulli equation with
dimensionless gravitational constant \(g>0\),
\begin{equation}\label{Bernoulli}
  \phi_t + \frac12|\nabla\phi|^2 + gy + p = 0.
\end{equation}
From the dynamic boundary condition \eqref{dynamic-bc} we have
\(p=0\).

Applying the chain rule and Cauchy-Riemann equations we obtain
\begin{gather*}
  \phi_t|_{\{\beta=0\}} = \Psi_t - \frac1J(X_\alpha X_t+Y_\alpha Y_t)\Psi_\alpha - \frac1J(Y_\alpha X_t-X_\alpha Y_t)\Theta_\alpha,\\
  \frac12|\nabla\phi|^2|_{\{\beta=0\}} =
  \frac{1}{2J}(\Psi_\alpha^2+\Theta_\alpha^2).
\end{gather*}
Using the relations \eqref{Kinematic-} and \eqref{Kinematic+}, we
simplify the first of these to obtain
\[
\phi_t|_{\{\beta=0\}} = \Psi_t -
\Tilh^{-1}\left[\frac{\Theta_\alpha}{J}\right]\Psi_\alpha
-\frac{1}{J}\Theta_\alpha^2.
\]
This leads to the equation
\[
\Psi_t - \Tilh^{-1}\left[\frac{\Theta_\alpha}{J}\right]\Psi_\alpha
+\frac{1}{2J}(\Psi_\alpha^2 - \Theta_\alpha^2) + gY = 0.
\]
We write this in terms of \(Q = \Psi+i\Theta\) by applying \( \Ph
\) to obtain
\[
Q_t - \Ph
\left[\Tilh^{-1}\left[\frac{\Theta_\alpha}{J}\right]\Psi_\alpha
  +\frac{\Theta_\alpha^2}{J}\right]
+\Ph\left[\frac{1}{2J}(\Psi_\alpha^2 + \Theta_\alpha^2)\right] -
g\Tilh[W] = 0.
\]
An application of the product formula \eqref{SummationFormula} gives
us
\[
\Tilh\left[\Tilh^{-1}\left[\frac{\Theta_\alpha}{J}\right]\Psi_\alpha +
  \frac{\Theta_\alpha^2}{J}\right] =
\frac{\Theta_\alpha}{J}\Psi_\alpha -
\Tilh^{-1}\left[\frac{\Theta_\alpha}{J}\right]\Theta_\alpha,
\]
which leads to the equation
\begin{equation}\label{FullSystem-part2}
  Q_t + FQ_\alpha  - g\Tilh[W] + \Ph\left[\frac{|Q_\alpha|^2}{J}\right] = 0.
\end{equation}
Combining \eqref{FullSystem-part1} and \eqref{FullSystem-part2} we
obtain at the fully nonlinear system \eqref{FullSystem-re}.

\medskip
%


\subsection{Symmetries}
Besides the gauge freedom, the
system \eqref{FullSystem-re} has a number of symmetries:

\smallskip

\begin{itemize}
\item[(i)] \emph{Translation.} The equations are invariant under time
  and space translations, for \((t_0,\alpha_0)\in\Rl^2\)
  \[
  (W(t,\alpha),Q(t,\alpha))\mapsto (W(t +
  t_0,\alpha+\alpha_0),Q(t+t_0,\alpha+\alpha_0)).
  \]
\item[(ii)] \emph{Reflection.} We have a horizontal reflection
  symmetry given by
  \[
  (W(t,\alpha),Q(t,\alpha))\mapsto (-\bar W(t,-\alpha),\bar
  Q(t,-\alpha)).
  \]
\item[(iii)] \emph{Time reversal.} We have a time reversal symmetry
  given by
  \[
  (W(t,\alpha),Q(t,\alpha))\mapsto (W(-t,\alpha),-Q(-t,\alpha)).
  \]
\item[(iv)] \emph{Galilean invariance.} The system has a Galilean
  invariance, for \(c\in\Rl\)
  \[
  (W(t,\alpha),Q(t,\alpha))\mapsto (W(t,\alpha - ct),Q(t,\alpha - ct)
  - c\left((\alpha - ct) + W(t,\alpha - ct)\right) + \frac12 c^2t).
  \]
  However, as our choice of spaces require \(R\) to vanish at
  \(\pm\infty\) we break the Galilean symmetry as in terms of \((\W,R)\)
  the Galilean shift corresponds to the map
  \[
  (\W(t,\alpha),R(t,\alpha)) \mapsto(\W(t,\alpha-ct),R(t,\alpha-ct) -
  c).
  \]
\end{itemize}


\subsection{Hamiltonian structure and conserved quantities}
If the free surface is given by \(y = \eta(x)\), then the energy of the
system in Euclidean coordinates is given by
\[
\mc E(\eta,\phi) = \frac g2\int_\Rl |\eta|^2\,dx + \frac12 \int_\Rl\int_{-h}^{\eta(x)}|\nabla
\phi|^2 \,dydx.
\]
We may write this in terms of the holomorphic variables \((W,Q)\) as
\[
\mc E(W,Q) = \frac g4 \langle W,W\rangle - \frac 14\langle
Q,\Tilh^{-1}[Q_\alpha]\rangle + \frac g2\langle WW_\alpha,W\rangle.
\]
We note that the additional factor of \(\frac12\) appears here due to the use of the complex-valued functions.

It was first observed by Zakharov \cite{zak} that the water wave system is a
Hamiltonian equation with Hamiltonian \(\mc E\). To see this we
consider the space of holomorphic functions \((W,Q)\in\H_h\)
equipped with the inner product
\begin{equation}\label{Squirrel}
\left\la \begin{bmatrix}W_1\\Q_1\end{bmatrix},\begin{bmatrix}W_2\\Q_2\end{bmatrix}\right\ra := \frac g2\langle W_1,W_2\rangle
+ \frac12\langle L_hQ_1,L_hQ_2\rangle.
\end{equation}
With respect to this inner product we have
\[
d\mc E(W,Q) = \begin{bmatrix}W + WW_\alpha - \Tilh^{-1}\Ph\left[\bar
    W\Tilh[W_\alpha]\right]\\ Q \end{bmatrix}.
\]

We claim that the system \eqref{FullSystem-re} may then be written in
the form
\begin{equation}\label{eq:Hamster}
  \begin{bmatrix}W_t\\Q_t\end{bmatrix} = \begin{bmatrix}0 & \opA \\ \opC & \opB\end{bmatrix}d\mc E(W,Q),
\end{equation}
where the operators $\opA$, $\opB$ and \(\opC\) are given by
\begin{align*}
\opA[w] &:= - (1+W_\alpha)\Ph\left[\frac{w_\alpha - \bar w_\alpha}{J}\right],\\
\opB[q] &:= - Q_\alpha\Ph\left[\frac{q_\alpha - \bar q_\alpha}{J}\right] -
\Ph\left[\frac {\Ph[\bar Q_\alpha q_\alpha] + \bar\P_h[Q_\alpha\bar q_\alpha]}{J}\right],
\\
\opC[w] &:= g \Ph\left[\frac{\Ph\left[(1 + \bar W_\alpha)\Tilh[w]\right] + \bar\P_h\left[(1 + W_\alpha)\Tilh[\bar w]\right]}{J}\right].
\end{align*}
Taking \(\opA^*\) to be the adjoint of \(\opA\) with respect to the inner product on the space of holomorphic functions in \(\mfH_h\), we apply Lemma~\ref{lem:Adjoint} to obtain
\[
L_h^2\opC[w] = -g\opA^*[w],\qquad L_h^2 \opB[q] = - \opB^*[L_h^2 q],
\]
and hence the matrix operator
\[
\begin{bmatrix}0 & \opA \\ \opC & \opB\end{bmatrix},
\]
is skew-adjoint with respect to the inner product \eqref{Squirrel}. This skew-adjoint matrix is the representation in our setting of the symplectic form for the finite bottom system.

We now prove \eqref{eq:Hamster}. 
We first note that
\[
\opA[Q] = - F(1+W_\alpha),\qquad \opB[Q] =
- FQ_\alpha - \Ph\left[\frac{|Q_\alpha|^2}{J}\right].
\]
It remains to consider the term involving \(\opC\),
which we may write in the form
\[
g \Ph\left[\frac{\Ph\left[(1 + \bar W_\alpha)\Tilh[w] -
      W_\alpha\Tilh[\bar w]\right]}{J}\right], \qquad 
      w = W + WW_\alpha - \Tilh^{-1}\Ph\left[\bar
    W\Tilh[W_\alpha]\right].
\]

Given holomorphic functions \(u,v\) we may write them in terms of their real parts and apply the product
formula \eqref{SummationFormula} to obtain the identity
\begin{equation}\label{Gerbil}
\Ph\big[\Tilh[uv] - \bar u\Tilh[v] - \Tilh[\bar
  u]v\big] = \Tilh[u]v.
\end{equation}
Taking \(u = W\) and \(v = W_\alpha\) we may apply this identity to the quadratic part of the numerator to obtain
\[
\Ph\left[\Tilh[WW_\alpha] - \bar W\Tilh[W_\alpha] - W_\alpha\Tilh[\bar W] + \bar W_\alpha \Tilh[W]\right] = W_\alpha\Ph[\Tilh[W]] + \Ph\left[\bar W_\alpha\Tilh[W]\right].
\]
Next we consider the cubic part of the numerator. Here we apply both the identity \eqref{Gerbil} and its complex conjugate with \(u = W\) and \(v = W_\alpha\) to obtain
\[
\Ph\left[\bar W_\alpha\Tilh[WW_\alpha] - \bar W_\alpha\bar
    W\Tilh[W_\alpha] - W_\alpha\Tilh[\bar W\bar W_\alpha] +
    W_\alpha\bar\P_h[ W\Tilh[\bar W_\alpha]]\right]
  = W_\alpha \Ph\left[\bar W_\alpha\Tilh[W]\right].
\]
Combining these with the linear part \(\Ph[\Tilh[W]]\) we obtain
\[
\opC\left[W + WW_\alpha - \Tilh^{-1}\Ph[\bar W\Tilh[W_\alpha]]\right] =
g\Ph\left[\frac{(1+W_\alpha)\Ph[(1+\bar W_\alpha)\Tilh[W]]}{J}\right] =
g\Tilh[W],
\]
where the second equality follows from the fact that \(\dfrac{1+W_\alpha}{J} = \dfrac{1}{1+\bar W_\alpha}\) is antiholomorphic and hence we may discard the inner projection operator. This completes the proof of \eqref{eq:Hamster}.

As the system is invariant under translation,
\(\alpha\mapsto\alpha+c\), via Noether's principle there will be a
corresponding conserved quantity. This is the horizontal momentum,
\[
\mc I(W,Q) = - \frac12\langle L_h W,L_h Q\rangle =  \frac12\langle W,\Til^{-1} Q_\alpha\rangle .
\]
With respect to the above inner product on \(\H_h\) we
have
\[
d\mc I(W,Q)
= - \begin{bmatrix}g^{-1}L_h^2Q\\W\end{bmatrix}.
\]
A further calculation gives us that
\[
\begin{bmatrix}W_\alpha\\Q_\alpha\end{bmatrix}
= \begin{bmatrix}0&\opA\\\opC&\opB\end{bmatrix}d\mc I(W,Q).
\]


\subsection{Positivity of the normal derivative of the
  pressure}\label{sect:TaylorStability} As discussed above, a
necessary condition for the well-posedness of \eqref{FullSystem-re} is
the Taylor stability condition \eqref{TS}. In this section we first
derive the expression for the normal derivative of the pressure in
holomorphic coordinates, and then show that it remains positive for as
long as the free surface remains a positive distance away from the
bottom. We remark that an alternate proof of this property, using the 
maximum principle, can be found in Lannes~\cite{L-book}. Our proof here,
based on a sum of squares representation, provides a different 
insight into this problem.

From the Bernoulli equation \eqref{Bernoulli} we may write the normal
derivative of the pressure as
\[
-\left.\frac{\partial p}{\partial \nu}\right|_{\Gamma(t)} = \frac
1J\partial_\beta\left.\left(\phi_t + \frac12|\nabla\phi|^2 +
    gy\right)\right|_{\{\beta=0\}}.
\]
Using the Cauchy-Riemann equations we obtain
\[
\partial_\beta\phi_t|_{\{\beta=0\}} =
-\partial_\alpha\left(\Tilh\left[\frac{\Psi_\alpha^2 +
      \Theta_\alpha^2}{2J}\right] + g\Tilh[Y]\right).
\]
A further application of the Cauchy-Riemann equations yields
\(g\partial_\beta y|_{\{\beta=0\}} = gX_\alpha\), so
\[
-J\left.\frac{\partial p}{\partial \nu}\right|_{\Gamma(t)} = g +
\frac12(\partial_\beta -
\Tilh\partial_\alpha)(|\nabla\phi|^2)|_{\{\beta=0\}} + g((X_\alpha - 1) -
\Tilh[Y_\alpha]).
\]

Next we define the holomorphic velocity,
\begin{equation}\label{HolomorphicVelocity}
  r := \phi_x - i\phi_y = \frac{q_\alpha}{1+w_\alpha}.
\end{equation}
From \eqref{uvExp} we see that
\( |\nabla\phi|^2 = |r|^2 \),
and as \(r\) is holomorphic we obtain
\[
\frac12\left.(\partial_\beta -
  \Tilh\partial_\alpha)(|r|^2)\right|_{\{\beta=0\}} = 2\Im\Ph[R\bar R_\alpha] = a.
\]
Further,
\(g\left((X_\alpha - 1) - \Tilh[Y_\alpha]\right) = g(1 + \Tilh^2)\Re\W = \ao.  \)
As a consequence,
\begin{equation}
-J\left.\frac{\partial p}{\partial \nu}\right|_{\Gamma(t)} = g + \mfa.
\end{equation}

We now show that the Taylor stability condition \eqref{TS} is
satisfied whenever the free surface \(\Gamma(t)\) is a positive
distance away from the bottom \(\{y = -1\}\):

\medskip
\begin{lemma}\label{L:Taylor}
  Assume that $(W,Q) \in \Hh$ are holomorphic, with $\Im W \geq c >
  -h$.  Then we have the pointwise bound
  \begin{equation}
    g+ \mfa \geq g(c+h).
  \end{equation}
\end{lemma}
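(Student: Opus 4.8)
The plan is to write $g+\mfa$ as a sum of two manifestly nonnegative contributions — one coming from the quadratic part $a = 2\Im\Ph[R\bar R_\alpha]$ and one from $g$ together with the linear correction $\ao = g(1+\Til^2)\Re\W$ — and then to track how the hypothesis $\Im W \geq c$ enters. Recall from the derivation in Section~\ref{sect:TaylorStability} that $-J\,\partial p/\partial\nu = g+\mfa$ was obtained from $\tfrac12(\partial_\beta - \Til\partial_\alpha)$ applied to $|\nabla\phi|^2 = |r|^2$ together with the term $g((X_\alpha-1) - \Til[Y_\alpha])$. Since $r$ is holomorphic and $\beta\mapsto |r(\alpha,\beta)|^2$ is subharmonic on the strip $\Strip$, the key observation is that $\tfrac12(\partial_\beta - \Til\partial_\alpha)|r|^2\big|_{\{\beta=0\}}$ is precisely the Dirichlet-to-Neumann (with Neumann condition on the bottom) output for this subharmonic trace, hence nonnegative; this is the sum-of-squares representation advertised in the introduction. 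In fact one can see $2\Im\Ph[R\bar R_\alpha] = \langle \Til^{-1}\partial_\alpha \tfrac12(\cdot), \cdot\rangle$-type positivity directly: writing $R = \sum$ Fourier modes, $2\Im\Ph[R\bar R_\alpha]$ expands to an integral of $|R_\alpha|\cdot$(something) that is a genuine square after using the multiplier structure of $\Ph$ and $\Til_h$.

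The second and more delicate step is the linear-in-$W$ term. I would compute $g + \ao = g\big(1 + (1+\Til_h^2)\Re\W\big) = g\big(1 + (1+\Til_h^2)\Re W_\alpha\big)$ and relate this to the harmonic extension of $W$ into the strip. The idea: $(1+\Til_h^2)$ is the Fourier multiplier $1 - \tanh^2(h\xi) = \sech^2(h\xi)$, which is exactly the symbol $p(\xi,-h)^{-1}\cdot p(\xi,-h)$... more usefully, $\sech(h\xi) = p(\xi,-h)$ is the symbol evaluating the harmonic extension on the bottom $\{\beta=-h\}$. So $(1+\Til_h^2)\Re W_\alpha$ should be re-expressible as (a derivative of) the trace of $\Re w$ on the bottom line $\beta = -h$. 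Since $w$ is holomorphic and real on $\{\beta = -h\}$ by construction, $\Re w$ on the bottom equals $w$ on the bottom, call it $w_b(\alpha)$, and $g+\ao = g(1 + \partial_\alpha w_b + \dots)$; a clean computation with the multipliers should give $g + \ao = g\,\partial_\alpha x_b$ where $x_b$ is the horizontal coordinate of the bottom parametrization — but the bottom is $\{y=-h\}$, parametrized by some monotone map, and the total derivative there is controlled. Alternatively, and perhaps more robustly, I would use that $\Im w \geq c$ in the strip (by the maximum principle applied to the harmonic function $\Im w$, whose boundary values are $\Im W \geq c$ on top and $0 \geq c$... wait, need $c < 0 < h$, consistent) to get a pointwise lower bound: the claim $g + \mfa \geq g(c+h)$ suggests that in fact $g + \ao \geq g\cdot\frac{\text{(distance to bottom)}}{h}$ pointwise, i.e. $\frac{y - (-h)}{h}$ averaged appropriately, and adding the nonnegative $a$ only helps.

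Concretely I would argue as follows. The function $\beta \mapsto 1 + \partial_\beta(\text{harmonic ext of } \Im w)$... let me instead use $h + y(\alpha,\beta) + \text{(something)}$ harmonic, vanishing appropriately. Set $v := \beta + h + \Im w = \beta + h + (y - \beta) = y + h$ — no: recall $w = z - (\alpha+i\beta)$ so $\Im w = y - \beta$, hence $y + h = \Im w + \beta + h$, which is harmonic (as $\Im w$ is harmonic and $\beta+h$ is), equals $0$ on the bottom $\{\beta = -h\}$, and equals $\Im W + h \geq c + h > 0$ on top. By the mean value / Poisson representation for the mixed problem \eqref{MixedBCProblem} — here with Dirichlet data $\Im W + h$ on top and Dirichlet (not Neumann) on the bottom — and by Hopf's lemma, $\partial_\beta(y+h)\big|_{\{\beta=0\}} > 0$; moreover the normalized bound $\partial_\beta(y+h)\big|_{\{\beta=0\}} \geq \frac{1}{h}\inf(\Im W + h) \geq \frac{c+h}{h}$ follows by comparison with the linear-in-$\beta$ harmonic function $\frac{(c+h)(\beta+h)}{h}$, which has the same bottom value $0$ and smaller top value. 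Then one identifies $g\,\partial_\beta(y+h)\big|_{\{\beta=0\}}\cdot(\text{Jacobian factor})$ with $g + \ao$, or more precisely shows $g + \ao = g\,\partial_\beta(\Im w)\big|_{\beta=0} + g = g\,\partial_\beta(y+h)|_{\beta=0}$ via the Cauchy–Riemann computation already present in the text (where $g((X_\alpha-1) - \Til[Y_\alpha])$ appeared). Combining with $a \geq 0$ yields $g + \mfa \geq g(c+h)$.

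The main obstacle I anticipate is making the identification between the Fourier-multiplier expression $g(1+\Til_h^2)\Re\W$ and the normal derivative $g\,\partial_\beta(y+h)|_{\beta=0}$ of the harmonic extension completely rigorous at low regularity, and in particular justifying the comparison-principle lower bound $\partial_\beta(y+h)|_{\beta=0} \geq (c+h)/h$ — the function $y+h$ is only in $H^k_h$-type spaces, but $k > 3/2$ from Section~\ref{sect:ConformalMap} gives enough, and the comparison is with an explicit linear harmonic function so the maximum principle applies cleanly on the strip. The positivity $a \geq 0$ should be a short separate computation using the structure of $\Ph$ and $\Til_h$ as Fourier multipliers (this is the "sum of squares" the authors mention), and I expect it to go through without touching the hypothesis on $\Im W$ at all.
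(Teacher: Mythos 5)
Your overall decomposition $g+\mfa = (g+\ao) + a$ with $a\geq 0$ and $g+\ao\geq g(c+h)$ is exactly the paper's, but your treatment of the linear term contains a genuine error. First, the identification $g+\ao = g\,\partial_\beta(y+h)|_{\beta=0}$ is false: by Cauchy--Riemann, $\partial_\beta y|_{\beta=0} = X_\alpha = 1+\Re\W$, so $g\,\partial_\beta(y+h)|_{\beta=0} = g(1+\Re\W)$, whereas $\ao = g(1+\Til_h^2)\Re\W$ carries the extra smoothing factor $1+\Til_h^2 = \sech^2(hD)$ coming from the combination $(X_\alpha-1)-\Til_h[Y_\alpha]$. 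This factor is not cosmetic --- it is the whole point. Second, even for the quantity you do consider, the comparison-principle bound $\partial_\beta(y+h)|_{\beta=0}\geq (c+h)/h$ is false: the relevant operator is the Dirichlet--Dirichlet normal-derivative map with symbol $\xi\coth(h\xi)$, which behaves like $|\xi|$ at high frequency and is therefore not positivity-preserving (a tall bump in $\Im W$ makes $\partial_\beta(y+h)|_{\beta=0}$ arbitrarily negative just outside its support while keeping $\Im W\geq c$). Moreover, your Hopf/comparison argument with the linear harmonic function $\frac{(c+h)(\beta+h)}{h}$ yields an inequality in the \emph{wrong} direction: at a point of the top where $\Im W = c$, one has $v-\ell\geq 0$ inside with equality at the boundary point, which forces $\partial_\beta(v-\ell)\leq 0$ there, i.e.\ an upper bound, not a lower bound. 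The paper's proof avoids all of this by computing the Fourier multiplier relating $(1+\Til^2)\Re\W$ to $\Im W$, namely $m(\xi)=2\xi\cosech(2\xi)$, whose inverse Fourier transform $\frac{\pi}{8}\sech^2(\frac{\pi}{4}\alpha)$ is a nonnegative kernel of total mass one; convolving $\Im W\geq c$ with it gives $(1+\Til^2)\Re\W\geq c$ immediately. The smoothing factor $\sech^2(h\xi)$ is precisely what converts the non-sign-definite symbol $\xi\coth(h\xi)$ into this probability density.

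On the quadratic term: your maximum-principle sketch (using subharmonicity of $|r|^2$, the vanishing of $\partial_\beta|r|^2$ on the bottom, and comparison with the harmonic extension of its trace) is a legitimate alternative route to $a\geq 0$ --- it is essentially the Eulerian argument of Lannes that the paper explicitly contrasts itself with --- and has the virtue of being short, at the cost of invoking the harmonic extension and decay at infinity. But your parallel claim that the Fourier ``sum of squares'' is a short direct computation understates it considerably: the paper's proof requires symmetrizing the kernel $K(\xi,\eta)$, rewriting everything in terms of $\Re R$, and a nontrivial decomposition $K_2 = \frac12\tanh\xi\tanh\eta\,(I+I^-) + \tanh\xi\tanh\eta\,K_3$ with an explicit integral representation for each piece. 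Either route works for this half of the lemma; it is the linear half where your argument breaks down.
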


\begin{proof}
Using the spatial scaling discussed in the introduction, it suffices to assume that \(h = 1\). We recall the expression of $\mfa$,
  \[
  \mfa = 2 \Im \P[R \bar R_\alpha] + g(1+\Til^2) \Re \W .
  \]
  We will consider the two terms separately, and prove that
  \begin{equation} \label{pn+}
  (1+\Til^2) \Re \W \geq c, \qquad \Im \P[R \bar R_\alpha] \geq 0.
  \end{equation}

  For the first of these, we write it in terms of $\Im W$ as follows:
  \[
  (1+\Til^2) \Re \W = - (1+\Til^2) \partial_\alpha \Til^{-1} (\Im W).
  \]
  The multiplier on the right has symbol
  \[
  m(\xi) = 2\xi\cosech2\xi.
  \]
  As a consequence we may write
  \[
  (1+\Til^2) \Re \W = \int K(\alpha-\alpha')\Im W(\alpha')\,d\alpha',
  \]
  where
  \[
  K(\alpha) = \frac{1}{\sqrt{2\pi}}\check m(\alpha) =
  \frac{\pi}{8}\sech^2(\frac{\pi}{4}\alpha)
  \]
  is non-negative, Schwartz and has integral \(1\). Then the first part
  of \eqref{pn+} follows.

  For the second part we begin by writing
  \[
  2 \Im \P [ R \bar R_\alpha] = -\frac{i}2 \left[(1-i\Til) (R \bar
    R_\alpha) - (1+i \Til) (\bar R R_\alpha) \right].
  \]
  Hence in Fourier space we have the representation
  \[
  \widehat{2 \Im \P [ R \bar R_\alpha]}(\zeta) = \int_{\xi-\eta =
    \zeta} \hat R(\xi) \bar{ \hat R}(\eta) K(\xi,\eta) \, d\eta,
  \]
  where the kernel $K$ is given by
  \[
  K(\xi,\eta) = -\frac12\left((\xi+\eta) +
    (\xi-\eta)\tanh(\xi-\eta)\right).
  \]

  As in \cite{HIT}, a natural idea might be to obtain a sum of squares
  representation of the above integral. Naively, we could seek a
  decomposition of the kernel as below
  \[
  K(\xi,\eta) = \int f_N(\xi) f_N(\eta) \, dN.
  \]
  However, here we have the additional information that $R$ is
  holomorphic, which naively allows us to estimate integrals mostly
  concentrated where $\xi,\eta > 0$ by symmetric integrals
  concentrated where $\xi,\eta < 0$.  To eliminate this constraint we
  write everything in terms of the real part of $R$, which is an
  arbitrary function:
  \[
  \hat R(\xi) = (1 - \tanh \xi) \widehat{\Re R}(\xi).
  \]
  Then the kernel $K$ is replaced by
  \[
  K_1(\xi,\eta) = (1 - \tanh \xi)(1 - \tanh \eta)K(\xi,\eta).
  \]
  Further, for real functions we have the symmetry
  \[
  \hat f(-\xi) = \bar{\hat{f}}(\xi),
  \]
  so the above kernel can be further replaced by
  \[
  K_2(\xi,\eta) = \frac12 (K_1(\xi.\eta) + K_1(-\xi,-\eta)).
  \]
  We compute
  \[
  \begin{split}
    K_2(\xi,\eta)
    =&\ \frac12(\tanh \xi + \tanh \eta)(\xi+\eta) - \frac12(1 + \tanh \xi \tanh
    \eta)(\xi-\eta)\tanh(\xi-\eta)
    \\
    =&\ \tanh \xi \tanh \eta \left (\frac{\xi}{\tanh \xi} +
      \frac{\eta}{ \tanh \eta} - (\xi-\eta) \tanh(\xi-\eta)\right ).
  \end{split}
  \]

  On the other hand the following expression gives the symbol of a
  pointwise non-negative form
  \[
  \begin{split}
    I (\xi,\eta) := & \ \int (1+\tanh N) ( 1+
    \tanh(\xi-N))(1+\tanh(\eta-N)) \, dN
    \\
    = & \ \xi\left(1+\frac{1}{\tanh \xi}\right)\left( 1 +
      \frac{1}{\tanh(\eta-\xi)}\right) + \eta\left(1+\frac{1}{\tanh
        \eta}\right)\left( 1 + \frac{1}{\tanh(\xi-\eta)}\right),
  \end{split}
  \]
  and after symmetrization
  \[
  \frac12\left(I(\xi,\eta) + I (-\xi,-\eta)\right) = \frac{\xi}{\tanh
    \xi} + \frac{\eta}{\tanh \eta} - \frac{\xi - \eta}{\tanh (\xi
    -\eta)}.
  \]
  Then we can write
  \[
  K_2(\xi,\eta) = \frac12 \tanh \xi \tanh \eta \left(I(\xi,\eta) + I
    (-\xi,-\eta)\right) + \tanh \xi \tanh \eta \, K_3(\xi,\eta),
  \]
  where
  \[
  K_3(\xi,\eta) = K_3(\xi-\eta):= 2(\xi - \eta)\cosech(2(\xi - \eta)).
  \]

  The quadratic form determined by the first term in \(K_2\) is  non-negative. On the other hand for the second term we take advantage of its translation invariance to write
  \[
  K_3(\xi-\eta)= \int g(\xi-N) g(\eta - N) \,dN,
  \]
  with even, real-valued $g$. Indeed, taking the Fourier transform we get
  \[
  \hat g^2 = \frac{1}{\sqrt{2\pi}}\hat K_3,
  \]
  or equivalently
  \[
  \hat g(\alpha)^2 = \frac\pi8 \sech^2(\frac\pi4 \alpha).
  \]
  The right hand side is non-negative and its square root is a Schwartz
  function. This suffices for our purposes, and yields the desired
  representation for $K_3$.

\end{proof}


\subsection{Derivation of the quasilinear system} In this section we derive the quasilinear system \eqref{DiagonalQuasiSystem-re} for the holomorphic variables  
\[
(\W,R) = \left( W_\alpha, \frac{Q_\alpha}{1+W_\alpha} \right),
\]
where we recall that \(R \) is the restriction of the holomorphic velocity \eqref{HolomorphicVelocity} to the free boundary.

As we expect mixed holomorphic-antiholomorphic terms to be lower order than purely holomorphic terms, we first introduce the (real-valued) advection velocity
\[
b = 2\Re\Ph\left[\frac{Q_\alpha}{J}\right],
\]
so that \(F = b-\dfrac{\bar Q_\alpha}{J}\). We note that our earlier gauge fixing procedure corresponds to fixing the real constant in $b$ so that  
\[
\lim_{\alpha \to -\infty} b(t,\alpha) = 0.
\]
Evidently the similar condition at positive infinity does not need to hold.

Differentiating \eqref{FullSystem-re} we obtain a self-contained quasilinear system in \((W_\alpha,Q_\alpha)\),
\[
\begin{cases}
W_{\alpha t} + bW_{\alpha\alpha} + b_\alpha(1+W_\alpha) =\left[\dfrac{\bar Q_\alpha}{1+\bar W_\alpha}\right]_\alpha\vspace{0.1cm}\\
Q_{\alpha t} + bQ_{\alpha\alpha} + b_\alpha Q_\alpha - g\Tilh[W_\alpha] = \bar\P_h\left[\dfrac{|Q_\alpha|^2}{J}\right]_\alpha.
\end{cases}
\]
As \(b=\bar b\) we have
\[
b_\alpha = \bar F_\alpha + \frac1J\left(Q_{\alpha\alpha} - \frac{Q_\alpha W_{\alpha\alpha}}{1+W_\alpha} - \frac{Q_\alpha\bar W_{\alpha\alpha}}{1+\bar W_\alpha}\right).
\]
Grouping the highest order terms on the left hand side we obtain
\[
\begin{cases}
W_{\alpha t} + bW_{\alpha\alpha} + \dfrac{1}{1+\bar W_\alpha}\left(Q_{\alpha\alpha} - \dfrac{Q_\alpha W_{\alpha\alpha}}{1+W_\alpha} \right) = -\bar F_\alpha(1 + W_\alpha) + \dfrac{Q_\alpha\bar W_{\alpha\alpha}}{(1+\bar W_\alpha)^2} + \left[\dfrac{\bar Q_\alpha}{1+\bar W_\alpha}\right]_\alpha\vspace{0.1cm}\\
Q_{\alpha t} + bQ_{\alpha\alpha} + \dfrac{Q_\alpha}{J}\left(Q_{\alpha\alpha} - \dfrac{Q_\alpha W_{\alpha\alpha}}{1+W_\alpha} \right)  - g\Tilh[W_\alpha] = -\bar F_\alpha Q_\alpha + \dfrac{Q_\alpha^2\bar W_{\alpha\alpha}}{J(1+\bar W_\alpha)} + \bar\P_h\left[\dfrac{|Q_\alpha|^2}{J}\right]_\alpha .\!\!\!
\end{cases}
\]

As in the infinite depth case, in order to obtain favorable estimates at high frequency we must diagonalize this system. To do this we define the operator
\begin{equation}\label{Diagonalizer}
\A(w,q) = (w,q - Rw),
\end{equation}
 Taking \(\W = W_\alpha\) we use the diagonal variables
\[
(\W,R) = \A(W_\alpha,Q_\alpha).
\]
We calculate
\[
R_\alpha = \frac{1}{1+W_\alpha}\left(Q_{\alpha\alpha} - \dfrac{Q_\alpha W_{\alpha\alpha}}{1+W_\alpha}\right),
\]
and obtain the equation
\begin{equation}\label{DQ-part1}
\W_t + b\W_\alpha + \frac{1+\W}{1+\bar \W}R_\alpha = \left(\dfrac{R_\alpha}{1+\bar\W} - b_\alpha\right)(1 + \W) + \bar R_\alpha.
\end{equation}
Defining
\[
M = \frac{R_\alpha}{1+\bar\W} + \frac{\bar R_\alpha}{1+\W} - b_\alpha,
\]
we obtain the first part of \eqref{DiagonalQuasiSystem-re}.

For the second part of \eqref{DiagonalQuasiSystem-re}, we first write
\begin{equation}\label{DQ-part2}
Q_{\alpha t} + bQ_{\alpha\alpha} + \dfrac{1+\W}{1+\bar \W}RR_\alpha  - g\Tilh[\W] = \left(\dfrac{R_\alpha}{1+\bar\W} - b_\alpha\right)(1 + \W)R + \bar\P_h\left[|R|^2\right]_\alpha,
\end{equation}
and then calculate,
\[
R_t = \frac{Q_{\alpha t}}{1+W_\alpha} - \dfrac{Q_\alpha W_{\alpha t}}{(1+W_\alpha)^2}.
\]
Thus, using \eqref{DQ-part1} and \eqref{DQ-part2}, we obtain the second part of \eqref{DiagonalQuasiSystem-re}.


\section{Local well-posedness for a model equation}
\label{s:model}  

In this section we will study the local well-posedness for a model equation, which will play a key role, both in the study of the linearized problem in the next section, and in the study of the differentiated equations later on. Here, and for the rest of the paper, we will assume 
that $h=1$, which we can do by scaling, and require uniformity with respect to $g$ in the range $g \lesssim 1$.

Our model system has the form
\begin{equation}\label{ModelLinSys}
\begin{cases}
w_t + \M_bw_\alpha + \P\left[\dfrac{r_\alpha}{1+\bar\W}\right] - \P\left[\dfrac{R_\alpha \Til^2 w}{1+\bar\W}\right] = G\vspace{0.1cm}\\
r_t + \M_br_\alpha - \P\left[\dfrac{(g+\mfa)\Til[w]}{1+\W}\right] = K,
\end{cases}
\end{equation}
where $\M_b$ is the holomorphic multiplication operator 
\[
\M_b f= \P(bf).
\]
Here both the unknowns $(w,r)$ and the inhomogeneous terms \((G,K)\in\H\) are holomorphic.  The functions $(\W,R)$ are solutions to the differentiated system \eqref{DiagonalQuasiSystem-re} and $b$ and $\mfa$ are the associated advection velocity, respectively the frequency shift, which are given by the formulas \eqref{def:b}, \eqref{def:a} in terms of $\W$ and $R$.  For convenience we recall the expressions of $b$, $\mfa$ and $M$ below
\[
b = 2\Re\left[R - \P[R\bar Y]\right],
\]
and
\[
\mfa = g(1 + \Til^2)\Re \W + 2\Im\P[R\bar R_\alpha], \quad M = 2\Re\P[R\bar
Y_\alpha - \bar R_\alpha Y].
\]

Notably, in our analysis we will not use at all the Sobolev regularity of $(\W,R)$.
Instead we will only use the bounds for $(\W,R)$ which are available in terms of the uniform control norms $A$ and $B$. Similarly, for $b$ and $\mfa$ we use only the corresponding
uniform bounds also in terms of $A$ and $B$, see Lemmas~\ref{l:a},~\ref{l:b} in the Appendix.

A natural energy for this system is given by the quadratic part of the Hamiltonian,
\begin{equation}\label{LinearEnergy}
 E_0(w,r) = g\langle w,w\rangle - \langle r,\Til^{-1}[r_\alpha]\rangle \approx_g \|(w,r)\|_{\H}^2.
\end{equation}
However, as the equations above  have variable coefficients, we instead work with an  adapted energy functional
\begin{equation}\label{QuadLinNRG}
E_{lin}^{(2)}(w,r) = \langle w,w\rangle_{g+\mfa} - \langle r,\Til^{-1}[r_\alpha]\rangle
= \langle w,w\rangle_{g+\mfa} + \langle \D r, \D r\rangle,
\end{equation}
where for a real valued weight $\omega$ we define the weighted inner product
\[
\langle u,v\rangle_\omega = \int \left(\Til\Re u\cdot\Til\Re v + \Im u\cdot\Im v\right)\, \omega\,d\alpha.
\]
We note that this inner product retains the orthogonality between holomorphic and antiholomorphic  functions, and thus the projectors $\P$ and $\bar\P$ continue to play the same role. From the 
Taylor stability condition \eqref{TS} in Lemma \ref{L:Taylor} ,
and the upper bound for $\mfa$ in Lemma~\ref{l:a}, we have
\[
E_{\lin}^{(2)}(w,r)\approx_{A}  E_0(w,r)
\]
for as long the fluid stays away from the bottom.

We also need  a weighted form of the above energy functional.
For a real valued weight $\omega$  we define
\begin{equation}
\begin{aligned}
E^{(2)}_{\omega, lin} (w,r):=&\left\langle w,w\right\rangle_{(g+\mfa)\omega} +
\left\langle \D r, \D r \right\rangle_{\omega} .
\end{aligned}
\end{equation}
Our main estimate for the model system is as follows:

\begin{proposition} \label{p:local} 
Let $I$ be a time interval where $A$ is bounded
and $B \in L^1$. Then in $I$ the following properties hold:

a)  The system of equations \eqref{ModelLinSys} is well posed in $ \H$, and
satisfies the estimate
  \begin{equation}
    \begin{aligned}
      \label{est1}
      \frac{d}{dt}E^{(2)}_{lin}(w,r)=2\left\langle G, w
      \right\rangle_{g+\mfa} + 2\left\langle \D K, \D r
      \right\rangle + O_{A}(B)  E^{(2)}_{lin}(w,r).
    \end{aligned}
  \end{equation}

b) Assume in addition that  $\omega$ is a weight satisfying
\begin{equation} \label{en-omega}
\| \omega \|_{L^\infty} \leq A, \qquad \| \omega \|_{\bmo^\frac12} \leq B,
\qquad \| (\partial_t + b \partial_\alpha) \omega \|_{L^\infty} \leq B.
\end{equation}
Then we also have 
  \begin{equation}
  \begin{aligned}
     \label{est2}
  \frac{d}{dt}E^{(2)}_{\omega,lin}(w,r)=2\left\langle G, w
      \right\rangle_{(g+\mfa)\omega} +2\langle  \D K, \D r \rangle_\omega + O_{A}(B) E^{(2)}_{lin}(w,r).
    \end{aligned}
  \end{equation}
\end{proposition}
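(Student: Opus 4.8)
The plan is to establish the a priori estimates \eqref{est1} and \eqref{est2} by running the quasilinear energy method directly on $E^{(2)}_{lin}$, and then to deduce well-posedness in $\H$ from these bounds via a standard regularization argument. For the estimates one differentiates $E^{(2)}_{lin}(w,r)=\la w,w\ra_{g+\mfa}+\la \D r,\D r\ra$ in time and substitutes \eqref{ModelLinSys} for $w_t$ and $r_t$. Throughout we use the structural fact noted after \eqref{QuadLinNRG} that the weighted inner products $\la\cdot,\cdot\ra_\omega$ keep holomorphic and antiholomorphic functions \emph{pointwise} orthogonal, so the outer $\P$'s may be discarded whenever they are paired against $w$, $r$, or $\Til^{-1}r_\alpha$ (which is holomorphic up to an exponentially smoothing remainder). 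After this the resulting terms split into: (i) the piece where $\partial_t$ hits the weight, $\la w,w\ra_{\partial_t\mfa}$; (ii) the transport contributions of $\M_b\partial_\alpha$ from both equations; (iii) the off-diagonal terms coupling the two equations; (iv) the contribution of $\P[R_\alpha\Til^2 w/(1+\bar\W)]$; and (v) the source terms, which are already exactly $2\la G,w\ra_{g+\mfa}+2\la\D K,\D r\ra$, matching the right hand side of \eqref{est1}.

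Groups (i), (ii) and (iv) are the ``error'' groups. Symmetrizing the transport terms, using that $\M_b f=\P(bf)$ with $b$ real and integrating by parts, the top-order parts of (ii) combine with (i) so that only the material derivative $(\partial_t+b\partial_\alpha)\mfa$ and the coefficient $b_\alpha$ survive, modulo $[\P,b]$-type commutators. Lemmas~\ref{l:a} and~\ref{l:b} supply $\|(\partial_t+b\partial_\alpha)\mfa\|_{\bmo}\lesssim_A gB$, $\|b_\alpha\|_{\bmo}\lesssim_A B$, and the $A,B$ bounds for $b,\W,R$; combined with the Coifman--Meyer commutator estimates of the Appendix (which accommodate $\bmo$ coefficients, the relevant quadratic densities being null forms) this yields $O_A(B)\,E^{(2)}_{lin}$ for (i)+(ii). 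For (iv) one notes that $w$ and $\Til^2 w$ are holomorphic up to a smoothing remainder, so the associated density is again a null form, and pairing it with $R_\alpha\in\bmo$ (recall $\|R_\alpha\|_{\bmo}\lesssim B$) produces $O_A(B)\,E^{(2)}_{lin}$.

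Group (iii) is where the design of $E^{(2)}_{lin}$ does its work, and is the main obstacle. Using $\la\D u,\D v\ra=-\la u,\Til^{-1}v_\alpha\ra$, the fact that $\Til$ acts as $\pm i$ times the identity on the holomorphic/antiholomorphic sectors modulo an exponentially smoothing remainder (since $\tanh\xi$ agrees with $\pm\sgn\xi$ up to such a remainder), and the observation that $1/(1+\W)$ is holomorphic while $1/(1+\bar\W)$ is its antiholomorphic conjugate, one checks that the contribution of $-\P[r_\alpha/(1+\bar\W)]$ in the first equation cancels, at leading order, that of $\P[(g+\mfa)\Til w/(1+\W)]$ in the second. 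This cancellation is unavoidable: each of these terms separately is of size $\|w\|_{\dot H^{\frac12}}\|r\|_{\dot H^{\frac12}}$, which is \emph{not} controlled by $E^{(2)}_{lin}\approx\|w\|_{L^2}^2+\|r\|_{\dot H^{\frac12}}^2$. What is left is a commutator applied to $w$ and $r_\alpha$, again $O_A(B)\,E^{(2)}_{lin}$ by the Appendix estimates. Collecting (i)--(v) gives \eqref{est1}; the real work is the bookkeeping of all these commutator remainders and the verification that each is genuinely $O_A(B)\,E^{(2)}_{lin}$, which forces one to work with $\bmo$-type rather than $L^\infty$-type coefficient bounds.

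Given the a priori bound, well-posedness in $\H$ follows in the usual way: \eqref{est1} with $G=K=0$ gives uniqueness and, by Gronwall, $E^{(2)}_{lin}(t)\lesssim_A E^{(2)}_{lin}(0)\,e^{C_A\int_I B}$; existence of a solution in $C(I;\H)$ comes from regularizing the system (e.g.\ by frequency truncation), using this uniform bound, and passing to the limit, with strong time-continuity recovered from energy (semi)continuity, while continuous dependence follows by applying the estimate to differences of solutions. Part (b) is proved by repeating the entire computation with $g+\mfa$ replaced by $(g+\mfa)\omega$ and $\la\D r,\D r\ra$ by $\la\D r,\D r\ra_\omega$: since $\omega$ is a scalar weight every cancellation above survives verbatim, the only new contribution is $\la w,w\ra_{(g+\mfa)(\partial_t+b\partial_\alpha)\omega}$ plus the analogous $r$ term, bounded by $\|(\partial_t+b\partial_\alpha)\omega\|_{L^\infty}E^{(2)}_{lin}\leq B\,E^{(2)}_{lin}$ by \eqref{en-omega}, while every commutator involving $\omega$ (in particular $[\D,\omega]$) is controlled by $\|\omega\|_{\bmo^{\frac12}}\leq B$, and the bound $\|\omega\|_{L^\infty}\leq A$ ensures the error is measured against $E^{(2)}_{lin}$ rather than $E^{(2)}_{\omega,lin}$.
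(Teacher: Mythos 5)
Your proposal is correct and follows essentially the same route as the paper: differentiate the (weighted) energy, substitute the equations, observe that the two off-diagonal terms $-\P[r_\alpha/(1+\bar\W)]$ and $\P[(g+\mfa)\Til[w]/(1+\W)]$ must be paired so their leading parts cancel (the paper realizes this cancellation via the adjoint identity $\la f\Til u,v\ra=-\la u,\bar f\Til v\ra$ for holomorphic $u,v$, which is the precise form of your ``$\Til\approx\pm i$ on sectors'' heuristic), and reduce every remainder to $[\Til,\cdot]$, $[\D,\omega]$ and double-commutator bounds with $\bmo$ coefficients, with the material derivatives $(\partial_t+b\partial_\alpha)\mfa$ and $(\partial_t+b\partial_\alpha)\omega$ and $b_\alpha$ as the surviving coefficients. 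The only organizational difference is that the paper proves \eqref{est2} directly and obtains \eqref{est1} as the special case $\omega\equiv 1$, whereas you do (a) first and then rerun the computation for (b); this is immaterial.
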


\begin{proof}
We note that the bound \eqref{est1} can be viewed as a special case of \eqref{est2}, so we will only prove the latter. We start by calculating
\[
\begin{aligned}
\frac{d}{dt}\left\langle w,w \right\rangle_{(g+\mfa)\omega}=& \left\langle w,w \right\rangle_{[(g+\mfa)\omega ]_t}  -2\left\langle bw_{\alpha},w \right\rangle_{(g+\mfa)\omega}
+2\left\langle d \Til^2 w,w \right\rangle_{(g+\mfa)\omega}+2\left\langle G,w \right\rangle_{(g+\mfa)\omega}\\
&-2\left\langle (1-\bar Y) r_{\alpha}, w \right\rangle_{(g+\mfa)\omega}, 
\end{aligned}
\]
where we define \(d := \dfrac{R_\alpha}{1+\bar\W}\).
We complete the weight of the first term appearing in the expression above to
\[
\left\langle w,w \right\rangle_{(\partial_t +b\partial_{\alpha})[(g+\mfa)\omega ]}.
\]
Using also the relation $b_{\alpha}=M+2\Re d$ we separate the above time derivative into
\[
\frac{d}{dt}\left\langle w,w \right\rangle_{(g+\mfa)\omega} =  2\left\langle G,w \right\rangle_{(g+\mfa)\omega}
+ D_w^1 + D_w^2 + D_w^3 + D_w^4,
\]
where
\[
\begin{split}
D_w^1 = &  \     \left\langle w,w \right\rangle_{(\partial_t +b\partial_{\alpha})[(g+\mfa)\omega ]}
- \langle M \Til^2 w, w\rangle_{(g+\mfa)\omega},
\\
D_w^2 = & \    \left\langle 2i\Im d \Til^2 w,w \right\rangle_{(g+\mfa)\omega},
\\
D_w^3 = & \  -2\left\langle bw_{\alpha},w \right\rangle_{(g+\mfa)\omega}+ \left\langle b_\alpha \Til^2 w,w \right\rangle_{(g+\mfa)\omega}
-\left\langle w,w \right\rangle_{b\partial_{\alpha}[(g+\mfa)\omega ]},
\\
D_w^4 = & \  -2\left\langle (1-\bar{Y})r_{\alpha}, w \right\rangle_{(g+\mfa)\omega}.
\end{split}
\]

In a similar manner we expand the time derivative of the $r$ term as 
\[
\frac{d}{dt}\left\langle \D r, \D r \right\rangle_{\omega}= 2\left\langle \D K,\D r\right\rangle_{\omega} 
+ D_r^1 + D_r^2 + D_r^3,
\]
where 
\[
\begin{split}
D_r^1 = & \ \left\langle \D r, \D r \right\rangle_{(\partial_t +b\partial_{\alpha})\omega},
\\
D_r^2 = & \ -2 \left\langle \D (br_{\alpha}) , \D r \right\rangle_{\omega} + \left\langle \D r, \D r \right\rangle_{-b\partial_{\alpha}\omega},
\\
D_r^3 = & \ 2 \left\langle \D \left( (1-Y)(g+\mfa)\Til [w]\right) , \D r \right\rangle_{\omega}.
\end{split}
\]

We now successively consider all the terms above:
\medskip

{\em 1. The terms $D_w^1$ and $D_r^1$ } are trivially estimated using the pointwise bounds for $\mfa$ and its derivatives (see Lemma~\ref{l:a}) and $\omega$, as well as the pointwise bound for $M$ (see Lemma~\ref{l:M}).

\medskip
{\em 2. The term $D_w^2$} is expanded using the definition of our inner product as 
\[
D_w^2 = 2\int - (g+\mfa)\omega  \Til ( \Im d \, \Til^2 \Im w) \Til \Re w +  (g+\mfa)\omega \Im d \, \Til^2 \Re w \Im w \ d \alpha.
\] 
We use the relation $\Im w = -\Til \Re w$ to eliminate $\Re w$ and obtain
\[
\begin{split}
D_w^2 = & \ - 2\int  - (g+\mfa)\omega \Til ( \Im d \, \Til^2 \Im w)   \Im w +   (g+\mfa)\omega \Im d\,
\Til \Im  w \Im w \ d \alpha
\\
= & \ - 2\int  (g+\mfa)\omega  \Im w \left( - [\Til, \Im d]\Til^2  \Im  w + \Im d \, (1+\Til^2) \Til\Im w \right) d\alpha.
\end{split}
\]
Now we use a commutator bound
\[
\| [\Til, \Im d]\|_{L^2 \to L^2} \lesssim \|d\|_{\bmo}
\]
for the first term, see \eqref{est:TilComm},
and a Coifman-Meyer bound for the remaining product
\[
\| \Im d \, (1+\Til^2) \Til\Im w\|_{L^2} \lesssim \|\Im d\|_{\bmo} \|\Im w\|_{L^2},
\]
using the fact that the multiplier $1+\Til^2$ has a rapidly decaying kernel,
via (the dual of) \eqref{est:StrongLoHi}.

\medskip
{\em 3. The term $D_w^3$} is similarly expanded as 
\[
\begin{aligned}
  D_w^3 = &  \int_{\mathbb{R}}(g+\mfa)\omega \Im w \left\lbrace - 2\Til(b
    \Til^{-1}  \Im w_{\alpha}) + \Til (b_{\alpha}\Til \Im w) + 2b\Im
    w_{\alpha}  \right.
    \\
    &\hspace*{4cm} \left. + b_{\alpha}\Til^2
    \Im w + 2b_\alpha\Im w\right\rbrace \, d\alpha
    \\
 =  &  \int_{\mathbb{R}}(g+\mfa)\omega \Im w \left\lbrace - 2[\Til,
    b]\Til^{-1} \Im w_{\alpha}+[\Til, b_{\alpha}]\Til \Im w + 2b_\alpha(1+\Til^2)\Im w
  \right\rbrace \, d\alpha .
\end{aligned}
\]

To bound the integral above we use the $L^{\infty}$ bounds for $\omega$ and $a$, together with  H\"older's inequality. The desired bounds for this integral are a consequence of the commutator bounds
\[
\Vert [\Til, b]\Vert_{H^{-1} \rightarrow L^2} \lesssim \Vert b_{\alpha}\Vert_{\bmo}, \quad \Vert [\Til, b_{\alpha}]\Vert_{L^2 \rightarrow L^2} \lesssim \Vert b_{\alpha}\Vert_{\bmo},
\]
which can be found in \eqref{est:TilComm}.

\medskip
{\em 4. The term $D_r^2$.} For simplicity we introduce the holomorphic variable $s:=\D r$. Further, we expand
\[
\begin{aligned}
- \left\langle \D (br_{\alpha}), \D r \right\rangle_{\omega}=& \left\langle \D (b\D^2 \Til (r)), \D r \right\rangle_{\omega} = \left\langle \D (b\D \Til (s)),s \right\rangle_{\omega}.
\end{aligned}
\]
Then 
\[
\begin{aligned}
D_r^2 = &\ 2\int_{\mathbb{R}} \omega \Im s \left(  \Til \D b\D+\D b\D \Til\right)\Im s -b\omega _{\alpha}(\Im s)^2\, d\alpha
\\ = &\  2\int_{\mathbb{R}}\omega \Im s \left(  \Til \D b\D+ \D b\D \Til
+ \partial_\alpha b + b \partial_\alpha
\right)\Im s\,  d\alpha
\\ = & \ 2 \int_{\mathbb{R}}\omega \Im s \left(  [\Til \D,b]\D+ \D [b,\D \Til]
\right)\Im s \, d\alpha.
\end{aligned}
\]
Thus we need an $L^2$ bound for the double commutator
\[
 \|[[\Til \D, b],\D]\|_{L^2 \to L^2} \lesssim \| b_\alpha\|_{\bmo},
 \]
which is established in the Appendix, see \eqref{double}.

\medskip
{\em 5. The term $D_w^4+D^3_r$.} This has the form
\[
\begin{split}
D_w^4+D^3_r = &  -2\left\langle (1-\bar{Y})r_{\alpha}, w \right\rangle_{(g+\mfa)\omega} 
+ 2 \left\langle \D \left( (1-Y)(g+\mfa)\Til [w]\right) , \D r \right\rangle_{\omega} 
\\ = & \ 2\left\langle (1-\bar{Y})\Til \D s, w \right\rangle_{(g+\mfa)\omega} 
+ 2 \left\langle \D \left( (1-Y)(g+\mfa)\Til [w]\right) , s \right\rangle_{\omega}.
\end{split}
\]
This has some commutator structure, so we expect to get the bound
\[
| D_w^4+D^3_r| \lesssim (\| Y\|_{\bmo^\frac12} + \|\mfa\|_{\bmo^\frac12} +\|\omega\|_{\bmo^\frac12})   \|w\|_{\mfH} \|s\|_{\mfH}, 
\]
with the implicit constant depending on the $L^\infty$ norm of the same parameters
$Y,\mfa$ and $\omega$. To see this we  divide the analysis into several steps. 

First we commute $L$ across $\omega$, and estimate the 
difference 
\[
|\langle L z, s \rangle_{\omega} - \langle z, Ls \rangle_{\omega}| \lesssim \|\omega\|_{\bmo^\frac12}\|z \|_{\mfH} \|s\|_{\mfH}.
\]
Expanding as above,  this reduces to the commutator bound (see Lemma~\ref{l:CMComm})
\[
\| [L,\omega]\|_{L^2 \to L^2} \lesssim \|\omega\|_{\bmo^\frac12}.
\]
We apply this to $z = (1-Y)(g+\mfa)\Til [w]$ and $s = \D r$. This reduces our problem to estimating 
the difference
\[
-2\left\langle (1-\bar Y)r_{\alpha}, w \right\rangle_{(g+\mfa)\omega} 
-2 \left\langle  (1-Y)(g+\mfa)\Til [w] , \Til^{-1} r_\alpha \right\rangle_{\omega}. 
\] 

Next we insert $g+\mfa$ inside via the estimate
\[
\langle (g+\mfa) z,w \rangle_\omega - \langle z,w \rangle_{(g+\mfa)\omega} \lesssim \|g+\mfa\|_{\bmo^\frac12}  \|z\|_{H^{-\frac12}} \|w\|_{\mfH}, 
\]
which reduces to the commutator bound (see \eqref{est:TilComm})
\begin{equation}\label{c12}
\| [g+\mfa, \Til]\|_{H^{-\frac12} \to L^2} \lesssim \|g+\mfa\|_{\bmo^\frac12}.
\end{equation}
We apply this with $z = (1-\bar{Y})r_{\alpha}$ to reduce our problem to estimating the 
difference 
\[
-2\left\langle (1-\bar Y)(g+\mfa) r_{\alpha}, w \right\rangle_{\omega} 
-2 \left\langle  (1-Y)(g+\mfa)\Til [w] , \Til^{-1} r_\alpha \right\rangle_{\omega}. 
\] 

Finally, with $e = (1-\bar Y)(g+\mfa) \in \bmo^\frac12$ and $z = \Til^{-1} r_\alpha$, it remains to estimate
 the difference
\[
|\langle e \Til z, w \rangle_\omega + \langle z, \bar e \Til w \rangle_{\omega}| \lesssim (\|e\|_{L^\infty} \| \omega\|_{\bmo^\frac12}
+ \|e\|_{\bmo^\frac12} \|\omega\|_{L^\infty}) \| w\|_{\mfH} \| z\|_{\mfH^{-\frac12}}.
\]
This vanishes if $\omega$ is constant. Else, writing $e = f+ig$, it reduces to the commutator bounds
\[
\| [ \omega, \Til f + f \Til]\|_{H^{-\frac12} \to L^2} \lesssim (\|f\|_{L^\infty} \| \omega\|_{\bmo^\frac12}
+ \|f\|_{\bmo^\frac12} \|\omega\|_{L^\infty}), 
\]
respectively 
\[
\| [ \omega, \Til g \Til]\| \lesssim (\|f\|_{L^\infty} \| \omega\|_{\bmo^\frac12}
+ \|f\|_{\bmo^\frac12} \|\omega\|_{L^\infty}), 
\]
which follow by repeated application of  bounds of the form \eqref{c12}.

\end{proof}

\section{The linearized equation}
\label{s:linearization}
In this section we first calculate the linearization of \eqref{FullSystem-re} and then prove that the corresponding linearized system is well-posed in \(\H\).

 We take the linearized variables at \((W,Q)\) to be \((w,q) = (\delta W,\delta
 Q)\) and compute
 \[
 \delta R = \frac{q_\alpha - Rw_\alpha}{1+\W},\qquad \delta F =
 \P\left[m - \bar m\right],\qquad \bar R\delta R = n,
 \]
 where we define
 \[
 m = \frac{q_\alpha - Rw_\alpha}{J} + \frac{\bar R
   w_\alpha}{(1+\W)^2},\qquad n = \frac{\bar R(q_\alpha -
   Rw_\alpha)}{1+\W}.
 \]
 We then obtain the linearized equations
 \begin{equation}\label{LinSys1}
   \begin{cases}
     w_t + Fw_\alpha + \P[m - \bar m](1+\W) = 0\vspace{0.1cm}\\
     q_t + Fq_\alpha + \P[m - \bar m]Q_\alpha - g\Til[w] + \P[n + \bar
     n] = 0.
   \end{cases}
 \end{equation}
 As \(F = b - \dfrac{\bar R}{1+\W}\) and \(\P = 1 - \bar\P\) we may
 write this system in the form
 \begin{equation}\label{LinSys2+}
   \begin{cases}
     w_t + bw_\alpha + \dfrac{q_\alpha - Rw_\alpha}{1+\bar \W} = 2(1+\W)\Re\bar\P[m]\vspace{0.1cm}\\
     q_t + bq_\alpha - g\Til[w] + \dfrac{R(q_\alpha - Rw_\alpha)}{1 + \bar\W} = 2i\Im\bar\P[n] +
     2Q_\alpha\Re\bar\P[m].
   \end{cases}
 \end{equation}

 This is a degenerate hyperbolic system with a double speed $b$, so in
 order to produce good energy estimates at high frequency we
 introduce diagonal variables. Following \cite{HIT}, a natural choice would be to take
 \((w,r) = \A(w,q) = (w,q - Rw)\). This would work at high frequencies, but 
not at low frequencies as we cannot make sense of
 the product \(Rw\) for \(w\in\mfH\). So instead we work with
 \[
 (w,r) = (w,q + R\Til^2w).
 \]
 We observe that \((w,r)\approx \A(w,q)\) when \(w\) is at frequencies
 \(\gg1\) whereas \((w,r)\approx(w,q)\) when \(w\) is at frequencies
 \(\ll1\).

 In terms of the diagonalized variables \((w,r)\) we have
 \begin{align*}
   m &= -\frac{\bar \W(r_\alpha - R_\alpha\Til^2[w] - R(1+\Til^2)[w_\alpha])}{J} +  \dfrac{\bar R w_\alpha}{(1+\W)^2},\\
   n &= \frac{\bar R(r_\alpha - R_\alpha\Til^2[w] -
     R(1+\Til^2)[w_\alpha])}{1+\W},
 \end{align*}
 where we have harmlessly removed the leading order holomorphic
 component of \(m\) that vanishes after projection to the space of
 antiholomorphic functions in \eqref{LinSys2+}. We then obtain the
 diagonalized system,
 \begin{equation}\label{DLinSys2}
   \begin{cases}
     w_t + bw_\alpha + \dfrac{r_\alpha }{1+\bar\W} - \dfrac{R_\alpha\Til^2 [w]}{1+\bar\W} = \cG\vspace{0.1cm}\\
     r_t + br_\alpha - \dfrac{(g+\mfa)\Til[w]}{1+\W} = \cK,
   \end{cases}
 \end{equation}
 where
 \begin{align*}
   \cG &= 2(1+\W)\Re\bar\P[m] + \frac{R(1+\Til^2)[w_\alpha]}{1+\bar\W}, \\
   \cK &= 2i\Im\bar\P[n] - R[1 + \Til^2,b]w_\alpha + R(1 + \Til^2)[w_t + bw_\alpha]
   + \frac{g\W - \mfa}{1+\W}(1+i\Til)\Til[w].
 \end{align*}
Here, for brevity in the notation, we have kept the $w_t + bw_\alpha$ term as a part of $\cK$, 
rather then substituting it from the first equation. This is harmless since $1+\Til^2$ 
has a Schwartz symbol so this term will only play a perturbative role.

 While \((w,r)\) are holomorphic, it is not immediately clear that
 \eqref{DLinSys2} preserves the space of holomorphic functions so we
 apply the projection \(\P\) to obtain
 \begin{equation}\label{PLinSys}
   \begin{cases}
     w_t + \M_bw_\alpha + \P\left[\dfrac{r_\alpha}{1+\bar\W}\right]  -\P\left[\dfrac{R_\alpha\Til^2[w]}{1+\bar\W}\right] = \P\cG\vspace{0.1cm}\\
     r_t + \M_br_\alpha - \P\left[\dfrac{(g+\mfa)\Til[w]}{1+\W}\right] =
     \P\cK,
   \end{cases}
 \end{equation}
which now has the form of the model equation \eqref{ModelLinSys}.
 Our main result for the linearized system \eqref{PLinSys} is the
 following Theorem:
 \medskip
 \begin{theorem}\label{thrm:LinQuadWP}
   Suppose that there exists a solution \((W,Q)\) to
   \eqref{FullSystem-re} on a time interval \([-T,T]\) such that
   \((W,Q)\in C([-T,T];\H)\) and \((\W,R)\in C([-T,T];\H^1)\). Then the
   linearized equation \eqref{PLinSys} is locally
   well-posed in \(\H\) on the interval \([-T,T]\), and the corresponding solution
   \((w,r)\in C([-T,T];\H)\) satisfies the estimate
   \begin{equation}
     \|(w,r)(t)\|_{\H}\lesssim \exp\left(C\int_0^t\|(g^{\frac12}\W,R)(s)\|_{H^1\times H^{\frac32}}\,ds\right) \|(w,r)(0)\|_{\H},
   \end{equation}
   where the implicit  constant depends only on
   \(A\) and \(\sup_{t\in[-T,T]}g^{-\frac12}\|(g^{\frac12}\W,R)(t)\|_{L^2\times H^{\frac12}}\).
 \end{theorem}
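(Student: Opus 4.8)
The plan is to recognize the projected linearized system \eqref{PLinSys} as an instance of the model system \eqref{ModelLinSys}, with inhomogeneous terms $G = \P\cG$ and $K = \P\cK$, and then to run the energy estimate of Proposition~\ref{p:local}(a). The only difference from a literal application of that proposition is that $(\P\cG,\P\cK)$ are not genuine external forcing terms: they depend linearly on $(w,r)$, and (through the retained $w_t + bw_\alpha$ term in $\cK$) on $w_t$. So the crux of the argument is to show that, when paired against $(w,r)$ in the energy, these terms are perturbative, namely
\[
|\langle \P\cG, w\rangle_{g+\mfa}| + |\langle \D\P\cK, \D r\rangle| \lesssim_{A}\ \|(g^{\frac12}\W,R)\|_{H^1\times H^{\frac32}}\, E^{(2)}_{lin}(w,r),
\]
with the implicit constant also depending on $g^{-\frac12}\|(g^{\frac12}\W,R)\|_{L^2\times H^{\frac12}}$.

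This estimate is the main obstacle. The danger is that $\cG$ and $\cK$ naively carry one derivative too many of $(w,r)$: $\cG$ contains $r_\alpha$ and $w_\alpha$, while $\cK$ contains $r_\alpha$ (inside $n=\bar R\,\delta R$), $w_\alpha$ and $w_t$. These are tamed by three mechanisms. First, $1+\Til^2$ has a Schwartz symbol, so every occurrence of $(1+\Til^2)w_\alpha$, $(1+\Til^2)w$, or $(1+\Til^2)(w_t+bw_\alpha)$ is effectively infinitely smoothing in $w$ — and $w_t+bw_\alpha$ is first reduced to order $\le 1$ by substituting the first equation of \eqref{DLinSys2}; after multiplication by $R$ or $\W$ these contribute terms bounded in $\mfH$, resp.\ $\D\mfH$, by $\|(w,r)\|_{\H}$ times a coefficient norm of $(\W,R)$. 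Second, $\Til^2 = -I + (1+\Til^2)$, so $R_\alpha\Til^2 w$ equals $-R_\alpha w$ up to a smoothing error, and the main piece $-R_\alpha w$ is already present in the model part \eqref{ModelLinSys}; only the difference needs to be estimated, and it is again perturbative. Third, and most delicately, the antiholomorphic projection $\bar\P$ in $m$ and $n$ acts on products $\bar\mu\cdot h$ with $\mu$ (anti)holomorphic — built from $\W,\bar\W,R,\bar R$ — and $h$ holomorphic, built from $r_\alpha$ and $w_\alpha$; by the Fourier support structure the output frequency in $\bar\P[\bar\mu\,h]$ is controlled by that of $\bar\mu$, which lets one transfer the extra derivative from $h$ onto the coefficient $\mu$. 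This turns the dangerous $r_\alpha$ and $w_\alpha$ contributions into Coifman--Meyer/commutator expressions handled by Lemma~\ref{l:CMComm} and \eqref{est:TilComm}, at the cost of a factor $\|\W_\alpha\|_{L^2}+\|R_\alpha\|_{\bmo}\lesssim\|(g^{\frac12}\W,R)\|_{H^1\times H^{\frac32}}$. The remaining coefficient factors (involving $\W,Y,R,\mfa,b$) are controlled in $L^\infty$ and $\bmo^{\frac12}$ via $A$ and $B$ by Lemmas~\ref{l:a}--\ref{l:M}, with $B\in L^1_t$ by \eqref{sobolev-B} and the hypothesis $(\W,R)\in C_t\H^1$.

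With this in hand, Proposition~\ref{p:local}(a) gives
\[
\frac{d}{dt}E^{(2)}_{lin}(w,r) = 2\langle\P\cG,w\rangle_{g+\mfa} + 2\langle\D\P\cK,\D r\rangle + O_{A}(B)\,E^{(2)}_{lin}(w,r) \lesssim_{A}\big(B + \|(g^{\frac12}\W,R)\|_{H^1\times H^{\frac32}}\big)\, E^{(2)}_{lin}(w,r).
\]
Since $E^{(2)}_{lin}\approx_{A} E_0 \approx_g \|(w,r)\|_{\H}^2$ for as long as the Taylor sign condition of Lemma~\ref{L:Taylor} holds (which it does along the given solution), Grönwall's inequality yields the stated exponential bound, the $B$ contribution being absorbed into the $H^1\times H^{\frac32}$ exponent via \eqref{sobolev-B}.

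Finally, local well-posedness in $\H$ follows from this a priori estimate by standard arguments. For existence one regularizes \eqref{PLinSys} (e.g.\ by smooth frequency truncation of the coefficients and of the data, or by a small parabolic regularization), notes that the regularized problems are solvable and obey the same energy estimate uniformly, and passes to the limit, the projections keeping all iterates holomorphic. Uniqueness and continuous dependence of the solution map in $C_t\H$ follow at once by applying the energy estimate to the difference of two solutions, which — the system being linear — solves the same homogeneous equation; the $w_t$ dependence in $\cK$ creates no circularity since it is eliminated via the first equation before any estimate is carried out.
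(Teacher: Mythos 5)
Your overall architecture is the same as the paper's: view \eqref{PLinSys} as an instance of the model system \eqref{ModelLinSys}, invoke Proposition~\ref{p:local}(a), reduce everything to an estimate on the source terms $(\P\cG,\P\cK)$, and close with Gr\"onwall. Your three mechanisms (Schwartz smoothing of $1+\Til^2$, the substitution $\Til^2=-I+(1+\Til^2)$, and the commutator structure of $\bar\P[\bar\mu\,h]$ for holomorphic $h$) are all correct and are all used in the paper's proof of Proposition~\ref{LinQuadWP}. However, there is a genuine gap in the most delicate term, and it is precisely the term that distinguishes the finite-depth case from \cite{HIT}.

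The problem is the product $\W\,\Re\bar\P[m]$ inside $G_1=2(1+\W)\Re\bar\P[m]$ (and the analogous products in $K_3$). Your mechanisms yield $\|\bar\P[m]\|_{\mfH}\lesssim_A g^{-\frac12}B\|(w,r)\|_{\H}$, but the $\mfH$ norm only controls $\Til\Re\bar\P[m]$ and $\Im\bar\P[m]$ in $L^2$; since $\Til$ degenerates at frequency zero, the low-frequency part $S_0\Re\bar\P[m]$ is \emph{not} controlled in $L^2$ by this bound. Consequently $\|\W\,S_0\Re\bar\P[m]\|_{L^2}$ cannot be estimated by $\|\W\|_{L^\infty}$ times any quantity you have produced, and the same obstruction persists if you only estimate the pairing $\langle\P\cG,w\rangle_{g+\mfa}$ rather than the norm. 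Your claim that ``the remaining coefficient factors are controlled in $L^\infty$ and $\bmo^{1/2}$ via $A$ and $B$'' would, if it sufficed, give a bound by $B\,\|(w,r)\|_{\H}$ alone --- which the authors explicitly state is impossible here. The paper's fix is to split $\W\Re\bar\P[m]=\W P_{\geq1}\Re\bar\P[m]+\W S_0\Re\bar\P[m]$, put $\W$ in $L^2$ in the second piece, and estimate $\|S_0\Re\bar\P[m]\|_{L^\infty}$ using the mapping property $S_0\P:L^1\to L^\infty$ together with the high-high frequency structure of the products in $m$; this is what produces the factor $\|(g^{1/2}\W,R)\|_{H^{1/2}\times H^1}$ and the dependence of the constant on $g^{-1/2}\|(g^{1/2}\W,R)\|_{L^2\times H^{1/2}}$. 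You correctly anticipate that Sobolev norms of $(\W,R)$ must appear in the final bound, but you attribute them to the cost of the commutators ($\|\W_\alpha\|_{L^2}+\|R_\alpha\|_{\bmo}$); in fact the commutators only cost $B$, and the Sobolev norms enter solely through this $L^1\to L^\infty$ duality argument at low frequency, which your proposal is missing.
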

 \medskip

We remark that  \((w,q) = (W_\alpha,Q_\alpha)\) is a solution to \eqref{LinSys1},
for which we will prove  cubic lifespan bounds. Following \cite{HIT},
 one might hope to also establish cubic lifespan bounds for small initial
 data for the linearized system \eqref{PLinSys}.  Unfortunately this is not 
the case and we  expect that cubic lifespan bounds for the linearized system will fail
 on account of a breaking of symmetry when \((w,q) \neq (W_\alpha,Q_\alpha)\).
One can view this as a reflection of the fact that the quadratic low frequency interactions are
stronger here than in the infinite depth case.

In order to
prove Theorem~\ref{thrm:LinQuadWP} it will suffice to
obtain a priori estimates for \(\|(\P\cG,\P\cK)\|_{\H}\) and apply Proposition~\ref{p:local}. However, in
stark contrast to the infinite depth case \cite{HIT} we will be unable
to control \(\|(\P\cG,\P\cK)\|_{\H}\) only in terms of the pointwise
norms \(A,B\) and the energy \(E_{\lin}^{(2)}(w,r)\). The difficulty
arises due to the presence of nonlocal terms in the expression
\(\Re\bar\P[m]\) appearing in both \(\cG\) and \(\cK\). Here we will make use of
the fact that \(\P S_0\colon L^1\rightarrow L^\infty\), which leads to
bounds in terms of the energy norms of \((\W,R)\).

As a consequence, we have the following Proposition:

\medskip
\begin{proposition}\label{LinQuadWP}
We have the estimate
\begin{equation}\label{LinQuadEnough}
\|(\P\cG,\P\cK)\|_{\H}\lesssim_{A,g^{-\frac12}\|(g^{\frac12}\W,R)\|_{L^2\times H^{\frac12}}} \left( B + \|(g^{\frac12}\W,R)\|_{H^{\frac12}\times H^1} \right)\|(w,r)\|_{\H}.
\end{equation}
\end{proposition}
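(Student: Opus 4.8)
The plan is to estimate each of the nonlinear terms $\cG$ and $\cK$ from the formulas

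\begin{align*}
\cG &= 2(1+\W)\Re\bar\P[m] + \frac{R(1+\Til^2)[w_\alpha]}{1+\bar\W}, \\
\cK &= 2i\Im\bar\P[n] - R[1 + \Til^2,b]w_\alpha + R(1 + \Til^2)[w_t + bw_\alpha] + \frac{g\W - \mfa}{1+\W}(1+i\Til)\Til[w],
\end{align*}
term by term, after recalling the expressions
\[
m = -\frac{\bar \W(r_\alpha - R_\alpha\Til^2[w] - R(1+\Til^2)[w_\alpha])}{J} + \frac{\bar R w_\alpha}{(1+\W)^2}, \qquad n = \frac{\bar R(r_\alpha - R_\alpha\Til^2[w] - R(1+\Til^2)[w_\alpha])}{1+\W}.
\]
First I would dispose of the ``easy'' pieces: the terms $\frac{R(1+\Til^2)[w_\alpha]}{1+\bar\W}$ in $\cG$, and $R[1+\Til^2,b]w_\alpha$, $R(1+\Til^2)[w_t+bw_\alpha]$, and $\frac{g\W-\mfa}{1+\W}(1+i\Til)\Til[w]$ in $\cK$. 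All of these involve the smoothing operator $1+\Til^2$, which has a Schwartz kernel, so $w_\alpha$ is effectively replaced by $w$ at low frequency and the operator is bounded on $\mfH\to\mfH$; I would use Coifman--Meyer type estimates (from the Appendix, e.g. \eqref{est:StrongLoHi}) together with the $L^\infty$ bounds for $\W,R,Y$ contained in $A$, the $\bmo$ bound for $b_\alpha$ (Lemma~\ref{l:b}) and for $R_\alpha$, and the first equation of \eqref{PLinSys} to substitute $w_t + bw_\alpha$. These contribute the $B\|(w,r)\|_\H$ part of the right-hand side, plus lower-order pieces controlled by $\|(g^{1/2}\W,R)\|_{H^{1/2}\times H^1}$.

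The main work is in the terms $2(1+\W)\Re\bar\P[m]$ in $\cG$ and $2i\Im\bar\P[n]$ in $\cK$, which is where the genuine obstruction lies and why, unlike in \cite{HIT}, the bound cannot close using only $A,B$ and $E^{(2)}_{\lin}$. Both $m$ and $n$ contain a term with no derivative falling on $w$ that cannot be paired against a commutator: the bilinear expression $\bar\P[\text{(coefficient)}\cdot w_\alpha]$ or $\bar\P[\text{(coeff)}\cdot r_\alpha]$ where the coefficient (e.g. $\bar\W R_\alpha$, $\bar\W R$, $\bar R R_\alpha$) is itself only in $\mfH$-type spaces, not in $\bmo$. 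The key mechanism, indicated in the excerpt, is that $\bar\P S_0\colon L^1\to L^\infty$ (low-frequency boundedness of the antiholomorphic projection), so that the low-frequency part of $\bar\P[m]$ and $\bar\P[n]$ is estimated by putting one factor in $L^2$ and the other in $L^2$ (so the product is in $L^1$) and landing in $L^\infty$, which then pairs fine against the $\mfH$ factor $(1+\W)$ or against $r$. This is precisely the step that forces the Sobolev-type norms $\|(g^{1/2}\W,R)\|_{H^{1/2}\times H^1}$ onto the right-hand side: $R_\alpha\in H^1$ reads off from $\|(\W,R)\|_{\H^1}$, and the $\dot H^{1/2}$ weight built into $\mfH$ on the $r$-component supplies the matching regularity. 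For the high-frequency part of $\bar\P[m],\bar\P[n]$ one instead uses standard Coifman--Meyer/para-product bounds, placing the worst factor in $\bmo$ (contributing to $B$) and the remaining ones in $L^\infty$ or $L^2$ as dictated by the $\mfH$ norms; the projection $\bar\P$ acting on a product of a holomorphic and an antiholomorphic factor gives the usual frequency gain.

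I would organize the proof as: (1) reduce to estimating $\|\bar\P[m]\|$ and $\|\bar\P[n]\|$ in suitable spaces after peeling off the $1+\Til^2$ terms; (2) decompose $m = m_{\lin} + \tilde m$ where $\tilde m$ collects the terms containing $R(1+\Til^2)[w_\alpha]$ (treated as above via the smoothing operator), leaving $m_{\lin}$ built from $\bar\W r_\alpha$, $\bar\W R_\alpha\Til^2 w$, and $\bar R w_\alpha$; (3) for each surviving bilinear form split into Littlewood--Paley low $\times$ low (use $\bar\P S_0\colon L^1\to L^\infty$), low $\times$ high and high $\times$ low (one factor in $\bmo$ or $L^\infty$, the other in $L^2$), and high $\times$ high (frequency gain from the projection plus Coifman--Meyer); (4) collect the constants, noting all multiplicative dependence is through $A$ and $g^{-1/2}\|(g^{1/2}\W,R)\|_{L^2\times H^{1/2}}$. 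The main obstacle is bookkeeping the borderline low-frequency interactions so that the $L^1\to L^\infty$ bound for $\bar\P S_0$ is invoked with the right pair of $L^2$ factors — ensuring one never needs $\bmo$ control of a quantity (like $\bar\W$ or $R$) for which only $\mfH$/$L^2$ control is available at low frequency — and checking that the resulting norms are exactly $\|(g^{1/2}\W,R)\|_{H^{1/2}\times H^1}$ and not something stronger.
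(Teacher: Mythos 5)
Your proposal matches the paper's proof in all essentials: the same term-by-term decomposition of $\cG$ and $\cK$, the same treatment of the smoothing terms via the Schwartz symbol of $1+\Til^2$ and paraproduct/commutator bounds, and the same key mechanism — using $S_0\P\colon L^1\to L^\infty$ on the low-frequency part of $\bar\P[m]$ with both factors placed in $L^2$, which is exactly what forces the Sobolev norms $\|(g^{1/2}\W,R)\|_{H^{1/2}\times H^1}$ onto the right-hand side. One minor refinement: in the paper the $L^1\to L^\infty$ device is needed only where $\bar\P[m]$ is multiplied by the outer factor $\W$ (in $G_1$, and again in $K_3$ through $w_t+bw_\alpha$), whereas $K_1=2i\Im\bar\P[n]$ closes with commutator bounds alone since its coefficient $\bar R$ is pointwise controlled by $A$.
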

\bpf

We decompose
\[
\cG = G_1 + G_2,\qquad \cK = K_1 + K_2 + K_3 + K_4,
\]
where,
\begin{alignat*}{3}
G_1 &= 2(1+\W)\Re\bar\P[m],\quad &G_2 &= \frac{R(1+\Til^2)[w_\alpha]}{1+\bar\W}, \\
K_1 &= 2i\Im\bar P[n],\qquad &K_2 &= - R[1+\Til^2,b]w_\alpha,\\
K_3 &= R(1+\Til^2)(w_t+bw_\alpha),\qquad &K_4 &= \frac {g\W - \mfa}{1+\W}(1+i\Til)\Til[w],
\end{alignat*}
and estimate each term separately.
\medskip

{\em 1. Bounds for \(G_1\).}
We may estimate
\[
\|\P G_1\|_{\mfH}\lesssim \|\bar\P[m]\|_{\mfH} + \|\W\Re\bar\P[m]\|_{L^2}.
\]

We first prove that
\begin{equation}\label{e:Pbarm}
\|\bar\P[m]\|_{\mfH}\lesssim_A g^{-\frac12}B\|(w,r)\|_{\H}.
\end{equation}
As \(\bar\P\) vanishes when applied to holomorphic terms, we write
\begin{align*}
\bar\P[m] &= - [\bar\P,\bar Y](1 - Y)r_\alpha + [\bar\P,d\bar\W](1 - Y)\Til^2[w]\\
&\quad + \bar\P[\bar Y(1 - Y)R(1+\Til^2)w_\alpha] + [\bar\P,\bar R](1-Y)^2w_\alpha.
\end{align*}
For the first, second and fourth terms we apply the commutator estimate \eqref{PComm} and the product estimate \eqref{e:HolomProductBound} with the estimate \eqref{est:Ybmo} for \(Y\) and the estimate \eqref{est:d} for \(d\). For the third term we simply use that \(1+\Til^2\) has Schwartz symbol and that \(\|R\|_{L^\infty}\lesssim B\).

For the second term in \(G_1\) we first decompose according to the frequency of \(\Re\bar\P[m]\),
\[
\W\Re\bar\P[m] = \W P_{\geq1}\Re\bar \P[m] + \W S_0\Re\bar\P[m].
\]
For the high frequency component we use the estimate \eqref{e:Pbarm} for \(\bar \P[m]\) to obtain
\[
\|\W P_{\geq1}\Re\bar\P[m]\|_{L^2}\lesssim \|\W\|_{L^\infty}\|\bar \P[m]\|_{\mfH}\lesssim_A g^{-\frac12}AB\|(w,r)\|_{\H}.
\]
For the low frequency component we are unable to estimate \(S_0\Re\bar\P[m]\) in \(L^2\), so instead we estimate
\[
\|\W S_0\Re\bar\P[m]\|_{L^2}\lesssim \|\W\|_{L^2}\|S_0\Re\bar\P[m]\|_{L^\infty}.
\]
It then remains to show that
\begin{equation}\label{G1-there}
\|S_0\Re\bar\P[m]\|_{L^\infty}\lesssim_A g^{-\frac12}\|(g^{\frac12}\W,R)\|_{H^{\frac12}\times H^1}\|(w,r)\|_{\H}.
\end{equation}

For the first term in \(m\) we use that we use that \(S_0\P\colon L^1\rightarrow L^\infty\) to obtain
\[
\|S_0\bar\P[\bar Y(1-Y)r_\alpha]\|_{L^\infty}\lesssim \|S_0(\bar Y(1-Y)r_\alpha)\|_{L^1}.
\]
Considering this to be the product of \(\bar Y(1-Y)\) and \(r_\alpha\) we may only have high-high frequency interactions and hence
\begin{align*}
\|S_0(\bar Y(1-Y)r_\alpha)\|_{L^1} &\lesssim \sum\limits_{k\approx k'}\|\P_k[\bar Y(1-Y)]\|_{L^2}\|\P_{k'}[r_\alpha]\|_{L^2}\\
&\lesssim \|\bar Y(1-Y)\|_{H^{\frac12}}\|L r\|_{\mfH}\\
&\lesssim_A \|\W\|_{H^{\frac12}}\|L r\|_{\mfH},
\end{align*}
where the final line follows from the Moser estimate \eqref{MoserL2}. For the second and third terms in \(m\) we may straightforwardly estimate
\[
\left\|\frac{R_\alpha\bar Y}{1 + \W}\Til^2[w]\right\|_{L^1} +\left\|\frac{R\bar Y}{1 + \W}(1+\Til^2)w_\alpha\right\|_{L^1} \lesssim_A \|R\|_{H^1}\|w\|_{\mfH}.
\]
For the final term in \(m\) we consider it to be a product of \(\bar R\) and \((1-Y)^2w_\alpha\) to obtain
\[
\|S_0(\bar R(1-Y)^2w_\alpha)\|_{L^1}\lesssim \sum\limits_{k\approx k'}\|R_k\|_{L^2}\|\P_k[(1-Y)^2w_\alpha]\|_{L^2}\lesssim \|R\|_{H^1}\|(1-Y)^2w_\alpha\|_{H^{-1}}.
\]
The estimate \eqref{G1-there} then follows from the product estimate \eqref{e:HolomProductBound}.
\medskip

{\em 2. Bounds for \(G_2\).} Here we simply use that \(1+\Til^2\) has Schwartz symbol to obtain
\[
\left\|\P\left[\frac{R(1+\Til^2)w_\alpha}{1+\bar\W}\right]\right\|_{\mfH}\lesssim_AB\|w\|_{\mfH}.
\]
\medskip

{\em 3. Bounds for \(K_1\).}
As \(K_1\) is purely imaginary we have
\[
\|L\P[i\Im\bar\P[n]]\|_{\mfH}\lesssim \|L\bar\P[n]\|_{\mfH}.
\]
We then write
\[
\bar\P[n] = [\bar\P,\bar R](1-Y)r_\alpha - [\bar\P,\bar R](1-Y)R_\alpha\Til^2[w] - \bar\P\left[\frac{|R|^2}{1+\W}(1+\Til^2)w_\alpha\right],
\]
and may estimate each term similarly to the proof of \eqref{e:Pbarm} to obtain
\[
\|L\bar\P[n]\|_{\mfH}\lesssim_AB\|(w,r)\|_{\H}.
\]
\medskip

{\em 4. Bounds for \(K_2\).}
We start by dividing \(K_2\) up according to frequency balance using the paraproduct operator \(T_R\) as
\[
R[1+\Til^2,b]w_\alpha = T_R[1+\Til^2,b]w_\alpha + (R - T_R)[1+\Til^2,b]w_\alpha.
\]

When \(R\) is at low frequency we may estimate
\[
\|T_R[1+\Til^2,b]w_\alpha\|_{H^{\frac12}}\lesssim \|R\|_{L^\infty}\|[1+\Til^2,b]w_\alpha\|_{H^{\frac12}}
\]
and for the remaining terms we apply the paraproduct estimate \eqref{Para1} to obtain
\[
\|(R - T_R)[1+\Til^2,b]w_\alpha]\|_{H^{\frac12}}\lesssim \|R\|_{\bmo^{\frac12}}\|[1+\Til^2,b]w_\alpha\|_{L^2}.
\]
As a consequence,
\[
\|L\P K_2\|_{\mfH}\lesssim g^{\frac12}A\|[1+\Til^2,b]w_\alpha\|_{H^{\frac12}}
\]

We then decompose using paraproducts,
\begin{align*}
[1+\Til^2,b]w_\alpha &= [1+\Til^2,T_b]w_\alpha + [1+\Til^2,b_0]w_{\leq4} + (1+\Til^2)T_{w_\alpha}b - T_{(1+\Til^2)w_\alpha}b\\
&\hspace*{1cm} + (1+\Til^2)\Pi[b_{\geq1},w_\alpha] - \Pi[b_{\geq1},(1+\Til^2)w_\alpha],
\end{align*}
and estimate each of these terms as follows: for the first two terms we apply the commutator estimate \eqref{est:WeakLoHi}, for the third and fourth terms the estimate \eqref{est:StrongLoHi} and for the final two terms we apply the paraproduct estimate \eqref{Para1}. The estimate for \(K_2\) then follows from the estimate \eqref{est:bBMO} for \(b\).
\medskip

{\em 5. Bounds for \(K_3\).}
We may estimate similarly to \(K_2\) to obtain
\begin{align*}
\|L\P K_3\|_{\mfH} &\lesssim \|R\|_{\bmo^{\frac12}}\|(1+\Til^2)[w_t + bw_\alpha - 2(1 + \W)S_0\Re\bar P[m]]\|_{H^{\frac12}}\\
&\quad + \|R\|_{H^{\frac12}}(1 + \|\W\|_{L^\infty})\|S_0\Re\bar P[m]]\|_{L^\infty}.
\end{align*}
For the first term we estimate as for \(G_1\) using that \(1 + \Til^2\) has Schwartz symbol to obtain
\[
\|(1+\Til^2)[w_t + bw_\alpha - 2(1 + \W)S_0\Re\bar P[m]]\|_{H^{\frac12}}\lesssim_A g^{-\frac12}B\|(w,r)\|_{\H},
\]
and for the second term we may simply apply the estimate \eqref{G1-there}.

{\em 6. Bounds for \(K_4\).}
As \(\Til[w]\) is holomorphic we have
\[
(1 + i\Til)\Til[w] = (1 + \Til^2)\Re\Til[w].
\]
We may then apply the paraproduct estimates \eqref{Para1} and \eqref{est:StrongLoHi} with the estimates \eqref{est:a} for \(\mfa\) and \eqref{est:Ybmo} for \(Y\) to obtain
\[
\left\|\frac{g\W - \mfa}{1+\W}(1+i\Til)\Til[w]\right\|_{H^{\frac12}}\lesssim\left(g\|Y\|_{\bmo^{\frac12}} + \|\mfa\|_{\bmo^{\frac12}}\right)\|w\|_{L^2}.
\]

This completes the proof of \eqref{LinQuadEnough}.

\epf
%


\section{Normal forms}\label{s:nf}

The goal of this section is to algebraically compute a normal
form correction for the system \eqref{FullSystem-re} for $(W,Q)$ as a
translation invariant bilinear form. We recall that the aim of the
normal form transformation is to eliminate the quadratic terms in the
equation. Precisely, at least formally the normal form variables
$(\tW,\tQ)$ will solve a nonlinear equation where all the nonlinear
terms are cubic and higher order. In this article we will not use such
an equation directly for three reasons:
\begin{itemize}
\item[(i)] The equation for the normal form variables $(\tW,\tQ)$ is not
self-contained, instead it still uses the original variables $(W,Q)$
in the nonlinearity.
\item[(ii)] The system \eqref{FullSystem-re} is fully nonlinear and 
the normal form transformation does not mix well with the nonlinear
structure.
\item[(iii)] The symbols for the normal form transformation are singular
precisely when the output has frequency zero.
\end{itemize}
Instead, in the next section we use the normal form
transformation in order to produce a cubic normal form energy that has the property that its time derivative along the flow is
of quartic and higher order. Interestingly (and very usefully) the 
normal form symbol singularities do not carry over to the 
normal form energy; this is due to cancellations arising 
after repeated symmetrizations.

 Incidentally, we remark that when considering the linearized equation some 
of these symmetrizations are lost, which is why we cannot prove 
cubic energy estimates for the linearized flow.

\subsection{The resonance analysis}
If we take \((W,Q) = 0\) in the linearized system \eqref{LinSys1} we obtain the system
\begin{equation}\label{LinearizedAtZero}
\begin{cases}
w_t + q_\alpha = 0\vspace{0.1cm}\\
q_t - g\Til[w] = 0,
\end{cases}
\end{equation}
which has dispersion relation
\[
\tau^2 = g\xi\tanh\xi.
\]
As a consequence we see that solutions split into right-moving and
left-moving components with dispersion relations \(\tau = \pm g^{\frac12}\omega(\xi)\), respectively, where
\[
\omega(\xi) = -\sgn\xi\sqrt{\xi\tanh\xi}.
\]

To understand bilinear resonant interactions we define the function
\[
\Delta(\xi,\eta,\zeta) = \omega(\xi) + \omega(\eta) + \omega(\zeta).
\]
Then resonant two wave interactions correspond to  solutions to the system
\[
\begin{cases}
\Delta(\pm\xi,\pm\eta,\pm\zeta) = 0\vspace{0.1cm}\\
\xi + \eta + \zeta = 0.
\end{cases}
\]
As \(\omega\) is sublinear, the only solutions occur when at least one of
 \(\xi,\eta,\zeta\) vanishes.

Symmetrizing the $\Delta$ function, we define the resonance function $\Omega$ by
\begin{align*}
\Omega(\xi,\eta,\zeta) &= \Delta(\xi,\eta,\zeta)\Delta(\xi,-\eta,-\zeta)\Delta(\xi,-\eta,\zeta)\Delta(\xi,\eta,-\zeta)\\
&= J(\xi)^2 + J(\eta)^2 + J(\zeta)^2 - 2J(\xi)J(\eta) - 2J(\eta)J(\zeta) - 2J(\zeta)J(\xi),
\end{align*}
on the set $\mc P = \{\xi + \eta + \zeta = 0\}$, where \(J(\xi) = \omega(\xi)^2 =
\xi\tanh\xi\). This vanishes quadratically on each of the lines $\xi =
0$, $\eta = 0$, respectively $\zeta = 0$. The function $\Omega$ will
play a key role in all computations which follow.

\subsection{Expansion to cubic order}
In order to construct normal forms for \((W,Q)\) we first expand \(F\) to cubic order as
\begin{align*}
\Poly{\leq 3}F
&= Q_\alpha - Q_\alpha W_\alpha - \P\left[Q_\alpha \bar W_\alpha - \bar Q_\alpha W_\alpha\right] + Q_\alpha W_\alpha^2 + \P\left[(Q_\alpha \bar W_\alpha - \bar Q_\alpha W_\alpha)(W_\alpha + \bar W_\alpha)\right],
\end{align*}
where, for a sufficiently smooth function \(f\colon\C^2\rightarrow\C\) we define \(\Poly{\leq k}f\) to select the terms of polynomial order \(\leq k\) in the Taylor expansion of \(f\) at zero.

We may then rewrite \eqref{FullSystem-re} as
\begin{equation}\label{LinearQuadratic}
\begin{cases}
W_t + Q_\alpha = G^{[2]} + G^{[3]} + G^{[4+]}\vspace{0.1cm}\\
Q_t - g\Til[W] = K^{[2]} + K^{[3]} + K^{[4+]},
\end{cases}
\end{equation}
where the quadratic terms are given by
\begin{gather*}
G^{[2]} = \P[Q_\alpha \bar W_\alpha - \bar Q_\alpha W_\alpha],\quad K^{[2]} = - Q_\alpha^2 -\P\left[Q_\alpha\bar Q_\alpha\right],
\end{gather*}
the cubic terms are given by
\[
\begin{aligned}
&G^{[3]} = W_\alpha\P\left[Q_\alpha\bar W_\alpha - \bar Q_\alpha W_\alpha\right] - \P\left[(Q_\alpha \bar W_\alpha - \bar Q_\alpha W_\alpha)(W_\alpha + \bar W_\alpha)\right],\\
&K^{[3]} = Q_\alpha^2 W_\alpha + Q_\alpha\P\left[Q_\alpha\bar W_\alpha - \bar Q_\alpha W_\alpha\right] +\P\left[Q_\alpha\bar Q_\alpha(W_\alpha+\bar W_\alpha)\right],
\end{aligned}
\]
and \(G^{[4+]},K^{[4+]}\) contain only quartic and higher order terms.

\subsection{Normal forms}
By considering parity, we seek holomorphic normal form corrections of the form
\[
\begin{cases}
\tW = W + B^h[W,W] + \dfrac1gC^h[Q,Q] + B^a[W,\bar W] + \dfrac 1gC^a[Q,\bar Q]\vspace{0.1cm}\\
\tQ = Q + A^h[W,Q] + A^a[W,\bar Q] + D^a[Q,\bar W],
\end{cases}
\]
so that the normal form variables $(\tW,\tQ)$  satisfy
\begin{equation}\label{NF-eq}
\begin{cases}
\Poly{\leq2}[\tW_t + \tQ_\alpha] =  0 \vspace{0.1cm}\\
\Poly{\leq 2}[\tQ_t - g\Til\tW ] =  0.
\end{cases}
\end{equation}
Here the operators $B^h$, $C^h$, $B^a$, $C^a$, $A^h$, $A^a$, $D^a$ are 
translation invariant bilinear forms, which can be described via their symbols, as below:
\begin{gather*}
B^h[W,W] = \frac{1}{2\pi}\int B^h(\xi,\eta)\hat W(\xi)\hat W(\eta) 
e^{i(\xi+\eta)\alpha}\,d\xi d\eta\\
B^a[W,\bar W] = \frac{1}{2\pi}\int B^a(\xi,\eta)\hat W(\xi)\bar{\hat W}(\eta) 
e^{i(\xi-\eta)\alpha}\,d\xi d\eta.
\end{gather*}
To determine these symbols uniquely we  assume that \(B^h,C^h\) are symmetric.

For the subsequent construction of the normal form energies we will interpret all symbols as functions on the plane $\mathcal P =
\{\xi + \eta + \zeta = 0\}$. For notational convenience we will adopt this convention in the following computations. In the context of bilinear operators we may interpret \(\zeta = \zeta(\xi,\eta):= - (\xi + \eta)\). For this reason we will compute holomorphic symbols at \((\xi,\eta)\) and mixed holomorphic-antiholomorphic symbols at \((\xi,-\eta)\).

\subsubsection{Holomorphic products} 
The holomorphic terms $A^h$, $B^h$ and $C^h$ are generated by the
holomorphic part of the quadratic nonlinearity, i.e., the first term
in $K^{[2]}$.  Comparing holomorphic terms at the quadratic level we
obtain a linear system for the symbols
\[
\begin{bmatrix}\xi+\eta & -2\eta & -2\tanh\xi\\
-\xi &0 &\tanh(\xi+\eta)\\
-\tanh\eta&\tanh(\xi+\eta)&0\end{bmatrix}\begin{bmatrix}A^h(\xi,\eta)\\B^h(\xi,\eta)\\C^h(\xi,\eta)\end{bmatrix} = \begin{bmatrix}0\\i\xi\eta\\0\end{bmatrix}.
\]
From the first row we have
\[
A^h = \frac{2\eta B^h}{\xi+\eta} + \frac{2\tanh\xi C^h}{\xi+\eta}.
\]
We then calculate the symmetrizations
\begin{gather*}
(\xi A^h)_{\mr{sym}} = \frac{2\xi\eta B^h}{\xi+\eta} + \frac{(\xi\tanh\xi + \eta\tanh\eta) C^h}{\xi+\eta},\\
(\tanh\eta A^h)_{\mr{sym}} = \frac{(\xi\tanh\xi + \eta\tanh\eta)B^h}{\xi+\eta} + \frac{2\tanh\xi\tanh\eta C^h}{\xi+\eta}.
\end{gather*}
Plugging this into the second row we obtain,
\[
((\xi+\eta)\tanh(\xi+\eta) - \xi\tanh\xi - \eta\tanh\eta)C^h = i\xi\eta(\xi+\eta) + 2\xi\eta B^h,
\]
and into the third row,
\[
((\xi+\eta)\tanh(\xi+\eta) - \xi\tanh\xi - \eta\tanh\eta) B^h = 2\tanh\xi\tanh\eta C^h.
\]
As a consequence we obtain the solutions
\begin{align*}
A^h(\xi,\eta) &= \frac{2i\eta J(\xi)\left(J(\zeta) - J(\xi) + J(\eta)\right)}{\Omega},\\
B^h(\xi,\eta) &= -\frac{2i\zeta J(\xi)J(\eta)}{\Omega},\\
C^h(\xi,\eta) &= -\frac{i\xi\eta\zeta\left(J(\zeta) - J(\xi) - J(\eta)\right)}{\Omega}.
\end{align*}

\subsubsection{Mixed terms} The mixed terms $A^a$, $B^a$, $C^a$ and
$D^a$ are generated by the mixed holomorphic-antiholomorphic part of the quadratic
nonlinearity.  As above, we write the mixed
holomorphic-antiholomorphic terms as a linear system
\[
\begin{bmatrix}\xi+\eta & -\eta & -\tanh\xi & 0\\
0 & -\xi & - \tanh\eta & \xi+\eta\\
-\xi & 0 & \tanh(\xi+\eta) & -\eta\\
-\tanh\eta & \tanh(\xi+\eta) & 0 & -\tanh\xi\end{bmatrix}\begin{bmatrix}A^a(\xi,-\eta)\\B^a(\xi,-\eta)\\C^a(\xi,-\eta)\\D^a(\xi,-\eta)\end{bmatrix} = \begin{bmatrix}\frac12i(1 - \coth(\xi+\eta))\xi\eta\\ -\frac12i(1-\coth(\xi+\eta))\xi\eta\\\frac12i(1 - \tanh(\xi+\eta))\xi\eta\\0\end{bmatrix}
\]
We solve this system to obtain
\begin{align*}
A^a(\xi,-\eta) &= - \frac{e^{2\zeta}}{e^{2\zeta} + 1}\left\{\left( J(\eta) + \eta \right)\frac{B^h(\xi,\eta)}{\zeta\tanh\eta} + \left(J(\xi) - \xi\right)\frac{C^h(\xi,\eta)}{\xi\zeta}\right\},
\\
B^a(\xi,-\eta) &= \frac{e^{2\zeta}}{e^{2\zeta} - 1}\left\{\left(J(\zeta) - (\xi-\eta)\right)\frac{B^h(\xi,\eta)}{\zeta} + \left( \eta J(\xi)-\xi J(\eta) \right)\frac{C^h(\xi,\eta)}{\xi\eta\zeta}\right\},
\\
C^a(\xi,-\eta) &= \frac{e^{2\zeta}}{e^{2\zeta}-1}\left\{\left( \eta J(\xi)-\xi J(\eta) \right)\frac{B^h(\xi,\eta)}{\zeta\tanh\xi\tanh\eta} + \left(J(\zeta) - (\xi-\eta)\right)\frac{C^h(\xi,\eta)}{\zeta}\right\},
\\
D^a(\xi,-\eta) &= -\frac{e^{2\zeta}}{e^{2\zeta}+1}\left\{ \left( J(\xi)-\xi \right)\frac{B^h(\xi,\eta)}{\zeta\tanh\xi}  + \left( J(\eta) + \eta \right)\frac{C^h(\xi,\eta)}{\eta\zeta}\right\}.
\end{align*}

\subsection{Symbol classes and asymptotics for the normal form}
Here we consider the symbols arising in the normal form,
and describe their size and regularity. These are needed 
in order to have good $L^2$ and $L^p$ multilinear bounds.

From the perspective of high frequency bounds, we are interested
in the interactions between one high negative frequency and 
one low frequency. Here we expect only the symbols $B^h$, $B^a$, $A^h$
and $D^a$ to play a role; the remaining symbols $C^h$, $C^a$ and $A^a$
(which do not appear at all in the infinite bottom case) will decay 
rapidly in the above regime. For the former symbols, on the other
hand, we will need to compute second order expansions around $\xi = 0$
(for $B^h$ and $A^h$) respectively around $\eta = 0$ (for $B^a$, $A^h$ and 
$D^a$). However, due to the linear component of the normal derivative of the pressure, we will also require an expansion for \(C^h\) near 
\(\eta = 0\).

From the perspective of low frequency analysis, we do not have any 
low frequency pointwise control on $\Re W$ and $\Re Q$. Hence 
we will need to show that $\Re W$ and $\Re Q$ do not appear undifferentiated
in our cubic energy functional. This requires certain cancellations 
to happen (akin to a null condition). For this we will need 
to exactly compute almost all of the above symbols at $\xi = 0$ 
and at $\eta = 0$.

We will interpret all symbols as functions on the plane $\mathcal P =
\{\xi+\eta+\zeta = 0\}$. In this plane we consider
three distinguished lines $\xi = 0$, $\eta = 0$, $\zeta = 0$.  The
symbol regularity will depend on the distance $d$ to these lines and
on the radius $\rho$,
\[
d = 1+ \min\{|\xi|,|\eta|,|\zeta| \}, \qquad \rho =  1+ \max\{|\xi|,|\eta|,|\zeta| \}.
\]
For a weight $\sigma$ which is slowly varying with respect to these scales we denote by $S(\sigma)$ the class  of symbols $s$ on $\mathcal P$ which satisfy 
\[
| (d \partial)^\alpha (\rho \partial_\rho)^\beta s | \lesssim c_{\alpha \beta} \sigma .
\]

We begin our discussion with the  expression $\Omega$, for which we have:

\medskip
\begin{lemma}~

a) The symbol $\Omega$ restricted to $\mathcal P$ 
is non-positive and belongs to $S(d \rho)$.

b) The symbol $\Omega$ vanishes quadratically on the three lines and is 
elliptic elsewhere, 
\[
\Til^2(\xi) \Til^2(\eta) \Til^2(\zeta) \Omega^{-1} \in S(d^{-1} \rho^{-1}).
\]

c) We have the following expansion in the region \(|\eta|\ll-\xi\):
\[
\Omega(\xi,\eta,\zeta) = 4 J(\eta) \xi + (\eta + J(\eta))^2 + S(e^{\xi})
=  - 4 J(\eta) \zeta + (\eta - J(\eta))^2 + S(e^{-\zeta}).
\]

d) On the line $\eta = 0$ we have the limit
\[
\lim_{|\eta| \to 0} \eta^{-2} \Omega(\xi,\eta,\zeta) =   J'(\xi)^2 - 4 J(\xi) =: \Lambda(\xi) < 0. 
\]

\end{lemma}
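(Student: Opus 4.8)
\emph{Proof plan.}
The starting point is an algebraic factorization of $\Omega$. With $u=J(\xi),\,v=J(\eta),\,w=J(\zeta)$ the stated formula reads $\Omega=(u-v-w)^2-4vw=\bigl[u-(\sqrt v+\sqrt w)^2\bigr]\bigl[u-(\sqrt v-\sqrt w)^2\bigr]$, and since $\sqrt{J(\xi)}=|\omega(\xi)|=g(|\xi|)$ with $g(s):=\sqrt{s\tanh s}$, on $\mathcal{P}$ this becomes
\[
-\Omega=(p+q+r)(-p+q+r)(p-q+r)(p+q-r),\qquad p=g(|\xi|),\ q=g(|\eta|),\ r=g(|\zeta|).
\]
Two elementary facts about $g$ do all the work: $g$ is increasing, $g(0)=0$, $g(s)^2=s\tanh s\le s$; and $g$ is \emph{subadditive} on $[0,\infty)$, because $g(s)/s=\sqrt{\tanh s/s}$ is nonincreasing (the standard monotonicity of $\tanh s/s$), so $g(s+t)=\frac{g(s+t)}{s+t}s+\frac{g(s+t)}{s+t}t\le g(s)+g(t)$. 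On $\mathcal{P}$ the largest of $|\xi|,|\eta|,|\zeta|$ is the sum of the other two, so subadditivity and monotonicity give the triangle inequality for $(p,q,r)$; hence all four factors above are $\ge0$ and $\Omega\le0$, which is the sign in (a). For the size bound, order $|\xi|\le|\eta|\le|\zeta|$ (so $|\eta|\approx|\zeta|\approx\rho$, $d=1+|\xi|$) and group
\[
-\Omega=\bigl[(q+r)^2-p^2\bigr]\bigl[p^2-(q-r)^2\bigr]\le (q+r)^2\,p^2\le 2(|\eta|+|\zeta|)\,|\xi|\lesssim\rho\,d.
\]
The accompanying derivative estimates for the class $S(d\rho)$ I would read off the \emph{polynomial} expression $\Omega=u^2+v^2+w^2-2uv-2vw-2wu$ (which, unlike the factored form, is smooth at $\xi=0$), using $J(\xi)=\xi\tanh\xi\in S(d)$ in one variable; this is routine bookkeeping, with the cancellation above reproduced derivative by derivative.

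For (d) I would Taylor-expand $\Omega$ in $\eta$ at $\eta=0$ along $\mathcal{P}$, i.e.\ with $\zeta=-(\xi+\eta)$: $u=J(\xi)$ is constant in $\eta$, $v=J(\eta)=\eta^2+O(\eta^4)$, and $w=J(\xi+\eta)=J(\xi)+J'(\xi)\eta+\tfrac12J''(\xi)\eta^2+O(\eta^3)$, so $u^2+w^2-2uw=(u-w)^2=J'(\xi)^2\eta^2+O(\eta^3)$, $-2v(u+w)=-4J(\xi)\eta^2+O(\eta^3)$, $v^2=O(\eta^4)$; adding, $\eta^{-2}\Omega\to J'(\xi)^2-4J(\xi)=:\Lambda(\xi)$. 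For strict negativity of $\Lambda$ away from $\xi=0$ I would use $g^2=J$, hence $2gg'=J'$ and $\Lambda=J'^2-4J=4J\bigl(g'(|\xi|)^2-1\bigr)$; so it is enough that $g'(s)<1$ for $s>0$, which follows from strict concavity of $g$ on $(0,\infty)$ together with $g'(0^+)=\lim_{s\to0}\sqrt{\tanh s/s}=1$. Strict concavity is the one genuine computation: differentiating $g^2=J$ twice gives $g''=(2JJ''-J'^2)/(4gJ)$, and with $t=\tanh\xi$ and $J''(\xi)=2(1-t^2)(1-\xi t)$ one checks
\[
J'(\xi)^2-2J(\xi)J''(\xi)=\bigl(t-\xi(1-t^2)\bigr)^2+4\xi^2t^2(1-t^2)>0\qquad(\xi>0),
\]
so $g''<0$. (At $\xi=0$ one has $\Lambda(0)=0$, consistent with the quartic vanishing of $\Omega$ at the triple point; the inequality in (d) is to be read for $\xi\ne0$.)

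Part (b) then follows by combining these. By the permutation symmetry of $\Omega$ in $(u,v,w)$ it suffices to treat the line $\eta=0$: the expansion in (d) shows $\Omega$ vanishes there exactly to order $2$ with leading coefficient $\Lambda(\xi)\ne0$ away from the triple point, while $\Til^2(\xi)\Til^2(\eta)\Til^2(\zeta)=-\tanh^2\xi\,\tanh^2\eta\,\tanh^2\zeta$ vanishes on $\eta=0$ like $\eta^2$, so the quotient is bounded near that line; near the triple point one returns to $-\Omega=[(q+r)^2-p^2][p^2-(q-r)^2]$, which exhibits $|\Omega|$ as bounded below by (squared distance to the nearest line) times a factor of size $\approx d\rho$. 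The quantitative claim $\Til^2(\xi)\Til^2(\eta)\Til^2(\zeta)\Omega^{-1}\in S(d^{-1}\rho^{-1})$ is then obtained by combining $|\tanh^2\xi\,\tanh^2\eta\,\tanh^2\zeta|\lesssim\min(|\xi|,1)^2\min(|\eta|,1)^2\min(|\zeta|,1)^2$ with this lower bound, the derivatives again being routine once the pointwise ratio is controlled. Finally, (c) is a direct asymptotic computation: for $|\eta|\ll-\xi$ we have $\xi\to-\infty$ and $\zeta=-\xi-\eta\to+\infty$, so $\tanh\xi=-1+O(e^{2\xi})$ and $\tanh\zeta=1+O(e^{-2\zeta})$, whence $u=-\xi+S(e^\xi)$ and $w=-\xi-\eta+S(e^\xi)$; substituting into $\Omega=u^2+v^2+w^2-2uv-2vw-2wu$, the part quadratic in $u,w$ collapses to $\eta^2$ and the terms linear in $v=J(\eta)$ add up to $4\xi v+2\eta v$, giving $\Omega=(v+\eta)^2+4\xi v+S(e^\xi)=(J(\eta)+\eta)^2+4J(\eta)\xi+S(e^\xi)$, with the second form obtained from $\xi=-\zeta-\eta$.

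\emph{Main obstacle.} The sign in (a) and the expansions (c), (d) are comparatively direct once the factorization, the monotonicity of $\tanh s/s$, and the strict-concavity identity for $\omega$ are in hand. The real difficulty is the \emph{uniform}, all-derivatives symbol-class bookkeeping behind (a) and (b)---showing $\Omega\in S(d\rho)$ and $\Til^2(\xi)\Til^2(\eta)\Til^2(\zeta)\Omega^{-1}\in S(d^{-1}\rho^{-1})$ near the triple intersection $\xi=\eta=\zeta=0$, where the quadratic dispersion relation degenerates (the triangle inequality for $(|\omega(\xi)|,|\omega(\eta)|,|\omega(\zeta)|)$ becomes an equality on the sign sectors) and the nondegeneracy of $\Omega$ is a strictly sub-leading effect.
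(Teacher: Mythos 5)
The paper omits this proof entirely (``a fairly straightforward algebraic computation''), so there is nothing to compare against; your proposal has to stand on its own. Most of it does: the factorization $-\Omega=(p+q+r)(-p+q+r)(p-q+r)(p+q-r)$ with $p=\sqrt{J(\xi)}$ etc., the subadditivity of $s\mapsto\sqrt{s\tanh s}$ via monotonicity of $\tanh s/s$ to get the sign in (a), the identity $\Lambda=4J(g'^2-1)$ together with the strict-concavity computation $J'^2-2JJ''=(t-\xi(1-t^2))^2+4\xi^2t^2(1-t^2)>0$ for (d), and the asymptotics in (c) all check out.

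The genuine gap is in your treatment of (b) near the triple point $\xi=\eta=\zeta=0$. You claim the factored form exhibits $|\Omega|$ as bounded below by the squared distance to the nearest line times a factor $\approx d\rho$; this is false. Take $\xi=\eta=\epsilon$, $\zeta=-2\epsilon$: your bound would give $|\Omega|\gtrsim\epsilon^2$, but expanding $J(s)=s^2-s^4/3+O(s^6)$ in $\Omega=(u-v-w)^2-4vw$ and using $\xi^2-\eta^2-\zeta^2=2\eta\zeta$ on $\mc P$ shows that the quartic terms cancel identically and
\[
\Omega=-4\,\xi^2\eta^2\zeta^2+O(\epsilon^8),
\]
i.e.\ $\Omega$ vanishes to \emph{sixth} order at the origin (equivalently, the triangle defect $p+q-r\approx\tfrac12|\xi\eta\zeta|$ is cubic, not linear). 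A false lower bound on $|\Omega|$ cannot justify an upper bound on $\Omega^{-1}$, so as written this step fails. The conclusion of (b) is nevertheless true, and the repair is exactly the computation above: near the origin the sixth-order vanishing of $\Omega$ is matched term-for-term by $\Til^2(\xi)\Til^2(\eta)\Til^2(\zeta)\approx-\xi^2\eta^2\zeta^2$, giving a ratio of size $1\approx d^{-1}\rho^{-1}$ there; away from the origin your line-by-line analysis via (c) and (d) is the right argument. Your deferral of the all-derivatives bookkeeping for the symbol classes is acceptable given that the paper itself provides none of it.
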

\medskip

The proof is a fairly straightforward algebraic computation and is
omitted.  We remark that part (b) is consistent with the fact that in
our problem two wave resonances appear only when either an input
frequency or the output frequency is zero. Part (c) is relevant in our
high frequency analysis, while part (d) is needed for the low
frequency cancellation.

Now we successively consider the symbols in our normal form analysis:

\bigskip
{\bf The symbol $B^h$.}  Here by inspection we see that all the zeros 
of $\Omega$ are canceled by the numerator, except for a simple zero
at $\zeta = 0$. Then the natural regularity statement is obtained 
after multiplication with $\Til(\zeta)$. Precisely, we have
\begin{equation}\label{reg-bh}
\Til(\zeta) B^h(\xi,\eta) \in S(\rho).
\end{equation}
For the high frequency asymptotics in the region \(|\eta|\ll- \xi\) we have the expansion
\begin{equation}
B^h(\xi,\eta) = -\frac{i}2 \left( \xi - \frac{(\eta-J(\eta))^2}{4J(\eta)}\right) + S(d^2 \rho^{-1}).
\end{equation}

On the other hand, at $\eta = 0$ we have
\[
B^h(\xi,0) = \frac{2i \xi J(\xi)}{\Lambda(\xi)}.
\]

\bigskip
{\bf The symbol $C^h$.} Again all the zeros 
of $\Omega$ are canceled by the numerator, except for a simple zero
at $\zeta = 0$. Further, the difference $J(\xi) + J(\eta) - J(\zeta)$
decays exponentially if $\xi$ and $\eta$ have the same sign,
\[
J(\xi) + J(\eta) - J(\zeta) = O( e^{-|\xi|} + e^{-|\eta|}),\quad \xi\eta>0.
\]
We then obtain the size of $C^h$ as 
\begin{equation}\label{reg-ch}
\Til(\zeta) C^h(\xi,\eta) \in \left\{ \begin{array}{ll} S(\rho\ \min\{|\xi|,|\eta|\}) & \xi \eta < 0 
\cr S( d^{-N} \rho ) & \xi \eta > 0. \end{array} \right.
\end{equation}
The asymptotics in the region \(|\eta|\ll-\xi\) are
\[
C^h(\xi,\eta) = - \frac{i\eta(\eta+J(\eta))}{4J(\eta)} \left( \xi - \frac{(\eta - J(\eta))^2}{4J(\eta)}\right) + S(d^3 \rho^{-1}).
\]

Finally, we also need 
\[
C^h(\xi,0) = \frac{i \xi^2 J'(\xi)}{ \Lambda(\xi)}.
\]

\bigskip

{\bf The symbol $A^h$.} As before, we remove the zero at $\zeta = 0$ to obtain the regularity
\begin{equation}\label{reg-ah}
\Til(\zeta) A^h(\xi,\eta) \in  \left\{ \begin{array}{ll} S(|\eta|) & \eta\zeta < 0 
\cr S( d^{-N} |\eta| ) & \eta\zeta > 0. \end{array} \right. 
\end{equation}
Since $A^h$ is not symmetric, we need asymptotics both near $\xi =0 $
and $\eta = 0$. First we consider the region $|\eta|\ll - \zeta$.
Here we have
\begin{equation}\label{zero-ah-eta}
A^h(\xi,\eta) = - \frac{i\eta(\eta + J(\eta))}{2 J(\eta)}\left(1 + \frac{(\eta - J(\eta))^2}{4\xi J(\eta)}\right) + S(d^3\rho^{-1}),
\end{equation}
where the leading order term vanishes (which is consistent with the infinite bottom problem). Next, we consider the region $|\xi|\ll-\eta$:
\begin{equation}\label{zero-ah-eta+}
A^h(\xi,\eta) = - i \left( \eta +\frac{J(\xi)^2 -\xi^2}{4J(\xi)} \right) + S(d^2\rho^{-1}).
\end{equation}

Finally, we compute 
\[
A^h(\xi,0) = \frac{2 i J(\xi) J'(\xi)}{  \Lambda(\xi)},\qquad A^h(0,\eta) = \frac{4i \eta J(\eta)}{  \Lambda(\eta)}.
\]

\bigskip

Next we consider the symbols for the mixed terms, namely
$A^a$, $B^a$, $C^a$ and $D^a$. Here we will continue to consider $(\xi,\eta,\zeta)\in\mc P$ and compute the symbols at \((\xi,-\eta)\).

\bigskip

{\bf The symbol $A^a$.}
The symbol \(A^a(\xi,-\eta)\) decays exponentially in all directions except near the half-lines
$\{\xi = 0,\ \eta < 0\}$ and $\{\zeta=0,\ \xi<0\}$. Precisely, we have
\begin{equation}
\Til(\zeta) A^a(\xi,-\eta) \in  \left\{ \begin{array}{ll} S(d^{-N}|\xi|) & |\xi| \ll  -\eta\ \ \text{or}\ \  |\zeta|\ll-\xi
\cr S( \rho^{-N} ) & \text{elsewhere}. \end{array} \right. .
\end{equation}
Finally, we have
\[
A^a(\xi,0) = \frac{i}{\Lambda(\xi)(e^{2\xi} + 1)}\left[ 2  J(\xi) - \xi J'(\xi) + J(\xi) J'(\xi) \right].
\]

\bigskip

{\bf The symbol $B^a$.}
This is similar to $B^h$, in that 
\begin{equation}
\Til(\zeta) B^a(\xi,-\eta) \in S(\rho).
\end{equation}
In the region $|\eta| \ll -\xi$ we have the asymptotics
\begin{equation}
B^a(\xi,-\eta) = -i \xi + S(d^2\rho^{-1}).
\end{equation}
Finally,  we do not need the exact expressions for 
$B^a(\xi,0)$ and $B^a(0,-\eta)$, only the fact that they are purely 
imaginary.

\bigskip

{\bf The symbol $C^a$.}
The symbol \(C^a(\xi,-\eta)\) decays exponentially away from the region $\{0 < \xi \ll -\eta\}$
and the half-line $\{\zeta = 0,\ \eta < 0\}$. Precisely,
\begin{equation}
\Til(\zeta) C^a(\xi,-\eta) \in \left\{ \begin{array}{ll} S(|\xi| \rho) & 0 < \xi \ll -\eta \ \ \text{or} \ \ |\zeta|\ll-\eta
\cr S( d^{-N} |\xi| \rho ) & 0<-\xi \ll-\eta \ \ \text{or} \ \ |\zeta|\ll-\xi
\cr S( \rho^{-N} ) & \text{elsewhere}. \end{array} \right..
\end{equation}
Finally, on the 
two lines we have
\begin{align*}
C^a(\xi,0) &= \frac{i\xi}{\Lambda(\xi)(e^{2\xi} - 1)} \left [2J(\xi) - \xi J'(\xi) +  J(\xi) J'(\xi) \right], \\
C^a(0,-\eta) &= -\frac{i\eta}{\Lambda(\eta)(e^{2\eta} - 1)}\left[ 2J(\eta) - \eta J'(\eta) - J(\eta)J'(\eta) \right].
\end{align*}

\bigskip

{\bf The symbol $D^a$.} The symbol \(D^a(\xi,-\eta)\) decays exponentially away from the region $\{0 < -\xi \ll -\eta\}$
and the half-line $\{\eta = 0,\ \xi < 0\}$. Precisely,
\begin{equation}
\Til(\zeta) D^a(\xi,-\eta) \in \left\{ \begin{array}{ll} S(|\xi|)
& 0 < -\xi \ll -\eta \ \ \text{or} \ \ |\eta|\ll-\xi
\cr S(d^{-N}|\xi|) & 0<\xi\ll-\eta\ \ \text{or} \ \ |\zeta|\ll-\eta
\cr S( \rho^{-N} ) & \text{elsewhere}. \end{array} \right. .
\end{equation}
We will only require its high frequency asymptotics in the region $|\eta| \ll -\xi$:
\begin{equation}
D^a(\xi,-\eta) = - i \xi+ S(d^2 \rho^{-1}),
\end{equation}
which are similar to those for $B^a$.

Finally, we also need
\[
D^a(0,-\eta) = -\frac{i}{\Lambda(\eta)(e^{2\eta} + 1)}\left[ 2J(\eta) - \eta J'(\eta) - J(\eta)J'(\eta) \right].
\]

\section{The normal form energy.} \label{s:nf-en}

The aim of this section is to use the normal form computation in the previous section to produce a normal form energy, i.e., an energy functional which is accurate to quartic order. We summarize our result as follows:

\begin{proposition} \label{p:ennnf}
For each $n \geq 1$ there exists a normal form
  energy $E^n_{NF}\!=\! E^{n}_{NF}(\W, R)$ with the
  following properties:
  \medskip
  
a) Algebraic properties. $E^{n}_{NF}(\W, R)$ has only quadratic and cubic terms,
\[
\Poly{\geq 4}  E^{n}_{NF}(\W, R) = 0,
\]
and its quadratic part is given by the linear energy
\[
\Poly{\leq 2} E^{n}_{NF}(\W, R) = E_0(\partial^{n-1} \W, \partial^{n-1} R).
\]
Further, $E^{n}_{NF}(\W, R)$ is accurate to quartic order, i.e.,
\begin{equation}\label{dt-quartic}
\Poly{\leq 3}  \frac{d}{dt} E^{n}_{NF}(\W, R) = 0
\end{equation}
along the flow of \eqref{FullSystem-re}.
\medskip

b) Qualitative description. $E^n_{NF}$ has the form
\[
E^{n}_{NF}(\W, R) = E_0(\partial^{n-1} \W, \partial^{n-1} R) 
+ g B(\W,\W,\W) + A(\W,R,R),
\]
where $A$ and  $B$ are translation invariant trilinear forms.
Further, there is a decomposition
\[
  E^{n}_{NF}=E^n_{NF, high}+E^n_{NF, low},
\]
with 
\[
\begin{split}
E^n_{NF, high}= & \ E_0(\partial^{n-1} \W, \partial^{n-1} R) 
+ g B_{high}(\W,\W,\W) + A_{high}(\W,R,R), 
\\
E^n_{NF, low} = & \  g B_{low}(\W,\W,\W) + A_{low}(\W,R,R),
\end{split}
\]
where the forms $ B_{high}$ and $A_{high}$, respectively 
 $ B_{low}$ and $ A_{low}$ are characterized as follows:

\medskip 
(i) Case \(n\geq2\). Then the forms \(B_{high},A_{high}\) are given by
  \begin{equation}
    \label{m1}
    \begin{aligned}
  &B_{high}(\W,\W,\W)     := \ \langle \partial^{n-1} \W,\partial^{n-1} \W\rangle_{- 4n\Re \W
        + \frac12 (1+\Til ^2)\Re \W},\smallskip
\\
&A_{high}(\W,R,R) :=  - \langle \partial^{n-1} R, \Til^{-1} \partial^{n-1} R_\alpha \rangle_{- 4n\Re \W
        - \frac12 (1+\Til^2)\Re \W}\\
        &\hspace*{3.3cm}  - 2 \langle \W  \partial^{(n-1)}R, \Til^{-1} \partial^{(n-1)}R_\alpha \rangle + 2 \la \partial^{(n-2)}\W R_\alpha,\Til^{-1}\partial^{(n-1)}R_\alpha\ra,
    \end{aligned}
  \end{equation}
  whereas the forms $B_{low}$ and $A_{low}$ have symbols 
 $B_{low}(\xi,\eta,\zeta)$, $ A_{low}(\xi,\eta,\zeta)$
in the class 
\[
B_{low} \in S(d \rho^{2n-3}), \qquad A_{low} \in S(d d_1 \rho^{2n-3}) + S(\rho^{2n-2}),
\]
where \(d,\rho\) are defined as before and \(d_1 = \min\{ |\eta|,|\zeta|\}\) is the smaller of the two \(R\) frequencies.

(ii) Case \(n=1\). Then the forms \(B_{high}\), \(A_{high}\) are given by
  \begin{equation}
    \label{m11}
    \begin{aligned}
 &B_{high}(\W,\W,\W)     := \langle  \W, \W\rangle_{-4\Re \W
        +\frac12 (1+\Til ^2)\Re \W},\
\\
&A_{high}(\W,R,R) :=  - \langle  R, \Til^{-1} R_\alpha \rangle_{-4\Re\W
        - \frac12 (1+\Til^2)\Re \W} - 2 \langle R\W,\Til^{-1}R_{\alpha}\ra,
    \end{aligned}
  \end{equation}
  and the forms $ B_{low}$ and $ A_{low}$ have symbols 
 $B_{low}(\xi,\eta,\zeta)$, $ A_{low}(\xi,\eta,\zeta)$
in the class 
\[
B_{low} \in S(\rho^{-1}), \qquad A_{low} \in S(1).
\]
\end{proposition}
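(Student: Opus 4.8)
The plan is to take $E^n_{NF}$ of the form $E_0(\partial^{n-1}\W,\partial^{n-1}R) + g B(\W,\W,\W) + A(\W,R,R)$, with $B,A$ translation invariant trilinear forms, and to choose the symbols of $B$ and $A$ so that the cubic part of $\frac{d}{dt}E^n_{NF}$ vanishes. First I would differentiate the diagonal system \eqref{DiagonalQuasiSystem-re} (which is equivalent to \eqref{FullSystem-re} after differentiation and diagonalization) $n-1$ times and record, for $V := \partial^{n-1}\W$ and $S := \partial^{n-1}R$, the form $V_t + S_\alpha = \mathcal N^V$, $S_t - g\Til V = \mathcal N^S$, reading off the quadratic parts of $\mathcal N^V,\mathcal N^S$ from the transport term $b\,\partial^n$, the coefficient $b_\alpha = M + 2\Re d$, the factor $\tfrac{1+\W}{1+\bar\W}$, the term $(1+\W)M$, and the linear-in-$\W$ part $\ao = g(1+\Til^2)\Re\W$ of $\mfa$. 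Since $E_0$ is the Hamiltonian of the linearized-at-zero flow and is therefore conserved by $V_t = -S_\alpha$, $S_t = g\Til V$, one obtains $\frac{d}{dt}E_0(V,S) = 2g\la \mathcal N^V,V\ra + 2\la \D\mathcal N^S,\D S\ra$ plus terms of quartic and higher order; using the holomorphy relation $\Im u = -\Til\Re u$, its cubic part is a sum of terms of type $g\W^2R$ (coming from $\mathcal N^V$ and from the $-g\W\Til V$ part of $\mathcal N^S$) and of type $R^3$ (coming from the remaining quadratic part of $\mathcal N^S$). Note there is no $g\W^3$ term, since every quadratic term of $\mathcal N^V$ carries a factor of $R$.

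Next I would solve the resulting cohomological equation. Computing $\frac{d}{dt}(gB + A)$ along the linear flow $\W_t=-R_\alpha$, $R_t = g\Til\W$ (the nonlinear corrections being quartic) and writing everything on the Fourier side over the plane $\mathcal P = \{\xi+\eta+\zeta = 0\}$, the requirement $\Poly{\leq3}\frac{d}{dt}(E_0 + gB + A) = 0$ becomes a linear system for the symbols of $B$ and $A$ whose coefficient determinant, after symmetrization, is the resonance function $\Omega$ of Section~\ref{s:nf}. Equivalently --- and this is the route I would take in practice --- one reuses the normal form transformation $(\tW,\tQ)$ of Section~\ref{s:nf}: since \eqref{NF-eq} holds, the normal form variables satisfy an evolution with no quadratic nonlinearity, so $E_0$ evaluated on the $(n-1)$-times differentiated diagonal variables of the normal form pair $(\tW,\tQ)$ has a cubic-accurate time derivative, and its cubic correction is expressed through the symbols $B^h,C^h,A^h,A^a,B^a,C^a,D^a$. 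By the resonance analysis, $\Omega$ vanishes only --- and only quadratically --- on the three lines $\xi = 0$, $\eta = 0$, $\zeta = 0$, i.e. exactly where some input or output frequency vanishes; the division is controlled on $\mathcal P$ by $\Til^2(\xi)\Til^2(\eta)\Til^2(\zeta)\Omega^{-1}\in S(d^{-1}\rho^{-1})$, and the numerators carry enough vanishing on the three lines (from the $J$- and $\tanh$-factors in the water wave nonlinearity) that the symbols of $B$ and $A$ are smooth away from the lines and have at worst a simple pole on each of them.

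The substantive step is then the high/low decomposition. I would fully symmetrize $B$ and symmetrize $A$ in its two $R$-arguments, and extract the contributions concentrated near the three lines using the second-order expansions of the seven normal form symbols recorded in Section~\ref{s:nf} (for instance $B^h(\xi,0) = \tfrac{2i\xi J(\xi)}{\Lambda(\xi)}$, $C^h(\xi,0) = \tfrac{i\xi^2 J'(\xi)}{\Lambda(\xi)}$, $A^h(\xi,0)$, $A^h(0,\eta)$, the $|\eta|\ll-\xi$ and $|\xi|\ll-\eta$ asymptotics, and the reflection and conjugation symmetries relating the mixed symbols to the holomorphic ones). After symmetrization these leading singular parts assemble into exactly the weighted quadratic forms $g B_{high}(\W,\W,\W) + A_{high}(\W,R,R)$ of \eqref{m1} (for $n\geq2$) or \eqref{m11} (for $n=1$): the weight $-4n\Re\W$ comes out of the transport structure of the $(n-1)$-times differentiated equation together with the $\tfrac{1+\W}{1+\bar\W}$ factor, the $\pm\tfrac12(1+\Til^2)\Re\W$ pieces from the finite-depth linear term $\ao = g(1+\Til^2)\Re\W$ in the normal derivative of the pressure, and the extra terms $-2\la\W\partial^{n-1}R,\Til^{-1}\partial^{n-1}R_\alpha\ra + 2\la\partial^{n-2}\W R_\alpha,\Til^{-1}\partial^{n-1}R_\alpha\ra$ (resp. $-2\la R\W,\Til^{-1}R_\alpha\ra$ when $n=1$) from the two $R$-slots not occupying the same position prior to symmetrization. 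Setting $B_{low} := B - B_{high}$ and $A_{low} := A - A_{high}$, the residue is non-singular, and the claimed bounds $B_{low}\in S(d\rho^{2n-3})$, $A_{low}\in S(d\,d_1\,\rho^{2n-3}) + S(\rho^{2n-2})$ (resp. $S(\rho^{-1})$, $S(1)$ for $n=1$) follow from the symbol calculus of Section~\ref{s:nf}, the extra factor $\rho^{2(n-1)}$ being produced by the $(i\xi)^{n-1}$-type weights carried by the two high arguments.

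With the correction in hand, part (a) is immediate: $E^n_{NF}$ is quadratic-plus-cubic with quadratic part $E_0(\partial^{n-1}\W,\partial^{n-1}R)$ by construction, so $\Poly{\geq4}E^n_{NF} = 0$, and $\Poly{\leq3}\frac{d}{dt}E^n_{NF} = 0$ is precisely the cohomological identity (equivalently, a consequence of \eqref{NF-eq}); part (b) is then the output of the decomposition. The hardest part will be the symmetrization and cancellation bookkeeping in the third step: because the normal form symbols are genuinely singular at zero frequency here --- unlike in the infinite depth case of \cite{HIT} --- one must verify by hand, juggling all three line-expansions of all seven symbols together with their symmetries, that the singular parts cancel down to exactly the explicit forms \eqref{m1}, \eqref{m11}, with the precise coefficients $-4n$ and $\pm\tfrac12$, leaving a remainder of the stated regularity; tracking the $n$-dependence, and the separate and weaker conclusion in the base case $n=1$, is where this is most delicate.
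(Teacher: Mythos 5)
Your proposal follows essentially the same route as the paper's proof: form $\Poly{\leq 3}E_0(\partial^n\tW,\partial^n\tQ)$ using the normal form transformation of Section~\ref{s:nf}, rewrite it as explicit trilinear forms via the seven symbols, symmetrize, peel off the leading near-diagonal contributions as $B_{high}$, $A_{high}$, and check the symbol class of the remainder; part (a) is then automatic from \eqref{NF-eq}. The strategy is sound, but two points in your sketch are stated imprecisely in ways that matter for the computation you defer. First, it is not enough that the symbols of $B$ and $A$ have ``at worst a simple pole'' on the lines $\xi=0$, $\eta=0$, $\zeta=0$: after symmetrization they must actually \emph{vanish} there, carrying a factor $\xi\eta\zeta$ (the paper proves $\tB^{sym}\in\xi\eta\zeta\,ES(\rho^{2n-2})$ and $\tA^{sym}\in\xi\eta\zeta\,ES(\rho^{2n-1})$). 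This is needed because $\Re W$ and $\Re Q$ are not controlled undifferentiated at low frequency, and it is a genuine cancellation: the prefactors $(e^{2\zeta}-1)$, $\zeta^{2n}$ already remove the poles, and the twelve-fold (resp.\ four-fold) symmetrization together with the reflection symmetry then produces the zeros on the diagonals. Second, the last two terms of $A_{high}$ in \eqref{m1} do not arise from ``the two $R$-slots not occupying the same position prior to symmetrization'' but from the separate, final change of variables $Q_\alpha=R(1+\W)$, which replaces $\langle Q^{(n)},\Til^{-1}Q^{(n)}_\alpha\rangle$ by $\langle R^{(n-1)},\Til^{-1}R^{(n-1)}_\alpha\rangle+2\langle [R\W]^{(n-1)},\Til^{-1}R^{(n-1)}_\alpha\rangle$ and partially cancels against the $W$-high-frequency terms already present in $\tA_{1,high}$; if you instead solve the cohomological equation directly in $(\W,R)$, this substitution must be built into the quadratic sources from the start. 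With these two points repaired, the remaining work is exactly the symbol bookkeeping the paper carries out.
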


The remainder of this section is devoted to the proof of the above
proposition.  To start with we give a brief description of the
types of trilinear forms $B$ and $A$ that we will work with. These
trilinear forms are translation invariant so they can be described in
terms of their symbols. Precisely, one can 
represent any such trilinear form
$B(W,W,W)$ and $A(W,Q,Q)$ as
\[
\begin{aligned}
&B(W,W,W) =  \frac2{\sqrt{2\pi}}\Re\int_{\xi+\eta+\zeta = 0} B(\xi,\eta,\zeta)  \hat W(\xi) \hat W(\eta) \hat W(\zeta)\,
d\xi d\eta,\\
&A (W,Q,Q)=  \frac2{\sqrt{2\pi}}\Re\int_{\xi+\eta+\zeta = 0} A(\zeta,\xi,\eta) \hat W(\zeta) \hat Q(\xi) \hat Q(\eta) \,d\xi d\eta.
\end{aligned}
\]
At the same time, we also need trilinear forms which involve complex conjugates. However,  the functions $W$ and $Q$ are holomorphic, and thus
their Fourier transforms satisfy the relations
\begin{equation}\label{flip}
\bar{ \hat W}(-\xi) = e^{2\xi} \hat W(\xi), \qquad \bar{ \hat Q}(-\xi) = e^{2\xi} \hat Q(\xi).
\end{equation}
These relations allow us to uniquely represent all the cubic terms in
the normal form energy functional in the above form without any conjugates.
The price to pay is that we need to allow such exponentials in our symbol classes.
However, this happens in a very limited way. To account for this we introduce the 
following notation

\smallskip
\begin{definition}
Given any class of symbols $S(\sigma)$ on the plane $\mc P = \{\xi+\eta+\zeta = 0\}$, we denote by
$ES(\sigma)$ the linear span of symbols in $\{S(\sigma), e^{\pm 2\xi} S(\sigma), e^{\pm 2\eta} S(\sigma), e^{\pm 2\zeta} S(\sigma) \}$.
\end{definition}
\smallskip

We note than any trilinear form with symbols in the class $ES$,
acting on holomorphic functions, can be written 
as a sum of trilinear forms with symbols in $S$, but where complex 
conjugation is also allowed. 

Also we note that for any such trilinear form, its symbol is uniquely determined 
up to symmetries, i.e., for the symmetric part
of the above symbols.  Indeed, symmetrizations will play a crucial role in our computations 
because they will allow us  to gain some critical cancellations.

The aim of this section is to determine the symbols $A$, $B$ above,
and to study their properties. 

\subsection{From normal forms to normal form energies}
As a first step in the proof of the proposition, here we obtain 
a preliminary normal form energy $ \tEnnf(W,Q)$ of the form
\[
\tEnnf(W,Q) = E_0 (\partial^n \tilde W, \partial^n \tilde Q),
+ g \tB(W,W,W) + \tA(W,Q,Q)
\]
so that the key property \eqref{dt-quartic} holds.
The natural expression for the normal form energy is provided by the normal form 
transformation computed in the previous section. Precisely, we will take
\[
\begin{split}
\tEnnf(W,Q) = & \  \Poly{\leq 3} E_0 (\partial^n \tilde W, \partial^n \tilde Q)
\\ = & \  E_0(\partial^n W, \partial^n Q) + 2g \la \partial^n W,  \partial^n W^{[2]} \ra - 2
\la  \Til^{-1} \partial^{n+1} Q,  \partial^n Q^{[2]} \ra.
\end{split}
\]
In view of the equations \eqref{NF-eq} the property \eqref{dt-quartic}  is automatically satisfied.
It remains to express the trilinear forms above involving the normal 
form corrections $W^{[2]}$, $Q^{[2]}$ as trilinear forms  $\tB(W,W,W)$ and   $\tA(W,Q,Q)$.

Given the expressions  for  $W^{[2]}$ and $Q^{[2]}$, the trilinear form  $\tB$  is as follows:
\[
\begin{split}
\tB(W,W,W) = &  \frac2{\sqrt{2\pi}} \Re \int_{\xi+\eta+\zeta = 0}  \zeta^{2n} ( \bar{\hat W}(-\zeta)) - \hat W(\zeta)) 
B^h(\xi,\eta) \hat W(\xi) \hat W(\eta)\, d\xi d\eta 
\\
& \ +    \frac2{\sqrt{2\pi}}\Re \int_{\xi+\eta+\zeta = 0} \zeta^{2n} ( \bar{\hat W}(-\zeta)) - \hat W(\zeta)) 
B^a(\xi,-\eta) \hat W(\xi) \bar {\hat  W}(-\eta)\, d\xi d\zeta.
\end{split}
 \]


We can put these two integrals together
using the relation \eqref{flip} to obtain 
\[
\tB(\xi,\eta,\zeta) =   (e^{2\zeta}-1)  \zeta^{2n} ( B^h(\xi,\eta) +  e^{2\eta} B^a(\xi,-\eta)).
\]
Further, we can symmetrize $\tB$ with respect to the three variables, as well as  with respect to the reflection symmetry\footnote{Here we use the fact that the trilinear forms $\tA$, $\tB$ are real valued.} 
\[
\tB(\xi,\eta,\zeta) \to \bar \tB(-\xi,-\eta,-\zeta).
\]
We denote the symmetrization of  $\tB$ by $\tB^{sym}$, which can be used instead of $\tB$. As mentioned before, this symmetrization is very important, not only in order to uniquely describe the trilinear 
form, but also because it allows us to eliminate small denominators
in the symbol for $\tilde B$ (even though such singularities do appear
in the normal form).

We can perform a similar computation for $\tA$:
\[
\begin{split}
\tA(W,Q,Q) = & \  \frac2{\sqrt{2\pi}}\Re \int_{\xi+\eta+\zeta = 0}  \zeta^{2n} ( \bar{\hat W}(-\zeta)) - \hat W(\zeta)) 
C^h(\xi,\eta) \hat Q(\xi) \hat Q(\eta)\,d\xi d\eta 
\\
& \ +   \frac2{\sqrt{2\pi}}\Re \int_{\xi+\eta+\zeta = 0} \zeta^{2n} ( \bar{\hat W}(-\zeta)) - \hat W(\zeta)) 
C^a(\xi,-\eta) \hat Q(\xi) \bar {\hat  Q}(-\eta) \,d\xi d\zeta
\\ 
& \ +  \frac2{\sqrt{2\pi}}\Re \int_{\xi+\eta+\zeta = 0}  \coth\zeta\ \zeta^{2n+1} ( \bar{\hat Q}(-\zeta)) - \hat Q(\zeta)) 
A^h(\xi,\eta) \hat W(\xi) \hat Q(\eta) \,d\xi d\eta
\\
& \ +   \frac2{\sqrt{2\pi}}\Re \int_{\xi+\eta+\zeta = 0}\coth\zeta\ \zeta^{2n+1} ( \bar{\hat Q}(-\zeta)) - \hat Q(\zeta)) 
A^a(\xi,-\eta) \hat W(\xi) \bar {\hat  Q}(-\eta) \,d\xi d\zeta
\\
& \ +   \frac2{\sqrt{2\pi}}\Re \int_{\xi+\eta+\zeta = 0}\coth\zeta\ \zeta^{2n+1} ( \bar{\hat Q}(-\zeta)) - \hat Q(\zeta)) 
D^a(\xi,-\eta) \hat Q(\xi) \bar {\hat  W}(-\eta) \,d\xi d\zeta.
\end{split}
\]
This yields the symbol for $\tA$, namely
\[
\begin{split}
\tA(\zeta,\xi,\eta)
= & \ \zeta^{2n} (e^{2\zeta} -1) \left(  C^h(\xi,\eta) + e^{2\eta} C^a(\xi,-\eta)\right) \\ & \ +  \xi^{2n+1} (e^{2\xi} + 1) 
\left( A^h(\zeta,\eta) + e^{2\eta} A^a( \zeta,-\eta) +  e^{2\zeta} D^a(\eta,-\zeta) \right)  .
\end{split}
\]
Again, this can be further symmetrized with respect to $\xi$ and
$\eta$, as well as with respect to  the reflection symmetry to obtain the symbol $\tA^{sym}$.

\subsection{ The properties of the symbols 
$\tA^{sym}$ and $\tB^{sym}$
}

A crucial step in our analysis is to understand the properties of the
symbols $\tA^{sym}$ and $\tB^{sym}$.  In this we have two goals. In terms
of low frequencies, we want to show that we can extract factors of
$\xi \eta \zeta$, so that $\tEnnf$ depends only on the differentiated
variables $W_\alpha$ and $Q_\alpha$.  In terms of high frequencies we
seek to find the leading terms in the expansion of the symbols for $\tA$
and $\tB$ near the axis $\xi = 0$, $\eta = 0$ and $\zeta = 0$.
These are as follows:
\bigskip

\begin{lemma}~

a) The symbols $\tA^{sym}$ and $\tB^{sym}$ can be expressed in the form
\[
\tA^{sym} \in \xi \eta \zeta ES(\rho^{2n-1}), \qquad \tB^{sym} \in  \xi \eta \zeta ES(\rho^{2n-2}).
\]

b) The leading order terms in  $\tB^{sym}$  in the region $|\eta| \ll \xi$ have the form
\begin{equation}
\begin{split}
\tB^{sym} =  & \ -\frac{i}{48} e^{2\xi} \xi^{2n}\left( 8n  \eta -  \frac{(\eta+J(\eta))^2}{ J(\eta)}\right) 
- \frac{i}{48} e^{-2\zeta} \zeta^{2n}\left( 8n  \eta +  \frac{(\eta-J(\eta))^2}{ J(\eta)}\right) 
 \\&\ +  \eta  ES(d \rho^{2n-1}) .
\end{split}\label{tB-Asymptotics}
\end{equation}

c) The leading order terms in $ \tA^{sym}$ in the region $|\zeta| \ll \xi$
are as follows:
\begin{equation}
\begin{split}
\tA^{sym} = &\  \frac{i}{16}  e^{2\xi} \xi^{2n} \eta\left( 8n \zeta + \frac{J(\zeta)^2 - \zeta^2}{J(\zeta)}\right)  - \frac{i}{16}  e^{-2\eta} \eta^{2n} \xi\left( 8n \zeta - \frac{J(\zeta)^2 - \zeta^2}{J(\zeta)} \right)
\\&\ + \zeta ES(d \rho^{2n}). 
\end{split}
\end{equation}

d) The leading order terms in $ \tA^{sym}$ in the region $|\eta| \ll \zeta$
are as follows:
\begin{equation}\label{tA-Asymptotics-small-eta}
\begin{split}
\tA^{sym} =&\ \frac14 i e^{2\eta + 2\zeta} \xi^{2n+1} \eta +  \eta ES(  d^2 \rho^{2n-1}) + \eta ES(\rho^{2n}).
\end{split}
\end{equation}

\end{lemma}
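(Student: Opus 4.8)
The plan is to extract all four statements directly from the two symbol formulas
\[
\tB(\xi,\eta,\zeta)=(e^{2\zeta}-1)\zeta^{2n}\bigl(B^h(\xi,\eta)+e^{2\eta}B^a(\xi,-\eta)\bigr),
\]
\[
\tA(\zeta,\xi,\eta)=\zeta^{2n}(e^{2\zeta}-1)\bigl(C^h(\xi,\eta)+e^{2\eta}C^a(\xi,-\eta)\bigr)+\xi^{2n+1}(e^{2\xi}+1)\bigl(A^h(\zeta,\eta)+e^{2\eta}A^a(\zeta,-\eta)+e^{2\zeta}D^a(\eta,-\zeta)\bigr),
\]
together with the symbol classes and the boundary/asymptotic expansions of $B^h,C^h,A^h,A^a,B^a,C^a,D^a$ from the previous lemma. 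The symmetrizations defining $\tB^{sym},\tA^{sym}$ (over permutations of $(\xi,\eta,\zeta)$ and under the reflection $s(\xi,\eta,\zeta)\mapsto\overline{s(-\xi,-\eta,-\zeta)}$) are what produce the cancellations in (a) and isolate the leading terms in (b)--(d).

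For \emph{(a)} I would recast the statement: by trilinearity, that $\tA^{sym},\tB^{sym}$ carry a factor $\xi\eta\zeta$ is equivalent to $\tEnnf(W,Q)$ being invariant under the gauge shifts $(W,Q)\mapsto(W+c,Q+q)$, i.e.\ to $\tEnnf$ depending on $(W,Q)$ only through $(W_\alpha,Q_\alpha)$. Since the quadratic part $E_0(\partial^n W,\partial^n Q)$ is already invariant for $n\geq1$, it suffices to check that the cubic part $2g\langle\partial^n W,\partial^n W^{[2]}\rangle-2\langle\Til^{-1}\partial^{n+1}Q,\partial^n Q^{[2]}\rangle$ is unchanged when a constant is frozen into any one input slot of the bilinear corrections $W^{[2]},Q^{[2]}$; each such freezing replaces the corresponding normal-form symbol by its restriction to an axis $\{\xi=0\}$ or $\{\eta=0\}$, which is explicit from the previous lemma. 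The contributions that are \emph{diagonal} (both slots carry the same function, up to conjugation) vanish by a parity argument: all the axis values --- $B^h(0,\eta),C^h(0,\eta),A^h(\xi,0),A^h(0,\eta),B^a(\xi,0),B^a(0,-\eta),C^a(\xi,0),C^a(0,-\eta),A^a(\xi,0),A^a(0,-\eta),D^a(\xi,0),D^a(0,-\eta)$ --- are purely imaginary (visible from the displayed formulas, all of the shape $i\,(\text{real})/\Lambda$, using that $J,\Lambda$ are even and $J'$ is odd), so against the real kernels $\sinh^2\zeta\cdot\zeta^{2n}$ (or $\zeta^{2n+1}\coth\zeta\cdot\sinh^2\zeta$) and the real weights $e^{-2\zeta}|\hat W(\zeta)|^2$, $e^{-2\zeta}|\hat Q(\zeta)|^2$ the relevant integral is purely imaginary and its real part vanishes; this already gives invariance under $W\mapsto W+c$. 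Invariance under $Q\mapsto Q+q$ additionally requires the \emph{mixed} contributions (where $W$ is paired against a $Q$-operator, or $\Til^{-1}\partial^{n+1}Q$ against a $W$-operator, coming from $C^h,C^a$ in $W^{[2]}$ and $A^h,A^a,D^a$ in $Q^{[2]}$) to cancel among themselves; after an integration by parts putting them on the same footing, this reduces to an algebraic identity among the axis values of $C^h,C^a,A^h,A^a,D^a$ which closes precisely because $J(\zeta)=\zeta\tanh\zeta$, equivalently $2J(\zeta)\cosh\zeta=2\zeta\sinh\zeta$. Once the factor $\xi\eta\zeta$ has been extracted, the regularity statements $\tB^{sym}\in\xi\eta\zeta\,ES(\rho^{2n-2})$ and $\tA^{sym}\in\xi\eta\zeta\,ES(\rho^{2n-1})$ follow from the classes $\Til(\zeta)B^h,\Til(\zeta)B^a,\Til(\zeta)C^h,\Til(\zeta)C^a\in S(\rho)$ and the analogous bounds for $A^h,A^a,D^a$ (the simple pole at $\zeta=0$ being absorbed by $\zeta^{2n}$ and the extra vanishing factor), together with bookkeeping of the remaining powers of the output frequency; the exponential prefactors $(e^{2\zeta}-1),(e^{2\xi}+1),e^{2\eta}$ and the flips \eqref{flip} are exactly what force the enlargement of $S$ to $ES$.

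For \emph{(b)--(d)} I would substitute the one-large-frequency expansions from the previous lemma into $\tB,\tA$ and symmetrize. In the region $|\eta|\ll|\xi|$ only the ``one high negative frequency, one low frequency'' expansions $B^h(\xi,\eta)=-\tfrac{i}{2}\bigl(\xi-\tfrac{(\eta-J(\eta))^2}{4J(\eta)}\bigr)+S(d^2\rho^{-1})$ and $B^a(\xi,-\eta)=-i\xi+S(d^2\rho^{-1})$ are relevant, since $C^h,C^a$ decay rapidly there; symmetrization pairs the contribution with $\xi$ large against the one with $\zeta$ large, and the two leading terms assemble into \eqref{tB-Asymptotics} with remainder in $\eta\,ES(d\rho^{2n-1})$. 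Parts (c) and (d) are identical in spirit, using instead the expansions of $A^h,A^a,D^a,C^h,C^a$ near $\{\zeta=0\}$, respectively near $\{\eta=0\}$, together with the rapid decay of the symbols that do not contribute in the relevant region. As a consistency check, each leading expression respects (a): on $\{\eta=0\}$ in (b) one has $\zeta=-\xi$, so $e^{-2\zeta}\zeta^{2n}=e^{2\xi}\xi^{2n}$ and the two displayed terms cancel, as they must since $\tB^{sym}$ carries a factor of $\eta$.

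The main obstacle is part (a): the individual symbols $B^h,C^h,A^h,\dots$ emphatically do \emph{not} vanish on the coordinate axes --- indeed $B^h,C^h,A^h$ have nonzero axis values that blow up like $1/\Lambda$ near the triple point --- so the required vanishing is a genuine cancellation appearing only after summing all permuted and reflected contributions, and only because those axis values have exactly the matching algebraic form. Organizing this cancellation, especially the mixed $W$--$Q$ contributions and the symbols $C^h,C^a,A^a$ which have no infinite-depth analogue, is the crux; the high-frequency expansions in (b)--(d) are then routine once the relevant asymptotics from the previous lemma are in hand.
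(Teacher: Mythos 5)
Your overall strategy is the paper's: symmetrize, kill the easy edges by a parity/pure-imaginarity argument, handle the one hard edge by explicit algebra with the axis values, then substitute the one-large-frequency expansions for (b)--(d). The gauge-invariance reformulation of (a) --- that extracting $\xi\eta\zeta$ is equivalent to $\tEnnf$ depending only on $(W_\alpha,Q_\alpha)$ --- is a legitimate repackaging (the paper states this consequence but works directly on the symbols), and your parity argument for the ``diagonal'' contributions is exactly how the paper disposes of $\tB^{sym}$ and of the edge $\zeta=0$ of $\tA^{sym}$: there it factors the symmetrized symbol as (odd exponential) $\times$ (even factors) $\times$ (fully symmetric factor), respectively invokes smoothness and pure imaginarity of the whole symbol $\tA$ together with the $(\xi,\eta)$-swap and the reflection, so that no individual axis values are needed at all (note that $A^a(0,-\eta)$ and $D^a(\xi,0)$, which your slot-freezing argument would require, are not among the displayed formulas).

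The one genuine gap is the edge $\xi=0$ of $\tA^{sym}$ --- your ``mixed $W$--$Q$ contributions.'' You correctly identify this as the crux but then assert that the required identity ``closes because $J(\zeta)=\zeta\tanh\zeta$'' without exhibiting it; nothing structural guarantees this in advance, and it is precisely the computation the paper must do. Concretely, one writes $4\tA^{sym}(-\eta,0,\eta)=\tA(-\eta,0,\eta)+\tA(-\eta,\eta,0)-\tA(\eta,0,-\eta)-\tA(\eta,-\eta,0)$, substitutes the axis values $C^h(0,\eta)$, $C^a(0,-\eta)$, $C^h(\eta,0)$, $C^a(\eta,0)$, $A^h(-\eta,0)$, $A^a(-\eta,0)$, $D^a(0,\eta)$, and checks that $\Lambda(\eta)\tA(-\eta,0,\eta)$ and $\Lambda(\eta)\tA(-\eta,\eta,0)$ are explicit combinations of $\eta^{2n+1}J(\eta)J'(\eta)$, $\eta^{2n+1}(2J(\eta)-\eta J'(\eta))$ and exponentials that cancel exactly; your proposal needs this verification to be complete. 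Two smaller points: in part (b) the symbols $C^h,C^a$ do not enter $\tB$ at all (only $B^h$ and $B^a$ do, with $C^h$ appearing only inside the formula for $B^a$), so the remark about their rapid decay is misplaced there; and for the leading terms the paper computes the coefficient $L_\xi$ of $e^{2\xi}$ directly from the rational expressions $J(\xi)J(\eta)J(\zeta)/\Omega$ and $K(\xi,\eta)/\Omega$ rather than from the expansions of $B^h,B^a$ --- both routes give the stated asymptotics, but yours requires tracking all permuted copies carefully since the expansions are stated with a distinguished large variable.
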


\begin{proof}
  We successively establish the desired properties for $\tA^{sym}$ and
  $\tB^{sym}$.  To simplify the bookkeeping we introduce the notation
  $\esym$ to describe the relation between two symbols which have the
  same symmetrization.
\bigskip

{\em  1. The symbol  $\tB^{sym}$.} 
 We recall that
\[
\tB(\xi,\eta,\zeta) =   (e^{2\zeta}-1)  \zeta^{2n} \left( B^h(\xi,\eta) +  e^{2\eta} B^a(\xi,-\eta)\right).
\]
Using symmetries, the $B^h$ contribution to  $\tB^{sym}$ is given by 
obtained  by symmetrizing the  expression
\[
\tB^{h,sym} \esym  \zeta^{2n}  (e^{2\zeta}-1) B^h(\xi,\eta) \esym  
 2 \zeta^{2n}  \sinh^2 \zeta  B^h(\xi,\eta) \esym   
- i \zeta^{2n}  (e^{2\zeta}- e^{-2\zeta})   \frac{J(\xi) J(\eta) J(\zeta)}{\Omega(\xi,\eta,\zeta)}.
\]
This is a smooth symbol. Further, since the exponential factor is odd and all other factors are even, its symmetrization vanishes on all three diagonals.

For the $B^a$ part we simplify using the reflection symmetry,
\[
\begin{split}
\tB^{a,sym}\esym & \  \zeta^{2n-1} e^{-2 \xi} \{ 
(J(\xi+\eta) - (\xi-\eta)) B^h(\xi,\eta) + (\tanh \xi -\tanh \eta)C^h(\xi,\eta) \}
\\ \esym & \  \ i \zeta^{2n}(e^{2\xi}- e^{-2 \xi}) \frac{J(\xi) J(\eta) J(\zeta)}{\Omega(\xi,\eta,\zeta)}
 + i \frac{\zeta^{2n}( e^{2\xi}+ e^{-2 \xi})}{2\Omega(\xi,\eta,\zeta)}  K(\xi,\eta),
 \end{split}
\]
where
\[
K(\xi,\eta) =    2(\xi-\eta) J(\xi) J(\eta) - (\eta J(\xi) - \xi J(\eta)) (J(\zeta) - J(\xi) - J(\eta)). 
\]
The symmetrization of the first term vanishes on the diagonals as the
first two factors are even, respectively odd, and the fraction is fully
symmetric.  The same applies for the last term, where all we need to
use for $K$ is that it is odd and antisymmetric.

Next we consider the high frequency asymptotics. Simply by considering
separately the size of each component above, we obtain $\tB^{sym} \in
ES(\rho^{2n+1})$, which suffices outside a small conical neighborhood
of the diagonals. We need to improve this near the diagonals so we
consider the case $|\eta| \ll |\xi|,|\zeta|$.  Here we need to compute
the principal part of $\tB^{sym}$ modulo lower order terms,
i.e., symbols in $ES(d^2 \rho^{2n-1})$.

The terms containing $e^{\pm 2 \eta}$ are exponentially small compared
to $e^{\pm 2\xi}$ and $e^{\pm 2 \zeta}$ so we can neglect them. We can
also neglect terms with the $\eta^{2n}$ factor.  Further, there can be
no polynomial cancellation arising from the exponentials so we might
as well consider them separately.  Hence we consider the leading order
coefficient $L_\xi$ of $e^{2\xi}$ in the region where $\xi > 0$ (and
thus $\zeta < 0$).  Neglecting lower order terms we compute
\[
\begin{split}
- i \Omega L_\xi = & \  \frac13 (- \xi^{2n} + \frac12 \zeta^{2n})  J(\xi) J(\eta) J(\zeta)  + \frac1{12} \zeta^{2n} K(\xi,\eta)
\\
= & \  - \frac13 (- \xi^{2n} + \frac12 \zeta^{2n})  \xi \zeta J(\eta)   
+ \frac1{12} \zeta^{2n} ( 2 \xi(\xi-\eta) J(\eta) - \xi( \eta - J(\eta))^2)
\\
= & \ \frac13 (\xi^{2n} - \zeta^{2n}) \xi \zeta J(\eta) - \frac1{12} \zeta^{2n} \xi(\eta+J(\eta))^2
\\
= & - \frac13 2n \xi^{2n}\zeta \eta J(\eta) -    \frac1{12} \zeta^{2n} \xi(\eta+J(\eta))^2 .
\end{split}
\]
Thus, dividing by $\Omega$ we obtain 
\[
-i  L_\xi  =   - \frac1{12} 2n \xi^{2n} \eta + \frac1{48} \zeta^{2n} \frac{(\eta+J(\eta))^2}{J(\eta)} .
\]
There is a second relevant term in the same region, namely the one with the $e^{-2\zeta}$ factor,
which is obtained by the reflection symmetry and yields the complex conjugate of the previous contribution. 
Thus we get the statement in the proposition.

 \bigskip

{\em 2. The symbol  $\tA^{sym}$.}
We recall the expression for $\tA$:
\[
\begin{split}
\tA(\zeta,\xi,\eta) 
= & \ (e^{2\zeta} -1) \zeta^{2n} C^h(\xi,\eta) 
+ ( e^{-2\xi} - e^{2 \eta}) \zeta^{2n}  C^a(\xi,-\eta) \\ & \ +  \xi^{2n+1} (e^{2\xi} + 1) 
(A^h(\zeta,\eta) + e^{2\eta} A^a( \zeta,-\eta) + e^{2\zeta} D^a(\eta,-\zeta) ).  
\end{split}
\]
Using the reflection symmetry for the first  term we have
\[
\begin{split}
\tA(\zeta,\xi,\eta) \esym & \ 2 \zeta^{2n} \sinh^2 \zeta \   C^h(\xi,\eta) 
+ ( e^{-2\xi} - e^{2 \eta}) \zeta^{2n}  C^a(\xi,-\eta) \\ & \ +  \xi^{2n+1} (e^{2\xi} + 1) 
(A^h(\zeta,\eta) + e^{2\eta} A^a( \zeta,-\eta) + e^{2\zeta} D^a(\eta,-\zeta)).  
\end{split}
\]

We first verify that the symbol $\tA^{sym}$ vanishes on the edges. The edge $\zeta = 0$ requires that 
$\tA^{sym}(0,\xi,-\xi) = 0$. This needs no computation, instead it is a consequence of the 
fact that $\tA$ above is smooth and purely imaginary. Indeed, the symmetry in $(\xi,\eta)$ 
corresponds to $\tA(0,\xi,-\xi)  \to \tA(0,-\xi,\xi)$, whereas the reflection symmetry corresponds to the transformation 
$\tA(0,\xi,-\xi) \to - \tA(0,-\xi,\xi)$. 

It remains to compute the edge $\xi = 0$, i.e., $\tA^{sym}(-\eta,0,\eta)$. In view of the symmetries
and the fact that $\tA$ is purely imaginary, we have 
\[
4 \tA^{sym}(-\eta,0,\eta) = \tA(-\eta,0,\eta) + \tA(-\eta,\eta,0) - \tA(\eta,0,-\eta) - \tA(\eta,-\eta,0).
\]
So we proceed to compute
\[
\begin{split}
\Lambda(\eta) \tA(-\eta,0,\eta) = & \ (e^{-2\eta} - 1)\eta^{2n}\Lambda(\eta)C^h(0,\eta) + (1 - e^{2\eta})\eta^{2n}\Lambda(\eta)C^a(0,-\eta)\\
=& \ i\eta^{2n + 1}\left(- e^{-2\eta}J(\eta)J'(\eta) + 2J(\eta) - \eta J'(\eta)\right) - 2i\eta^{2n+1}J(\eta)J'(\eta).
\end{split}
\]
A similar computation yields
\[
\begin{split}
\Lambda(\eta) \tA(-\eta,\eta,0) = & \  (e^{-2\eta} - 1)\eta^{2n}\Lambda(\eta)\left(C^h(\eta,0) + C^a(\eta,0)\right)\\
& \ + (e^{2\eta} + 1)\eta^{2n+1}\left(A^h(-\eta,0) + A^a(-\eta,0) + e^{-2\eta}D^a(0,\eta)\right)\\
=& - i\eta^{2n+1}\left(-e^{-2\eta}J(\eta)J'(\eta) + 2J(\eta) - \eta J'(\eta)\right)\\
&\ - i(4 + 3e^{2\eta}+ 3e^{-2\eta})\eta^{2n+1}J(\eta)J'(\eta) i( e^{2\eta} - e^{-2\eta})\eta^{2n+1}(2J(\eta) - \eta J'(\eta)).
\end{split}
\]
Combining these two we  get $\tA^{sym}(-\eta,0,\eta) = 0$.

Finally we compute the high frequency asymptotics for $\tA^{sym}$. 
To be precise, we have $\tA^{sym} \in ES(\rho^{2n+2})$ and we compute 
its symbol modulo lower order terms in $ES(d^2\rho^{2n})$ near the edge 
$\zeta = 0$, respectively $ES(d^2d_1\rho^{2n-1}) + ES(d_1\rho^{2n})$ near the edges
$\xi = 0$ and $\eta = 0$.
Here $A^a$ and $C^a$ do not contribute to the principal part so we drop them. 

First we consider the case when $\zeta$ is small and $\xi$ and $\eta$
are large. Neglecting terms with a $\zeta^2$ factor we are left with
\[
  \xi^{2n+1} (e^{2\xi} + 1)  (A^h(\zeta,\eta)  + e^{2\zeta} D^a(\eta,-\zeta) )  .
\]
We only need to retain the factors with $e^{\pm 2\xi}$ 
and $e^{\pm 2 \eta}$, which leaves us with 
\[
  \xi^{2n+1} ( e^{2\xi} A^h(\zeta,\eta) +  e^{-2\eta} D^a(\eta,-\zeta) )  .
\]
 In view of the symmetries it suffices to compute the coefficient $L_\xi$
of $ e^{ 2\xi}$  when $\xi > 0$. This is given by,  after symmetrization,
\[
\begin{split}
L_\xi = & \ \frac14  (\xi^{2n+1}  A^h(\zeta,\eta)  +  \eta^{2n+1}  D^a(-\xi,\zeta)) 
\\ = & \ \frac14 \left[
- i \xi^{2n+1}   \left( \eta +\frac{J(\zeta)^2 -\zeta^2}{4J(\zeta)} \right)   + i \eta^{2n+1} 
 \xi \right]
\\ = & \  i  \eta \xi^{2n} \left( \frac{n}2 \zeta   + \frac{J(\zeta)^2 -\zeta^2}{16 J(\zeta)} \right),
\end{split}
\]
as required in the proposition.

Lastly, we consider the case when $\eta$ is small, neglecting $A^a$, $C^a$ and
all the $\eta^3$ terms. Here $D^a$ also does not contribute. Thus, as both $A^h$ and $C^h$ 
are odd and purely imaginary,  
applying the symmetries we need to consider the expression
\[
\begin{split}
  \frac12\zeta^{2n} (e^{2\zeta} + e^{-2\zeta})  C^h(\xi,\eta) 
  +  \frac14  \xi^{2n+1} (e^{2\xi}-e^{-2\xi}) A^h(\zeta,\eta) .    
\end{split}
\]
By symmetry it suffices to consider the case that \(\zeta>0\) and \(\xi<0\). Thus, the leading order terms in the region \(|\eta|\ll\zeta\) are given by
\[
e^{2\zeta}\left(\xi^{2n}e^{2\eta} - \zeta^{2n} \right) \frac{i \eta(J(\eta) + \eta)}{8J(\eta)}\left(\xi - \frac{(J(\eta) - \eta)^2}{4J(\eta)}\right).
\]
Using that
\[
(e^{2\eta} - 1)\frac{i\eta(J(\eta) + \eta)}{8J(\eta)} = \frac14i\eta e^{2\eta},
\]
and ignoring lower order terms we are left with
\[
\frac14i\xi^{2n+1}\eta e^{2\eta + 2\zeta} - \frac n4i\xi^{2n}\eta^2  e^{2\eta + 2\zeta}\frac{J(\eta) - \eta}{J(\eta)} - \frac1{16}i\xi^{2n}\eta e^{2\eta + 2\zeta}\frac{(J(\eta) - \eta)^2}{J(\eta)}.
\]
The second and third symbols yield contributions in the class \(\eta ES(\rho^{2n})\) and hence they can be neglected. Thus, we are left with only the leading term,
\[
\frac14i\xi^{2n+1}\eta e^{2\eta + 2\zeta},
\]
and the final claim of the lemma follows.

\end{proof}
\medskip

\subsection{ The high-low decomposition of $\tA$ and $\tB$}
The normal form energy is conserved to quartic order but, as our
problem is quasilinear, we expect that its time derivative will
contain more derivatives of $(W,Q)$ than we want. The idea is then to
remedy this issue by adding quartic (and higher order) quasilinear
corrections to the normal form energy. Fortunately, in this problem it
suffices to correct only the leading order terms in the normal form
energy.  Because of this, it is convenient to split the normal form
energy into a leading part plus a lower order part,
\[
\tEnnf = \tilde E^n_{NF,high} + \tilde E^n_{NF,low},
\]
which corresponds to the decomposition of the trilinear forms $\tA$ and $\tB$
as 
\begin{equation}\label{first-hl}
\tA = \tA_{high}+\tA_{low}, \qquad \tB = \tB_{high}+\tB_{low}.
\end{equation}
The decomposition of the symbols is already given in the previous lemma,
here we just compute the terms in the leading order part.
To understand this decomposition it is useful to separate the generic case 
$n \geq 2$ from $n = 1$. For larger $n$ we have:

\begin{lemma}\label{l:nf-high}
Let $n \geq 2$. Then the trilinear forms $\tA$, $\tB$ admit a decomposition as in 
\eqref{first-hl} where the symbols of $\tA_{low}$, $\tB_{low}$ satisfy 
\begin{equation}\label{low}
\tB_{low} \in \xi \eta \zeta ES(d \rho^{2n-3}), \qquad \tA_{low} \in \xi \eta \zeta ES(dd_1 \rho^{2n-3}) + \xi\eta\zeta ES(\rho^{2n-2}),
\end{equation}
and the forms $\tA_{high}$, $\tB_{high}$ are given by
\begin{equation} \label{eq-high}
\begin{aligned}
 &\tB_{high}(W,W,W)  =  \ \langle W^{(n)}, W^{(n)} \rangle_{- 4 n \Re W_\alpha + \frac12(1 +\Til^2) \Re W_\alpha},
 \\
&\tA_{high}(W,Q,Q)=
  \ - \langle Q^{(n)}, \Til^{-1} Q^{(n+1)}\rangle
_{ - 4 n \Re W_\alpha - \frac12(1 +\Til^2) \Re W_\alpha}
\\ &\hspace{3.3cm} + 2 \langle Q_\alpha W^{(n)}, \Til^{-1}Q^{(n+1)} \rangle  + 2n \langle Q_{\alpha\alpha} W^{(n-1)}, \Til^{-1} Q^{(n+1)} \rangle.
\end{aligned}
\end{equation}

\end{lemma}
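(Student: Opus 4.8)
The plan is to take the explicit formulas \eqref{eq-high} as the \emph{definition} of $\tB_{high}$ and $\tA_{high}$, to set $\tB_{low} := \tB - \tB_{high}$ and $\tA_{low} := \tA - \tA_{high}$, and then to verify two things: that the symmetrized symbols of $\tB_{high}, \tA_{high}$ reproduce the full leading order (in $\rho$) behaviour of $\tB^{sym}, \tA^{sym}$ along each of the three edges $\xi=0$, $\eta=0$, $\zeta=0$; and that, as an immediate consequence, the remainders $\tB_{low},\tA_{low}$ fall into the improved symbol classes \eqref{low}. The point is that the preceding lemma already supplies $\tB^{sym}\in\xi\eta\zeta\, ES(\rho^{2n-2})$ and $\tA^{sym}\in\xi\eta\zeta\, ES(\rho^{2n-1})$ together with the precise edge expansions \eqref{tB-Asymptotics}, \eqref{tA-Asymptotics-small-eta}, and the $|\zeta|\ll\xi$ expansion; once the leading edge terms have been peeled off, the difference automatically gains a factor $d/\rho$ in the interior and vanishes to the correct order near the edges, which is exactly \eqref{low}. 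Throughout I will work with symbols only up to the symmetrization relation $\esym$, since the trilinear forms are determined only by their symmetric parts, and I will use the holomorphicity relations \eqref{flip} to trade complex conjugates for exponential factors in $ES$.

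\emph{The form $\tB$.} Since $\tB^{sym}$ is fully symmetric in $(\xi,\eta,\zeta)$ it suffices to match near the single edge $|\eta|\ll\xi$ and invoke symmetry for the other two. I would first rewrite the weighted quadratic form $\langle W^{(n)},W^{(n)}\rangle_\omega$ with $\omega = c_1\Re W_\alpha + c_2(1+\Til^2)\Re W_\alpha$ as a translation invariant trilinear form in $W$: using $\Im u = -\Til\Re u$ and $\Re u = \tfrac12(u+\bar u)$ on the two holomorphic copies of $W^{(n)}$ and then \eqref{flip} on the weight, one obtains a symbol carrying the factor $(i\xi)^n(i\zeta)^n$ from the two copies of $W^{(n)}$ and a factor proportional to $\eta(1+e^{2\eta})$ times $1$ (for the $c_1$ piece) or $\sech^2\eta$ (for the $c_2$ piece) from $W_\alpha$. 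After symmetrization, and using $J(\eta)=\eta\tanh\eta$ together with $1+\tanh\eta = 2e^{2\eta}/(e^{2\eta}+1)$, the regime $|\eta|\ll\xi$ retains only the $e^{2\xi}\xi^{2n}$ and $e^{-2\zeta}\zeta^{2n}$ coefficients, and comparison with \eqref{tB-Asymptotics} identifies $-4n\Re W_\alpha$ as the source of the $8n\eta$ term and $\tfrac12(1+\Til^2)\Re W_\alpha$ as the source of the $-(\eta+J(\eta))^2/J(\eta)$ term. This both fixes the constants $-4n$ and $\tfrac12$ and shows that $\tB-\tB_{high}$ has had its top-order edge behaviour removed on all three edges, hence $\tB_{low}\in\xi\eta\zeta\, ES(d\rho^{2n-3})$.

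\emph{The form $\tA$.} Here there are two inequivalent edges: the $W$-edge $\zeta=0$, and the pair of $Q$-edges $\xi=0,\eta=0$ exchanged by the $(\xi,\eta)$-symmetry of $\tA^{sym}$. The first term of $\tA_{high}$, namely $-\langle Q^{(n)},\Til^{-1}Q^{(n+1)}\rangle_{-4n\Re W_\alpha-\frac12(1+\Til^2)\Re W_\alpha}$, carries its weight at the $W$-frequency $\zeta$; expanded near $\zeta=0$ it produces the leading terms there, with the $-4n$ piece giving the $8n\zeta\,\xi^{2n}\eta$ contribution and the $-\tfrac12(1+\Til^2)$ piece giving $(J(\zeta)^2-\zeta^2)/J(\zeta) = -2\zeta/\sinh 2\zeta$, matching the $|\zeta|\ll\xi$ expansion of $\tA^{sym}$ (the opposite sign of the $(1+\Til^2)$ weight relative to $\tB_{high}$ coming from the factor $\Til^{-1}\partial$). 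For $n\ge2$ this same term is of order $\eta^n=O(\eta^2)$ near the $Q$-edge $\eta=0$, hence subleading there; the leading behaviour near $\eta=0$ (and its mirror near $\xi=0$) is instead produced by the remaining two terms $2\langle Q_\alpha W^{(n)},\Til^{-1}Q^{(n+1)}\rangle + 2n\langle Q_{\alpha\alpha}W^{(n-1)},\Til^{-1}Q^{(n+1)}\rangle$, which require $n\ge2$ for $W^{(n-1)}$ to be a genuine derivative of $W$ — precisely why the case $n=1$ is handled separately. I would check that the first of these reproduces the leading term $\tfrac14 i e^{2\eta+2\zeta}\xi^{2n+1}\eta$ of \eqref{tA-Asymptotics-small-eta}, while the $2n$-term is exactly what is needed to cancel the subleading $\eta^2 ES(\rho^{2n})$ corrections; after subtraction the remainder near the $Q$-edges is $\xi\eta\zeta\, ES(d_1\rho^{2n-2})$ and in the interior $\xi\eta\zeta\, ES(dd_1\rho^{2n-3})$, so altogether $\tA_{low}\in\xi\eta\zeta\, ES(dd_1\rho^{2n-3})+\xi\eta\zeta\, ES(\rho^{2n-2})$.

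The principal obstacle is the bookkeeping in the $\tA$ step: one must simultaneously track the exponential factors $e^{\pm2\xi},e^{\pm2\eta},e^{\pm2\zeta}$ introduced by \eqref{flip}, the symbol $i\coth\zeta$ of $\Til^{-1}$ acting on the differentiated variables, and the double symmetrization (over $(\xi,\eta)$ and under reflection), all while matching against the edge expansions of the preceding lemma — and in particular verifying that the somewhat delicate combination $2\langle Q_\alpha W^{(n)},\Til^{-1}Q^{(n+1)}\rangle + 2n\langle Q_{\alpha\alpha}W^{(n-1)},\Til^{-1}Q^{(n+1)}\rangle$ is exactly the object that removes \emph{all} of the top-order growth near the $Q$-edges. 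Once these edge matchings are in place, the symbol-class statement \eqref{low} follows by inspection from $\tA^{sym}\in\xi\eta\zeta\, ES(\rho^{2n-1})$ and $\tB^{sym}\in\xi\eta\zeta\, ES(\rho^{2n-2})$.
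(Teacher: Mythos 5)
Your proposal is correct and follows essentially the same route as the paper: the paper's proof also consists of taking the leading edge symbols from the asymptotics lemma (parts (b)--(d)), applying the symmetrizations and the holomorphicity relations \eqref{flip}, and identifying the result with the weighted inner products in \eqref{eq-high}, with everything left over absorbed into the low parts; you simply read this matching in the reverse direction. Your identifications of the individual pieces — the $-4n$ versus $\frac12(1+\Til^2)$ weights from the two terms of \eqref{tB-Asymptotics}, the sign flip in the $(1+\Til^2)$ weight for $\tA_{high}$, and the role of the last two terms of $\tA_{high}$ in removing the non-absorbable $\eta$ and $\eta^2$ contributions near the $Q$-edges — all agree with the paper's computation.
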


On the other hand for $n = 1$ we have the more accurate result
\begin{lemma}\label{l:nf-high1}
Let $n =1 $. Then the trilinear forms $\tA$, $\tB$ admit a decomposition as in 
\eqref{first-hl} where $\tA_{low}$, $\tB_{low}$ satisfy 
\begin{equation}\label{low1}
\tB_{low} \in \xi \eta \zeta ES( \rho^{-1}), \qquad \tA_{low} \in \xi \eta \zeta ES(1),
\end{equation}
and $\tA_{high}$, $\tB_{high}$ are given by
\begin{equation}\label{eq-high1}
\begin{aligned}
 &\tB_{high}(W,W,W)  = 
 \ \langle W_\alpha, W_\alpha \rangle_{- 4\Re W_\alpha + \frac12(1 +\Til^2) \Re W_\alpha},
\\
&\tA_{high}(W,Q,Q)=
  \ - \langle Q_\alpha, \Til^{-1} Q_{\alpha \alpha} \rangle_{- 4\Re W_\alpha - \frac12(1 +\Til^2) \Re W_\alpha}.
\end{aligned}
\end{equation}

\end{lemma}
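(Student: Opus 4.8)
The plan is to obtain Lemma~\ref{l:nf-high1} by the same mechanism as Lemma~\ref{l:nf-high} — comparing the symmetrized symbols $\tA^{sym}$, $\tB^{sym}$ with the symbols of explicitly chosen weighted energies and reading off the remainder — while exploiting the extra cancellation available when $n=1$. The first step is to compute the symbols of the candidate leading forms $\tB_{high}$, $\tA_{high}$ in \eqref{eq-high1}. Using that for holomorphic $u,v$ and real $\omega$ one has $\langle u,v\rangle_\omega = 2\int \Til\Re u\,\Til\Re v\,\omega\,d\alpha$, together with the Fourier identities
\[
\widehat{\Re W_\alpha}(\xi) = \tfrac{i\xi}{2}(1+e^{2\xi})\hat W(\xi),\qquad \widehat{\Til\Re W_\alpha}(\xi) = \tfrac{J(\xi)}{2}(1+e^{2\xi})\hat W(\xi),
\]
and the analogues for $Q_\alpha$ and for $\Til^{-1}Q_{\alpha\alpha}$ (whose $\Til\Re$ has symbol $-\tfrac{\xi^2}{2}(1+e^{2\xi})$), one reads off explicit expressions for the symbols of $\tB_{high}$ and $\tA_{high}$: these are products of $J$, of the symbol of the weight operator $-4\pm\tfrac12(1+\Til^2)$, and of the exponential prefactors $(1+e^{2\xi})(1+e^{2\eta})(1+e^{2\zeta})$ forced by the holomorphy relations \eqref{flip}. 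Here the constant $-4$ in the weight is chosen to reproduce the ``$8\zeta$'' coefficients in the expansions of parts (b)--(c) of the preceding lemma (specialized to $n=1$), and the smoothing term $\tfrac12(1+\Til^2)\Re W_\alpha$ to reproduce the $(\zeta\pm J(\zeta))^2/J(\zeta)$ and $(J(\zeta)^2-\zeta^2)/J(\zeta)$ pieces there.

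Next I would set $\tB_{low}:=\tB^{sym}-\tB_{high}$ and $\tA_{low}:=\tA^{sym}-\tA_{high}$. Using the exact edge values $B^h(\xi,0)$, $C^h(\xi,0)$, $A^h(\xi,0)$, $A^h(0,\eta)$, $A^a(\xi,0)$, $C^a(\xi,0)$, $C^a(0,-\eta)$, $D^a(0,-\eta)$ recorded in the symbol-asymptotics section, together with parts (b)--(d), one checks that the symmetrized remainders vanish along each of the three lines $\xi=0$, $\eta=0$, $\zeta=0$, exactly as in the proof of Lemma~\ref{l:nf-high}; hence each remainder factors as $\xi\eta\zeta$ times a symbol whose order is then estimated by combining the local expansions near each edge with the global bounds $\tB^{sym}\in\xi\eta\zeta ES(\rho^{2n-2})$, $\tA^{sym}\in\xi\eta\zeta ES(\rho^{2n-1})$ of part (a), specialized to $n=1$.

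The remaining point — the sharper classes $\xi\eta\zeta ES(\rho^{-1})$ and $\xi\eta\zeta ES(1)$, and the absence from $\tA_{high}$ of the extra terms $2\langle Q_\alpha W^{(n)},\Til^{-1}Q^{(n+1)}\rangle$, $2n\langle Q_{\alpha\alpha}W^{(n-1)},\Til^{-1}Q^{(n+1)}\rangle$ that \eqref{eq-high} carries — is where the $n=1$ case genuinely differs. For $n=1$ the polynomial prefactors $\zeta^{2n}=\zeta^2$, $\xi^{2n+1}=\xi^3$ are minimal, and the second extra term degenerates to $2\langle Q_{\alpha\alpha}W,\Til^{-1}Q_{\alpha\alpha}\rangle$, which contains an undifferentiated $W$ and so is not admissible as it stands. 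I would show that, after symmetrizing in the two $Q$-frequencies, the combined symbol of the pair $2\langle Q_\alpha W_\alpha,\Til^{-1}Q_{\alpha\alpha}\rangle + 2\langle Q_{\alpha\alpha}W,\Til^{-1}Q_{\alpha\alpha}\rangle$ has the structure $\xi\eta(\xi^2\sinh 2\xi + \eta^2\sinh 2\eta)$ up to constants, which vanishes on $\zeta=0$ by the oddness of $\sinh$ (hence carries the full factor $\xi\eta\zeta$) and lies in the admissible class; it may therefore be folded into $\tA_{low}$, leaving the cleaner $\tA_{high}$ of \eqref{eq-high1} together with the improved bound, and a parallel argument handles $\tB$.

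The main obstacle is the bookkeeping in the first two steps: one must verify that the single weighted energies $\tB_{high}$, $\tA_{high}$ capture the \emph{entire} top-order contribution of $\tB^{sym}$, $\tA^{sym}$ near every one of the three edges at once — so that the remainders truly vanish there and drop the expected power of $\rho$ — and one must match the exponential prefactors exactly, since these encode the holomorphy of $W$ and $Q$ through \eqref{flip}. The $n=1$ sharpening then demands one further round of symmetrization to see that the apparently bare-$W$ contribution is in fact harmless.
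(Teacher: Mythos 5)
Your steps 1 and 2 --- reading off the symbols of the candidate weighted forms via \eqref{flip} and then checking that the symmetrized remainders vanish on the three edges and land in the stated classes --- follow the paper's own route for the $n\geq 2$ case, reorganized as a verification rather than a derivation; that part is sound. The problem is step 3, which is exactly the point where $n=1$ differs from $n\geq 2$, and there your argument does not work.

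The pair $2\la Q_\alpha W_\alpha,\Til^{-1}Q_{\alpha\alpha}\ra + 2\la Q_{\alpha\alpha}W,\Til^{-1}Q_{\alpha\alpha}\ra$ cannot be folded into $\tA_{low}$. Your symmetrized symbol does vanish on $\zeta=0$, but that buys only a single factor of $\zeta$: writing $\xi\eta\bigl(\xi^2\sinh 2\xi+\eta^2\sinh 2\eta\bigr)=\xi\eta\zeta\, h$, a Taylor expansion in $\zeta$ gives $h\approx -(2\xi\sinh 2\xi+2\xi^2\cosh 2\xi)$ near $\zeta=0$, so $h\in ES(\rho^2)$ there, not $ES(1)$. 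Indeed the second member alone, with its undifferentiated $W$, has symbol of size $\xi^2\eta^2\sim\rho^4\sim \xi\eta\zeta\cdot\rho^2$ when both $Q$'s are at high frequency and $W$ at low frequency. These are top-order contributions (worse, in fact, than the global class $\xi\eta\zeta ES(\rho)$ of $\tA^{sym}$ for $n=1$), so discarding them into $\tA_{low}$ would violate \eqref{low1}. This is not a bookkeeping issue that more symmetrization can repair: if the "$\tA_{high}$ with extra terms" of \eqref{eq-high} were correct for $n=1$, then removing the extra terms would require them to be lower order, and they are not.

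What actually happens for $n=1$ is that the extra terms are never needed, because the single weighted form $-\la Q_\alpha,\Til^{-1}Q_{\alpha\alpha}\ra_{-4\Re W_\alpha}$ already reproduces the leading asymptotics of $\tA^{sym}$ in the region $|\eta|\ll\zeta$ where $W$ carries a high frequency --- precisely the region that, for $n\geq 2$, generates the extra terms. Concretely, by \eqref{tA-Asymptotics-small-eta} the leading symbol there is $\frac14 i\xi^{3}\eta\, e^{2\eta+2\zeta}$, while the corresponding piece of the weighted form is $-\frac14 i\xi\eta\zeta(\xi-\eta)e^{2\zeta+2\eta}$; their difference equals $\frac14 i\xi\eta^{3}e^{2\eta+2\zeta}=\xi\eta\zeta\cdot\frac{i\eta^2}{4\zeta}e^{2\eta+2\zeta}$, which lies in $\xi\eta\zeta ES(1)$ since $\eta$ is the small frequency. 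In the paper's bookkeeping this is the statement that the coefficients of $e^{2\zeta}$ and $e^{-2\xi}$, which remain separate for $n\geq 2$ and force the extra terms in \eqref{eq-high}, combine for $n=1$ into the missing half of the $-4\Re W_\alpha$ inner product. Your step 3 should be replaced by this matching computation near the edge $|\eta|\ll\zeta$ (together with the analogous, unproblematic check for $\tB$).
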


We remark that the difference in sign  in the coefficient of $\dfrac12(1 +\Til^2) \Re W_\alpha$
above  accounts exactly for the linear part of the
normal derivative of the pressure, namely \(\ao\).  The second line in $\tA_{high}$ in \eqref{eq-high}
is also natural and is due to the fact that  $(W,Q)$ is not a good set of variables for the 
differentiated equations. Instead, in the next subsection we switch from $Q_\alpha$
to the diagonal variable $R$ and the bulk of these terms will disappear.

\begin{proof}[Proof of Lemma~\ref{l:nf-high}]
We successively consider all the contributions in the leading part of $\tA^{sym}$
and $\tB^{sym}$.

\medskip

{\em 1. The contribution of $\tB_{high}$}. 
This is given by the symbol 
\[
\tB_{high} = - \frac{i}{48} e^{2\xi} \xi^{2n}\left( 8n  \eta - \frac{(\eta+J(\eta))^2}{
J(\eta)}\right) + \text{symmetries}.
\]
There are twelve symmetries, and after applying them all we obtain
\begin{equation}\label{high1}
- \int  |W^{(n)}|^2 \left(4 n \Re W_\alpha - \frac12(1 +\Til^2) \Re W_\alpha\right)  \,d\alpha.
\end{equation}
Modulo lower order terms which can be included in $\tB_{low}$ this
agrees with the expression for $\tB_{high}$ in the lemma.

\medskip

{\em 2. The contribution of $\tA_{high}$ with high frequencies on
  $Q$}. This is given by the symbol
\[
\tA_{high} = e^{2\xi} i\xi^{2n}\eta \left( \frac n2 \zeta   + \frac{J(\zeta)^2 -\zeta^2}{12J(\zeta)} \right) + \text{symmetries}.
\]
There are four symmetries, and after applying them all we obtain
\begin{equation}\label{high2}
\int  \Re (i \bar Q^{(n+1)} Q^{(n)})  \left(4 n \Re W_\alpha + \frac12 (1 +\Til^2) \Re W_\alpha\right)\,  d\alpha.
\end{equation}
Modulo lower order terms which can be included in $\tA_{low}$ 
this agrees with the first term in the expression for $\tA_{high}$ in the lemma.
\medskip

{\em 3. The contribution of $\tA_{high}$ with high frequency on $W$}.
This is given by the symbol
\[
\begin{split}
\frac14 e^{2\eta+2\zeta} (i\xi)^{n+1}(i\zeta)^n \eta + \frac{n}{4}ie^{2\eta+2\zeta}(i\xi)^{n+1}(i\zeta)^{n-1}\eta^2 + \text{symmetries}.
\end{split}
\]
There are four symmetries so we get the expression
\begin{equation}\label{high3}
\begin{split}
& 2 \Re \int i\bar Q^{(n+1)} W^{(n)} Q_\alpha\,d\alpha + 2n \Re\int i \bar Q^{(n+1)} W^{(n-1)} Q_{\alpha\alpha}\, d\alpha ,
\end{split}
\end{equation}
which up to lower order terms is equivalent to the second line in $\tA_{high}$.

\end{proof}

\begin{proof}[Proof of Lemma~\ref{l:nf-high1}]
This follows the same steps as in the previous proof, with the only difference that some terms which were previously distinct are now combining.

{\em 1. Contribution of \(\tB_{high}\).} We may write
\[
B^{sym} = L_\xi e^{2\xi} + \text{symmetries},
\]
where the full symbol is given by
\begin{align*}
- 12i\Omega L_\xi &= - (6\zeta\eta + \xi^2)J(\xi)J(\eta)J(\zeta) - \xi \zeta J(\xi)^2J(\eta) - \xi\eta J(\xi)^2J(\zeta)\\
&\quad - \zeta\eta J(\xi)^3 - \zeta^2 J(\xi)J(\eta)^2 - \eta^2 J(\xi)J(\zeta)^2.
\end{align*}
Again it suffices to consider the region \(\xi\gg|\eta|\). A similar computation to before gives us that
\[
L_\xi e^{2\xi} = \frac16i\xi\eta\zeta e^{2\xi} - \frac1{24}i\xi\zeta (\eta + J(\eta))^2J(\eta)^{-1} e^{2\xi} + \xi\eta\zeta ES(\rho^{-1}).
\]
Applying the symmetries and observing that the leading order term is already symmetric in \(\eta,\zeta\) we obtain \(\tB_{high}\).

{\em 2. Contribution of \(\tA_{high}\).}
Again we may write
\[
\tA^{sym} = L_\xi e^{2\xi} + L_\zeta e^{2\zeta} + \textrm{symmetries},
\]
where the full symbols
\begin{align*}
-4i\Omega L_\xi & = \xi\eta(\eta^2 + \zeta^2 - 2\xi^2) J(\zeta)(J(\zeta) - J(\xi) - J(\eta)) \\
&\quad + \xi^2\eta(\zeta - \eta) J(\xi)(J(\xi) - J(\eta) - J(\zeta))\\
&\quad + \xi\eta^2(\zeta - \eta) J(\xi) (J(\xi) - J(\eta) + J(\zeta)),
\end{align*}
\begin{align*}
-4i\Omega L_\zeta & = \xi\eta(\xi^2 + \eta^2 - 2\zeta^2) J(\zeta)(J(\zeta) - J(\xi) - J(\eta)) \\
&\quad - \xi^2\eta(\xi - \eta) J(\zeta)(J(\xi) - J(\eta) - J(\zeta))\\
&\quad - \xi\eta^2(\xi - \eta) J(\zeta)(J(\xi) - J(\eta) + J(\zeta)).
\end{align*}
In the region where \(\xi\gg|\zeta|\) a similar computation to before gives us that
\begin{align*}
L_\xi e^{2\xi} &= \frac14i\xi\eta\zeta(\xi - \eta) e^{2\xi} + \frac1{16}i\xi^2\eta(J(\zeta)^2 - \zeta^2)J(\zeta)^{-1} e^{2\xi} + \xi\eta\zeta ES(1).
\end{align*}
The first term gives us part of the term
\[
- \la Q_\alpha,\Til^{-1}Q_{\alpha\alpha}\ra_{-4\Re W_\alpha},
\]
and the second term gives us
\[
- \la Q_\alpha,\Til^{-1}Q_{\alpha\alpha}\ra_{-\frac12(1 + \Til^2)\Re W_\alpha}.
\]
In the region where \(\zeta\gg|\eta|\) the coefficients \(L_\zeta\) and \(L_{-\xi}\) of \(e^{2\zeta}\) and \(e^{-2\xi}\) respectively combine to give
\[
L_\zeta e^{2\zeta} + L_{-\xi}e^{-2\xi} = -\frac14 i\xi\eta\zeta(\xi - \eta) e^{2\zeta + 2\eta} + \xi\eta\zeta ES(1),
\]
which combines with the the first part of \(\tA_{high}\) to give the rest of the term
\[
- \la Q_\alpha,\Til^{-1}Q_{\alpha\alpha}\ra_{-4\Re W_\alpha}.
\]

\end{proof}

\subsection{The normal form energy in the diagonal 
variables $(\W,R)$}

The normal form energy $\tEnnf$ constructed so far is expressed in terms of the 
variables $(W,Q)$. Since we can smoothly extract factors of $\xi \eta \zeta$ 
from the symbols $\tA$, $\tB$, it is clear that one can view both $\tA$ and $\tB$ 
as trilinear forms in $(W_\alpha,Q_\alpha)$,
\[
\tB (W,W,W) = \tB_1(\W,\W,\W), \qquad \tA (W,Q,Q) = \tA_1(\W,Q_\alpha,Q_\alpha),
\]
where their symbols satisfy
\[
\tB_1 \in ES(\rho^{2n-2}), \qquad \tA_1 \in ES( \rho^{2n-1}).
\]
The same procedure applied separately to the high frequency parts
$\tA_{high}$, $\tB_{high}$ respectively the lower order terms
$\tA_{low}$, $\tB_{low}$ yields the forms $\tA_{1, high}$, $\tB_{1,
  high}$, respectively $\tA_{1,low}$, $\tB_{1,low}$, where the former are given for \(n\geq 2\) by
(see Lemma~\ref{l:nf-high})
\begin{equation}\label{nf-t1}
\begin{split}
 \tB_{1,high}(\W,\W,\W)  = & \  \langle \W^{(n-1)}, \W^{(n-1)} \rangle_{- 4 n \Re \W + \frac12(1 +\Til^2) \Re \W}
 \\
\tA_{high}(\W,Q_\alpha,Q_\alpha)=
 & \ -  \langle \Til^{-1} Q^{(n)}_\alpha, Q^{(n-1)}_\alpha\rangle
_{ - 4 n \Re \W - \frac12(1 +\Til^2) \Re \W}
\\ &+ 2 \langle \Til^{-1}Q^{(n)}_\alpha, Q_\alpha \W^{(n-1)} \rangle  + 2n \langle \Til^{-1}Q^{(n-1)}_\alpha, Q_{\alpha\alpha} \W^{(n-1)} \rangle ,
\end{split}
\end{equation}
and the symbols
for the latter have regularity
\[
\tB_{1,low} \in ES(d \rho^{2n-3}), \qquad \tA_{1,low} \in ES(dd_1 \rho^{2n-3}) + ES(\rho^{2n-2}).
\]
In order to conclude the proof of Proposition~\ref{p:ennnf} we need one last step,
namely to further switch from $(\W,Q_\alpha)$ to the diagonal variables $(\W,R)$.
This is still a purely algebraic computation, where we only need to insure that 
the original normal form energy $\tEnnf(W,Q)$ and the new one 
$\Ennf(\W,R)$ agree to cubic order,
\[
\Poly{\leq 3} \left( \tEnnf(W,Q) - \Ennf(\W,R) \right) = 0.
\]
We caution the reader that at this point, by a slight abuse of notation,
we switch the meaning of the $\Lambda$ operators. Whereas previously these were taken with respect to the expansion in the $(W,Q)$ variables, from here on we use instead the expansion in the $(\W, R)$ variables.
It is a simple observation that the above relation has identical meaning 
in both frames of reference, and this allows for a smooth transition 
between one setting and the other.

To fulfill the above requirement each of the terms in $\tEnnf$ is treated as follows,
based on the relation $Q_\alpha = R(1+\W)$:

\begin{itemize}
\item The term $ \tB_1(\W,\W,\W)$ is left unchanged.
\item The term $  \tA_1(\W,Q_\alpha,Q_\alpha)$ is replaced by  $\tA_1(\W,R,R)$.
\item The term $ \langle W^{(n)}, W^{(n)} \rangle = \langle \W^{(n-1)}, \W^{(n-1)} \rangle$
is left unchanged.
\item The term $ \langle Q^{(n)}, \Til^{-1} Q^{(n)}_\alpha \rangle$ is replaced by 
the expression $$\Poly{\leq 3} \langle [R(1+\W)]^{(n-1)}, \Til^{-1} [R(1+\W)]^{(n-1)}_\alpha \rangle.$$
\end{itemize}
Rewriting the last expression as 
\[
\Poly{\leq 3} \langle [R(1+\W)]^{(n-1)}, \Til^{-1} [R(1+\W)]^{(n-1)}_\alpha \rangle = 
  \langle R^{(n-1)}, \Til^{-1} R^{(n-1)}_\alpha \rangle + 2  
\langle [R\W]^{(n-1)}, \Til^{-1} R^{(n-1)}_\alpha \rangle,
\]
we can write our final normal form energy as 
\[
\Ennf(\W,\! R) = E_0(\W^{(n-1)},\! R^{(n-1)}) - 2  
\langle [R\W]^{(n-1)}, \Til^{-1} R^{(n-1)}_\alpha \rangle +  \tB_1(\W,\!\W,\!\W)+ \tA_1(\W,\! R,\! R),
\]
which is as required in Proposition~\ref{p:ennnf},
with
\[
B(\W,\!\W,\!\W)= \tB_1(\W,\!\W,\!\W), \quad A(\W,R,R) = \tA_1(\W,R,R) - 2  
\langle [R\W]^{(n-1)}, \Til^{-1} R^{(n-1)}_\alpha \rangle.
\]

By construction this has all the properties in part (a) of Proposition~\ref{p:ennnf}, as well as the required symbol regularity properties for the trilinear part. It remains to 
compute the high frequency  parts $B_{high}$ and $A_{high}$. For $B_{high}$ 
there is nothing to compute, as we can take
\[
B_{high} = \tB_{1,high},
\]
with $\tB_{1,high}$ as in \ref{l:nf-high}.

For $A_{high}$ on the other hand we expand
\begin{align*}
\langle [R\W]^{(n-1)}, \Til^{-1} R^{(n-1)}_\alpha \rangle &=
\langle R^{(n-1)} \W, \Til^{-1} R^{(n-1)}_\alpha \rangle + \langle \W^{(n-1)} R, \Til^{-1} R^{(n-1)}_\alpha \rangle\\
&\quad + (n-1)\langle \W^{(n-2)} R_\alpha, \Til^{-1} R^{(n-1)}_\alpha \rangle + \text{l.o.t.}
\end{align*}
The last two terms cancel with the last two terms in $\tA_{1,high}(\W,R,R)$ so we obtain
\begin{align*}
A_{high} &= -  \langle \Til^{-1} R^{(n)}, R^{(n-1)}\rangle
_{ - 4 n \Re \W - \frac12(1 +\Til^2) \Re \W}\\
&\quad - 2 \langle (R^{(n-1)} \W, \Til^{-1} R^{(n-1)}_\alpha \rangle + 2\la R_\alpha\W^{(n-2)},\Til^{-1}R^{(n-1)}_\alpha\ra
\end{align*}
as needed in part (b(i)) of  Proposition~\ref{p:ennnf}. To complete the proof of Proposition~\ref{p:ennnf} we apply an identical computation for the case \(n=1\).

%
\section{Higher order energy
  estimates}
\label{s:ee}

The main goal of this section is to establish two energy bounds for
$(\W,R)$, and their higher derivatives. Precisely, we will seek to obtain first short 
and then long time bounds for the time dependent quantities
\[
\norm_n:=\Vert (g^\frac12 \W, R)\Vert_{H^{n-1} \times H^{n - \frac{1}{2}}}, \qquad n \geq 1.
\]
For $n = 0$ we will instead set 
\[
\norm_0:=\Vert (W, Q)\Vert_{\H},
\]
which is closely related to the conserved energy.

 Our first result is  a quadratic bound, which applies to all solutions independently of the size of
the initial data.  This is needed for our local well-posedness result
in Theorem~\ref{t:loc}. Precisely, the large data result is as follows:

\begin{proposition}\label{t:en=large}
  For any $n \geq 1$ there exists an energy functional $\End(\W,R)$ with the
  following properties:

 (i) Norm equivalence:
  \begin{equation*}
    E^{n,(2)}(\W,R) \approx_A  E_{0} (\partial^{n-1} \W, \partial^{n-1} R)  + O_A (\norm_{n-1}^2) .
  \end{equation*}

  (ii) Quadratic energy estimates for solutions to
  \eqref{ww2d-diff-unp}:
  \begin{equation*}
    \frac{d}{dt} \End(\W,R)  \lesssim_A B \norm_n^2 .
  \end{equation*}
\end{proposition}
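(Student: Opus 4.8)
The plan is to reduce everything to the model system \eqref{ModelLinSys} and Proposition~\ref{p:local}. First I would differentiate the diagonal system \eqref{DiagonalQuasiSystem-re} exactly $n-1$ times (for $n=1$ this is no differentiation at all) and set $(w,r):=(\partial^{n-1}\W,\partial^{n-1}R)$, which are again holomorphic since $\Til$ commutes with $\partial_\alpha$. The transport terms are kept on the left in the form $\M_b w_\alpha$, $\M_b r_\alpha$, and the leading coupling is matched to the model terms $\P[r_\alpha/(1+\bar\W)]-\P[R_\alpha\Til^2 w/(1+\bar\W)]$ and $-\P[(g+\mfa)\Til[w]/(1+\W)]$. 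The second match uses the algebraic identity
\begin{equation*}
i(g\W-\mfa)=g\Til[\W]-ia,
\end{equation*}
valid because $\W$ is holomorphic (so $i\W=\Til[\W]+i(1+\Til^2)\Re\W$) and $\mfa=a+g(1+\Til^2)\Re\W$; this shows that the linear part of the $R$--equation is exactly the model coupling, while the genuine remainder is the quadratic expression $\P[(a\Til[\W]+g(1+\Til^2)\Re\W\,\Til[\W]+ia)/(1+\W)]$. Commuting $\partial^{n-1}$ past the non-constant coefficients $b$, $1/(1+\bar\W)$, $(1+\W)/(1+\bar\W)$, $(g+\mfa)/(1+\W)$ then puts the differentiated system into the form \eqref{ModelLinSys} for $(w,r)$, with inhomogeneous terms $(G_n,K_n)\in\H$ of three kinds: (a) the transport commutators $[\partial^{n-1},b]\partial\W$, $[\partial^{n-1},b]\partial R$; (b) the paracommutators generated by the variable coefficients; (c) the differentiated lower order nonlinearities $\partial^{n-1}[(1+\W)M]$, $-\P\partial^{n-1}[R_\alpha(1+\Til^2)\W/(1+\bar\W)]$, and $\partial^{n-1}$ of the quadratic remainder above.

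For the energy I would take $\End(\W,R):=\Elind(w,r)=\langle w,w\rangle_{g+\mfa}+\langle\D r,\D r\rangle$, adding if necessary explicit lower order corrections so that (i) holds exactly. Part (i) is then immediate: Taylor stability $g+\mfa\ge g(c+1)>0$ from Lemma~\ref{L:Taylor} together with the bound $\|\mfa\|_{L^\infty}\lesssim_A g$ from Lemma~\ref{l:a} gives $\langle w,w\rangle_{g+\mfa}\approx_A g\langle w,w\rangle$, so $\Elind(w,r)\approx_A\Ez(\partial^{n-1}\W,\partial^{n-1}R)$, and the difference $\langle w,w\rangle_{\mfa}$ is of lower order. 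Proposition~\ref{p:local}(a) applied to $(w,r)$ with source $(G_n,K_n)$ then yields
\begin{equation*}
\frac{d}{dt}\End(\W,R)=2\langle G_n,w\rangle_{g+\mfa}+2\langle\D K_n,\D r\rangle+O_A(B)\,\Elind(w,r),
\end{equation*}
and since $\Elind(w,r)\approx_A\Ez(\partial^{n-1}\W,\partial^{n-1}R)\lesssim\norm_n^2$ the last term is already of the desired shape. Thus it remains to prove
\begin{equation*}
|\langle G_n,w\rangle_{g+\mfa}|+|\langle\D K_n,\D r\rangle|\lesssim_A B\,\norm_n^2.
\end{equation*}

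This is the bulk of the work, and the main obstacle is the top order transport commutator. The leading piece of $[\partial^{n-1},b]\partial\W$ is $(n-1)b_\alpha w$, and in the pairing $\langle\P[b_\alpha w],w\rangle_{g+\mfa}$ one first extracts a $[\Til,b_\alpha]$ commutator, controlled on $L^2$ by $\|b_\alpha\|_{\bmo}\lesssim B$ through \eqref{est:TilComm}; the remaining term $\int(g+\mfa)\,b_\alpha\,|\Im w|^2\,d\alpha$ is exactly the quantity disposed of in the internal $D_w^3$ computation in the proof of Proposition~\ref{p:local} --- which is precisely why $b\partial^n$ is kept inside $\M_b w_\alpha$ on the left rather than thrown into $G_n$ --- so that its net effect is $O_A(B)\Elind$, the weight being handled by the smoothing in $g(1+\Til^2)\Re\W$ and Lemma~\ref{l:a}. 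The companion term $[\partial^{n-1},b]\partial R$ paired with $\D r$ is treated the same way, using instead the double commutator bound \eqref{double} underlying the $D_r^2$ computation. The subleading commutators $\binom{n-1}{j}(\partial^j b)(\partial^{n-j}\W)$ with $j\ge 2$ and the paracommutators of type (b) are then estimated by the Tilbert and Coifman--Meyer commutator bounds \eqref{est:TilComm}, \eqref{est:StrongLoHi}, \eqref{est:WeakLoHi}, \eqref{double}, the paraproduct bound \eqref{Para1}, and the pointwise and $\bmo$ bounds for $b$, $\mfa$, $Y$, $d$ from Lemmas~\ref{l:a}, \ref{l:b} and \eqref{est:Ybmo}, \eqref{est:d}, with Moser estimates used to distribute derivatives.

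Finally, the lower order nonlinearities of type (c) each carry either a smoothing factor $1+\Til^2$ or an undifferentiated factor, which lets one place one factor in $L^1$ (or the Hardy space $H^1$) while the complementary high derivative of $(\W,R)$ sits in $\bmo$; pairing by $H^1$--$\BMO$ duality (and $\P S_0\colon L^1\to L^\infty$ at low frequency, exactly as in the proof of Proposition~\ref{LinQuadWP}), together with the pointwise bounds for $M$ from Lemma~\ref{l:M}, produces contributions of size $B\,\norm_{n-1}\norm_n\le B\,\norm_n^2$. Summing all contributions and tracking the powers of $g$ --- which balance because $\norm_k$ carries the weight $g^{1/2}$ on $\W$ and none on $R$, matching the model energy $\Elind$ --- gives $\frac{d}{dt}\End(\W,R)\lesssim_A B\,\norm_n^2$, as claimed. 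Apart from the top order transport commutator and the bookkeeping of the $(g+\mfa)$ weight, everything is routine multilinear analysis of the kind collected in the Appendix.
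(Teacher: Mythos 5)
Your overall strategy --- differentiate $n-1$ times and feed the resulting system into Proposition~\ref{p:local} --- is the same as the paper's, but two structural steps are missing, and without them the key estimate $|\langle G_n,w\rangle_{g+\mfa}|\lesssim_A B\norm_n^2$ fails. First, with $r=\partial^{n-1}R$ the differentiated $\W$--equation carries the top order term $[(1+\W)R^{(n-1)}]_\alpha/(1+\bar\W)$ and the $R$--equation the coefficient $(g+\mfa)/(1+\W)^2$; neither matches the model system unless you substitute $\R=(1+\W)R^{(n-1)}$ (and, for $n\geq3$, the further quadratic correction $\tilde\R=\R-R_\alpha\W^{(n-2)}+(2n-1)\W_\alpha R^{(n-2)}$). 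Second, and more seriously, the commutator term you single out, $(n-1)b_\alpha\W^{(n-1)}$ --- which, after substituting $b_\alpha=2\Re[R_\alpha/(1+\bar\W)]-M$, becomes $n(R_\alpha/(1+\bar\W)+\bar R_\alpha/(1+\W))\W^{(n-1)}$ plus errors, with a companion term proportional to $\Re[R_\alpha/(1+\bar\W)]\,\R$ in the second equation --- cannot be absorbed as a source term. Since $b_\alpha$ is real, the pairing $\langle b_\alpha w,w\rangle_{g+\mfa}$ contains $\int(g+\mfa)\,b_\alpha\,|\Im w|^2\,d\alpha$, which pairs a $\BMO$ function of size $B$ against a nonnegative $L^1$ function with nonzero integral; this is not an $H^1$--$\BMO$ pairing, so it is not bounded by $B\|w\|_{\mfH}^2$ (you would need $\|b_\alpha\|_{L^\infty}$, which neither $A$ nor $B$ controls). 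The quantity disposed of inside $D_w^3$ of Proposition~\ref{p:local} is not this one: there the combination of $-2\langle bw_\alpha,w\rangle$, the $\Re d\,\Til^2 w$ piece coming from the left-hand side of the model system, and the transported weight produces $b_\alpha(1+\Til^2)\Im w$, with the crucial Schwartz multiplier $1+\Til^2$; an extra copy of $b_\alpha w$ thrown into $G_n$ has no such cancellation partner.

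The paper removes these terms by conjugation rather than estimation: with $\phi=-2\Re\log(1+\W)$ one has $(\partial_t+b\partial_\alpha)\phi=2\Re[R_\alpha/(1+\bar\W)]-2M$, so the unbounded terms are, up to bounded $M$--errors, exactly $-n[(\partial_t+b\partial_\alpha)\phi]$ times the unknown, and the substitution $(w,r)=(e^{n\phi}\W^{(n-1)},e^{n\phi}\tilde\R)$, followed by projection with $\P$, eliminates them identically and casts the system in the model form \eqref{ModelLinSys}. The energy is then $\Elind(\fw,\fr)$, with a separate norm--equivalence lemma $\|(\fw,\fr)\|_{\H}\approx_A\|(\W^{(n-1)},R^{(n-1)})\|_{\H}$ modulo $A\norm_n$ giving part (i). You need to add these two ingredients; the remainder of your outline (the treatment of $M$, of the $\mfa$--remainder, and of the genuinely lower order terms by placing one factor in $L^1$) is consistent with the paper's argument.
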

Here we allow lower order errors in the energy equivalence,
and thus, the bound for $\norm_k$ for instance is obtained 
by reiterating the above estimates for $1 \leq n \leq k$, and 
using the energy conservation as a starting point which corresponds 
to $n = 0$.

 Our  second estimate is a cubic bound which only applies
for small solutions, and is used to prove our cubic lifespan result 
in Theorem~\ref{t:long}. The small data result is as follows:

\begin{proposition}\label{t:en=small}
  For any $n \geq 1$ there exists an energy functional $\Ent$ which
  has the following properties as long as $A \ll 1$:

  (i) Norm equivalence:
  \begin{equation}\label{en-cubic}
    \Ent (\W,R)=  E_{0} (\partial^{n-1} \W, \partial^{n-1} R) + 
O(A) \norm_n^2 .
  \end{equation}

  (ii) Cubic energy estimates:
  \begin{equation}\label{dt-cubic}
    \frac{d}{dt} \Ent (\W,R)  \lesssim_A AB  \norm_n^2 .
  \end{equation}
\end{proposition}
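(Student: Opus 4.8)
The plan is to construct the cubic energy functional $\Ent$ by adding to the differentiated linear energy $E^{(2)}_{lin}(\partial^{n-1}\W, \partial^{n-1}R)$ the cubic normal form energy corrections built in Proposition~\ref{p:ennnf}, together with appropriate quartic quasilinear modifications of the leading-order (high-frequency) cubic terms. Concretely, I would set
\[
\Ent(\W,R) = E^{(2)}_{lin}(\partial^{n-1}\W,\partial^{n-1}R) + g B_{NF}(\W,\W,\W) + A_{NF}(\W,R,R) + \text{(quartic corrections)},
\]
where the cubic trilinear forms are chosen so that, modulo quartic errors, $\Ent$ agrees with the normal form energy $\Ennf$ from Proposition~\ref{p:ennnf}, hence its time derivative is purely quartic at leading order; the quartic corrections are then designed to absorb the too-many-derivatives terms that appear when one differentiates the quasilinear system \eqref{DiagonalQuasiSystem-re}. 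The norm equivalence \eqref{en-cubic} is immediate from the symbol bounds in Proposition~\ref{p:ennnf}: the trilinear forms $B$ and $A$ have symbols in $S(d\rho^{2n-3})$ and $S(dd_1\rho^{2n-3}) + S(\rho^{2n-2})$ (and for $n=1$ even better), so Coifman--Meyer type multilinear estimates bound the cubic terms by $A\,\norm_n^2$, using the $L^\infty$-type control of $\W, R$ encoded in $A$ and trading one derivative via the $d$ or $d_1$ factor against the top Sobolev norm.

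For the energy estimate \eqref{dt-cubic}, I would proceed in two stages. First, for the high-frequency part $\Enthigh = E^{(2)}_{lin}(\partial^{n-1}\W,\partial^{n-1}R) + gB_{high} + A_{high} + \text{(quartic)}$, I would use the explicit formulas \eqref{m1}/\eqref{m11} for $B_{high}, A_{high}$: these are weighted linear energies with weights built from $\Re\W$ and $(1+\Til^2)\Re\W$, so differentiating the differentiated system and applying Proposition~\ref{p:local} (the model-equation estimate, with a weight $\omega$ incorporating the $\pm 4n\Re\W \mp \frac12(1+\Til^2)\Re\W$ corrections) produces a time derivative that is, modulo genuinely quartic terms, of the form $O_A(B)E^{(2)}_{lin}$. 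The point of including the $-\frac12(1+\Til^2)\Re\W$ versus $+\frac12(1+\Til^2)\Re\W$ discrepancy between the $\W$ and $R$ weights is precisely to cancel the linear piece $\ao = g(1+\Til^2)\Re\W$ of the Taylor coefficient $\mfa$, which is the mechanism producing the cubic (rather than quadratic) improvement. The quartic quasilinear corrections handle the remaining top-derivative terms $b\partial_\alpha$, $b_\alpha$, exactly as in the large-data Proposition~\ref{t:en=large}, but now every leftover term carries an extra factor of the small quantities controlled by $A$, yielding the gain from $B\norm_n^2$ to $AB\norm_n^2$.

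Second, for the low-frequency part $\Ennflow = gB_{low}(\W,\W,\W) + A_{low}(\W,R,R)$, I would differentiate directly and use the symbol regularity $B_{low}\in S(d\rho^{2n-3})$, $A_{low}\in S(dd_1\rho^{2n-3})+S(\rho^{2n-2})$. When the time derivative hits one factor, one substitutes the equation \eqref{DiagonalQuasiSystem-re}; the resulting quartic expressions are estimated by Coifman--Meyer bounds, where the extra $d$ (or $d_1$) factors in the symbol compensate for the derivative loss coming from $b\partial_\alpha$ and $R_\alpha$, so one never needs more than $\norm_n$ on the top-order factor and the remaining factors are absorbed into $A$ and $B$. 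The key technical input here is that, because $B_{low}$ and $A_{low}$ vanish to appropriate order on the resonant set (the $d, d_1$ factors), the normal-form cancellation $\Poly{\leq 3}\frac{d}{dt}\Ennf = 0$ localizes nicely and the leftover quartic terms are controlled by $AB\norm_n^2$.

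The main obstacle I expect is the bookkeeping of the quartic error terms in the high-frequency estimate — specifically, showing that all the terms produced by differentiating the quasilinear system $n-1$ times and commuting through the weights $\Re\W$, $(1+\Til^2)\Re\W$ genuinely have the structure $AB\norm_n^2$ rather than merely $B\norm_n^2$. This requires a careful accounting of where each power of $A$ comes from: some terms gain smallness because an undifferentiated $\W$ or $R$ factor appears (directly $O(A)$), others because a normal-form cancellation kills the non-small part, and still others because the quartic quasilinear correction was precisely engineered to eliminate the offending $B\norm_n^2$ contribution. The delicate point — and the reason $A\ll1$ is needed — is that the energy equivalence \eqref{en-cubic} has $O(A)$ errors, so one must iterate the estimate carefully (as in Proposition~\ref{t:en=large}, starting from $n=0$ energy conservation and bootstrapping up) to ensure the cubic corrections never dominate the linear energy; this is where most of the technical work, relegated to the Appendix commutator and Coifman--Meyer lemmas, gets invoked.
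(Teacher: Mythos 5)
Your overall strategy coincides with the paper's quasilinear modified energy method: split the normal form energy of Proposition~\ref{p:ennnf} into a high-frequency part, replace that part by a quasilinear weighted energy agreeing with it to cubic order and estimate it via the model problem of Proposition~\ref{p:local}, and estimate the remaining low-frequency trilinear forms directly by Coifman--Meyer type bounds. One bookkeeping difference: the paper does not insert the weight $\pm 4n\Re\W$ into Proposition~\ref{p:local}(b); that weight is generated by the exponential conjugation $w = e^{n\phi}\W^{(n-1)}$, $\phi = -2\Re\log(1+\W)$ (whose expansion $e^{n\phi}\approx 1-2n\Re\W$ produces exactly the $-4n\Re\W$ in the quadratic form), and this conjugation is also what removes the unbounded $R_\alpha\W^{(n-1)}$-type terms from the differentiated system; only $\omega=\frac14(1+\Til^2)\Re\W$ enters as a weight in the model estimate, and this is admissible precisely because it is a multiple of $\ao$ and so Lemma~\ref{l:a} supplies the hypotheses \eqref{en-omega}.

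There is, however, one genuine gap in your low-frequency argument. You assert that the $d$, $d_1$ factors in the symbols of $B_{low}$, $A_{low}$ ``compensate for the derivative loss coming from $b\partial_\alpha$.'' They cannot: in $\Poly{\geq 4}\frac{d}{dt}B_{low}(\W,\W,\W)=3B_{low}(\Poly{\geq2}\partial_t\W,\W,\W)$ the transport piece $T_b\partial_\alpha\W$ puts the extra derivative on a (possibly) high-frequency factor, where the symbol gain $d$ — which sits on the \emph{lowest} frequency — is of no help, and the low frequencies of $b$ are not pointwise controlled (only $\Til b$ is, by Lemma~\ref{l:b}), so this term is not perturbative. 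The paper's mechanism is a cancellation: one isolates $T_b\partial_\alpha$ in each of the three slots, commutes $b_{<j}$ past the frequency localizations (the commutators are perturbative), and integrates by parts so that the sum of the three transport contributions collapses into terms carrying $\partial_\alpha b_{<k}$, which \emph{is} controlled by $B$. Without this symmetrization/integration-by-parts step the quartic transport error does not close; the same cancellation is needed again, in a more delicate form, for the extra correction $E^{(3),a}(\W,R)$ that must be added in the case $n=1$, where the paper additionally uses the decomposition $\partial_t = (\partial_t + T_b\partial_\alpha) - T_b\partial_\alpha$ together with the bounds of Lemma~\ref{l:dt-rw}.
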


The first step in the analysis will be to isolate the main part of the
systems for $(\W, R)$ and for their derivatives, and derive quadratic
energy estimates for it.  A key part in this will be played by the 
model system studied in Section \eqref{s:model}. This model system plays the same role in this paper as the linearized
system played in the analysis of  the infinite depth water waves, (see\cite{HIT}). However, this correspondence is incomplete, in that 
here we do not have cubic estimates for the linearized system, but we
do have them for the system in $(\W, R)$ and also for its higher derivatives.

We will first differentiate the equations, and 
prove the large data result using the bounds for the model problem
in Proposition~\ref{p:local}. Then we consider the small data 
problem, and combine the prior high frequency analysis with the 
normal form energy derived in the previous section.

\subsection{The case $n=1$.}  We begin by looking at the $(\W, R)$ system \eqref{DiagonalQuasiSystem-re}.
This is a self-contained  diagonal system in these variables, which we rewrite in a form which is similar to the  model problem
in Proposition~\ref{p:local}:
\begin{equation}
  \label{ww2d-diff-unp}
  \left\{
    \begin{aligned}
      &\W_t + b\W_\alpha + \frac{1}{1+\bar \W}R_\alpha - \frac{R_{\alpha}}{1+\bar \W}\Til^2 \W= \mathcal{G}\\
      &R_t + bR_\alpha -\frac{(g+\mathfrak{a})\Til [\W]}{1+\W} =\mathcal{ K},
    \end{aligned}
  \right.
\end{equation}
where
\[
\mathcal{G}:=(1+\W)M  - \frac{R_{\alpha}}{1+\bar \W} (1+\Til^2) [\W] , \quad \mathcal{K}:=-2i\frac{\Im \P\left[
    R\bar{R}_{\alpha} \right]}{1+\W} -\frac{\mathfrak{a}\Til [\W]}{1+\W}.
\]
In order to view this system as an evolution in the space of holomorphic
functions,  we project \eqref{ww2d-diff-unp} onto the space of
holomorphic functions via the projection operator $\P$:
\begin{equation}
  \label{ww2d-diff-proj}
  \left\{
    \begin{aligned}
      &\W_t + b\W_\alpha + \P\left[ \frac{1}{1+\bar \W}R_\alpha \right] - \P \left[ \frac{R_{\alpha}}{1+\bar \W} \Til^2\W\right] = \P \mathcal{G}\\
      &R_t + bR_\alpha -\P \left[ \frac{(g+\mf{a})\Til [\W]}{1+\W}\right]
      =\P \mathcal{ K}.
    \end{aligned}
  \right.
\end{equation}
We also recall here the expressions of $b$, $a$ and $M$
\[
b = 2\Re\left[R - \P[R\bar Y]\right],
\]
and
\[
\mathfrak{a} = g\W + ig\Til[\W] + 2\Im\P[R\bar R_\alpha], \quad M = 2\Re\P[R\bar
Y_\alpha - \bar R_\alpha Y].
\]

To this system we associate the positive definite linear functional
energy  $E^{(2)}_{lin} (\W, R)$ given by \eqref{QuadLinNRG}.
The main result of this subsection establishes energy bounds for the
system \eqref{ww2d-diff-proj}, thus proving the $n=1$ part of 
Proposition~\ref{t:en=large}

\begin{proposition} The above energy applied to solutions of 
projected system \eqref{ww2d-diff-proj} satisfies the  following estimates:
\begin{itemize}
\item[i)] Norm equivalence:
\[
E^{(2)}_{lin} (\W, R) \approx_A \|(\W,R)\|_{\H}^2.
\]
\item[ii)] Cubic energy estimates:
  \[
  \frac{d}{dt}E^{(2)}_{lin} (\W, R)\lesssim_A B \norm_1^2.
  \]
  \end{itemize}
\end{proposition}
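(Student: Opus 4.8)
The plan is to exploit the fact that the projected system \eqref{ww2d-diff-proj} is an instance of the model system \eqref{ModelLinSys} studied in Section~\ref{s:model}, so that Proposition~\ref{p:local} does most of the work, leaving only the source-term estimates. Concretely, I would set $(w,r) = (\W,R)$, observe that $(\W,R)$ indeed solves \eqref{ModelLinSys} with right-hand sides $(G,K) = (\P\mathcal{G},\P\mathcal{K})$, where $\mathcal{G}$, $\mathcal{K}$ are as in \eqref{ww2d-diff-unp}, and take $\omega \equiv 1$ so that $E^{(2)}_{lin}$ is exactly the energy to which Proposition~\ref{p:local}(a) applies. The norm equivalence in (i) is immediate from \eqref{LinearEnergy}, \eqref{QuadLinNRG} and the remark that $E^{(2)}_{\lin} \approx_A E_0 \approx_g \|(\W,R)\|_{\H}^2$, which in turn relies on the Taylor stability bound in Lemma~\ref{L:Taylor} and the upper bound for $\mfa$ in Lemma~\ref{l:a}.

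For (ii), the estimate \eqref{est1} from Proposition~\ref{p:local} gives
\[
\frac{d}{dt}E^{(2)}_{lin}(\W,R) = 2\langle \P\mathcal{G},\W\rangle_{g+\mfa} + 2\langle \D\P\mathcal{K},\D R\rangle + O_A(B)\,E^{(2)}_{lin}(\W,R),
\]
and the last term is already $\lesssim_A B\,\norm_1^2$ by the norm equivalence. So the whole game is to show
\[
|\langle \P\mathcal{G},\W\rangle_{g+\mfa}| + |\langle \D\P\mathcal{K},\D R\rangle| \lesssim_A B\,\norm_1^2,
\]
for which it suffices (using Cauchy--Schwarz in $\mfH$, the $L^\infty$ bound on $g+\mfa$ from Lemma~\ref{l:a}, and $\norm_1 \approx_A \|(\W,R)\|_{\H}$ up to the $g^{1/2}$ weighting) to prove the source bound
\[
g^{\frac12}\|\P\mathcal{G}\|_{\mfH} + \|\D\P\mathcal{K}\|_{\mfH} \lesssim_A B\,\norm_1 .
\]
Here I would split $\mathcal{G} = (1+\W)M - \tfrac{R_\alpha}{1+\bar\W}(1+\Til^2)\W$ and $\mathcal{K} = -2i\tfrac{\Im\P[R\bar R_\alpha]}{1+\W} - \tfrac{\mfa \Til[\W]}{1+\W}$ and estimate term by term. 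For the $M$ term I would use the pointwise bound on $M$ from Lemma~\ref{l:M} together with $\|\W\|_{L^\infty} \lesssim A$; for the $(1+\Til^2)\W$ term I would use that $1+\Til^2$ has a Schwartz kernel so that $\|(1+\Til^2)\W\|_{\mfH} \lesssim \|\W\|_{L^2}$, combined with $\|R_\alpha\|_{L^2} \lesssim \norm_1$ actually needs to be avoided---instead one puts $R_\alpha$ in $\mfH$ ($\|R\|_{H^1}$) and the Schwartz factor acting on $\W$ in $L^\infty$, bounded by $A$. The $\D$-weighted $\mathcal{K}$ estimates are the place where one must be careful: one writes $\D\P\mathcal{K}$, uses the Coifman--Meyer and commutator bounds of the Appendix (e.g.\ \eqref{est:TilComm}, \eqref{est:StrongLoHi}, the paraproduct estimates \eqref{Para1}) to move $\D \approx |D|^{1/2}$ onto the rougher factor, and invokes the bounds for $\mfa$ (Lemma~\ref{l:a}) and $Y$ in $\bmo^{1/2}$.

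The main obstacle I anticipate is \emph{not} a low-frequency one (unlike in the linearized analysis of Section~\ref{s:linearization}, here $w = \W = W_\alpha$ and $r = R$ are genuinely differentiated/decaying variables, so the dangerous $\P S_0$ terms with undifferentiated $\Re W$, $\Re Q$ do not arise), but rather the half-derivative bookkeeping in the $\D\P\mathcal{K}$ term: one must distribute the $|D|^{1/2}$ across products like $\tfrac{1}{1+\W}\Im\P[R\bar R_\alpha]$ and $\tfrac{\mfa\Til[\W]}{1+\W}$ without ever asking for more than one derivative on $R$ or $\W$, which forces a careful paraproduct decomposition and the use of $\bmo^{1/2}$ (rather than $L^\infty$) control of the coefficients $\W$, $Y$, $\mfa$. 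These coefficient bounds in terms of $A$ and $B$ are exactly what Lemmas~\ref{l:a}, \ref{l:b}, \ref{l:M} provide, so modulo invoking them the argument reduces to the commutator and product estimates collected in the Appendix.
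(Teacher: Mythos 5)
Your strategy coincides with the paper's: one treats \eqref{ww2d-diff-proj} as an instance of the model system \eqref{ModelLinSys} with $(w,r)=(\W,R)$ and source $(\P\mathcal{G},\P\mathcal{K})$, applies \eqref{est1}, and reduces matters to the source bound $\|(\P\mathcal{G},\P\mathcal{K})\|_{\H}\lesssim_A B\,\norm_1$, which is precisely the auxiliary lemma the paper proves; the norm equivalence in (i) is, as you say, immediate from the Taylor sign condition and the upper bound on $\mfa$.

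One sub-step of your source estimate would fail as written. For the term $\dfrac{R_\alpha}{1+\bar\W}(1+\Til^2)\W$ in $\mathcal{G}$ you propose to place ``$R_\alpha$ in $\mfH$ ($\|R\|_{H^1}$)''; but $\norm_1$ only controls $\|R\|_{H^{1/2}}$, so $\|R\|_{H^1}$ is a $\norm_2$-level quantity and the estimate would not close at the regularity of the proposition. The Schwartz smoothing of $1+\Til^2$ must be used to spare derivatives on $R$, not on $\W$: after a paraproduct decomposition, the pieces carrying $R_\alpha$ at the dominant frequency are bounded by $\|\langle D\rangle^{1/2}R\|_{L^\infty}\,\|\W\|_{L^2}$ via \eqref{est:StrongLoHi} and \eqref{Para1}, and since $\|\langle D\rangle^{1/2}R\|_{L^\infty}\lesssim \|\langle D\rangle R\|_{\bmo}\leq B$ and $\|\W\|_{L^2}\lesssim g^{-1/2}\norm_1$, multiplying by the weight $g^{1/2}$ gives the required $B\,\norm_1$. (The paper's displayed proof only treats the $(1+\W)M$ part of $\mathcal{G}$, so this term has to be supplied in any case.) The remainder of your plan for $\mathcal{K}$ --- commuting $\P$ past $R$ and using the $\bmo^{1/2}$ control of $Y$ and $\mfa$ --- matches the paper's argument.
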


Here the energy equivalence follows directly from the positivity 
and boundedness of $\mathfrak{a}$, see Proposition~\ref{TS} and  Lemmma~\ref{l:a}.
The second estimate in the proposition relies on the
estimates obtained in \eqref{est1}. Precisely, in order to obtain
the quadratic energy estimates for the large data, it suffices to
prove a priori bounds for $\Vert (\P \mathcal{G}, \P
\mathcal{K})\Vert_{\mathcal{H}}$. These a priori bounds will be in
terms of the pointwise control norms $A$, $B$ and the energy
$E^{(2)}_{lin} (\W, R)$:

\begin{lemma}
  \label{quadratic, n=1} The following estimates for the lower order
  terms $(\P \mathcal{G}, \P \mathcal{K})$:
  \begin{equation}
    \label{quad-energy}
    \Vert (\P \mathcal{G}, \P \mathcal{K})\Vert_{\mathcal{H}} \lesssim _{A} B \norm_1.
  \end{equation}
\end{lemma}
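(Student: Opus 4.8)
The goal is to bound $\|(\P\mathcal{G},\P\mathcal{K})\|_{\mathcal{H}}$ by $B\norm_1$, with constants depending only on $A$; since $\norm_1 = \|(g^{\frac12}\W,R)\|_{L^2\times H^{\frac12}}$, and $\|w\|_{\mfH}\approx \|w\|_{L^2}$ and $\|L_hr\|_{\mfH}\approx\|r\|_{H^{\frac12}}$ at low frequency, this amounts to estimating $\P\mathcal{G}$ in $\mfH$ and $L\P\mathcal{K}$ in $\mfH$. The key structural point, exactly as in \cite{HIT}, is that every term in $\mathcal{G}$ and $\mathcal{K}$ is genuinely quadratic and can be organized so that one factor carries an $L^\infty$-type (i.e.\ $A$) bound and the other carries a $B$-weighted norm, with the projections $\P,\bar\P$ supplying the needed commutator or null structure. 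I would first record the pointwise bounds I intend to use: $\|\W\|_{L^\infty},\|Y\|_{L^\infty}\lesssim A$ and $g^{-\frac12}\|\langle D\rangle^{\frac12}R\|_{L^\infty}\lesssim A$ from the definition \eqref{A-def}, together with the $B$-bounds for $b$, $\mfa$, $M$, $R_\alpha$ and $Y_\alpha$ from Lemmas~\ref{l:a}, \ref{l:b}, \ref{l:M} in the Appendix (schematically $\|b_\alpha\|_{\bmo},\|\mfa\|_{\bmo^{\frac12}},\|M\|_{\bmo},g^{-\frac12}\|R_\alpha\|_{\bmo^{\frac12}}\lesssim B$), and the Coifman--Meyer / commutator estimates collected in the Appendix.

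\textbf{The term $\P\mathcal{G}$.} Recall $\mathcal{G} = (1+\W)M - \dfrac{R_\alpha}{1+\bar\W}(1+\Til^2)[\W]$. For the first piece I would split $(1+\W)M = M + \W M$; the constant-times-$M$ part is simply $\|M\|_{\mfH}$, and since $M$ is given by \eqref{def:m} as $2\Re\P[R\bar Y_\alpha - \bar R_\alpha Y]$ I would estimate $\|M\|_{L^2}\lesssim \|R\|_{L^\infty}\|Y_\alpha\|_{L^2} + \|R_\alpha\|_{L^2}\|Y\|_{L^\infty}\lesssim AB\norm_1 g^{-\frac12}$ after accounting for the $g$-weights, using the $L^\infty\to L^2$ Coifman--Meyer bounds. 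For $\W M$ I put $M$ in $L^2$ (not $L^\infty$) and $\W$ in $L^\infty$, giving $\lesssim A\|M\|_{L^2}$. For the second piece of $\mathcal{G}$, the multiplier $1+\Til^2$ has a Schwartz (rapidly decaying, integrable) kernel, so $\|(1+\Til^2)\W\|_{L^2}\lesssim \|\W\|_{L^2}$ and $\|(1+\Til^2)\W\|_{L^\infty}\lesssim\|\W\|_{L^\infty}\lesssim A$; then I estimate $\big\|\dfrac{R_\alpha}{1+\bar\W}(1+\Til^2)\W\big\|_{L^2}\lesssim \|R_\alpha\|_{L^2}\|(1+\Til^2)\W\|_{L^\infty}\lesssim A\|R_\alpha\|_{L^2}$, which is $\lesssim AB g^{-\frac12}\norm_1 g^{\frac12}$ in the right $\mathcal H$-normalization via the $B$-bound on $R_\alpha$ — here I should be a bit careful about whether $R_\alpha\in L^2$ follows from $\norm_1$ at high frequency, and if not, absorb the high-frequency part into the $\bmo^{\frac12}$ norm of $R$ using a paraproduct decomposition. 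The factors $\frac{1}{1+\W}$, $\frac{1}{1+\bar\W}$ are handled as usual: they lie in $1 + \bmo^{\frac12}\cap L^\infty$ by the Moser-type estimate \eqref{e:HolomProductBound}-type bounds, so multiplication by them is bounded on the relevant spaces with constant depending on $A$.

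\textbf{The term $L\P\mathcal{K}$, and the main obstacle.} Here $\mathcal{K} = -2i\dfrac{\Im\P[R\bar R_\alpha]}{1+\W} - \dfrac{\mfa\Til[\W]}{1+\W}$. The genuinely delicate term is the first one, since $R\bar R_\alpha$ is high-high in the worst case and we need to land in $\dot H^{\frac12}$ (i.e.\ apply $L\approx |D|^{\frac12}$). I would write $\Im\P[R\bar R_\alpha]$ using the commutator identity $\Im\P[R\bar R_\alpha] = \frac12\Im[\bar\P,\bar R]R_\alpha + (\text{holomorphic/antiholomorphic clean terms})$, so that the frequency of the output is controlled by the smaller of the two input frequencies — this is precisely the null structure that lets the half-derivative be distributed favorably. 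Then $\|L\,\Im\P[R\bar R_\alpha]\|_{L^2}$ is controlled by a Coifman--Meyer bound of the form $\lesssim \|R\|_{\text{(pointwise)}}\|R_\alpha\|_{\bmo^{\frac12}}$, i.e.\ $\lesssim AB\,g\,\norm_1\cdot g^{-1}$ after normalization, quoting the commutator estimate \eqref{PComm} (or \eqref{est:TilComm}) from the Appendix. For the $\dfrac{\mfa\Til[\W]}{1+\W}$ term I decompose by paraproducts: when $\mfa$ is at low frequency, $\|T_{\mfa}\Til\W\|_{H^{\frac12}}\lesssim\|\mfa\|_{L^\infty}\|\W\|_{H^{\frac12}}$, using $\|\mfa\|_{L^\infty}\lesssim g A$ from Lemma~\ref{l:a}; when $\mfa$ is at high frequency, $\|(\mfa - T_{\mfa})\Til\W\|_{H^{\frac12}}\lesssim\|\mfa\|_{\bmo^{\frac12}}\|\W\|_{L^\infty}\lesssim B\cdot gA\cdot g^{-1}$ via the paraproduct estimate \eqref{Para1}. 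I expect the main obstacle to be exactly the bookkeeping of the $g$-weights together with the high/low frequency split in the first term of $\mathcal{K}$: one must verify that the half-derivative on $R\bar R_\alpha$ never forces a norm of $R$ stronger than what $A$ and $B$ (and $\norm_1$) supply, which is where the holomorphic null structure (the $\P$ projection producing a commutator) is essential — without it the estimate would fail at high-high frequencies, mirroring the situation in the infinite-depth case but here with the additional subtlety that the low-frequency structure is weaker.
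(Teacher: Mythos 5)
Your overall architecture (commutator structure coming from the projections, paraproduct estimates, the Appendix bounds for $M$, $\mfa$, $Y$, $b$) is the same as the paper's, and the treatment of the $\mfa\,\Til[\W]$ term matches the paper's almost verbatim. However, there is a recurring and genuine gap in how you distribute derivatives, and at the $n=1$ level it is fatal. The norm $\norm_1 = \|(g^{1/2}\W,R)\|_{L^2\times H^{1/2}}$ controls only $\W$ in $L^2$ and $R$ in $H^{1/2}$; it does \emph{not} control $\|\W_\alpha\|_{L^2}$, $\|Y_\alpha\|_{L^2}$ or $\|R_\alpha\|_{L^2}$ — those are $\norm_2$-level quantities. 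Your proposed bound $\|M\|_{L^2}\lesssim \|R\|_{L^\infty}\|Y_\alpha\|_{L^2}+\|R_\alpha\|_{L^2}\|Y\|_{L^\infty}$ therefore proves nothing toward \eqref{quad-energy}: both terms require a full derivative on the factor that is supposed to carry the energy norm. The same defect appears in your estimate $\|\tfrac{R_\alpha}{1+\bar\W}(1+\Til^2)\W\|_{L^2}\lesssim\|R_\alpha\|_{L^2}\|(1+\Til^2)\W\|_{L^\infty}$ (you flag this one but your fallback is not carried out). The fix, which is the actual content of Lemma~\ref{l:M} that the paper invokes, is to exploit $\P\bar Y_\alpha = \P\bar R_\alpha=0$ to write $M=2\Re[\P,R]\bar Y_\alpha-2\Re[\P,Y]\bar R_\alpha$ and then use \eqref{PComm}, which moves the derivative onto the factor measured in $\bmo$ (hence into $B$), leaving the other factor in $L^2$ or $H^{1/2}$ at the $\norm_1$ level; for the $(1+\Til^2)\W$ term one instead uses the Schwartz symbol of $1+\Til^2$ together with a paraproduct split so that $R_\alpha$ is only ever measured in $\bmo\subset$ (paraproduct-admissible) norms controlled by $B$.

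A second, smaller inconsistency: for the term $\Im\P[R\bar R_\alpha]$ you assert a bound of the form $\|R\|_{\mathrm{pointwise}}\|R_\alpha\|_{\bmo^{1/2}}$, which contains two uniform norms and no energy norm, so it cannot produce the right-hand side $B\norm_1$ of \eqref{quad-energy} (your subsequent "$\lesssim AB\,g\,\norm_1\cdot g^{-1}$" is not consistent with it). The paper's estimate is $\|\la D\ra^{1/2}\bar\P[\bar R R_\alpha]\|\lesssim\|R\|_{\bmo^1}\|R\|_{H^{1/2}}$, i.e.\ exactly one $B$-factor and one $\norm_1$-factor, again obtained from the commutator estimate \eqref{PComm}. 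So the idea (null structure via the projection) is correct, but the norms you assign to the two copies of $R$ have to be chosen as above for the lemma to close.
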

\begin{proof} We begin with the estimate for $\mathcal{G}$:
  \[
  \Vert \P \mathcal{G}\Vert_{\H}\lesssim \Vert \P M\Vert_{\H}+\Vert \P
  [\W M]\Vert _{L^2}\lesssim_A (B+AB) \Vert R\Vert_{H^{\frac{1}{2}}}.
  \]
  To bound $\mathcal{K}$ we estimate each of the terms separately. Thus,
  for the first term we estimate
  \[
  \begin{aligned}
    \Vert \left\langle D \right\rangle ^{\frac{1}{2}} \left[ \P \left[
        \Im \bar{\P}[\bar{R}R_{\alpha}]\right](1-Y) \right]\Vert _{\H}
    &\lesssim \Vert \left\langle D \right\rangle ^{\frac{1}{2}}
    \bar{\P}[\bar{R}R_{\alpha}] \Vert _{\H} +
    \Vert \left\langle D \right\rangle ^{\frac{1}{2}}  \bar{\P}[\bar{R}R_{\alpha}]Y\Vert _{\H}\\
    &\lesssim \Vert R\Vert_{\bmo^{1}} \Vert R\Vert_{H^\frac{1}{2}} +\Vert Y\Vert_{\bmo^{\frac{1}{2}}}\Vert \left\langle D \right\rangle ^{\frac12}R\Vert_{L^{\infty}}\Vert R\Vert_{H^{\frac12}}\\
    &\lesssim _A (B+AB) \Vert R\Vert_{H^{\frac12}}.
  \end{aligned}
  \]
 This is a direct consequence of the commutator estimate \eqref{PComm}
  together with the $Y$ estimate derived in \eqref{est:Ybmo}. As for the
  second term, we use the estimates derived for $\mathfrak{a}$ and $Y$ in
  \eqref{est:a} respectively \eqref{est:Ybmo}, to arrive at
  \[
  \Vert \mf{a}\Til [\W](1-Y)\Vert_{H^{\frac12}}\lesssim (\Vert
  \mf{a}\Vert_{\bmo^{\frac12}}+\Vert \mf{a}\Vert_{L^{\infty}}\Vert
  Y\Vert_{\bmo^{\frac12}}) \Vert \W\Vert _{L^2}.
  \]
  \end{proof}

  For the small data problem it is of further interest to track the
  solutions on larger time scales in order to prove Proposition~\ref{t:en=small}. This is done at the end of this section.


\subsection{The case $n=2$} 
 We recall that the system \eqref{DiagonalQuasiSystem-re}  for
$(\W , R)$ is given by
\begin{equation*}
  \left\{
    \begin{aligned}
      &\W_t + b\W_\alpha + \frac{1}{1+\bar \W}R_\alpha + \frac{R_{\alpha}}{1+\bar \W}\W= (1+\W)M\\
      &R_t + bR_\alpha -\frac{g\Til [\W]}{1+\W} +\frac
      {ia}{1+\W}=0,
    \end{aligned}
  \right.
\end{equation*}
where $a$ is the same as in the infinite depth gravity water waves
\[
 a:=2\Im\P[R\bar{R}_{\alpha}].
\]

We differentiate with respect to $\alpha$ in order to obtain 
a system for $(\W_\alpha,R_\alpha)$,
\begin{equation*}
  \left\{
    \begin{aligned}
      &\W_{\alpha t} + b\W_{\alpha \alpha}  + \frac{\left[ (1+\W)R_{\alpha}\right] _{\alpha}}{1+\bar \W}= -b_{\alpha}\W_{\alpha} -(1+\W)R_{\alpha}\bar{Y}_{\alpha}+\W_{\alpha}M+(1+\W) M_{\alpha}\\
      &R_{\alpha t} + bR_{\alpha \alpha} -\frac{g\Til [\W
        _{\alpha}]}{1+\W} +\frac{g\Til [\W]}{(1+\W)^2}\W_{\alpha}
      +\frac {ia_{\alpha}}{1+\W}
      -\frac{ia}{(1+\W)^2}\W_{\alpha}=-b_{\alpha}R_{\alpha
      },
    \end{aligned}
  \right.
\end{equation*}
and rewrite it as follows
\begin{equation*}
  \left\{
    \begin{aligned}
      &\W_{\alpha t} + b\W_{\alpha \alpha}  + \frac{\left[ (1+\W)R_{\alpha}\right] _{\alpha}}{1+\bar \W}= -b_{\alpha}\W_{\alpha} -(1+\W)R_{\alpha}\bar{Y}_{\alpha}+\W_{\alpha}M+(1+\W) M_{\alpha}\\
      &R_{\alpha t} + bR_{\alpha \alpha} -\left[ \frac{(g+\mathfrak{a})\Til
          [\W_{\alpha}]}{(1+\W)^2}\right]=\frac{ia-g\Til
        [\W]}{(1+\W)^2}(1+i\Til)\W_{\alpha}-\frac
      {ia_{\alpha}}{1+\W}-b_{\alpha}R_{\alpha }.
    \end{aligned}
  \right.
\end{equation*}
We recall that
\begin{equation}
\label{def-M}
M=\frac{R_{\alpha}}{1+\bar{\W}}+\frac{\bar{R}_{\alpha}}{1+\W}-b_{\alpha},
\end{equation}
and use this definition to simplify the system above
\begin{equation*}
  \left\{
    \begin{aligned}
      &\W_{\alpha t} + b\W_{\alpha \alpha}  + \frac{\left[ (1+\W)R_{\alpha}\right] _{\alpha}}{1+\bar \W}=\left(  M-\frac{R_{\alpha}}{1+\bar{\W}}-\frac{\bar{R}_{\alpha}}{1+\W}\right) \W_{\alpha} \\
      &\hspace{7cm}-(1+\W)R_{\alpha}\bar{Y}_{\alpha}+\W_{\alpha}M+(1+\W) M_{\alpha}\\
      &R_{\alpha t} + bR_{\alpha \alpha} -\left[  \frac{(g+\mfa)\Til [\W_{\alpha}]}{(1+\W)^2}\right]=\frac{ia-g\Til [\W]}{(1+\W)^2}(1+i\Til)\W_{\alpha}-\frac {ia_{\alpha}}{1+\W}\\
      &\hspace{7cm}+\left(
        M-\frac{R_{\alpha}}{1+\bar{\W}}-\frac{\bar{R}_{\alpha}}{1+\W}\right)
      R_{\alpha }.
    \end{aligned}
  \right.
\end{equation*}

To align this more closely with the linearized equation and express the system in a manner similar to \cite{HIT}, we introduce the auxiliary holomorphic variable 
\[
\R = (1+\W) R_{\alpha}.
\]
Then it becomes
\begin{equation*}
  \left\{
    \begin{aligned}
      &\W_{\alpha t} + b\W_{\alpha \alpha}  + \frac{\R _{\alpha}}{1+\bar \W} +\frac{R_{\alpha}}{1+\bar{\W}} \W_{\alpha}=-\frac{\bar{R}_{\alpha}}{1+\W} \W_{\alpha}+\R\bar{Y}_{\alpha}+2\W_{\alpha}M+(1+\W) M_{\alpha}\\
      &\\
      &\R_t+b \R_{\alpha}-\left[  \frac{(g+\mfa)\Til [\W_{\alpha}]}{(1+\W)}\right]=\frac{ia-g\Til [\W]}{(1+\W)}(1+i\Til)\W_{\alpha} + (\bar{R}_{\alpha}R_{\alpha}-ia_{\alpha} )+2\R M\\
      &\hspace{10cm} -2\left(
        \frac{R_{\alpha}}{1+\bar{\W}}+\frac{\bar{R}_{\alpha}}{1+\W}\right)
      \R .
    \end{aligned}
  \right.
\end{equation*}
Here, we have isolated on the left the leading part of our
equations. The goal is to interpret the terms on the right hand side
as perturbative (with one exception, which is 
only due to the low regularity setting, see below).  In addition, for the cubic bound we will   also need to pay attention to the quadratic part of the terms in the equations.

In order to simplify our bookkeeping we define two types of error terms for the above system. These are denoted by $\errw $
and $\errr $, which correspond to the two equations. A similar strategy was employed in \cite{HIT}.
However, unlike in \cite{HIT}, here we also include bounded quadratic terms into the error, rather than explicitly keeping track of them. This simplifies the argument somewhat, at the expense of getting a less precise expression for the normal form energy.

 The bounds for these errors are in terms of the control variables $A$, $B$, as well
as the $L^2$ type norm $\norm_2$, where
\[
\norm_2:=\Vert (g^{\frac{1}{2}}\W, R)\Vert_{H^1 \times H^{\frac{3}{2}}}.
\]

The acceptable errors in the $\W_{\alpha}$ equation are denoted, by
$\errw$ and are of two types, $\errw^{[2]}$ and $\errw^{[3]}$. The
first one, $\errw^{[2]}$, consists of  quadratic terms
which satisfy the bounds
\[
\Vert \Til \P\mathcal{G}\Vert_{L^2}\lesssim B\norm_2, \quad  \Vert \mathcal{G}\Vert_{H^{-\frac12}}\lesssim A\norm_2 .
\]
By $\errw^{[3]}$ we denote the cubic and higher counterpart  of $\errw^{[2]}$, which contains terms  $\mathcal{G}$
which satisfy the estimate
\[
\Vert \Til \P \mathcal{G}\Vert_{L^2}\lesssim _A AB\norm_2, \quad \Vert  \mathcal{G}\Vert_{H^{-\frac12}}\lesssim _A A^2\norm_2.
\]

The acceptable errors in the $\R$ equation are denoted by $\errr$ and are of two types, $\errr ^{[2]}$ and $\errr
^{[3]}$. The first one, $\errr^{[2]}$, consists of quadratic terms $\mathcal{K}$
that  satisfy the bounds
\[
\Vert \Til \P \mathcal{K}\Vert_{H^{\frac{1}{2}}}\lesssim B\norm_2, \quad \Vert
\mathcal{K}\Vert_{L^2}\lesssim A\norm_2.
\]
By $\errr^{[3]}$ we denote terms in $K$ which satisfy the estimates
\[
\Vert \Til \P  \mathcal{K}\Vert_{H^{\frac{1}{2}}}\lesssim _A AB\norm_2,
\quad \Vert \mathcal{K}\Vert_{L^2}\lesssim _A A^2\norm_2.
\]

\begin{remark}
  Compared to \cite{HIT}, above we define $\norm_2$ in a more relaxed, inhomogeneous fashion.
This is in part caused by the lack of scaling. It is  reasonable  because here we work with the system for the
  differentiated variables $(\W, R)$ or their higher counterparts, which is 
  used to bound the high frequencies of the solutions.
\end{remark}

A key property of the space of errors is contained in the following lemma:
\begin{lemma}
\label{eroare}
Let $\Phi$ be a function which satisfies
\begin{equation}
\Vert \Phi \Vert _{L^{\infty}} \lesssim A, \quad \Vert  \Phi \Vert_{\bmo^{\frac{1}{2}}}\lesssim B. 
\end{equation}
Then, we have the multiplicative bounds
\begin{equation}
\Phi\cdot \errw= \errw , \quad \Phi \cdot \errr =\errr.
\end{equation}

\end{lemma}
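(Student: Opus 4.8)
\textbf{Proof plan for Lemma~\ref{eroare}.}

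The plan is to verify the multiplicative stability of the error classes term by term, working through the four defining estimates (two for $\errw$, coming from the $\Til\P$-bound in $L^2$ and the $H^{-\frac12}$-bound, and two for $\errr$, coming from the $\Til\P$-bound in $H^{\frac12}$ and the $L^2$-bound), in both the quadratic and cubic versions. The key observation is that for each bound of the form $\Vert \Til\P\,\mathcal{G}\Vert_{L^2}\lesssim B\norm_2$ (or its $\errr$ analogue), multiplying by $\Phi$ and re-projecting gives $\Vert \Til\P(\Phi\,\mathcal{G})\Vert_{L^2}$, which we must re-express in terms of the original $\Til\P\mathcal{G}$ together with the lower-order $H^{-\frac12}$ (resp.\ $L^2$) control and the bounds on $\Phi$. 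This is exactly the kind of situation where the paraproduct decomposition $\Phi\,\mathcal{G}=T_\Phi\mathcal{G}+T_{\mathcal{G}}\Phi+\Pi[\Phi,\mathcal{G}]$ does the work: the paraproduct $T_\Phi\mathcal{G}$ keeps the high frequencies of $\mathcal{G}$, so it inherits the $B\norm_2$ bound via $\Vert\Phi\Vert_{L^\infty}\lesssim A$ (and $\Phi$ enters only through $L^\infty$); the paraproduct $T_{\mathcal{G}}\Phi$ keeps the high frequencies of $\Phi$, so it is controlled by $\Vert\mathcal{G}\Vert_{L^\infty}$-type or rather by the low-regularity norm of $\mathcal{G}$ times $\Vert\Phi\Vert_{\bmo^{1/2}}\lesssim B$; and the resonant term $\Pi[\Phi,\mathcal{G}]$ is handled by a paraproduct estimate of the form \eqref{Para1}. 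Commuting $\Til\P$ past the multiplication by $\Phi$ costs a commutator $[\Til\P,\Phi]$, which by the Coifman--Meyer-type bounds in \eqref{est:TilComm} (and Lemma~\ref{l:CMComm}) maps $H^{-1/2}\to L^2$ with norm $\lesssim\Vert\Phi\Vert_{\bmo^{1/2}}\lesssim B$; applying this to $\mathcal{G}\in H^{-1/2}$ produces a term bounded by $B\cdot A\norm_2$, which is acceptable (indeed even better than needed in the quadratic case, since it already has a $B$ factor).

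First I would set up the bookkeeping: fix one of the four estimates, say the $\errw$ bound $\Vert\Til\P\mathcal{G}\Vert_{L^2}\lesssim B\norm_2$, and write $\Til\P(\Phi\mathcal{G})=\Phi\,\Til\P\mathcal{G}+[\Til\P,\Phi]\mathcal{G}$. The first piece is bounded by $\Vert\Phi\Vert_{L^\infty}\Vert\Til\P\mathcal{G}\Vert_{L^2}\lesssim A\cdot B\norm_2$, and the commutator piece by $\Vert[\Til\P,\Phi]\Vert_{H^{-1/2}\to L^2}\Vert\mathcal{G}\Vert_{H^{-1/2}}\lesssim B\cdot A\norm_2$, using the second defining estimate of $\errw^{[2]}$ for the low-regularity control of $\mathcal{G}$. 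For the companion bound $\Vert\Phi\mathcal{G}\Vert_{H^{-1/2}}\lesssim A\norm_2$ one argues directly: since $H^{-1/2}$ is an algebra module over $L^\infty\cap\bmo^{1/2}$ in the appropriate sense, this follows from a product estimate (of the type \eqref{e:HolomProductBound} or \eqref{Para1}) with $\Vert\Phi\Vert_{L^\infty}+\Vert\Phi\Vert_{\bmo^{1/2}}\lesssim A+B\lesssim A$ (noting that on the relevant time intervals $B$ appears in time-integrated form and so on any fixed slice we only need the crude bound that $\Phi$ is bounded). The same scheme applies verbatim to the two $\errr$ bounds, with $L^2$ replaced by $H^{1/2}$ and $H^{-1/2}$ replaced by $L^2$; here $\Til\P$ is again commuted past $\Phi$, the commutator $[\Til\P,\Phi]$ mapping $L^2\to H^{1/2}$ with norm $\lesssim\Vert\Phi\Vert_{\bmo^{1/2}}\lesssim B$ by \eqref{est:TilComm}, applied to $\mathcal{K}\in L^2$. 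Finally, the cubic versions $\errw^{[3]}$, $\errr^{[3]}$ are obtained by the identical argument: each inequality simply acquires an extra factor and the right-hand sides $AB\norm_2$, $A^2\norm_2$ are stable under multiplication by $\Phi$ exactly as above, since $\Phi$ contributes at worst a factor $A$ in $L^\infty$ and a factor $B$ in $\bmo^{1/2}$, and one never needs two factors of $B$ from $\Phi$ at once.

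The main obstacle is the term $T_{\mathcal{G}}\Phi$ (equivalently, the commutator contribution $[\Til\P,\Phi]\mathcal{G}$), where the high frequencies sit on $\Phi$ rather than on $\mathcal{G}$: here one must use the weakest available norm on $\mathcal{G}$ — the $H^{-1/2}$ control in the $\errw$ case, the $L^2$ control in the $\errr$ case — precisely paired against $\Vert\Phi\Vert_{\bmo^{1/2}}\lesssim B$, and one has to check that the resulting Sobolev exponents match up (that $H^{-1/2}\cdot\bmo^{1/2}\hookrightarrow L^2$ and $L^2\cdot\bmo^{1/2}\hookrightarrow H^{1/2}$ at the level of the relevant paraproduct piece). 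This is exactly what the commutator estimates \eqref{est:TilComm} and the Coifman--Meyer bounds of Lemma~\ref{l:CMComm} are designed to deliver, so once those are invoked the verification is mechanical; the only care needed is to confirm that the hypothesis on $\Phi$ is stated with $\bmo^{1/2}$ and not merely $\bmo$, which it is, so that the half-derivative in $[\Til\P,\Phi]$ is affordable.
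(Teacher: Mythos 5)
The paper gives no proof of this lemma (it is explicitly left to the reader), so there is no official argument to compare against; your strategy is nonetheless clearly the intended one, since it uses precisely the toolkit assembled in the Appendix. The core of your plan is sound: write $\Til\P(\Phi\mathcal G)=\Phi\,\Til\P\mathcal G+[\Til\P,\Phi]\mathcal G$, bound the first piece by $\|\Phi\|_{L^\infty}\|\Til\P\mathcal G\|_{L^2}\lesssim AB\norm_2$, and pay for the commutator with $\|\Phi\|_{\bmo^{1/2}}\lesssim B$ against the weaker ($H^{-1/2}$, resp.\ $L^2$) control of the error, via \eqref{est:TilComm} with $(s,\sigma)=(0,\tfrac12)$, resp.\ $(\tfrac12,0)$. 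One point worth making explicit when you write this up: $\Til\P$ acts on $u$ and on $\bar u$ through multipliers with bounded symbols whose derivatives are Schwartz (the $\Til^{-1}$ inside $\P$ is cancelled by the outer $\Til$), and the purely conjugate component is $(1+\Til^2)\bar u$, which is infinitely smoothing; this is what legitimizes applying \eqref{est:TilComm} to every piece of $[\Til\P,\Phi]$, including the leftover term $(\bar\Phi-\Phi)(1+\Til^2)\bar{\mathcal G}$ that is not literally a commutator.

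The one assertion you should not let stand is ``$\|\Phi\|_{L^\infty}+\|\Phi\|_{\bmo^{1/2}}\lesssim A+B\lesssim A$'': the two control norms are not comparable in general, and the parenthetical about $B$ being time-integrated is irrelevant here since the lemma is a fixed-time statement. The correct way to handle the terms where $\Phi$ carries the high frequency, namely $T_{\mathcal G}\Phi$ and $\Pi[\Phi,\mathcal G]$, is not to squeeze them into the $A$-weighted auxiliary bound but to observe that by \eqref{Para1} they land in $L^2$ (resp.\ $H^{1/2}$ for the $\errr$ case), not merely in $H^{-1/2}$, with norm $\lesssim\|\Phi\|_{\bmo^{1/2}}\|\mathcal G\|_{H^{-1/2}}\lesssim AB\,\norm_2$; they therefore already satisfy the leading $\Til\P$-projected bound of the error class with the required factor of $B$, which is the only bound that feeds into the energy flux through Proposition~\ref{p:local}, and their lower-norm control is then automatic. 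Only $T_\Phi\mathcal G$ genuinely needs $\|\Phi\|_{L^\infty}\lesssim A$ together with both defining estimates of $\mathcal G$. With that repair the verification is, as you say, mechanical, and the cubic classes $\errw^{[3]}$, $\errr^{[3]}$ follow by the identical computation with an extra factor of $A$.
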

The proof of the lemma is relatively straightforward and is left for
the reader.

We now return to system above and expand some of the terms. We begin with the terms containing $M$. For
this we will make use of the bounds we have established for $M$ in the 
Appendix:
\begin{equation}
  \label{M-needed}
  \begin{aligned}
    & \| M\|_{L^\infty} \lesssim AB, \quad \Vert
    M\Vert_{H^{\frac{1}{2}}}\lesssim A\norm_2, \quad \Vert M_\alpha \Vert_{L^2}\lesssim A\norm_2 .
  \end{aligned}
\end{equation}

Precisely, the $M$ terms in the equations satisfy
\[
M(1+\W_{\alpha}) + M_\alpha \W =\errw, \quad M\R=\errr.
\]
The first claim is a straightforward consequence of the pointwise bound
for $M$ and the $L^2$ bound for $M_\alpha$. For the second, we recall
that $\R =R_{\alpha} (1+\W)$, which  together with Lemma~\ref{eroare} allows us to only estimate
$MR_{\alpha}$. The $H^{\frac{1}{2}}$ bound for $MR_{\alpha}$ follows
after a Littlewood-Paley decomposition of the product: the bounds for the low-high and balanced interactions are a direct consequence of
\eqref{M-needed}, and the bounds for high-low interactions are obtained by combining \eqref{M-needed} and Lemma~\ref{est:TilComm}.

Next we consider the expression
\[
\frac{ia-g\Til [\W]}{(1+\W)}(1+i\Til)\W_{\alpha} = \frac{ia-g\Til [\W]}{(1+\W)}(1+\Til^2) \Re \W_{\alpha},
\]
which we claim belongs to $\errr$. To prove our claim, we split the  above expression  into a quadratic part and a cubic and higher term,
\[
-g\Til [\W] (1+\Til^2) \Re \W_{\alpha} +  ( ia+g\Til [\W]
\W)(1-Y) \left[ (1+\Til ^2)\Re \W_{\alpha}\right].
\]

We may then apply the paraproduct estimates \eqref{Para1} and
\eqref{est:StrongLoHi} with the estimates \eqref{est:a} for \(\mf{a}\)
(which also applies to the component $a$ of $\mfa$) and
\eqref{est:Ybmo}, \eqref{est:Y2} for \(Y\) to obtain:
\[
\begin{aligned}
  \Vert  (ia+g\Til [\W] \W)(1-Y) \left[  (1+\Til ^2)\Re \W_{\alpha}\right]\Vert_{H^{\frac{1}{2}}}\hspace*{4cm}\\
  \lesssim \left\lbrace A \left( \Vert a\Vert
      _{\bmo^{\frac{1}{2}}}+\Vert Y \Vert_{\bmo^{\frac{1}{2}}}\right)
    + \left( g\Vert \left\langle D\right\rangle ^{\frac{1}{2}}\W\Vert _{\bmo}\Vert \W \Vert_{L^{\infty}} +\Vert \W\Vert_{L^{\infty}}\Vert Y\Vert_{\bmo^{\frac{1}{2}}}\right) \right\rbrace  \Vert \W_{\alpha}\Vert_{L^2}.\\
\end{aligned}
\]
Similarly, for the quadratic part we obtain
\[
\Vert g\Til \![\W] (1+\Til^2)(\Re \W_{\alpha})\Vert_{H^{\frac{1}{2}}}\lesssim_A A\norm_2 .
\]

Next we consider the difference
\[
\bar{R}_{\alpha}R_{\alpha}-ia_{\alpha} = 2 \bar \P[ R_\alpha \bar R_\alpha] + i \Im \P [ R \bar R_{\alpha\alpha}] .
\]
For this we bound 
\[
\| \bar{R}_{\alpha}R_{\alpha}-ia_{\alpha} \|_{L^2} \lesssim A \norm_2, \qquad
\| \P \Til (\bar{R}_{\alpha}R_{\alpha}-ia_{\alpha}) \|_{H^\frac12} \lesssim B \norm_2 .
\]

Taking into account all the above bounds, it follows that our system can be rewritten in the form
\begin{equation*}
  \left\{
    \begin{aligned}
      &\!\!\W_{\alpha t} + \!b\W_{\alpha \alpha}  + \frac{\R _{\alpha}}{1+\bar \W} +\frac{R_{\alpha}}{1+\bar{\W}} \W_{\alpha}=2\R\bar{Y}_{\alpha} -2\frac{\bar{R}_{\alpha}}{1+\W} \W_{\alpha}+\errw \\
      &\\
      &\!\!\R_t+\!b \R_{\alpha}-\left[ \frac{(g+\mf{a})\Til
          [\W_{\alpha}]}{(1+\W)}\right]\!=-4\!\Re \left(
        \frac{R_{\alpha}}{1+\bar{\W}}\right) \R\! +\!\errr .
    \end{aligned}
  \right.
\end{equation*}

One might wish to compare this system to the model system
\eqref{ModelLinSys}, for which we obtained the nice energy estimates  in 
\eqref{est1}, and use these estimates to prove quadratic energy bounds
provided that the right hand side terms are bounded in $L^2$, and
$H^{\frac{1}{2}}$ respectively.

Unfortunately we still have terms on the right which cannot be  bounded as error terms, i.e., in $L^2 \times H^{\frac{1}{2}}$.
 This matches similar issues appearing in the infinite depth case in \cite{HIT}. To deal with these terms we use the same conjugation with respect to a real exponential weight $e^{2\phi}$, where $\phi=-2\Re \log
(1+\W)$, which was previously used in \cite{HIT}. When implementing
such a transformation, we are not only able to eliminate the unbounded
terms pointed out above, but we also manage to cast our system in a
similar form as the model system in \eqref{ModelLinSys}.

To see this, we compute
\[
\phi_{\alpha}=-2\Re \frac{\W_{\alpha}}{1+\W},\qquad (\partial_t
+b\partial_{\alpha})\phi =2\Re\frac{R_{\alpha}}{1+\bar{\W}}-2M.
\]
We denote the weighted variables by
\[
w:=e^{2\phi}\W_{\alpha}, \quad r:=e^{2\phi}\R.
\]
Before explicitly writing down the resulting equations, we remark that
by Lemma~\ref{eroare} we have
\[
e^{2\phi} \errw=\errw, \quad e^{2\phi}\errr =\errr,
\]
which simplifies the transformed system to
\begin{equation*}
  \left\{
    \begin{aligned}
      & w_t + b w_{ \alpha}  + \frac{r_{\alpha}}{1+\bar \W} - \frac{R_{\alpha}}{1+\bar{\W}} \Til^2 w=\errw \\
      &\\
      &r_t+b r_{\alpha}-\left[ \frac{(g+\mf{a})\Til
          [w]}{(1+\W)}\right]=\errr .
    \end{aligned}
  \right.
\end{equation*}
Here we have also harmlessly replaced $w$ by $- \Til^2 w$ in the last term on the left in the first equation. 
The difference is  easily included in the error as $1+\Til^2$ has a Schwartz symbol. This is done in order to bring 
the above equations more in line with the model linear problem.

Unfortunately our new variables $(w,r)$, are not exactly
holomorphic; the last system contains both holomorphic and
also antiholomorphic components. To remedy this issue we need to
project the system via the projection $\P$, and also work with the
projected variables $(\P w, \P r)$.  At this point one might
legitimately be concerned that restricting to the holomorphic part
would remove a good portion of our variables. However this is not the
case, as one can verify that the a similar argument as the one in
Lemma~3.4 from \cite{HIT} applies to the finite depth case:

\begin{proposition} The energy of $(\P w, \P r)$ above is equivalent
  to the energy of $(\W_{\alpha}, R_{\alpha})$: 
  \begin{equation}
    \Vert (\P w, \P r)\Vert_{\mathcal{H}}\sim _{A} \Vert (w,r)\Vert _{\mathcal{H}}\sim _{A} \Vert (\W_{\alpha},R_{\alpha})\Vert _{\mathcal{H}} \mbox{ modulo } A\norm_2 .
  \end{equation}
\end{proposition}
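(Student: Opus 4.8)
The plan is to prove the chain of three norm equivalences
$\Vert (\P w, \P r)\Vert_{\H}\sim_A \Vert (w,r)\Vert_{\H}\sim_A \Vert (\W_\alpha,R_\alpha)\Vert_{\H}$
modulo $A\norm_2$, working from the outside in. The second equivalence is the easy one: since $\phi = -2\Re\log(1+\W)$ and $A\ll 1$ (or at least $A$ controls $\Vert\W\Vert_{L^\infty}+\Vert Y\Vert_{L^\infty}$), the weight $e^{2\phi}$ is a bounded function bounded away from zero, with $\Vert e^{2\phi}-1\Vert_{L^\infty}\lesssim_A A$ and $\Vert e^{2\phi}\Vert_{\bmo^{1/2}}\lesssim_A B$ by the composition/Moser estimates for $\bmo^{1/2}$ and the bounds on $\W$, $Y$. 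Writing $w = \W_\alpha + (e^{2\phi}-1)\W_\alpha$ and $r = \R + (e^{2\phi}-1)\R$ with $\R = (1+\W)R_\alpha$, the multiplication by $e^{2\phi}-1$ is an $O_A(A)$ perturbation in $\H$ using the product/paraproduct estimates (\eqref{Para1}, \eqref{est:StrongLoHi}) exactly as in the error bounds already used in this section; combined with $\Vert \R - R_\alpha\Vert_{\H}\lesssim_A A\norm_2$ this gives the second $\sim_A$.

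The real content is the first equivalence, $\Vert(\P w,\P r)\Vert_{\H}\sim_A \Vert(w,r)\Vert_{\H}$ modulo $A\norm_2$, i.e., that the antiholomorphic parts $\bar\P w$, $\bar\P r$ are lower order. This is where I would invoke, and adapt, the argument of Lemma~3.4 in \cite{HIT}. The key point is that $w$ and $r$ are \emph{not} arbitrary functions: $\W_\alpha = W_\alpha$ is the derivative of a holomorphic function, $R_\alpha$ is the derivative of the holomorphic $R$, and $\R = (1+\W)R_\alpha$, so the antiholomorphic components of $w,r$ arise \emph{only} from the non-holomorphic multipliers $e^{2\phi}-1$ and $1+\W$. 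Thus $\bar\P w = \bar\P[(e^{2\phi}-1)W_\alpha]$ and, since $\bar\P$ annihilates holomorphic functions, this equals $\bar\P[(e^{2\phi}-1)W_\alpha] - (e^{2\phi}-1)\bar\P[W_\alpha]$ up to a term that itself vanishes; more usefully, I would write $\bar\P w$ as a sum of commutator terms $[\bar\P,(e^{2\phi}-1)]W_\alpha$ plus projections of genuinely antiholomorphic-balanced products, exactly as in the treatment of $\bar\P[m]$, $\bar\P[n]$ in Sections~\ref{s:model} and \ref{s:linearization}. The commutator estimate \eqref{PComm} together with $\Vert e^{2\phi}-1\Vert_{\bmo^{1/2}}\lesssim_A B$ and $\Vert e^{2\phi}-1\Vert_{L^\infty}\lesssim_A A$ then yields $\Vert\bar\P w\Vert_{\mfH}\lesssim_A A\Vert W_\alpha\Vert_{\mfH}$, and similarly $\Vert L\bar\P r\Vert_{\mfH}\lesssim_A A\norm_2$ after writing $r = e^{2\phi}(1+\W)R_\alpha$ and peeling off the two non-holomorphic factors one at a time. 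Feeding these into $\Vert w\Vert_{\mfH}^2 = \Vert\P w\Vert_{\mfH}^2 + \Vert\bar\P w\Vert_{\mfH}^2$ (using that $\P,\bar\P$ are orthogonal projections in $\mfH$) gives the claimed equivalence modulo $A\norm_2$.

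Here the main obstacle is purely technical bookkeeping: one must carefully organize $\bar\P w$ and $\bar\P r$ into a finite sum of (i) commutators of $\bar\P$ with a non-holomorphic symbol (handled by \eqref{PComm}), (ii) products of two non-holomorphic factors against a holomorphic derivative (handled by the product estimate \eqref{e:HolomProductBound} and the fact that such products have favorable frequency balance), and (iii) terms where a Schwartz-symbol operator like $1+\Til^2$ appears, which are automatically perturbative. Each piece is then bounded by $A$ times the appropriate Sobolev norm using the $\bmo^{1/2}$ and $L^\infty$ bounds on $\W$, $Y$, $\phi$, precisely the estimates already assembled in the Appendix and used repeatedly above. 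Since the argument is a direct transcription of the infinite-depth Lemma~3.4 of \cite{HIT} with the Hilbert transform replaced by $\Til = \Til_1$ and $L^2/\dot H^{1/2}$ replaced by $\mfH$/$L\mfH$, and since the only genuinely new feature — the low-frequency behavior of $\mfH$ and the linear term $\ao$ in $\mfa$ — does not enter the \emph{projection} estimate at all, the proof goes through with the stated constants. For this reason I would keep the argument brief and simply reference the parallel computation in \cite{HIT}, spelling out only the bound on $\bar\P w$ as a representative case.
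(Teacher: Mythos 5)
Your proposal is correct and follows essentially the same route as the paper, which likewise reduces the claim to the commutator/high-frequency argument of Lemma~3.4 in \cite{HIT} after allowing the lower order errors $A\norm_2$ to absorb the low-frequency unboundedness of $\P$. (Only a trivial notational slip: $\W_\alpha = W_{\alpha\alpha}$, not $W_\alpha$, but this does not affect the argument since $\W_\alpha$ is still holomorphic and annihilated by $\bar\P$.)
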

Unlike in \cite{HIT}, here we allow for lower order errors in order to account for the $L^2$ unboundedness
of $\P$ at low frequencies. Once we do that, it remains to prove only a high frequency bound, for which the 
same argument as in \cite{HIT} applies.

We are now ready to write the system for $(Pw,Pr)$, namely
\begin{equation}
  \label{(Pw,Pr)a}
  \left\{
    \begin{aligned}
      & \P w_t +\P \left[  b \P w_{ \alpha} \right]  +\P \left[  \frac{ \P r_{\alpha}}{1+\bar \W} \right] -\P \left[ \frac{R_{\alpha}}{1+\bar{\W}} \Til^2 \P w \right] = G_2+\P \errw \\
      &\\
      & \P r_t+\P \left[ b \P r_{\alpha}\right] -\P \left[
        \frac{(g+\mf{a})\Til [\P w]}{(1+\W)}\right]= K_2+\P \errr ,
    \end{aligned}
  \right.
\end{equation}
where $(G_2,K_2)$ contain all the additional terms,
\[
\left\{
  \begin{aligned}
    &G_2:= -\P \left[ \frac{R_{\alpha}}{1+\bar{\W}} (1+\Til^2) \P w \right]-\P \left[ b \bar{\P }w_{\alpha}  \right]-\P \left[  \frac{ \bar \P r_{\alpha}}{1+\bar \W} \right] -\P\left[ \frac{R_{\alpha}}{1+\bar{\W}}\bar{\P}w\right] \\
    &K_2:= -\P \left[ b \bar{\P
      }r_{\alpha} \right] + \P \left[ \frac{(g+\mf{a})\Til [\bar{\P} w]}{(1+\W)}\right].
  \end{aligned}
\right.
\]
The goal here is to prove that $G_2=\errw$ and $K_2=\errr$, but this
is straightforward as they all have a nice commutator structure; the proof is left for the reader. We denote the last set of variables 
by $(Pw,Pr):= (\fw,\fr)$; these solve the system 
\begin{equation}
  \label{(Pw,Pr)}
  \left\{
    \begin{aligned}
      &  \mathfrak{w}_t +\P \left[  b \fw_{ \alpha} \right]  +\P \left[  \frac{  \fr_{\alpha}}{1+\bar \W} \right] -\P \left[ \frac{R_{\alpha}}{1+\bar{\W}}  \Til^2\fw \right] = \P\, \errw \\
      &\\
      & \fr_t+\P \left[ b r_{\alpha}\right] -\P \left[ \frac{(g+\mf{a})\Til [
          \fw]}{(1+\W)}\right]= \P \, \errr .
    \end{aligned}
  \right.
\end{equation}

Therefore, we can now apply the energy bounds obtained for the toy
model \eqref{ModelLinSys} to the system \eqref{(Pw,Pr)}.
Now the result of Proposition ~\eqref{t:en=large} follows from the energy
estimates for the model system \eqref{ModelLinSys}, namely \eqref{est1};
further, if $n=2$ then we can take
\[
E^{n,(2)}(\W, R)=E^{(2)}_{lin} (\fw, \fr).
\]
The last goal is to obtain cubic lifespan bounds for the 
small data problem, which would correspond to proving Proposition~\eqref{t:en=small}.  We address this question later in this section.

\subsection{The case $n\geq 3$} We follow the same strategy as in the case $n=2$ and derive the equations for $(\W ^{(n-1)}, R^{(n-1)})$. For
this, we start with the system \eqref{DiagonalQuasiSystem-re} and differentiate
$(n-1)$ times.  For this we will estimate the errors in terms of   $\norm_n$ which measures $(n-1)$ derivatives of $\W$ and $R$, with constants that  depend on the
control norms $A$ and $B$.

The acceptable errors in the $\W^{(n-1)}$ equation are denoted, as
before, by $\errw$ and are of two types, $\errw ^{[2]}$ and $\errw
^{[3]}$. The first one, $\errw ^{[2]}$, consists of holomorphic quadratic
terms in $G$ of the form that satisfy the bound
\[
\Vert \Til \P G\Vert_{L^2}\lesssim B\norm_n \quad \mbox{ and } \Vert G\Vert_{H^{-\frac{1}{2}}}\lesssim A\norm_n.
\]
By $\errw ^{[3]}$ we denote the cubic counterpart of $\errw$ of $G$,
which satisfies the estimate
\[
\Vert \Til \P G\Vert_{L^2}\lesssim_{A}AB\norm_n, \quad \Vert  G\Vert_{H^{-\frac12}}\lesssim_{A}A^2\norm_n.
\]

The acceptable errors in the $R^{(n-1)}$ equation are denoted, as
before, by $\errr$ and are of two types, $\errr ^{[2]}$ and $\errr
^{[3]}$. The first one, $\errr$, consists of holomorphic quadratic
 terms  in $K$  that satisfy the bound
\[
\Vert \Til\P K\Vert_{H^{\frac{1}{2}}}\lesssim B\norm_n, \quad \Vert
K\Vert_{L^2}\lesssim A\norm_n.
\]

By $\errr^{[3]}$ we denote terms in $K$ which satisfy the estimates
\[
\Vert \Til \P K\Vert_{H^{\frac12}}\lesssim_A AB \norm_n, \quad \Vert 
K\Vert_{L^2}\lesssim_A A^2\norm_n.
 \]

 We begin by differentiating the terms in the $\W$ equation. For the
$b$ term, after standard estimates, we have
\[
\begin{aligned}
  \partial^{(n-1)}_{\alpha}(b\W_{\alpha})&=b\W_{\alpha}^{(n-1)}+(n-1)b_{\alpha}\W^{(n-1)}+\errw\\
  &=b\W_{\alpha}^{(n-1)}+(n-1)\left(
    \frac{R_{\alpha}}{1+\bar{\W}}+\frac{\bar{R}}{1+\W}\right)\W^{(n-1)}+\errw.
\end{aligned}
\]
Here we have used the relation \eqref{def-M}, and also  the $L^{\infty}$ bound for $M$.

Continuing, we apply the same analysis for the $(n-1)$ derivative of the next term appearing in the $\W$ equation
\begin{equation}
\label{w1}
\begin{aligned}
  \partial_{\alpha}^{(n-1)} \frac{(1+\W)R_{\alpha}}{1+\bar{\W}}= &\frac{ \left[ (1+\W)R^{(n-1)} \right] _{\alpha}}{1+\bar{\W}}+\frac{R_{\alpha}}{1+\bar{\W}}\W^{(n-1)}+ \errw.
\end{aligned}
\end{equation}
Here we have again isolated the terms which cannot be placed into the error. 

Similarly, using the bounds for $M$ in Lemma ~\ref{l:M}, the last component of the $\W$ equation is
\[
 \partial_{\alpha}^{(n-1)} \left[ (1+\W)M\right]=   (1+\W)\partial^{(n-1)}_{\alpha}M+\errw=(1+\W)\partial^{(n-1)}_{\alpha}2\Re \P \left[ R\bar{Y}_{\alpha}-\bar{R}_{\alpha}Y\right]. 
\]
Because of the differentiation, there are no low frequency issues here. Distributing derivatives inside, the terms with derivatives on the antiholomorphic factors are all errors, so we are left only with the terms where all derivatives apply to the holomorphic factors. Harmlessly discarding the projection we arrive at
\[
\begin{aligned}
\partial_{\alpha}^{(n-1)} \left[ (1+\W)M\right]&=2(1+\W)\Re  \left[ R^{(n-1)}\bar{Y}_{\alpha} -\bar{R}_{\alpha}Y^{(n-1)}\right]+ \errw \\
&= -\frac{\bar{R}_{\alpha}}{1+\W}\W^{(n-1)}+\errw. \\
\end{aligned}
\]

Next, we differentiate the $R$ equation. We begin with
\[
\begin{aligned}
  \partial_{\alpha}^{(n-1)}(bR_{\alpha})=& bR^{(n-1)}_{\alpha}+ (n-1)b_{\alpha}R^{(n-1)}+b^{(n-1)}R_{\alpha}+\errr . \\
  \end{aligned}
  \]
  In the second term we use again the relation \eqref{def-M} and the boundedness of $M$. In the third term, only the holomorphic part of $b$ yields a nontrivial contribution, and that only when all derivatives apply to $Y$. Discarding again the projection,  we obtain
   \[
  \begin{aligned}
 \partial_{\alpha}^{(n-1)}(bR_{\alpha})  =&\ bR^{(n-1)}_{\alpha}+ (n-1)\left( \frac{R_{\alpha}}{1+\bar{\W}}+\frac{\bar{R}_{\alpha}}{1+\W}\right) R^{(n-1)}+\frac{R_{\alpha}}{1+\bar{\W}}R^{(n-1)}+\errr.
\end{aligned}
\]

For the remaining terms in the $R$ equation we write
\begin{equation}
  \label{c1}
  \partial_{\alpha}^{(n-1)} \left( \frac{g\Til [\W]}{1+\W}\right) = \frac{g\Til [\W^{(n-1)}]}{1+\W}-\frac{g\Til[\W]}{(1+\W)^2}\W^{(n-1)}+\errr.
\end{equation}

Lastly, using the bound for $\mathfrak{a}$ in Lemma~\ref{l:a}, we have
\begin{equation}
  \label{c2}
  \begin{aligned}
    \partial_{\alpha}^{(n-1)}\left( \frac{i\mfa}{1+\W}\right)
    =\frac{i\mfa^{(n-1)}}{1+\W}-\frac{i\mfa}{(1+\W)^2}\W^{(n-1)}+\errr.
  \end{aligned}
\end{equation}
In the first term we can discard the $a_1$ component of $\mfa$ into the error. In the contribution of $a=2\Im \P [R\bar{R}_{\alpha}]$, only the holomorphic part has an interesting component, precisely when all the derivatives fall on $R$. Hence we obtain
\begin{equation}
  \label{c3}
  \begin{aligned}
    \frac{ia^{(n-1)}}{1+\W}=&\frac{\bar{R}_{\alpha}}{1+\W} R^{(n-1)}+\errr.
  \end{aligned}
\end{equation}
In the second term in \eqref{c2}, we substitute $i\W ^{(n-1)}$ with $-\Til [\W^{(n-1)}]$ modulo a negligible error. Thus,
together with\eqref{c1}, \eqref{c2} and \eqref{c3} we arrive at
\begin{equation*}
  \begin{aligned}
    &-\partial_{\alpha}^{(n-1)} \left( \frac{g\Til [\W]}{1+\W} -
      \frac{i\mfa}{1+\W}\right) = -\frac{(g+\mf{a})\Til
      [\W^{(n-1)}]}{(1+\W)^2}-
    \frac{\bar{R}_{\alpha}}{1+\W} R^{(n-1)}+\errr.
  \end{aligned}
\end{equation*}
Combining the above equations we obtain the differentiated system
\begin{equation*}
  \left\{
    \begin{aligned}
      &\W^{(n-1)}_t +b\W_{\alpha}^{(n-1)}+\frac{\left( (1+\W)R^{(n-1)}\right)_{\alpha} }{1+\bar{\W}}+\frac{R_{\alpha}}{1+\bar{\W}}\W^{(n-1)}=G\\
      &R_t^{(n-1)}+bR_{\alpha}^{(n-1)}-\frac{(g+\mfa)\Til
        [\W^{(n-1)}]}{(1+\W)^2}=K,
    \end{aligned}
  \right.
\end{equation*}
where
\begin{equation*}
  \left\{
    \begin{aligned}
      &G=-n\frac{\bar{R}_{\alpha}}{1+\W}\W^{(n-1)} -(n-1)\frac{R_{\alpha}}{1+\bar{\W}}\W^{(n-1)}+\errw   \\
      &K= -n\left( \frac{R_{\alpha}}{1+\bar{\W}}+\frac{\bar{R}_{\alpha}}{1+\W}\right)R^{(n-1)}+\errr.
    \end{aligned}
  \right.
\end{equation*}

The following step is to better diagonalize the system, and for this we only
need to modify the $R^{(n-1)}$ equation by using the known
substitution $\R:=(1+\W)R^{(n-1)}$ (see \cite{HIT}). We obtain
\begin{equation*}
  \left\{
    \begin{aligned}
      &\W^{(n-1)}_t +b\W_{\alpha}^{(n-1)}+\frac{\R_{\alpha} }{1+\bar{\W}}+\frac{R_{\alpha}}{1+\W}\W^{(n-1)}=G\\
      &\R_t+b\R_{\alpha}-\frac{(g+\mfa)\Til [\W^{(n-1)}]}{1+\W}=K_1,
    \end{aligned}
  \right.
\end{equation*}
where
\begin{equation*}
  \begin{aligned}
    K_1=-(n+1)
    \frac{R_{\alpha}\R}{1+\bar{\W}}-n\frac{\bar{R}_{\alpha}\R}{1+\W}+\errr.
  \end{aligned}
\end{equation*}
To deal with the mildly unbounded terms on the right we proceed in two steps using the same idea as  in \cite{HIT}. First we implement a new holomorphic substitution
\[
\tilde{\R}:=\R -R_{\alpha}\W^{(n-2)}+(2n-1)\W_{\alpha}R^{(n-2)}.
\]
With the exception of a couple of terms (see also \cite{HIT}), the contribution of the added quadratic correction is cubic and lower order, so we obtain
\begin{equation*}
  \left\{
    \begin{aligned}
      &\W^{(n-1)}_t \! +b\W_{\alpha}^{(n-1)}\!+\! \frac{\tilde{\R}_{\alpha}
      }{1+\bar{\W}}+\! \frac{R_{\alpha}}{1+\W}\W^{(n-1)}=-n\left(
        \frac{\bar{R}_{\alpha}}{1+\W}+\! \frac{R_{\alpha}}{1+\bar{\W}}\right)
      \W^{(n-1)}\! +\errw
      \\
      &\tilde{\R}_t+b\tilde{\R}_{\alpha}-\frac{(g+\mfa)\Til
        [\W^{(n-1)}]}{1+\W}= -n\left(
        \frac{R_{\alpha}}{1+\bar{\W}}+\frac{\bar{R}_{\alpha}}{1+\W}
      \right)\tilde{\R}+\errr .
    \end{aligned}
  \right.
\end{equation*}

At this point we are in a similar situation as we were in the case
$n=2$. Precisely, we still have unbounded terms on the right, and the
goal is to eliminate them. The second step is to use  the same procedure as in the case $n=2$, which is to multiply the equations by $e^{n\phi}$, where
$\phi=-2\Re \log (1+\W)$. After standard estimates, and using also Lemma~\ref{eroare}, we can write a system for
$(w:=e^{n\phi}\W^{(n-1)}, r:=e^{n\phi}\tilde{\R})$:
\begin{equation*}
  \left\{
    \begin{aligned}
      &w_t+ bw_{\alpha}+\frac{r_{\alpha}}{1+\bar{\W}}+\frac{R_{\alpha}}{1+\W}w=\errw\\
      & r_t+br_{\alpha}-\frac{(g+\mfa)\Til [w]}{1+\W}=\errr .
    \end{aligned}
  \right.
\end{equation*}
As $(w,r)$ are no longer holomorphic, we will need to project them via
the projection $\P$. We denote the projected variables $(\P w, \P r)$ by $(\fw, \fr )$, and write the equations for them.  As we have seen in the case $n=2$, we obtain some
additional terms which we can express as commutators. Moreover, these additional terms can be easily bounded using the
commutators estimates obtained in the Appendix to obtain the system
\begin{equation}
  \label{pwpr}
  \left\{
    \begin{aligned}
      &\fw_t+ \P[b \fw]_{\alpha}+\P \left[ \frac{\fr_{\alpha}}{1+\bar{\W}}\right] +\P \left[ \frac{R_{\alpha}}{1+\W}\fw\right] =\P [\errw ]\\
      &\fr_t+P\left[ b \fr_{\alpha}\right] -\P\left[ \frac{(g+\mfa)\Til
          [\fw]}{1+\W}\right] =\P[\errr] .
    \end{aligned}
  \right.
\end{equation}
Now, we modify this system one last time in order to be able to compare it with the model system \eqref{ModelLinSys}, and after one rather 
straightforward estimates we can rewrite it as
\begin{equation}
  \label{pwpr-frank}
  \left\{
    \begin{aligned}
      &\fw_t+ \P[b \fw]_{\alpha}+\P \left[ \frac{\fr_{\alpha}}{1+\bar{\W}}\right] -\P \left[ \frac{R_{\alpha}}{1+\W}\Til^2 [\fw]\right] = \P [\errw ]\\
      &\fr_t+P\left[ b \fr_{\alpha}\right] -\P\left[ \frac{(g+\mfa)\Til
          [\fw]}{1+\W}\right] =\P[\errr] .
    \end{aligned}
  \right.
\end{equation}

In order to be able to apply the estimates obtained for
the model system in \eqref{ModelLinSys}, we need to ensure that the
energy of $(\fw, \fr)$ is equivalent to the one of $(\W ^{(n-1)},
R^{(n-1)})$. This is summarized in the following proposition:
\begin{proposition}
  The energy of of $(\fw, \fr)$ above is equivalent to the one of
  $(\W ^{(n-1)}, R^{(n-1)})$,
  \begin{equation}
    \Vert (\fw, \fr)\Vert_{\mathcal{H}}\approx_A  \Vert ( w,  r)\Vert_{\mathcal{H}}\approx _{A} \Vert ( \W^{(n-1)},  R^{(n-1)})\Vert_{\mathcal{H}}\approx _{A}\mbox{ modulo } A\norm_n.
  \end{equation}
\end{proposition}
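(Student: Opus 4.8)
The plan is to prove the two displayed equivalences separately, factoring the map $(\W^{(n-1)},R^{(n-1)})\mapsto(w,r)\mapsto(\fw,\fr)=(\P w,\P r)$ as a harmless change of variables followed by the holomorphic projection, and to track everything modulo errors of size $A\norm_n$.

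\textbf{First, the weighted change of variables.} Recall that $w=e^{n\phi}\W^{(n-1)}$ and $r=e^{n\phi}\tilde{\R}$, where $\phi=-2\Re\log(1+\W)$, $\R=(1+\W)R^{(n-1)}$, and $\tilde{\R}=\R-R_\alpha\W^{(n-2)}+(2n-1)\W_\alpha R^{(n-2)}$. I would first observe that, by the pointwise control of $\W$ and $Y$, the factor $1+\W$ stays nondegenerate, so $e^{\pm n\phi}$ are bounded multipliers on $\mathcal{H}$ with norm $\lesssim_A1$, while $e^{\pm n\phi}-1$ is $O_A(A)$ in the relevant $L^\infty\cap\bmo^{\frac12}$ class; moreover $\tilde{\R}-R^{(n-1)}$ is a sum of balanced bilinear expressions, each the product of one factor controlled in $L^\infty\cap\bmo^{\frac12}$ by $A$ and $B$ with one factor carrying $n-1$ derivatives. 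The Moser-type product and commutator bounds of the Appendix --- such as \eqref{e:HolomProductBound}, \eqref{MoserL2}, \eqref{Para1}, \eqref{est:TilComm}, together with the $Y$-estimate \eqref{est:Ybmo} --- then give $\|(w,r)-(\W^{(n-1)},R^{(n-1)})\|_{\mathcal{H}}\lesssim_A A\norm_n$, which is the second $\approx_A$, modulo $A\norm_n$.

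\textbf{Next, the projection.} Since $\W^{(n-1)}$ and $\tilde{\R}$ are holomorphic and $\P$ fixes holomorphic functions, I would write $\P w=\W^{(n-1)}+\P[(e^{n\phi}-1)\W^{(n-1)}]$ and $\P r=\tilde{\R}+\P[(e^{n\phi}-1)\tilde{\R}]$, and split each of $w,r,\P w,\P r$ into frequencies $<1$ and $\geq1$. On the high-frequency band the Tilbert symbols $-i\tanh\xi$ and $i\coth\xi$ are bounded above and below, so $\P$ and $\bar\P$ act boundedly on $L^2$ and on $\dot H^{\frac12}$; hence $\|(\P w)_{\geq1}-(\W^{(n-1)})_{\geq1}\|_{L^2}\lesssim\|(e^{n\phi}-1)\W^{(n-1)}\|_{L^2}\lesssim_A A\norm_n$, and after a paraproduct decomposition using the $\bmo^{\frac12}$ bound on $\phi$ the analogous estimate holds for the $r$-component in $\dot H^{\frac12}$. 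This is exactly the high-frequency part of the argument of Lemma~3.4 of \cite{HIT}, which I would follow verbatim.

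\textbf{The main obstacle} is the low-frequency band, where $\P$ is \emph{not} $L^2$-bounded: unlike in infinite depth, here $-i\tanh\xi$ vanishes at $\xi=0$, so $\Til^{-1}$ (symbol $i\coth\xi$) is singular there and this singularity passes into the definition of $\P$. The resolution is to compare both $(w,r)_{<1}$ and $(\fw,\fr)_{<1}$ with $(\W^{(n-1)},R^{(n-1)})_{<1}$ rather than with each other. The first step already gives $\|w-\W^{(n-1)}\|_{\mathcal{H}}+\|r-R^{(n-1)}\|_{\mathcal{H}}\lesssim_A A\norm_n$; for the projected variables I would use that $\P w-\W^{(n-1)}=\P[(e^{n\phi}-1)\W^{(n-1)}]$ (and similarly for $r$) has its argument in $L^1\cap L^2$, both factors lying in $L^2$, so that the mapping property $\P S_0\colon L^1\to L^\infty$ together with the low-frequency localization yields $\|[\P w-\W^{(n-1)}]_{<1}\|_{\mfH}\lesssim_A\|e^{n\phi}-1\|_{L^2}\|\W^{(n-1)}\|_{L^2}\lesssim_A A\norm_n$, and likewise for $\P r$. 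Thus at low frequencies $\fw,\fr$ differ from $\W^{(n-1)},R^{(n-1)}$ by $O_A(A\norm_n)$ in $\mfH$, so the low-frequency contribution to the difference of energies is $O_A(A\norm_n^2)$ and is absorbed into the stated error. Combining the three steps proves the proposition, after which one may take $E^{n,(2)}(\W,R):=E^{(2)}_{lin}(\fw,\fr)$ and the norm equivalence required in Proposition~\ref{t:en=large} follows, exactly as in the cases $n=1,2$. Apart from this low-frequency analysis of $\P$, every ingredient is routine Moser-type estimation together with the boundedness arguments already carried out in \cite{HIT}.
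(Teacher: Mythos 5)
Your overall strategy is the one the paper intends: the paper omits the proof of this proposition, deferring to Lemma~3.5 of \cite{HIT}, and the only modification it announces is exactly the one you adopt, namely absorbing the low-frequency contribution into the $A\norm_n$ error and running the \cite{HIT} argument at high frequencies. The conjugation step, the identification $\P w-\W^{(n-1)}=\P[(e^{n\phi}-1)\W^{(n-1)}]$, and the high-frequency treatment of $\P$ are all in line with that plan.

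One step does not work as written, however: your low-frequency bound. You claim $\|[\P w-\W^{(n-1)}]_{<1}\|_{\mfH}\lesssim_A\|e^{n\phi}-1\|_{L^2}\|\W^{(n-1)}\|_{L^2}$ via the mapping property $\P S_0\colon L^1\to L^\infty$. But $\mfH$ is an $L^2$-based norm, and an $L^\infty$ bound on a function localized at frequencies $\lesssim 1$ on the whole line gives no $L^2$ control (Bernstein goes from $L^1$ to $L^2$ or $L^2$ to $L^\infty$, not the other way). The $L^1\to L^\infty$ device is the right tool in Proposition~\ref{LinQuadWP}, where the low-frequency piece is subsequently paired with an $L^2$ factor; it is not the right tool here. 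Fortunately the correct argument is simpler than the one you propose: $\P$ is the \emph{orthogonal} projection in $\mfH$, so $\|\P h\|_{\mfH}\le\|h\|_{\mfH}\le\|h\|_{L^2}$ (the last inequality because $\Til$ has bounded symbol), and $L$, being a real even multiplier, commutes with $\P$. Hence the entire correction, at all frequencies, is controlled by $\|(e^{n\phi}-1)\W^{(n-1)}\|_{L^2}$ and $\|(e^{n\phi}-1)\tilde{\R}\|_{H^{\frac12}}$, which the $L^\infty$ smallness of $e^{n\phi}-1$ plus a paraproduct splitting reduce to $A\norm_n$-type errors; the ``low-frequency unboundedness of $\P$'' never actually enters once one measures in $\mfH$ rather than $L^2$. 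The reverse inequality $\|(w,r)\|_{\H}\lesssim\|(\fw,\fr)\|_{\H}+C_A A\norm_n$ is obtained the same way, writing $w-\P w=\bar\P[(e^{n\phi}-1)\W^{(n-1)}]$ (and similarly for $r$) and using the contractivity of $\bar\P$ on $\mfH$. Finally, for the $r$-component the product bounds in $H^{\frac12}$ — both for $(e^{n\phi}-1)\tilde{\R}$ and for $\tilde{\R}-R^{(n-1)}$ — require the high-low and balanced paraproduct cases to be handled with the interpolation norms $B^{s,\infty}_2$ and the commutator bounds \eqref{est:TilComm}, not just Moser estimates; that bookkeeping is where the content of Lemma~3.5 of \cite{HIT} actually lies, and your sketch should acknowledge it rather than fold it into ``routine Moser-type estimation.''
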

 A similar result can be found in \cite{HIT} (see Lemma 3.5).
The proof of the above proposition is quite similar, and we leave it as an exercise for the reader.

Now the result of Proposition~\eqref{t:en=large} follows from the energy estimates for the model system \eqref{ModelLinSys}, namely \eqref{est1} applied to \eqref{pwpr-frank}; to obtain the result we use the energy functional
\[
E^{n, (2)}_{high}(\W, R)=E^{(2)}_{lin}(\fw, \fr).
\]

 The further goal is to obtain cubic lifespan bounds, which would correspond to proving Proposition~ \eqref{t:en=small}. The key to that 
is to produce a suitable modified cubic energy. This is done in the 
next subsection. However, here we will discuss the leading
part of the modified cubic energy, which is given by
\[
E^{n,(3)}_{high}(\W,R)=E^{(3)}_{high}(\fw,\fr):=E^{(2)}_{lin} (w,r)-E^{(2)}_{\omega , lin}(w,r), \quad \mbox{ where }\quad  \omega = \frac{1}{4}(1+\Til ^2)\Re \W .
\]

\begin{remark} Comparing $E^{n,(3)}_{high}(w,r)$ with the corresponding version appearing in the infinite depth case, one
  will notice that the are some differences. On one hand, the second component of the above energy is specific to the finite bottom case, and does not appear at all in the infinite bottom problem. On the other hand, the last  three terms in the quasilinear cubic energy from \cite{HIT} are no longer showing up in the above leading energy. Mainly, this is because here we use better bookkeeping of the errors, and those terms are now reclassified as admissible error terms.  In other words, here they  are incorporated into the lower order component
of the quasilinear modified energy we seek to construct.
\end{remark}

We claim that we have favourable bounds for the time evolution of this energy. Precisely, we have
\begin{proposition}
  \label{high}
  Let $(\fw,\fr)$ be defined as above.  Then
  \begin{itemize}
  \item[a)] Assuming that $A\ll 1$, we have
    \begin{equation}
      \label{high-equi}
      E^{(3)}_{high}(\fw,\fr)=  E_{0}(\W^{(n-1)}, R^{(n-1)}) +O(A)\norm_n^2.
    \end{equation}
  \item[b)] The solution of $(\fw, \fr)$ of \eqref{pwpr} satisfies
    the following energy estimate
    \begin{equation}
      \frac{d}{dt}  E^{(3)}_{high}(\fw, \fr)\lesssim_A B\norm_{n}^2, \quad \Poly{\geq 4}\frac{d}{dt}  E^{(3)}_{high}(\fw, \fr)\lesssim_A AB\norm_{n}^2.
    \end{equation}
  \end{itemize}
\end{proposition}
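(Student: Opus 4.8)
The plan is to treat both estimates in Proposition~\ref{high} as direct consequences of the model system analysis in Proposition~\ref{p:local}, combined with the normal form energy construction of Section~\ref{s:nf-en}. For part (a), the key point is that $E^{(3)}_{high}(\fw,\fr)=E^{(2)}_{lin}(\fw,\fr)-E^{(2)}_{\omega,lin}(\fw,\fr)$ with the weight $\omega=\frac14(1+\Til^2)\Re\W$. Since $(1+\Til^2)$ has a Schwartz kernel, $\|\omega\|_{L^\infty}\lesssim\|\W\|_{L^\infty}\lesssim A$, so the perturbation $E^{(2)}_{\omega,lin}$ is bounded by $A$ times $E^{(2)}_{lin}(\fw,\fr)\approx_A\norm_n^2$; together with the equivalence $E^{(2)}_{lin}(\fw,\fr)\approx_A E_0(\W^{(n-1)},R^{(n-1)})$ (which follows from the Taylor stability bound Lemma~\ref{L:Taylor}, the bound for $\mfa$ in Lemma~\ref{l:a}, and the energy equivalence proposition for $(\fw,\fr)$ versus $(\W^{(n-1)},R^{(n-1)})$ established above for $n=2$ and $n\geq3$), this gives \eqref{high-equi} with the $O(A)\norm_n^2$ error. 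Here I would also need to absorb the $A\norm_n$ discrepancy between $(\fw,\fr)$ and $(\W^{(n-1)},R^{(n-1)})$ that was already allowed in the preceding propositions.

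For part (b), the first (quadratic) bound is immediate: apply \eqref{est1} and \eqref{est2} from Proposition~\ref{p:local} to the system \eqref{pwpr-frank} (equivalently \eqref{pwpr}) with $G=\P\errw$, $K=\P\errr$, and with $\omega=\frac14(1+\Til^2)\Re\W$. One must first check that this $\omega$ satisfies the hypotheses \eqref{en-omega}: $\|\omega\|_{L^\infty}\lesssim A$ and $\|\omega\|_{\bmo^{1/2}}\lesssim B$ follow from the Schwartz symbol of $1+\Til^2$ together with the $\bmo^{1/2}$ bound on $\W$; the transport bound $\|(\partial_t+b\partial_\alpha)\omega\|_{L^\infty}\lesssim B$ requires using the $\W$ equation in \eqref{DiagonalQuasiSystem-re} to express $(\partial_t+b\partial_\alpha)\Re\W$ in terms of $R_\alpha$, $\Til^2\W$ and $(1+\W)M$, then using the pointwise bounds for $R$, $M$ from Lemmas~\ref{l:M} and the Appendix and the smoothing from $1+\Til^2$. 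Granting this, the right-hand side of \eqref{est2} is bounded by $\langle\P\errw,\fw\rangle_{(g+\mfa)\omega}+\langle\D\P\errr,\D\fr\rangle_\omega+O_A(B)E^{(2)}_{lin}$; the error pairings are controlled via Cauchy–Schwarz and the defining bounds $\|\Til\P\errw\|_{L^2}\lesssim B\norm_n$, $\|\errw\|_{H^{-1/2}}\lesssim A\norm_n$, $\|\Til\P\errr\|_{H^{1/2}}\lesssim B\norm_n$, $\|\errr\|_{L^2}\lesssim A\norm_n$, all of which dominate by $B\norm_n^2$.

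The quartic bound $\Poly{\geq4}\frac{d}{dt}E^{(3)}_{high}(\fw,\fr)\lesssim_A AB\norm_n^2$ is the main obstacle, and is exactly where the normal form energy enters. The idea is that $E^{(3)}_{high}$ was chosen precisely so that its cubic part matches $\Poly{\leq3}E^{n}_{NF,high}$ from Proposition~\ref{p:ennnf} (up to terms reclassified into errors) — the weight $-\frac14(1+\Til^2)\Re\W$ in $E^{(2)}_{\omega,lin}$ reproduces the $\frac12(1+\Til^2)\Re\W$ coefficients appearing in $B_{high}$, $A_{high}$ in \eqref{m1}/\eqref{m11}, and the advection weight $-4n\Re\W$ (which is of transport type) is generated automatically by the $\langle\cdot,\cdot\rangle_{(\partial_t+b\partial_\alpha)[(g+\mfa)\omega]}$-type terms in the proof of Proposition~\ref{p:local}. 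Because $\Poly{\leq3}\frac{d}{dt}E^n_{NF}=0$ along the flow (property \eqref{dt-quartic}), the cubic part of $\frac{d}{dt}E^{(3)}_{high}$ cancels against the explicit cubic contributions of the error terms, leaving only quartic and higher remainders. I would organize this as: (1) expand $\frac{d}{dt}E^{(3)}_{high}$ using \eqref{est2}; (2) identify the cubic terms on the right with $\Poly{\leq3}\frac{d}{dt}\big(\Poly{\leq3}E^n_{NF,high}\big)$, using that the low-frequency pieces $E^n_{NF,low}$ — which have symbols in $S(d\rho^{2n-3})$ resp. $S(dd_1\rho^{2n-3})+S(\rho^{2n-2})$ — contribute only perturbatively at high frequency and are already inside the error budget; (3) invoke \eqref{dt-quartic} to kill the cubic part; (4) bound the residual quartic terms by $AB\norm_n^2$ using the $\errw^{[3]},\errr^{[3]}$ bounds ($\|\Til\P\errw\|_{L^2}\lesssim_A AB\norm_n$, etc.) together with $O_A(B)\cdot O_A(A\norm_n^2)=O_A(AB\norm_n^2)$ from the $O_A(B)E^{(2)}_{lin}$ term once one notes $E^{(2)}_{lin}(\fw,\fr)=E_0+O_A(A)\norm_n^2$. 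The delicate bookkeeping — tracking which cubic monomials in $\frac{d}{dt}E^{(3)}_{high}$ correspond to which terms in $\frac{d}{dt}E^n_{NF,high}$, and verifying the low-frequency remainders are genuinely error terms rather than obstructions — is the part that will require the most care, and is precisely where the finite-depth zero-frequency singularities could in principle reappear were it not for the symmetrizations already carried out in Section~\ref{s:nf-en}.
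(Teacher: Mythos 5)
Your treatment of part (a) and of the first (quadratic) estimate in part (b) is exactly the paper's route: both are read off from Proposition~\ref{p:local} applied to \eqref{pwpr-frank} with the weight $\omega=\frac14(1+\Til^2)\Re\W$, once \eqref{en-omega} is verified. (The paper verifies \eqref{en-omega} by observing that $\omega$ is a multiple of $g^{-1}\ao$ and citing Lemma~\ref{l:a}, in particular \eqref{est:aTransport}; your re-derivation from the $\W$ equation is the same computation.)

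Your plan for the quartic estimate, however, contains a misstep. In steps (2)--(3) you propose to match the cubic terms of $\frac{d}{dt}E^{(3)}_{high}$ with $\Poly{\leq3}\frac{d}{dt}E^n_{NF,high}$ and then ``invoke \eqref{dt-quartic} to kill the cubic part.'' This does not work as stated: \eqref{dt-quartic} asserts $\Poly{\leq3}\frac{d}{dt}E^n_{NF}=0$ only for the \emph{full} normal form energy $E^n_{NF}=E^n_{NF,high}+E^n_{NF,low}$; the cubic part of $\frac{d}{dt}E^n_{NF,high}$ alone does not vanish, and the cancellation against the low-frequency piece is precisely what is organized later in Lemmas~\ref{l:high} and~\ref{l:low}, not inside Proposition~\ref{high}. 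Fortunately the detour is also unnecessary: the operator $\Poly{\geq4}$ discards all cubic monomials by definition, so no cancellation is needed here. What the quartic bound actually requires is your step (4), made precise: (i) the pairings $\langle\P\errw,\fw\rangle_{(g+\mfa)\omega}$, $\langle\D\P\errr,\D\fr\rangle_\omega$ contribute to $\Poly{\geq4}$ either through the cubic errors $\errw^{[3]},\errr^{[3]}$ (which carry the $AB\norm_n$ bound directly) or through the quadratic errors paired against a subleading factor ($\mfa$, $\omega$, or the quadratic part of $\fw,\fr$), each of which supplies the extra factor $A$; and (ii) the remainder recorded as $O_A(B)E^{(2)}_{lin}$ in \eqref{est1}--\eqref{est2} is a bound, not an identity, so to take its $\Poly{\geq4}$ part one must go back into the terms $D_w^i$, $D_r^i$ of the proof of Proposition~\ref{p:local} and check that their quartic parts carry an additional coefficient ($\bar Y$, $\mfa/g$, $\omega$, the quadratic part of $b$, \dots) bounded by $A$. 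Your justification of (ii) via ``$E^{(2)}_{lin}=E_0+O(A)\norm_n^2$'' only controls the contribution of the cubic part of the energy, not the quartic part of the coefficient, so this inspection cannot be skipped.
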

 The proof is a straightforward application of Proposition~\eqref{p:local}. To see that, one needs to verify that the real weight $\omega= \dfrac{1}{4}(1+\Til ^2)\Re \W$ satisfies the required bounds \eqref{en-omega}. But this is true in view of Lemma~\ref{l:a}, as $\omega$ is a multiple of the $a_1$ component of $\mfa$.

\subsection{The quasilinear modified energy for $n\geq 2$, 
small  data.} In this section we construct an $n$-th order energy with
cubic estimates, $E^{n, (3)}$, which satisfies the bounds in Proposition~\ref{t:en=small}.
This energy is obtained following the method introduced in \cite{HIT}(\emph{the quailinear modified energy method}), which we now describe by
splitting it into several steps:

\bigskip

{\bf{ 1. Construct the normal form energy}}. 
This has been accomplished in the previous Section~\ref{s:nf-en},
but for convenience we outline the process here. Formally, it begins 
with the construction of a normal form transformation whose aim is to eliminate
the quadratic terms in the equation \eqref{FullSystem-re} for $(W,Q)$.  The normal form variables $(\tW, \tQ)$ are given by
\[
\begin{cases}
  \tW =W +W^{[2]}= W + B^h[W,W] + \frac1gC^h[Q,Q] + B^a[W,\bar W] + \frac 1gC^a[Q,\bar Q] \vspace{0.1cm}\\
  \tQ =Q+Q^{[2]} = Q + A^h[W,Q] + A^a[W,\bar Q] + D^a[Q,\bar W],
\end{cases}
\]
where the bilinear multipliers arising here are defined in
Section~\ref{s:nf}.  A full description of these
symbols is given later in the same section. What matters is that
the normal form variables $(\tW, \tQ)$ solve an equation of the form
\begin{equation*}
  \left\{
    \begin{aligned}
      &\Poly{\leq2}(\tW_t +\tQ_{\alpha})= 0\\
      &\Poly{\leq2}(\tQ _t - g \Til \tW)= 0.
    \end{aligned}
  \right.
\end{equation*}
 Following Section~\ref{s:nf-en}, its associated cubic normal form
energy functional is
\[
\begin{split}
  \tEnnf(W,Q) = & \ \Poly{\leq3}E_0 (\partial^n \tilde
  W, \partial^n \tilde Q) \\ = & \ E_0(\partial^n W, \partial^n Q) +
  2g \la \partial^n W, \partial^n W^{[2]} \ra - 2 \la
  \Til^{-1} \partial^{n+1} Q, \partial^n Q^{[2]} \ra .
\end{split}
\]
This is chosen so that the following relation holds
\begin{equation}\label{no-cubic}
\Poly{\leq 3} \frac{d}{dt} \tEnnf(W,Q) = 0.
\end{equation}
Here we discard the quartic terms in $E_0 (\partial^n \tilde W, \partial^n \tilde Q)$
as on one hand they are both highly unbounded, and on the other hand they
 do not affect the last relation above. Further, unlike the cubic terms, the quartic terms carry no intrinsic meaning as the normal form transformation is only uniquely determined up to cubic terms.

 As we show in the proof Proposition~\ref{p:ennnf}, the normal form energy  $\tEnnf(W,Q)$  can be expressed up to quartic terms as a function of diagonal variables
 $(\W,R)$ in the form
\[
E^n_{NF}(\W, R)=E_0 (\partial^{n-1} \W, \partial^{n-1} R)+gB(\W, \W, \W)+A(\W, R,R).
\]
with trilinear forms $A$ and $B$ whose symbols we have computed. 

 We further remark that while the normal form expression has
 singularities at frequency zero, no such singularities are present in
 the normal form energy. This is due to symmetrization cancellations
 akin to some form of null condition. Even better,
 neither $W$ nor $Q$ can appear undifferentiated in the above 
cubic terms.

 The chief  disadvantage of the normal form energy, which 
due to the fact that our problem is quasilinear, is
that the quartic and higher terms in its time derivative $d/dtE^{n}_{NF}$
 are highly unbounded. Thus there is no hope to prove the bound 
\eqref{dt-cubic} for it, neither does \eqref{en-cubic} hold, for that matter. 

To better isolate the above difficulty, we have decomposed the normal form energy
into two parts,
\[
E^n_{NF}=E^{n}_{NF, high}+E^{n}_{NF, low},
\]
where
\[
\begin{aligned}
&E^n_{NF,high}(\W, R)=  \ E_0 (\partial^{n-1} \W, \partial^{n-1} R)+g B_{high}(\W, \W, \W)
+\tilde A_{high}(\W, R,R),
\\
&E^n_{NF,low}(W, Q)=  \ g B_{low}(\W, \W, \W)+ A_{low}(\W, R,R).
\end{aligned}
\]
Here the lower order part is quite complicated algebraically, but has the virtue 
that it does not cause difficulties  neither in  \eqref{en-cubic} nor in \eqref{dt-cubic}.
The high frequency part, on the other hand, has the advantage that we can compute
 it explicitly. Precisely, by Proposition~\ref{p:ennnf} we have
  \begin{equation}
    \label{m1+}
    \begin{aligned}
  &B_{high}(\W,\W,\W)     := \langle \partial^{n-1} \W,\partial^{n-1} \W\rangle_{- 4n\Re \W
        + \frac12 (1+\Til ^2)\Re \W},\smallskip
\\
&A_{high}(\W,R,R) :=  - \langle \partial^{n-1} R, \Til^{-1} \partial^{n-1} R_\alpha \rangle_{- 4n\Re \W
        - \frac12 (1+\Til^2)\Re \W}\\
        & - 2 \langle \W  \partial^{(n-1)}R, \Til^{-1} \partial^{(n-1)}R_\alpha \rangle + 2 \la \partial^{(n-2)}\W R_\alpha,\Til^{-1}\partial^{(n-1)}R_\alpha\ra.
    \end{aligned}
  \end{equation}

This is the part we need to further modify and adapt to the quasilinear 
structure of our problem.

\bigskip

\bigskip

{\bf{2. Construct the quasilinear modified energy.} }Here we construct
\emph{ the quasilinear modified energy $E^{n, (3)}$}, starting from the normal
form energy $E^n_{NF}(\W,R)$. Inspired by the expression for the high frequency 
part $ E^n_{NF,high}(\W,R)$ of the normal form energy, one is naturally led to consider
the high frequency quasilinear  modified energy $E^{(3)}_{high} (\fw,\fr)$ where
\begin{equation}
\begin{aligned}
E^{(3)}_{high}(w,r):=&E_{ lin}^{(2)}(w,r) -\frac{1}{4}E^{(2)}_{\omega, lin} (w,r). \\
\end{aligned}
\end{equation}
Comparing the two, we would like them to agree to cubic order. This is not 
exactly the case, however the next best thing happens, namely that 
the cubic part of the difference is lower order:

\begin{lemma} \label{l:eq-high}
The trilinear form $\Poly{\leq 3}(  E^n_{NF}(\W,R) -  E^{(3)}_{high} (\fw,\fr))$ 
is a lower order form in $(\W,R)$, where $n\geq 1$.
\end{lemma}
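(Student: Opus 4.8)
The plan is to compare the two cubic functionals term by term, tracking only the cubic parts and reducing everything to the symbol level where the comparison becomes algebraic. First I would recall the explicit forms of both objects. On one side, $\Poly{\leq 3} E^n_{NF}(\W,R) = E_0(\partial^{n-1}\W,\partial^{n-1}R) + gB_{high}(\W,\W,\W) + A_{high}(\W,R,R) + gB_{low} + A_{low}$, where $B_{high}$, $A_{high}$ are given explicitly in \eqref{m1}/\eqref{m11} and $B_{low}$, $A_{low}$ lie in the lower-order symbol classes of Proposition~\ref{p:ennnf}, hence are automatically lower order forms. On the other side, $E^{(3)}_{high}(\fw,\fr) = E^{(2)}_{lin}(\fw,\fr) - \tfrac14 E^{(2)}_{\omega,lin}(\fw,\fr)$ with $\omega = (1+\Til^2)\Re\W$. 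So the content of the lemma is really the comparison of the \emph{leading} parts, since both $gB_{low}+A_{low}$ on one side and (as I will argue) the quartic-and-higher remainders on the other are lower order by construction.

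The key reduction is to unwind the definition of $(\fw,\fr)$ through the chain of substitutions made in Section~\ref{s:ee}: $w = e^{n\phi}\W^{(n-1)}$ (respectively $w = e^{2\phi}\W_\alpha$ for $n=2$), $r = e^{n\phi}\tilde\R$ with $\tilde\R = \R - R_\alpha\W^{(n-2)} + (2n-1)\W_\alpha R^{(n-2)}$, $\R = (1+\W)R^{(n-1)}$, $\phi = -2\Re\log(1+\W)$, followed by the projection $\P$. To cubic order, $e^{n\phi} = 1 - 2n\Re\W + \text{(quartic)}$ and $\Re\W = \Re W_\alpha$, so $w = \W^{(n-1)} - 2n\,\Re\W\cdot\W^{(n-1)} + \text{h.o.t.}$ and $r = R^{(n-1)} + \W R^{(n-1)} - R_\alpha\W^{(n-2)} + (2n-1)\W_\alpha R^{(n-2)} - 2n\,\Re\W\cdot R^{(n-1)} + \text{h.o.t.}$; the projection $\P$ only modifies things by lower order forms since $\P = I - \bar\P$ and $\bar\P$ applied to the near-holomorphic leading terms produces commutator-type expressions which, by the estimates \eqref{PComm} and the symbol bounds, are lower order. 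Substituting these expansions into $E^{(2)}_{lin}(\fw,\fr) = g\langle\fw,\fw\rangle - \langle\fr,\Til^{-1}\fr_\alpha\rangle$ and into $-\tfrac14\langle\fw,\fw\rangle_{(g+\mfa)\omega} - \tfrac14\langle\D\fr,\D\fr\rangle_\omega$ (keeping in mind $g+\mfa = g + a_1 + \text{quadratic}$ with $a_1 = g(1+\Til^2)\Re\W = g\omega$, so the weighted quadratic piece already contributes at cubic order with the correct $\pm\tfrac12(1+\Til^2)\Re\W$ coefficients after the $\tfrac14$ is absorbed), one collects all cubic terms and checks that the result matches $gB_{high}(\W,\W,\W) + A_{high}(\W,R,R)$ modulo lower order forms. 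The cross terms from the $e^{n\phi}$ dilation produce exactly the $-4n\Re\W$ weights in \eqref{m1}, and the $\tfrac14(1+\Til^2)$ weight in $\omega$ produces exactly the $\pm\tfrac12(1+\Til^2)\Re\W$ weights; the substitution $\tilde\R \to \R \to (1+\W)R^{(n-1)}$ generates precisely the two extra trilinear terms $-2\langle\W\partial^{(n-1)}R,\Til^{-1}\partial^{(n-1)}R_\alpha\rangle + 2\langle\partial^{(n-2)}\W R_\alpha,\Til^{-1}\partial^{(n-1)}R_\alpha\rangle$ appearing in $A_{high}$.

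The main obstacle I expect is bookkeeping rather than conceptual: one must be careful that every discrepancy between the two cubic functionals genuinely falls into the class of "lower order forms in $(\W,R)$" — meaning trilinear forms whose symbols carry an extra factor of $d$ (or $d_1$) relative to the leading $ES(\rho^{2n-2})$/$ES(\rho^{2n-1})$ classes, equivalently forms where at least one input appears with an extra derivative's worth of smoothing or a low-frequency gain. This requires invoking Lemma~\ref{l:nf-high}/\ref{l:nf-high1} for the $B_{low}$, $A_{low}$ bounds, the symbol regularity of $\Omega^{-1}$ and of $B^h$, $C^h$, $A^h$ etc.\ from the symbol-class lemma, and the commutator estimates \eqref{PComm}, \eqref{est:TilComm} to handle the error incurred by replacing $\fw$, $\fr$ by their non-projected, un-dilated counterparts. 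A secondary subtlety is the treatment of the $a$-component (the quadratic part of $\mfa$): in $E^{(2)}_{\omega,lin}$ the factor $(g+\mfa)$ multiplies $\omega$, but $\mfa\cdot\omega$ is quartic and $g\omega = a_1$ is the relevant cubic contribution, so one must verify that no cubic term is lost in passing from $\langle\fw,\fw\rangle_{(g+\mfa)\omega}$ to $g\langle\fw,\fw\rangle_\omega$. I would organize the proof as: (i) expand $(\fw,\fr)$ to cubic order and reduce projection errors to lower order forms; (ii) expand both quadratic functionals and collect cubic terms; (iii) match against \eqref{m1}/\eqref{m11}; (iv) absorb the remainder into $B_{low}$, $A_{low}$ using the symbol class bounds, and note the $n=1$ case is handled identically using \eqref{m11} and \eqref{low1}.
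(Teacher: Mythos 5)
Your proposal follows essentially the same route as the paper's proof: expand $(\fw,\fr)$ to cubic order through the chain of substitutions (exponential conjugation contributing $-2n\Re\W$, the $\tilde\R$ correction, dropping the projection), substitute into $E^{(2)}_{lin}-E^{(2)}_{\omega,lin}$ keeping the cubic contribution of the $a_1=g(1+\Til^2)\Re\W$ component of $\mfa$, and match term by term against \eqref{m1}/\eqref{m11}, with the residual discrepancies being commutators of the type $[\Til,\Re\W]$ and $[L,\omega]$, which are lower order. The bookkeeping you flag is exactly what the paper carries out explicitly.
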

The lemma is proved later in this section.

Based on this, we define 
\[
E^{n, (3)} = E^{n, (3)}_{high} + E^{n, (3)}_{low},
\]
where
\[
E^{n, (3)}_{low} =  E^n_{NF,low} +  \Poly{\leq 3}(  E^n_{NF} -  E^{n, (3)}_{high} (w,r)).
\]
This guarantees that we have the relation
\begin{equation}\label{all-cubic}
\Poly{\leq 3} E^{n, (3)} = \Poly{\leq 3} E^n_{NF} .
\end{equation}

\bigskip

{\bf{3. $E^{n,3}$ is a good quasilinear cubic energy.}} In other words
we want to prove that the estimate in Proposition~\eqref{high} holds.
In view of \eqref{no-cubic} and \eqref{all-cubic} it follows that for
solutions to \eqref{FullSystem-re} we have
\begin{equation}
\Poly{\leq 3}\frac{d}{dt} E^{n, (3)} = 0 .
\end{equation}
Thus, we obtain
\[
\frac{d}{dt}E^{n, (3)}=\Poly{\geq 4}  \frac{d}{dt}E^{n, (3)}_{low}
 +\Poly{\geq 4}  \frac{d}{dt} E^{n, (3)}_{high}.
\]
This relation allows us to split the task of proving bounds
for $E^{n, (3)}$ into separate bounds for the high, respectively the 
low frequency part. Precisely, it remains to establish the 
following:

\begin{lemma}\label{l:high}
The high frequency part $E^{n, (3)}_{high}$ satisfies the bounds
\begin{equation}
  \label{en-high}
   E^{n, (3)}_{low} =  \| (\partial^{n-1} \W, \partial^{n-1} R)\|_{\H}^2+ O(A) \norm_n^2,
\end{equation}
respectively
\begin{equation}
  \label{lambda4-high}
  \left| \Poly{\geq 4} \left( \frac{d}{dt} E^{n, (3)}_{high}\right)\right| \lesssim_A AB\norm_n ^2.
\end{equation}
\end{lemma}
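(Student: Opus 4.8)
The plan is to treat the two claims separately, reusing the machinery of Proposition~\ref{p:local} and the symbol bounds from the normal form analysis. For the norm-equivalence statement \eqref{en-high}, I would start from the definition $E^{n,(3)}_{high}(\W,R) = E^{(3)}_{high}(\fw,\fr) = E^{(2)}_{lin}(w,r) - \frac14 E^{(2)}_{\omega,lin}(w,r)$ with $\omega = \frac14(1+\Til^2)\Re\W$, and invoke part (a) of Proposition~\ref{high}: since $\omega$ is (up to a constant) the $\ao$ component of $\mfa$, Lemma~\ref{l:a} gives $\|\omega\|_{L^\infty}\lesssim A$, which makes the weighted energy term an $O(A)\norm_n^2$ perturbation. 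Combined with the equivalence $\|(\fw,\fr)\|_{\H}\approx_A \|(\W^{(n-1)},R^{(n-1)})\|_{\H}$ modulo $A\norm_n$ established in the previous subsection, and the equivalence $E^{(2)}_{lin}(w,r)\approx_A E_0(w,r)$ coming from the Taylor stability bound in Lemma~\ref{L:Taylor} together with the upper bound for $\mfa$, one obtains \eqref{en-high}. This step is essentially bookkeeping.

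For the time-derivative bound \eqref{lambda4-high}, the strategy is to apply Proposition~\ref{p:local}(b) to the system \eqref{pwpr-frank} for $(\fw,\fr)$, with the specific weight $\omega = \frac14(1+\Til^2)\Re\W$. First I would verify the hypotheses \eqref{en-omega}: $\|\omega\|_{L^\infty}\lesssim A$ and $\|\omega\|_{\bmo^{1/2}}\lesssim B$ follow from Lemma~\ref{l:a} since $1+\Til^2$ has a Schwartz symbol, and the material-derivative bound $\|(\partial_t + b\partial_\alpha)\omega\|_{L^\infty}\lesssim B$ follows from the corresponding bound on $(\partial_t+b\partial_\alpha)\mfa$ (again Lemma~\ref{l:a}, convolving against the Schwartz kernel of $1+\Til^2$). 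Then \eqref{est1}--\eqref{est2} give
\[
\frac{d}{dt}E^{(3)}_{high}(\fw,\fr) = 2\langle G,\fw\rangle_{(g+\mfa)(1-\frac14\omega)} + 2\langle \D K,\D\fr\rangle_{1-\frac14\omega} + O_A(B)\,\norm_n^2,
\]
where $(G,K) = (\P\errw,\P\errr)$ are the inhomogeneous terms in \eqref{pwpr-frank}. Using the $\errw,\errr$ estimates — recall $\|\Til\P G\|_{L^2}\lesssim B\norm_n$, $\|G\|_{H^{-1/2}}\lesssim A\norm_n$ and similarly for $K$ — one gets $|\langle G,\fw\rangle_{(g+\mfa)\omega'}| + |\langle \D K,\D\fr\rangle_{\omega'}|\lesssim_A B\norm_n^2$, which already yields the first (quadratic) bound $\frac{d}{dt}E^{(3)}_{high}\lesssim_A B\norm_n^2$ asserted in Proposition~\ref{high}(b).

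The genuinely new point is extracting the cubic cancellation, i.e., showing $\Poly{\leq 3}\frac{d}{dt}E^{n,(3)}_{high} = 0$ so that only the quartic-and-higher part survives and can be estimated by $AB\norm_n^2$. For this I would argue as in \cite{HIT}: the choice of weight $\omega$ is exactly calibrated so that the quadratic part of the nonlinear contribution $2\langle G,\fw\rangle + 2\langle\D K,\D\fr\rangle$ is cancelled against the quadratic part coming from differentiating the weight, $\langle \fw,\fw\rangle_{(\partial_t+b\partial_\alpha)[(g+\mfa)\omega]}$ etc. Concretely, the quadratic terms in $(G,K)$ — after the $e^{n\phi}$ conjugation and $\P$-projection these are the residual quadratic pieces that were not absorbed — must match, up to symmetrization, the quadratic part of $\frac14$ times the weighted energy flux. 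This is where one invokes that $E^{n,(3)}_{high}$ agrees to cubic order with $E^n_{NF}$ (Lemma~\ref{l:eq-high}, proved separately) and that $\Poly{\leq 3}\frac{d}{dt}E^n_{NF} = 0$ along the flow (the defining property \eqref{dt-quartic} of the normal form energy). So in fact the cleanest route is: by \eqref{all-cubic} and \eqref{no-cubic}, $\Poly{\leq 3}\frac{d}{dt}E^{n,(3)} = 0$; hence $\Poly{\leq 3}\frac{d}{dt}E^{n,(3)}_{high} = -\Poly{\leq 3}\frac{d}{dt}E^{n,(3)}_{low}$, but the right side is $\Poly{\geq 4}$ by construction (it has no cubic contribution to its time derivative at leading order because $E^{n,(3)}_{low}$ itself starts at cubic order and one derivative hits a quartic term), forcing both to vanish. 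Then $\frac{d}{dt}E^{n,(3)}_{high} = \Poly{\geq 4}\frac{d}{dt}E^{n,(3)}_{high}$, and the quartic bound $\lesssim_A AB\norm_n^2$ follows from the second, cubic-order estimates on the errors ($\|\Til\P\errw\|_{L^2}\lesssim_A AB\norm_n$, $\|\errw\|_{H^{-1/2}}\lesssim_A A^2\norm_n$, etc.) plugged into the same energy identity \eqref{est2}.

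The main obstacle is the last point: making rigorous that every quadratic term produced by $\frac{d}{dt}E^{(3)}_{high}(\fw,\fr)$ is genuinely cancelled, which requires carefully tracking the quadratic parts of $(\P\errw,\P\errr)$ through the $e^{n\phi}$-conjugation and the projection, and matching them — after symmetrization — against the weighted-flux quadratic terms. The subtlety is that $(\fw,\fr)$ are not $(W_\alpha,Q_\alpha)$ but diagonal variables modified by the conjugation, so one cannot directly quote the normal form computation; instead one has to check that the discrepancies lie in the lower-order class, which is precisely the content of Lemma~\ref{l:eq-high} and the reason that lemma is stated. Once that is granted, the remaining estimates are routine applications of the Coifman–Meyer and commutator bounds from the Appendix together with the parameter bounds in Lemmas~\ref{l:a}, \ref{l:b}, \ref{l:M}.
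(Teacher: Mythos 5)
Your core argument is the paper's: Lemma~\ref{l:high} is read off from Proposition~\ref{high}, which in turn is an application of Proposition~\ref{p:local} to the system \eqref{pwpr-frank} with the weight $\omega=\frac14(1+\Til^2)\Re\W$; the hypotheses \eqref{en-omega} are verified exactly as you say via Lemma~\ref{l:a}, since $\omega$ is a multiple of the $\ao$ component of $\mfa$, and the quartic bound comes from feeding the cubic-order error estimates ($\errw^{[3]}$, $\errr^{[3]}$) into the energy identity \eqref{est2}. That part is correct and is all the lemma requires.

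The detour in your third paragraph, however, contains a false step, and it is also unnecessary. You claim that $\Poly{\leq 3}\frac{d}{dt}E^{n,(3)}_{low}$ is ``$\Poly{\geq 4}$ by construction because $E^{n,(3)}_{low}$ starts at cubic order,'' and you use this to conclude that $\Poly{\leq 3}\frac{d}{dt}E^{n,(3)}_{high}$ and $\Poly{\leq 3}\frac{d}{dt}E^{n,(3)}_{low}$ vanish separately. This is wrong: differentiating a cubic trilinear form in time and substituting the \emph{linear} part of the evolution for the differentiated factor produces a genuinely cubic contribution, so $\Poly{\leq 3}\frac{d}{dt}E^{n,(3)}_{low}$ does not vanish in general. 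What \eqref{no-cubic} and \eqref{all-cubic} give is only that the \emph{sum} of the two cubic parts vanishes --- and that is precisely how the paper uses it, writing $\frac{d}{dt}E^{n,(3)}=\Poly{\geq4}\frac{d}{dt}E^{n,(3)}_{low}+\Poly{\geq4}\frac{d}{dt}E^{n,(3)}_{high}$ without ever asserting that either piece is purely quartic. Fortunately, the statement \eqref{lambda4-high} only concerns $\Poly{\geq 4}\bigl(\frac{d}{dt}E^{n,(3)}_{high}\bigr)$, so no cubic cancellation of the high part is needed: you simply take the $\Poly{\geq 4}$ projection of the identity \eqref{est2} and estimate it with the $AB$-level error bounds (checking also that the quartic part of the $O_A(B)$ remainder is $O_A(AB)\norm_n^2$, which holds because those constants arise from expressions at least linear in the data). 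Delete the cancellation argument and the proof stands.
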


\begin{lemma}\label{l:low}
The (cubic)  low frequency part $E^{n, (3)}_{low}$ satisfies the bounds
\begin{equation}
  \label{en-low}
   E^{n, (3)}_{low} = O(A) \norm_n^2,
\end{equation}
respectively
\begin{equation}
  \label{lambda4-low}
  \left| \Poly{\geq 4} \left( \frac{d}{dt} E^{n, (3)}_{low}\right)\right| \lesssim_A AB\norm_n ^2.
\end{equation}
\end{lemma}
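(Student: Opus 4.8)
The low frequency energy decomposes as
\[
E^{n,(3)}_{low} = g B_{low}(\W,\W,\W) + A_{low}(\W,R,R) + \Poly{\leq 3}\left( \Ennf - E^{n,(3)}_{high} \right),
\]
where by Proposition~\ref{p:ennnf} the symbols of $B_{low}$ and $A_{low}$ lie in $S(d\rho^{2n-3})$, resp. $S(dd_1\rho^{2n-3}) + S(\rho^{2n-2})$ when $n\geq2$ (in $S(\rho^{-1})$, resp. $S(1)$ when $n=1$), and by Lemma~\ref{l:eq-high} the last trilinear form is lower order in $(\W,R)$, so its symbol carries at least one extra factor of $d$ or $d_1$ relative to the leading scaling. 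The plan is to prove both \eqref{en-low} and \eqref{lambda4-low} purely by multilinear (Coifman--Meyer type) estimates of the kind collected in the Appendix, using only the control norms $A$, $B$ and $\norm_n$; no cancellation beyond the symbol regularity already established in Section~\ref{s:nf-en} is needed.

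For \eqref{en-low}, a translation invariant trilinear form with symbol in $S(d\rho^m)$ can, by Littlewood--Paley decomposition together with the constraint $\xi+\eta+\zeta = 0$, be estimated by placing one low frequency input in $L^\infty$ and the other two in $L^2$-based Sobolev norms, distributing the $m$ derivatives encoded by $\rho^m$ among the two high frequency factors; the factor of $d$ guarantees that there is always a genuinely low frequency slot, and for the $S(dd_1\rho^{2n-3})$ term a second one among the two $R$-slots. Writing $g B_{low}(\W,\W,\W) = B_{low}(g^{1/2}\W, g^{1/2}\W, \W)$ and estimating the undifferentiated $\W$ by $\|\W\|_{L^\infty} \lesssim A$, resp. estimating $A_{low}(\W,R,R)$ with $\W$ in $L^\infty$, or --- when $\W$ carries the top frequency --- with one $R$ in $L^\infty$ via $g^{-1/2}\|R\|_{L^\infty} \lesssim A$, the remaining two factors absorb at most $2n-2$ derivatives, which is controlled by $\norm_n^2$ since $2n-2 \le (n-1)+(n-\tfrac12)$ and $\|g^{1/2}\W\|_{H^{n-1}}, \|R\|_{H^{n-1/2}} \le \norm_n$. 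The lower order correction term is handled the same way, its extra $d$ or $d_1$ factor only making matters more favourable. For $n=1$ the classes $S(\rho^{-1})$, $S(1)$ make the bound immediate.

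For \eqref{lambda4-low}, since $E^{n,(3)}_{low}$ is cubic, $\tfrac{d}{dt}E^{n,(3)}_{low}$ is computed by replacing one argument by its time derivative via \eqref{DiagonalQuasiSystem-re}, i.e. $\W_t = -b\W_\alpha - \tfrac{1+\W}{1+\bar\W}R_\alpha + (1+\W)M$ and $R_t = -bR_\alpha + i\tfrac{g\W-\mfa}{1+\W}$. Substituting the linear parts of these right hand sides yields the cubic form $\Poly{\leq 3}\tfrac{d}{dt}E^{n,(3)}_{low}$, which is \emph{not} estimated here --- at the level of the full energy it is cancelled by the analogous cubic contribution of $E^{n,(3)}_{high}$, as recorded in the paragraph preceding Lemma~\ref{l:high}. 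The quartic and higher part comes from inserting the quadratic and higher nonlinear pieces $b\W_\alpha$, $\big(\tfrac{1+\W}{1+\bar\W}-1\big)R_\alpha$, $(1+\W)M$, $bR_\alpha$, $i\big(\tfrac{g\W-\mfa}{1+\W}-(g\W-\ao)\big)$ (and the genuinely cubic parts of $b$, $\mfa$, $M$) into one slot of $B_{low}$, $A_{low}$ or the correction form. Each insertion produces a quadrilinear (or higher) translation invariant form; the plan is to spend the pointwise norm $A$ on one of the two slow factors produced by the nonlinearity (using $\|b\|_{L^\infty}, \|Y\|_{L^\infty}, \|\W\|_{L^\infty} \lesssim A$ and $\|M\|_{L^\infty}\lesssim AB$, cf. Lemma~\ref{l:M} and \eqref{M-needed}), the $\bmo^{1/2}$-type norm in $B$ on a second slow factor (crucially $B$ carries a half derivative, which is where one of the two derivatives released by the time differentiation lands), and $\norm_n$ on the remaining two, with the $d$ and $d_1$ localizations in the symbol classes ensuring the $L^\infty$ and $\bmo^{1/2}$ factors are always at suitably low frequency.

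The main obstacle is precisely this last derivative-counting step: one must check, over the frequency configurations and over the list of nonlinear terms, that after time differentiation there is always a valid assignment of one factor to $L^\infty$ (via $A$), one to $\bmo^{1/2}$ (via $B$), and the other two to $H^{n-1}\times H^{n-1/2}$ (via $\norm_n$), with the symbol of the resulting quadrilinear form never requiring more than the available $\approx 2n-2$ derivatives on those $L^2$ factors. The two places that need the most care are the term $(1+\W)M$ in the $\W$-equation, since $M$ is only borderline bounded ($\|M\|_{L^\infty}\lesssim AB$, $\|M\|_{H^{1/2}}\lesssim A\norm_n$, see \eqref{M-needed}), and the quadratic part $2i\,\Im\P[R\bar R_\alpha](1-Y)\Til[\W]+\ldots$ of the $R$-equation, whose two $R$-factors compete with the two $R$-slots already in $A_{low}$; here the $S(dd_1\rho^{2n-3})$ structure, which forces one of the two $R$-frequencies in $A_{low}$ to be small, is exactly what makes the count close. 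For $n=1$ everything is much simpler since the classes $S(\rho^{-1})$, $S(1)$ demand essentially no derivatives, reducing the estimate to placing two factors in $L^2$ and spending $A$ and $B$ on the rest.
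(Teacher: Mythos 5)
Your treatment of \eqref{en-low} matches the paper's: a Littlewood--Paley decomposition of the trilinear forms, placing the factor at the lowest frequency (guaranteed by the $d$, $d_1$ weights) in $L^\infty$ via $A$ and the two high-frequency factors in $L^2$, with the symbol class $S(d\rho^{2n-3})$ (resp. $S(dd_1\rho^{2n-3})+S(\rho^{2n-2})$) supplying exactly the $2n-2$ derivatives that $\norm_n^2$ can absorb. That part is fine.

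There is, however, a genuine gap in your argument for \eqref{lambda4-low}, precisely at the transport term. You propose to insert $b\W_\alpha$ (resp. $bR_\alpha$) into one slot of $B_{low}$ or $A_{low}$ and estimate the resulting quadrilinear form perturbatively, spending $A$ on $b$ and letting ``one of the derivatives released by the time differentiation land on $B$.'' This does not close: in the worst configuration the differentiated factor sits at the top frequency $2^k$, so the slot receives $b_{<k}\partial_\alpha \W_k$, and the derivative count becomes $2^j\cdot 2^{(2n-3)k}\cdot 2^k\|\W_k\|_{L^2}^2$, i.e. $2n-1$ derivatives on the two $L^2$ factors where only $2(n-1)$ are available from $\norm_n$; putting $\partial_\alpha\W_k$ in $\bmo^{1/2}$ instead still loses half a derivative and sacrifices the $L^\infty$ slot. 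The extra derivative cannot be made to ``land on $b$'' by any static assignment of norms — it must be \emph{moved} there. The paper does this by splitting $\Poly{\geq2}\partial_t\W=(\partial_t+T_b\partial_\alpha)\W - T_b\partial_\alpha\W$ (the first piece is perturbative by Lemma~\ref{l:dt-rw}), and then exploiting the fact that the \emph{same} transport operator acts on all three slots of the symmetric trilinear form: after commuting $b_{<k}$ past the frequency localizations (a commutator gaining $\nabla b$), an integration by parts of the schematic type $\sum_i\int b\,\partial_\alpha\W_i\prod_{j\neq i}\W_j = -\int b_\alpha \prod_j \W_j$ converts the dangerous derivative into $b_\alpha$, which is then controlled by $B$ via \eqref{est:bBMO}. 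Note also that the paper deliberately keeps the \emph{low frequencies} of $b$ inside $T_b$ precisely because they are not perturbatively controlled, contrary to your plan of charging all of $b$ to $\|b\|_{L^\infty}\lesssim A$. Your proposal, which processes one slot at a time, cannot see this three-slot cancellation and therefore fails to prove \eqref{lambda4-low}.
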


To conclude the proof of Proposition~\ref{t:en=small} it remains to prove the three 
lemmas above. This is the same argument as in \cite{HIT}, but here it is slightly
more complicated, at least at the computational level.

\begin{proof}[Proof of Lemma~\ref{l:eq-high}]
We first expand the expression $\Poly{\leq 3}(  E^n_{NF, high}(\W, R) -  \dfrac{1}{2}E^{(3)}_{high} (\fw,\fr))$ for the case $n\geq 3$
and express the result in terms of $(\W,R)$. Up to cubic terms the expansion of $(\fw,\fr)$ is
\begin{equation*}
\left\{
\begin{aligned}
&\Poly{\leq3}\,\fw= \W^{(n-1)}-2n \P[\Re \W \cdot \W^{(n-1)}] \\
&\Poly{\leq 3}\, \fr=R^{(n-1)}-2n\P [\Re \W\cdot R^{(n-1)}]+\W R^{(n-1)}-R_{\alpha}\W^{(n-2)}+(2n-1)\W_{\alpha}R^{(n-2)}. 
\end{aligned}
\right.
\end{equation*}
Before substituting the expansion of $(\fw, \fr)$ into the energy formulas, we observe that the projection $\P$ can be dropped off; moreover the last term in the quadratic expansion of $\fr$ only contributes to lower order terms based on the definition provided in the earlier section. Thus, we can also omit this term. The explicit quadratic and cubic terms showing up in the expression of $E^{(3)}_{high} (\fw,\fr))$ are
\begin{equation}
\label{miha}
\begin{aligned}
\Poly{\leq 3} E^{(3)}_{high} (\fw,\fr))&=\Poly{\leq 3} \left( E^{(2)}_{lin}(\fw, \fr)-\frac{1}{2}E^{(2)}_{\omega, lin}(\fw,\fr)\right) \\
&=\left\langle \W^{(n-1)}, \W^{(n-1)} \right\rangle _{g}+\left\langle LR^{(n-1)}, LR^{(n-1)} \right\rangle \\
&\quad  + \left\langle \W^{(n-1)}, \Re \W \cdot \W^{(n-1)} \right\rangle _{-4ng}+\left\langle \W^{(n-1)}, \W^{(n-1)}\right\rangle _{g\omega}\\
&\quad -\left\langle \Til^{-1}R^{(n)}, -4n\Re \W\cdot R^{(n-1)}+2R^{(n-1)}\W-2R_{\alpha}\W^{(n-2)}\right\rangle \\
&\quad -\frac{1}{2}\left\langle \W^{(n-1)}, \W^{(n-1)}\right\rangle _{g\omega}-\frac{1}{2}\left\langle LR^{(n-1)}, LR^{(n-1)}\right\rangle_{\omega}, 
\end{aligned}
\end{equation}
where $\omega =(1+\Til^2)\Re\W$. 

It remains to compare the result with the expression of $ \Poly{\leq 3}E^n_{high, NF} (\W, R)$, which we recall below: 
\[
\Poly{\leq 3}E^n_{NF, high}(\W, R)=  \ E_0(\partial^{n-1} \W, \partial^{n-1} R) + g B_{high}(\W,\W,\W) + A_{high}(\W,R,R), 
\]
where $B_{high}(\W,\W,\W)$,  $A_{high}(\W,R,R)$ are given in \eqref{m1}.

First we observe that the first line of the expansion in \eqref{miha} is in fact  $\ E_0(\partial^{n-1} \W, \partial^{n-1} R)$. The terms on the second line in \eqref{miha} together with the first term on la last line are the terms appearing in $gB_{high}$ modulo a commutator, which yields a lower order term; the commutator is
\[
\left[ \Til, \Re\W \right]\Re \W^{(n-1)}. 
\]
We return to the remaining terms in \eqref{miha} and observe  that the first term in the expansion of the inner product on the third line together with the last term on la last line match (after integrating by parts) the first term in the expansion of $A_{high}$, \eqref{m1}, up to the commutators 
\[
\left[ L, \omega \right]\Im (LR^{(n-1)}), \quad  \left[ L, \omega \right]\Til \Re (LR^{(n-1)}),
\]
which are again lower order terms.

Lastly, the last two terms, $ -\left\langle \Til^{-1}R^{(n)}, 2R^{(n-1)}\W\right\rangle $ and $ \left\langle \Til^{-1}R^{(n)}, 2R_{\alpha}\W^{(n-2)}\right\rangle $, are a perfect match to the remaining terms in $A_{high}$.
 
 For the case $n=2$ the computation is similar but simpler.
The last three terms in $\Poly{\leq 3}$ no longer appear, whereas in 
the expression for $A_{high}(\W,R,R)$ in \eqref{m1} the last two terms 
also cancel.

\end{proof}

\begin{proof}[Proof of Lemma~\ref{l:high}]
 This is a direct consequence of Lemma~\eqref{high}.

\end{proof}

\begin{proof}[Proof of Lemma~\ref{l:low}]
We recall that  $E^{n,3}_{low}$ is a trilinear expression
of the form
\[
 E^{n,3}_{low} = g B_{low}(\W,\W,\W) + A_{low}(\W,R,R),
\]
where $ B_{low}$ and $A_{low}$ are translation invariant
trilinear forms. To begin with, we note that the exact form of the
terms in $E^{n,3}_{low}$ is irrelevant here.  All that matters is
their symbol class, which we now recall. In the case of $B_{low}$, the
symmetric symbol $B_{low}(\xi,\eta,\zeta)$ satisfies
\[
B_{low} \in ES(d \rho^{2n-3}), 
\]
while in the case of $A$, the symbol $A_{low} (\xi,\eta,\zeta)$ is only
symmetric in the last two variables and satisfies
\[
A_{low} \in ES(\rho^{2n-2}) + ES( d d_1 \rho^{2n-3}).
\]
Here $d$, $d_1$ measure the distance to the axes as follows:
\[
d = 1+\min\{|\xi|,|\eta|,|\zeta|\}, \qquad d_1 = 1+\min\{|\eta|,|\zeta|\}.
\]
We recall that we can eliminate the exponentials in the symbols at the expense 
of  replacing some of the arguments $(\W,R)$ by their complex conjugates.

We begin with the estimate \eqref{en-low}. By applying a standard
trilinear Littlewood-Paley decomposition combined with a 
standard separation  of variables argument we can thus write
$B_{low}$ as a sum of a rapidly convergent series
\[
B_{low}(\W,\W,\W) =  \sum_{1 \leq j \leq k}  2^j 2^{(2n-3)k} \sum_{m}
\int \chi^{m,1}_{j,k}(D) \W_j \cdot \chi^{m,2}_{j,k}(D)  \W_k \cdot \chi^{m,3}_{j,k}(D) \W_k \ d\alpha .
\]
Here complex conjugates are also allowed, and the symbols $\chi^{m,i}_{j,k}(\xi)$
have the following properties:

\medskip
(i) They are  smooth on the respective dyadic scales $2^j$, respectively $2^k$
uniformly with respect to $j,k$.
\medskip

(ii) They are  rapidly decaying in $m$, also uniformly with respect to $j,k$.

\medskip
In particular the multipliers $\chi^{m,i}_{j,k}(D)$ are uniformly
bounded in all $L^p$ spaces and rapidly decaying with respect to $m$.
Hence, we immediately obtain the following bound for $\tilde A_{low}$:
\[
\begin{split}
| B_{low}(\W,\W,\W)| \lesssim & \ \sum_{1 \leq j \leq k} 2^j 2^{(2n-3)k} 
\| \W_j\|_{L^\infty} \|\W_k\|_{L^2}^2 
\\ \lesssim & \ \sup_j \| \W_j\|_{L^\infty} \sum_k2^{(2n-2)k} \|\W_k\|_{L^2}^2 
\\ \leq & \ g^{-1} A \norm_n^2.
\end{split}
\]
The computation is only slightly more involved for  $A_{low}$. We only discuss 
the $ ES( d d_1 \rho^{2n-3})$ part, as the analysis for the lower homogeneity part
$ES(\rho^{2n-2})$ is similar but simpler. We need to 
consider two cases depending on whether the $\W$ factor or an $R$ factor is low frequency.
We obtain
\[
\begin{split}
| A_{low}(\W,\! R,\! R)| \lesssim &  \sum_{1 \leq j \leq k} 2^j 2^{(2n-2)k} 
\| \W_j\|_{L^\infty} \|R_k\|_{L^2}^2 + 2^{2j} 2^{(2n-3)k} \|R_j\|_{L^\infty} \|\W_k\|_{L^2}
\|R_k\|_{L^2} 
\\ \lesssim &  \sup_j \| \W_j\|_{L^\infty}\! \sum_k2^{(2n-1)k} \|R_k\|_{L^2}^2 
\!+\! \sup_j 2^{\frac{j}2} \| R_j\|_{L^\infty} \sum_k2^{(2n-\frac32)k} \|\W_k\|_{L^2} \|R_k\|_{L^2}
\\ \leq & \ A \norm_n^2.
\end{split}
\]

Now consider the bound \eqref{lambda4-low}, where we write
\[
\Poly{\geq 4}\frac{d}{dt}  B_{low}(\W,\W,\W) = 
3  B_{low}(\Poly{\geq 2} \partial_t \W ,\W,\W), 
\]
respectively 
\[
\Poly{\geq 4}\frac{d}{dt}  A_{low}(\W,R,R) = 
  A_{low}(\Poly{\geq 2} \partial_t \W ,R,R) + 
2   A_{low}( \partial_t \W ,\Poly{\geq 2}R,R).
\]
For the time derivatives of $\W$ and $R$ we separate the leading order 
transport term, precisely its paraproduct part, writing
\[
\Poly{\geq 2} \partial_t \W = (\partial_t + T_b \partial_\alpha)\W 
-  T_b \partial_\alpha \W ,
\]
and similarly for $R$. Here by a slight abuse of notation we include the contribution of 
the low frequencies in $b$ in $T_b$. This is because we do not have good control over
the low frequencies of $b$, so these cannot be bounded perturbatively, and instead
must be treated only in a commutator type fashion.

 The first term has better regularity, and its
contribution is treated perturbatively.  Precisely, a computation
similar to the one above applies provided we can establish the
pointwise bounds
\begin{equation}\label{dt-para-inf}
\| \Poly{\geq 2}  (\partial_t + T_b \partial_\alpha) \W\|_{B^{0,\infty}_\infty} 
+g^{-\frac12} \| \Poly{\geq 2}  (\partial_t + T_b \partial_\alpha) R\|_{B^{\frac12,\infty}_\infty} 
\lesssim_A AB ,
\end{equation}
respectively the $L^2$ bounds 
\begin{equation}\label{dt-para-2}
\| \Poly{\geq 2}  (\partial_t + T_b \partial_\alpha) \W\|_{H^{n-\frac32}} 
+g^{-\frac12} \| \Poly{\geq 2}  (\partial_t + T_b \partial_\alpha) R\|_{H^{n-1}} 
\lesssim_A A \norm_n .
\end{equation}
Both of  these are proved in Lemma~\ref{l:dt-rw} in the Appendix. 

For the contribution of the transport term, on the other hand, we need to capture some 
cancellation. We discuss the case of the form $B_{low}$, as $A_{low}$ is similar.
In the product case, this cancellation is a simple integration by parts,
based on the formula
\[
\int b \partial_\alpha \W_1 \W_2 \W_3 + \W_1 b \partial_\alpha \W_2 \W_3 
+ \W_1 \W_2 b \partial_\alpha \W_3  \ d\alpha = - \int b_\alpha   \W_1 \W_2 \W_3 \ d\alpha ,
\]
where the derivative is moved onto $b$. In our case, however, 
we need to contend instead with factors which at frequency $2^j$
have the form $\chi_j(D)(b_{<j}\partial_\alpha \W_j)$. 

As a preliminary observation, 
we remark that we can commute out the coefficient $b_{<j}$, by writing
\[
\chi_j(D)(b_{<j} \partial_\alpha \W_j) = b_{<j} \chi_j(D)\partial_\alpha  \W_j +
[\chi_j(D), b_{<j}] \partial_\alpha \W_j.
 \]
Here the commutator term can be expressed in the form
\[
[\chi_j(D), b_{<j}] \partial_\alpha \W_j = L(\nabla b_{<j}, \W_j), 
\]
where $L$ stands for a translation invariant bilinear form with integrable kernel.
Then one can directly use the bounds in Lemma~\ref{l:b} for $b$ to show that 
this term satisfy the same bounds as in \eqref{dt-para-inf}, \eqref{dt-para-2}, and thus 
can be treated perturbatively.

Once we have discarded the commutator term, we can include $\chi_j(D)$ into $\W_j$
for brevity, and then  we are left with having to estimate an expression of the form
\[
I = \int b_{<j}    \partial_\alpha \W_j   \cdot   \W_k \cdot  \W_k 
+ \W_j  \cdot b_{<k}  \partial_\alpha  \W_k \cdot  \W_k + 
\W_j  \cdot   \W_k b_{<k}\partial_\alpha  \W_k \, d\alpha . 
\]
Separating the expression $b_{<k}$ in all factors we can integrate by parts
and obtain
\[
I = - \int  \partial_\alpha  b_{<k}  \W_j   \cdot   \W_k \cdot  \W_k\, d\alpha
-  \int b_{[j,k]}    \partial_\alpha \W_j   \cdot   \W_k \cdot  \W_k \, d\alpha .
\]
Now in the first integral we group the product $ \partial_\alpha  b_{<k}  \W_k$,
which again satisfies the same bounds as in \eqref{dt-para-inf}, \eqref{dt-para-2}.
In  the second integral the derivative yields a $2^j$ factor, and now the 
expression $2^j  b_{[j,k]} $ is even better than $\partial_\alpha  b_{<k} $.
\end{proof}

\subsection{The quasilinear modified energy for $n = 1$, 
small  data.} In this section we construct a first order energy with
cubic estimates, $E^{1, (3)}$, which satisfies the bounds in Proposition~\ref{t:en=small}.
This energy is obtained following the same procedure as in the case $n \geq 2$ presented before,
but with some minor computational differences, which we now describe.

One main source of differences is  the expression for $A_{high}$ which is slightly different here. 
Also in this case it is no longer meaningful to do the exponential conjugation. Because 
of this, it is now convenient to set up the quasilinear correction to the normal form energy 
in a more direct fashion,
\[
E^{1,(3)}_{high} = E^{(3)}_{high}(\W,R) + E^{(3),a}(\W,R),
\]
where the extra component 
\[
E^{(3),a}(\W,R) = - 2 \langle \W, \W^2\rangle + 2 \langle R, \W \Til^{-1} R_\alpha \rangle  
\]
mirrors the similar correction in the infinite bottom case \cite{HIT}.

An advantage of doing this is that the remaining lower order cubic part 
\[
E^{1,(3)}_{low} = B_{low}(\W,\W,\W) + A_{low}(R,R,\W) 
\]
contains only terms whose symbol is not only lower order on the diagonals
but also away from them, namely their symbols satisfy 
\begin{equation}\label{low-1}
B_{low} \in S(\rho^{-1}), \qquad A_{low} \in S(1) .
\end{equation}
This is due to the similar gain in Proposition~\ref{p:ennnf}. 

With these definitions we  remark that  Lemma~\ref{l:eq-high} is still valid. For that we need to match the terms in  $\Poly{\leq 3}(  E^n_{NF, high}(\W, R) )$ to the terms in  $\Poly{\leq 3}( \frac{1}{2}E^{1,(3)}_{high} (\W,R))$. The computations are similar to the ones we did for the case $n\geq 2$ but simpler.

Further, the  statements of Lemmas~\ref{l:high}, \ref{l:low} remain unchanged. It remains to prove Lemmas~\ref{l:high},\ref{l:low} in this context.

\begin{proof}[Proof of Lemma~\ref{l:high}, $n=1$]
The bound \eqref{en-high} is straightforward. The $E^{(3)}_{high}$ part of \eqref{lambda4-high} is also exactly as before in view of Lemma~\ref{quadratic, n=1}. It remains to prove  the extra correction  $E^{(3),a}(\W,R) $
also satisfies \eqref{lambda4-high}.
For convenience we state this in a separate lemma:

\begin{lemma}\label{l:high-a}
The cubic correction $E^{ (3),a}$ satisfies the bounds
\begin{equation}
  \label{en-high-a}
   E^{(3),a} \lesssim_A A \norm_1^2,
\end{equation}
respectively
\begin{equation}
  \label{lambda4-high-a}
  \left| \Poly{\geq 4} \left( \frac{d}{dt} E^{(3),a}\right)\right| \lesssim_A AB\norm_1 ^2.
\end{equation}
\end{lemma}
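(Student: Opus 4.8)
The plan is to treat $E^{(3),a}(\W,R) = -2\la \W,\W^2\ra + 2\la R,\W\Til^{-1}R_\alpha\ra$ as a translation invariant cubic form with an \emph{explicit} symbol, and then argue exactly as in the proof of Lemma~\ref{l:low} but with the much better symbol bounds \eqref{low-1} available for $n=1$. First I would establish \eqref{en-high-a}: each of the two terms is a trilinear form whose symbol lies in the class $S(1)$ (after removing the exponentials via \eqref{flip}), so a standard trilinear Littlewood--Paley decomposition with separation of variables writes $E^{(3),a}$ as a rapidly convergent series $\sum_{j\le k}\sum_m \int \chi^{m,1}_{j,k}(D) f_j\cdot\chi^{m,2}_{j,k}(D) f_k\cdot\chi^{m,3}_{j,k}(D)f_k\,d\alpha$ with uniformly $L^p$-bounded, rapidly-in-$m$-decaying multipliers. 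Placing the low-frequency factor in $L^\infty$ and the two high-frequency factors in $L^2$ gives $|E^{(3),a}|\lesssim \sup_j(\|\W_j\|_{L^\infty}+g^{-\frac12}2^{j/2}\|R_j\|_{L^\infty})\,\norm_1^2\lesssim_A A\,\norm_1^2$, where the $L^\infty$ norms are absorbed into $A$ via \eqref{A-def}. The $2\la\W,\W^2\ra$ term is even simpler since it only involves $\W$.

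Next, for \eqref{lambda4-high-a} I would compute $\Poly{\geq4}\frac{d}{dt}E^{(3),a}$ by substituting the equations \eqref{FullSystem-re}, equivalently the differentiated system in $(\W,R)$. Schematically $\Poly{\ge4}\frac{d}{dt}E^{(3),a}= 3\,E^{(3),a}_{\W\W}(\Poly{\ge2}\partial_t\W,\cdot,\cdot) + \text{(analogous $R$-terms)}$, so the argument reduces, exactly as in Lemma~\ref{l:low}, to two ingredients: the perturbative part and the transport part. For the perturbative part, I substitute $\Poly{\geq2}\partial_t\W = (\partial_t+T_b\partial_\alpha)\W - T_b\partial_\alpha\W$ (and likewise for $R$) and use the bounds \eqref{dt-para-inf}, \eqref{dt-para-2} for $(\partial_t+T_b\partial_\alpha)\W$, $(\partial_t+T_b\partial_\alpha)R$ from Lemma~\ref{l:dt-rw}: feeding these into the same Littlewood--Paley trilinear estimate as above, with one factor now carrying the $AB$ bound and the rest the $A\norm_1$ bounds, produces the desired $O_A(AB\norm_1^2)$. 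For the transport part $T_b\partial_\alpha\W$, I use the integration-by-parts cancellation: the sum of the three transport contributions to a cubic form equals, up to a commutator and frequency-localization error, $-\int b_\alpha \W\cdot\W\cdot\W$ type terms where the derivative lands on the coefficient $b$; the commutator $[\chi_j(D),b_{<j}]\partial_\alpha$ is handled as in Lemma~\ref{l:low} using the bounds for $b$ in Lemma~\ref{l:b}, and the remaining $\partial_\alpha b_{<k}\cdot(\cdot)$ factor satisfies the same $L^\infty\cap L^2$ bounds as in \eqref{dt-para-inf},\eqref{dt-para-2}.

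The only genuinely $n=1$-specific point, and the one I would be most careful about, is that for $n=1$ we do \emph{not} perform the exponential conjugation $e^{\phi}$, so the quadratic equations for $(\W,R)$ that feed the time derivative are the ones in \eqref{ww2d-diff-proj} rather than \eqref{(Pw,Pr)}; I must make sure that Lemma~\ref{l:dt-rw} is applied to the correct variables and that no unbounded term (such as the $R_\alpha\W/(1+\bar\W)$ type terms that conjugation was designed to remove) sneaks into the cubic computation without a compensating cancellation. Here the saving grace is precisely \eqref{low-1}: since the symbol of $E^{(3),a}$ is bounded in $S(1)$ (not merely $\xi\eta\zeta$ times something), every factor can be placed in $L^\infty$ without needing a derivative to spare, so the potentially-unbounded quadratic contributions to $\partial_t\W$ are never hit by a derivative coming from the symbol. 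The main obstacle, then, is bookkeeping: verifying that after substituting the equations, every resulting quartic term has at least one factor that is either genuinely quadratic-and-small (hence controlled by $A$) or is a $(\partial_t+T_b\partial_\alpha)$-type good term (controlled by $B$), so that the product lands in $AB\,\norm_1^2$; this is routine but must be done term by term, matching the structure already carried out for $n\ge2$ in Lemma~\ref{l:low}.
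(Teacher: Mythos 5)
Your overall strategy for \eqref{lambda4-high-a} --- substitute the $(\W,R)$ equations, split $\partial_t$ into a good part controlled by Lemma~\ref{l:dt-rw} and a paradifferential transport part $T_b\partial_\alpha$ handled by integration by parts and commutators with $b_{<k}$ --- is exactly the paper's argument, and \eqref{en-high-a} is indeed straightforward. But the justification you lean on most heavily is false: the symbol of the correction $\langle R,\W\Til^{-1}R_\alpha\rangle$ is \emph{not} in $S(1)$. The operator $\Til^{-1}\partial_\alpha$ contributes a factor $\xi\coth\xi\sim\rho$ at high frequency, so this trilinear form has symbol of class $S(\rho)=S(\rho^{2n-1})$ for $n=1$, i.e.\ it is a \emph{leading-order} cubic form. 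That is precisely why $E^{(3),a}$ sits in $E^{1,(3)}_{high}$ rather than being absorbed into $E^{1,(3)}_{low}$ and dispatched by Lemma~\ref{l:low}; the bound \eqref{low-1} applies only to what remains \emph{after} $E^{(3),a}$ has been split off. Consequently your claim that ``every factor can be placed in $L^\infty$ without needing a derivative to spare'' does not hold for this term: after time differentiation the derivative in $\Til^{-1}\partial_\alpha$ combines with the transport term $b\partial_\alpha R$ to produce expressions of the schematic form $\int b_{<k}\,\bar R_{k,\alpha}\,\W_{\le k}\,R_{k,\alpha}\,d\alpha$, carrying two derivatives on the high-frequency $R$ factors, and these are exactly at the edge of what $\norm_1$ controls.

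The estimate is nonetheless saved by the mechanism you describe at the end: symmetrizing and integrating by parts so that the derivative lands on $(T_b-b_{<k})$ applied to the low-frequency factor, which then obeys the same bounds as the $(\partial_t+T_b\partial_\alpha)$ terms in Lemma~\ref{l:dt-rw}. Two further points need to be made explicit for the argument to close. First, in the Littlewood--Paley trichotomy for $\langle R,\W\Til^{-1}R_\alpha\rangle$ there is a case where the two $R$-frequencies are comparable and dominate the $\W$-frequency; this must be disposed of using the holomorphy of the factors (exponential decay at positive frequencies gives an $e^{-Nk}$ gain), not by a generic separation-of-variables bound. Second, the term $\langle\W,\W^2\rangle$ is handled more cleanly not by the generic machinery but by using the unprojected equation \eqref{quadratic+} for $\W$ together with the adjoint identity of Corollary~\ref{c:adj}, which collects the undifferentiated-$b$ contributions into the single commutator $\Til^{-1}\partial_\alpha[b,\Til]\W$ plus a $b_\alpha\W$ term, both controlled via \eqref{est:TilComm} and \eqref{est:bBMO}. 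With the $S(1)$ claim removed and replaced by this bookkeeping, your proof matches the paper's.
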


\begin{proof}
The first bound is straightforward, but the second does require some computations.  We consider both correction terms
\[
I_1=\left\langle \W, \W^2\right\rangle , \quad
 I_2=\left\langle R, \W \Til^{-1}R_{\alpha}\right\rangle ,
\]
and discuss each of them separately.

To estimate their derivatives it is easiest to use the unprojected form 
\eqref{DiagonalQuasiSystem-re} of the equations for $\W$ and $R$, which for our purposes here
we write in the form
\begin{equation}
\label{quadratic+}
\left\{
\begin{aligned}
&(\partial_t +b\partial_{\alpha})\W=-b_{\alpha}(1+\W)+\bar{R}_{\alpha}:=G\\
&(\partial_t +b\partial_{\alpha})R=i\frac{g\W -\mathfrak{a}}{1+\W}:=K.
\end{aligned}
\right.
\end{equation}
For $G$ and $K$ we only need their quadratic parts and higher,
\[
G^{2+}=-b_{\alpha}\W +\P [R\bar{Y}]_{\alpha}, \quad K^{2+}=-\frac{(ig\W-a_1)\W +a}{1+\W}.
\]
Then we have
\[
\begin{aligned}
\Poly{\geq 4} \left(  \frac{d}{dt}I_1\right)&=-\left\langle  b\W_{\alpha},\W^2\right\rangle 
-\left\langle \W_{\alpha},2b\W \W_{\alpha} \right\rangle +\left\langle  G^{2+}, \W ^2\right\rangle+2\left\langle \W, \W G^{2+}\right\rangle .
\end{aligned}
\]

Distributing derivatives and using Corollary~\ref{c:adj}, we separate the terms with undifferentiated $b$ as
\[
\begin{aligned}
-\left\langle  b\W_{\alpha},\W^2\right\rangle -\left\langle \W_{\alpha},b\partial_{\alpha} (\W ^2) \right\rangle =&-\left\langle  b\W_{\alpha},\W^2\right\rangle +
+\left\langle  \Til ^{-1}\partial_{\alpha} [b\Til \W],\W^2\right\rangle \\
&=\left\langle  (-b\partial_{\alpha}+ \Til ^{-1}\partial_{\alpha} b\Til )\W,\W^2\right\rangle . \\
\end{aligned}
\]
Note that we can express this as the sum of two terms, as shown below
\[
\left\langle   \Til ^{-1}\partial_{\alpha} [b\, ,\, \Til ]\W,\W^2\right\rangle +\left\langle b_{\alpha}\W, \W ^2\right\rangle,
\]
where both can be easily controlled by $AB\norm_1$ using Lemma~\ref{est:TilComm} followed by Lemma~\ref{est:bBMO}. The contribution of $G^{2+}$ is harmless since all the terms in $G^{2+}$ are bounded in $L^2$,
\[
\Vert G^{2+}\Vert_{L^2}\lesssim_A B\norm_1.
\]

We now return to the last correction term, $I_2$:
\[
\begin{aligned}
\Poly{\geq 4} \left(  \frac{d}{dt} I_2\right)  &=
\langle R_t, \W \Til^{-1} R_\alpha \rangle + \langle R_t, \W \Til^{-1} R_\alpha \rangle + 
\langle R_t, \W \Til^{-1} R_\alpha \rangle. 
\end{aligned}
\]
The argument for this expression  is slightly more involved. 
We proceed as in the proof of Lemma~\ref{l:low}, but with some extra
care. We begin with a Littlewood-Paley decomposition
\[
\langle R, \W \Til R_\alpha \rangle = \sum_{k,k_1,k_2 \geq 0} 
\langle P_{k_1} R, P_{k_2} \W \Til^{-1} P_{k_3} R_\alpha \rangle,
\]
and similarly for the time derivative. For the above summand to be nonzero, 
we need the two highest frequencies to be comparable. We first distinguish two easier cases:

\begin{enumerate}
\item[(i)] If $\min\{k,k_1,k_2\}  \lesssim 1$, and the time derivative applies to the low frequency.
Then the time differentiated factor is bounded in $L^2 \cap L^\infty$, and the two remaining factors 
are estimated in $L^2$ or $L^\infty$ as needed.

\item[(ii)] If $k < k_1 = k_2$, then we take advantage of the fact that our factors are holomorphic,
and thus have exponential decay at positive frequencies.  Thus we obtain an $e^{-Nk_1}$ gain
which is more than enough for all our estimates.
\end{enumerate}

This leaves us with two principal cases, namely the sums:
\[
J_1 = \partial_t \sum_{k > 4} \int \bar R_{k} \W_k R_{\leq k,\alpha} \, d\alpha, \qquad  
J_2 = \partial_t \sum_{k > 4} \int \bar R_{k} \W_{\leq k} R_{k,\alpha} \, d \alpha .
\]
To estimate their time derivatives we use again the decomposition
\[
\partial_t \W = (\partial_t + T_b \partial_\alpha) \W - T_b \partial_\alpha \W,
\qquad
\partial_t R = (\partial_t + T_b \partial_\alpha) R - T_b \partial_\alpha R.
\]
For the first term in each decomposition we have the estimates in Lemma~\ref{l:dt-rw}.
Using them, the bounds for the corresponding contributions to $J_1$ and $J_2$ are somewhat tedious 
but routine. It remains to consider the $T_b$ contributions, which are
\[ 
\begin{split}
J_1^b =  \sum_{k > 4} \int    b_{<k} \bar R_{k,\alpha} \W_k R_{\leq k,\alpha} 
+  \bar R_{k} b_{<k}  \W_{k,\alpha} R_{\leq k,\alpha} +
\bar R_{k} \W_k  \partial_\alpha (T_b  \bar R_{\leq k,\alpha})  
\, d\alpha, \\ 
J_2^b =  \sum_{k > 4} \int  b_{<k}  \bar R_{k,\alpha} \W_{\leq k} R_{k,\alpha} +  \bar R_{k} T_b  W_{\leq k,\alpha} 
  R_{k,\alpha} + 
 \bar R_{k} \W_{\leq k}  \partial_\alpha (b_{<k} R_{k,\alpha}) 
\, d \alpha. 
\end{split}
\]
Integrating by parts we rewrite these integrals as
\[
\begin{aligned}
&J_1^b =  \sum_{k > 4} \int   
\bar R_{k} \W_k  \partial_\alpha ((T_b - b_{<k})  \bar R_{\leq k,\alpha}) 
\, d\alpha, \\ 
&J_2^b =  \sum_{k > 4} \int   \bar R_{k} (T_b - b_{<k})  \W_{\leq k,\alpha} 
  R_{k,\alpha}
\, d \alpha .
\end{aligned}
\]
Here the expressions $(T_b - b_{<k})  \bar R_{\leq k,\alpha}$, respectively 
$(T_b - b_{<k})  W_{\leq k,\alpha} $ are of the same type as the expressions
considered in Lemma~\ref{l:dt-rw} as part of $(\partial_t + T_b \partial_\alpha) R$,
respectively $(\partial_t + T_b \partial_\alpha) \W$. Thus they also satisfy the bounds 
in Lemma~\ref{l:dt-rw}, and the desired conclusion follows.

\end{proof}

\begin{proof}[Proof of Lemma~\ref{l:low}, $n=1$ ]
Because of the better bounds for the lower order terms in \eqref{low-1}, this proof 
is straightforward and is omitted.
\end{proof}
\end{proof}

\section{Proof of the main results}
Given the estimates obtained in the previous sections both for the main evolution ~\ref{FullSystem-re}
and for the linearized equation, the proof of the main results in Theorem~\ref{t:loc} and Theorem~\ref{t:long}
are fairly routine. Thus, in this section we provide an outline of the proofs only. For a more in-depth exposition
of arguments of this type we refer the reader to the earlier article \cite{HIT} devoted to the infinite depth problem.
We will however emphasize the differences between the finite and infinite depth case. 

\begin{proof}[Proof of Theorem~\ref{t:loc}, outline]
Due to scaling considerations we can work with $h=1$ and $g \lesssim 1$. 
The main steps in the proof are as follows:

\medskip

{\em 1. Existence of regular solutions.} Here we start with initial
data $(W,Q)(0) \in L^2 \times H^\frac12$ and $(\W,R)(0) \in H^n \times
H^{n+\frac12}$ with $n \geq 2$, which has extra regularity both at low
frequency and at high frequency. For such data, local in time
solutions are constructed as weak limits of solutions for a frequency
localized system. In doing this it is convenient to work with the
differentiated equation \eqref{DiagonalQuasiSystem-re}, in order to
have the equations in diagonalized form. For this the argument in
\cite{HIT} applies almost identically.

We note one advantage of working with the holomorphic coordinates,
namely that the free water surface is not required to be a graph. If it were not for this, 
we could simply use the local well-posedness result in \cite{abz} or \cite{L-book}.

\bigskip

{\em 2. Uniqueness of regular solutions.}  
Here we consider two solutions $(W_1,Q_2)$ and $(W_2,Q_2)$ with regularity 
$(W_j,Q_j) \in C([0,T]; \H)$ and $(\W_j,R_j)  \in C([0,T]; \H^n)$ with $n \geq 2$, and show that if their
initial data agree then the two solutions must be equal. Note that while more regularity 
is assumed at high frequency, that is no longer the case at low frequency.

For the proof one subtracts the two sets of equations, estimating the difference of the 
two solutions for the differentiated equation \eqref{DiagonalQuasiSystem-re}. 
The key point is that up to perturbative terms, the difference $(w,r) = (\W_1-\W_2,R_1-R_2)$ solves a linear 
system similar to our model evolution for the linearized equation  \eqref{PLinSys}.
Then one can conclude the proof of uniqueness in a standard manner using Gronwall's inequality.

\bigskip
{\em 3. Lifespan bounds in terms of the $\H^1$ size of the data.} The lifespan of solutions 
constructed above depends both on the $\H^n$ size of the data $(\W,R)(0)$ and on $g$.
Here we show that we can in effect obtain lifespan bounds which depend only on the 
$\H^1$ size of the data and which are independent of $g$.  To be precise, we take initial data 
which satisfy the bounds
\begin{equation}\label{large-bd}
\| (W,Q)(0)\|_{\H} \leq g {\mathcal M}_0, \qquad \| (\W,R)(0)\|_{\H} \leq g {\mathcal M}_0, \qquad \| (\W_\alpha,R_\alpha)(0)\|_{\H} \leq  {\mathcal M}_0,
\end{equation}
as well as the pointwise bounds
\begin{equation}\label{large-bd+}
\| Y(0)\|_{L^\infty} \leq \K_0, \qquad \Im W+ 1 \geq c_0 > 0.
\end{equation}
Then we will show that there exists $T = T({\mathcal M}_0,Y_0,c_0)$ so that the solutions exist on $[-T,T]$ with similar bounds.

For the proof we use a bootstrap argument, assuming that the following bounds 
hold in $[0,T]$:
\begin{equation}\label{boot-bd}
\| (W,Q)\|_{\H} \leq g {\mathcal M}, \qquad \| (\W,R) \|_{\H} \leq g {\mathcal M}, \qquad \| (\W_\alpha,R_\alpha)\|_{\H} \leq  {\mathcal M},
\end{equation}
as well as the pointwise bounds
\begin{equation}\label{boot-bd+}
\| Y\|_{L^\infty} \leq \K, \qquad W+ 1 \geq c > 0.
\end{equation}
Then we need to show that for a suitable choice of ${\mathcal M}$, $\K$, $c$ depending on 
${\mathcal M}_0$, $\K_0$ and $c_0$ but not on $g$ we can improve all these bounds. 
Through the following computations we denote by $C_0$ various constants
which only depend on ${\mathcal M}_0$ and $K_0$.

We begin by observing that by Sobolev embedding our control parameters satisfy
\[
A,B \leq C({\mathcal M},\K), \qquad a \geq c g.
\]
Hence by the energy estimates for the differentiated equation in Proposition~\ref{t:en=large} we obtain 
\[
\| (\W,R)(0)\|_{\H} \leq g c^{-1}  C_0 (1+t C({\mathcal M},\K)),
\qquad \| (\W_\alpha,R_\alpha)\|_{\H} \leq  c^{-1}  C_0 C(\K) (1+ t C({\mathcal M},\K)),
\]
where the $\K$ dependence in the second bound is caused by the need to invert a $1+\W$ factor, see \cite{HIT} for a full argument.

To bound $(W,Q)$ in time we use the equations directly to obtain
\[
\| (W,Q)\|_{\H} \leq g( C_0 + t C({\mathcal M},\K)).
\]
To bound $Y$ in $L^\infty$ we reuse the argument in \cite{HIT},
which yields 
\[
\|Y(t)\|^2_{L^\infty} \leq  C_0(1+t C({\mathcal M},\K)).
\]
Finally, to bound $\Im W$ from below we use directly  the $W$ equation 
to obtain
\[
(\partial_t + \Re F \partial_\alpha) \Im W = (1+\Re W_\alpha) \Im \left(\frac{R}{1+\bar\W}\right),
\]
which yields
\[
\inf_{\alpha \in \R} 1+\Im W(t,\alpha) \geq c_0 - tC({\mathcal M},\K)  .
\]

Summarizing, in order to close the bootstrap we need to have the bounds
\[
{\mathcal M} > C_0 c^{-1} C(\K) (1+t C({\mathcal M},\K)), \qquad \K^2 > C_0 c^{-1} (1+t C({\mathcal M},\K)),
\qquad c < c_0 - tC({\mathcal M},\K)  .
\]
This is achieved by first choosing $c = c_0 /2$, then $\K$ large enough  $\K^2 = 2 C_0 c^{-1}$  next ${\mathcal M}$ large enough ${\mathcal M} = 2 C_0 C(\K) c^{-1}$,
and finally a small enough $T < T({\mathcal M},\K,c)$.

\bigskip

{\em 4. $\H^n$ solutions for $n \geq 2$.} Here we relax our low
frequency regularity assumption for the data to $(W,Q)(0) \in \H$,
while keeping the high frequency regularity $(\W,R)(0) \in \H^n$, $n
\geq 2$, and prove that solutions still exist. By Step 2, such solutions are also unique. To obtain such solutions we consider a sequence of data $(W_n,Q_n)(0)$
with regularity $(W_n,Q_n)(0) \in L^2 \times H^{\frac12}$ so that 
\[
(W_n,Q_n)(0) \to (W,Q)(0) \text{ in } \H, \qquad (\W_n,R_n)(0) \to (\W,R)(0)  \text{ in } \H^2.
\]
This is easily achieved by cutting off the low frequencies
\[
(W_n,Q_n)(0) = P_{>-n} (W,Q)(0).
 \]
 For $n$ large enough this family of data is uniformly bounded in the sense of 
 \eqref{large-bd}, so by the previous step they generate solutions $(W_n,Q_n)$ with uniform bounds life-span. But then the estimates on the linearized equation
in Section~\ref{s:linearization} show that the sequence $(W_n,Q_n)$ converges
to some $(W,Q)$ uniformly in the $\mfH$ topology. Due to the uniform  bounds 
on $(W_n,Q_n)$ this linearly yields $(W_n,Q_n) \to (W,Q)$ in $ \mfH^{2-}$.
Thus $R$ is well defined and we also have $(\W_n,R_n) \to (\W,R)$ in $\H^1$. Using now the uniform bounds on $(\W_n,R_n)$ we obtain weak convergence $(\W_n,R_n) \to (\W,R)$ in $\H^2$, and strong convergence in all weaker topologies. Thus we have 
obtained the desired solutions $(W,Q)$.

\bigskip

{\em 5. Rough  solutions.} Here we show that the solution operator  constructed above for data $(W,Q)(0) \in \H$ with $(\W,R)(0) \in \H^2$ extends continuously to data
with only $(W,Q)(0) \in \H$ and $(\W,R)(0) \in \H^1$. 

Indeed, consider some data which only satisfies the latter requirement. 
Then we regularize the data $(W,Q)(0)$ to $(W_n,Q_n)(0) = P_{<n}(W,Q)(0)$.
This linearly guarantees convergence
\[
 |(W_n,Q_n)(0) -(W,Q)(0)| \to 0 \qquad \text{in } H^{2},
\]
which also shows that $\W_n(0) \to \W(0)$ uniformly,
and also 
\[
(\W_n,R_n) \to (\W,R)   \qquad \text{in } H^{1}.
\]
Now we turn our attention to the key point, which is 
to improve this last convergence to $\H^1$. We will in effect do slightly better
than that, and for this we need to work with slowly varying 
frequency envelopes. Precisely, we have the following:

\begin{lemma}\label{data-cut}
Let $\{c_n\}_{n \geq 0}$ be a slowly varying frequency envelope 
for $(\W,R)(0)$ in $\H^1$. Then we have the estimate
\[
2^{\frac32 n} \| (\W_n,R_n)(0) - P_{<n}(\W,R)(0)\|_{\H} + 2^{-n}\|(\W_n,R_n)(0)\|_{\H^2}
\lesssim_A c_n .
\]
\end{lemma}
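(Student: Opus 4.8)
The plan is to reduce the statement to standard frequency-envelope bookkeeping for the regularized data $(\W_n,R_n)(0)$. First I would recall what $(\W_n,R_n)(0)$ actually is: these are the diagonal variables associated to the regularized holomorphic data $(W_n,Q_n)(0) = P_{<n}(W,Q)(0)$, i.e. $\W_n = W_{n,\alpha}$ and $R_n = Q_{n,\alpha}/(1+W_{n,\alpha})$. The point is that $\W_n$ is \emph{not} simply $P_{<n}\W$, because the $R$ component involves the nonlinear map $Q_\alpha \mapsto Q_\alpha/(1+W_\alpha)$, and the low-frequency truncation does not commute with this nonlinearity. So the proof splits into a linear part (where $P_{<n}(W,Q)(0)$ differs from $(W,Q)(0)$ by exactly a high-frequency tail controlled by the envelope) and a nonlinear/commutator part (estimating the mismatch $\W_n - P_{<n}\W$ and $R_n - P_{<n}R$).

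For the linear part, since $\{c_n\}$ is a slowly varying frequency envelope for $(\W,R)(0)$ in $\H^1$, we have by definition that $\|P_k(\W,R)(0)\|_{\H^1}\lesssim c_k$ with $c_j \lesssim 2^{\delta|j-k|}c_k$ for small $\delta$. This immediately gives $2^{\frac32 n}\|P_{<n}(\W,R)(0) - (\W,R)(0)\|_{\H} = 2^{\frac32 n}\|P_{\geq n}(\W,R)(0)\|_{\H} \lesssim 2^{\frac32 n}\sum_{k\geq n} 2^{-\frac32 k}\|P_k(\W,R)(0)\|_{\H^1} \lesssim \sum_{k\geq n} 2^{\frac32(n-k)}c_k \lesssim c_n$ by slow variation, and similarly $2^{-n}\|P_{<n}(\W,R)(0)\|_{\H^2}\lesssim 2^{-n}(\sum_{k<n}2^{2k}\cdot 2^{-k}c_k)\lesssim 2^{-n}\sum_{k<n}2^k c_k\lesssim c_n$. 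The nontrivial step is therefore to estimate $\|(\W_n,R_n)(0) - P_{<n}(\W,R)(0)\|_{\H}$ in $\H$ with the weight $2^{\frac32 n}$, and to control $\|(\W_n,R_n)(0)\|_{\H^2}$ with the weight $2^{-n}$. The second of these follows from the first together with the linear bound just derived. For the first, one writes $\W_n - P_{<n}\W = P_{<n}W_\alpha - W_{n,\alpha}$; but $W_{n,\alpha} = P_{<n}W_\alpha$ exactly, so this difference vanishes identically for the $\W$ component. The only genuine content is in $R$: $R_n - P_{<n}R = \dfrac{Q_{n,\alpha}}{1+W_{n,\alpha}} - P_{<n}\!\left[\dfrac{Q_\alpha}{1+W_\alpha}\right]$. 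Expanding both via the holomorphic product estimates and the fact that $\W(0)\in L^\infty$ is small (so $1/(1+\W)$ is a nice multiplier), this is a sum of commutator-type terms of the schematic form $[P_{<n}, \text{mult by }(1+\W)^{-1}]Q_\alpha$ plus genuinely high-frequency pieces $P_{\geq n}$ of products; each such term lands in $\H$ with the $2^{\frac32 n}c_n$ bound after invoking the Coifman--Meyer commutator estimates from the Appendix (the same ones used throughout, e.g. the bounds feeding into Lemma~\ref{l:b} and the $\bmo$-type product estimates) and the envelope bound on $\W(0)$ and $R(0)$.

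The main obstacle I expect is the careful treatment of this nonlinear mismatch term $R_n - P_{<n}R$: one must verify that the commutator between the low-frequency projector $P_{<n}$ and multiplication by the $W$-dependent coefficient genuinely gains a full derivative (so that a factor like $\|\W\|_{L^\infty}$ or $\|\W\|_{\bmo^{1/2}}$ can be extracted), \emph{and} that what remains is governed by the frequency envelope rather than just the $\H^1$ norm. The slow variation of $c_n$ is exactly what makes the sums over dyadic frequencies converge after the commutator gain; without it one would only get the non-enveloped estimate. I would organize this as a Littlewood--Paley trichotomy (low-high, high-low, high-high) for the product $(1+\W)^{-1}\cdot Q_\alpha$, note that the low-high and high-high interactions are harmless because they are supported at frequency $\gtrsim 2^n$ and directly controlled by $P_{\geq n}$ of the respective factors against the envelope, and that the high-low interaction is precisely the commutator piece where the derivative gain from $P_{<n} - P_{<n}$ acting on nearby frequencies produces the needed $2^{-n}$ (hence $2^{\frac32 n}$ against $\H^2$) savings. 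The argument is morally identical to the data-regularization lemma in \cite{HIT}; the only new feature is keeping the $g$-dependence uniform, which is automatic here since all the product and commutator estimates used are $g$-independent in the normalization $h=1$.
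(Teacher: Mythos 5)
Your proposal is correct and follows essentially the same route as the paper: the $\W$ component is exact since $\W_n=P_{<n}\W$, and the whole content is the nonlinear mismatch in $R$, which the paper also handles by a paradifferential/commutator decomposition against the frequency envelope. The only cosmetic difference is that the paper first factors out $(1+P_{<n}\W)^{-1}$ to reduce to the clean bilinear quantity $P_{<n}[R\W]-P_{<n}R\,P_{<n}\W$ before applying the trichotomy, whereas you phrase the same terms as commutators with multiplication by $(1+\W)^{-1}$; both yield the $2^{-\frac32 n}c_n$ bound in $L^2$.
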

We note that this lemma not only shows that  $(\W_n,R_n)(0) \to (\W,R)(0)$ in $\H^{1}$, but also that they share the common $c_n$ frequency envelope.

\begin{proof}
We drop the ``$(0)$" notation for this proof.
Only the $R$ part of the bounds is nontrivial. Expressing all in terms of $R$ 
and $W$, for the first expression above we need to bound in $H^\frac12$ the 
difference
\[
\frac{1}{1+P_{<n} \W} P_{<n} [R(1+\W)] -P_{<n} R =
\frac{1}{1+P_{<n} \W} ( P_{<n} [R\W] -  P_{<n} R P_{<n} \W).
\]

We will bound the last difference in $L^2$ using the usual paradifferential 
decomposition. We can express it as
\[
 P_{<N} [R\W] -  P_{<n} R P_{<n} \W = \Pi(P_n R,P_n \W)
 + [P_{<n}, R_{<n-4}]  P_n \W + [P_{<n}, \W_{<n-4} ]P_n R .
\]
Estimating the high frequency factors in $L^2$ and the low frequency factors in $L^\infty$ we obtain
\[
\| P_{<n} [R\W] -  P_{<n} R P_{<n} \W\|_{L^2} \lesssim A 2^{-\frac32 n} c_n .
\]

The $R_n$ bound in the second expression above is easier and is left for the reader.
\end{proof}

Once we have uniform bounds for $(\W_n,R_n)(0)$ in $\H^1$, by the previous step it follows that the corresponding solutions $(\W_n,R_n)$ have a uniform life-span, with uniform bounds. Our next goal is to show that the frequency envelope bounds 
are inherited also by the solutions.

\begin{lemma}
Let $(W_n,Q_n)$ be the solutions associated to the initial data as above.
Then we have the estimates
\begin{equation}\label{wrn-low}
\| (W_{n+1},Q_{n+1})-(W_n,Q_n) \|_{\mfH} \lesssim_{A,B} 2^{-2n} 
\end{equation}
\begin{equation}\label{wrn-high}
\| (\W_n,R_n) \|_{\H^2} \lesssim_{A,B} 2^{n} c_n,
\end{equation}
respectively
\begin{equation}\label{wrn-diff}
\| (\W_{n+1},R_{n+1})-(\W_n,R_n) \|_{\H} \lesssim_{A,B} 2^{-n} c_n.
\end{equation}
\end{lemma}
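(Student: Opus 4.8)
The plan is to propagate all three bounds from their values at $t=0$ using the uniform control available on the common lifespan $[-T,T]$: by the lifespan analysis of Step~3 above, the solutions $(W_n,Q_n)$ satisfy uniform $\H^1$ bounds, so by Sobolev embedding and \eqref{sobolev-A}, \eqref{sobolev-B} the control norms obey $A,B\lesssim 1$ and the Taylor sign obeys $a\gtrsim g$ on $[-T,T]$, with implicit constants independent of $n$. The initial data bounds are supplied by Lemma~\ref{data-cut} together with the algebraic relations $\W=W_\alpha$ and $Q_\alpha=R(1+\W)$. In particular, since $(\W,R)(0)\in\H^1$ one gains two derivatives at the level of $(W,Q)$, so that $\|(W_{n+1},Q_{n+1})(0)-(W_n,Q_n)(0)\|_{\mfH}=\|P_{[n,n+1]}(W,Q)(0)\|_{\mfH}\lesssim_A 2^{-2n}$; Lemma~\ref{data-cut} also gives directly $\|(\W_n,R_n)(0)\|_{\H^2}\lesssim_A 2^n c_n$, and, after comparing with $P_{<n}(\W,R)(0)$ and $P_{<n+1}(\W,R)(0)$ whose difference lives at frequency $\sim 2^n$, $\|(\W_{n+1},R_{n+1})(0)-(\W_n,R_n)(0)\|_{\H}\lesssim_A 2^{-n}c_n$.

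For \eqref{wrn-high} I would feed the initial bound $\|(\W_n,R_n)(0)\|_{\H^2}\lesssim_A 2^n c_n$ into the higher order energy estimates of Proposition~\ref{t:en=large} at the $\H^2$ level: the norm equivalence there together with the quadratic bound $\frac{d}{dt}E^{n,(2)}\lesssim_A B\,\norm_n^2$, a Gronwall inequality, and the uniform bound on $\int_0^t B$ propagate $\|(\W_n,R_n)(t)\|_{\H^2}\lesssim_{A,B} 2^n c_n$ for all $t\in[-T,T]$. No loss in $n$ occurs because the Gronwall factor depends only on the uniform background quantities.

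For \eqref{wrn-low} and \eqref{wrn-diff} I would subtract the two systems of equations and observe, exactly as in the uniqueness step, that up to perturbative source terms the difference of the two $(W,Q)$ solutions solves the linearized system \eqref{PLinSys}, while the difference of the two differentiated solutions $(\W,R)$ solves the model system \eqref{ModelLinSys}, in both cases around one of the backgrounds. The source terms are bilinear in the difference and the background: using the difference version of Proposition~\ref{LinQuadWP} and the $\errw,\errr$ bounds of Section~\ref{s:ee}, together with the uniform bounds on $A$, $B$ and $\|(\W_n,R_n)\|_{\H^1}$, they are controlled in $\H$ by $C\,\|(\text{difference})\|_{\H}$, plus, in the differentiated case, an extra contribution $C\,\|(\text{undifferentiated difference})\|_{\mfH}$ coming from the low frequency parts of the coefficients $b$ and $\mfa$. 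The two difference estimates are thus coupled, so I would run them simultaneously: the energy estimate of Proposition~\ref{p:local} applied to both, closed with Gronwall, propagates $\|(W_{n+1},Q_{n+1})-(W_n,Q_n)\|_{\mfH}\lesssim_{A,B} 2^{-2n}$ and $\|(\W_{n+1},R_{n+1})-(\W_n,R_n)\|_{\H}\lesssim_{A,B} 2^{-n}c_n$ from their initial values.

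The main obstacle, as in the uniqueness argument, is to verify that the source terms generated by differencing two genuine solutions are genuinely perturbative, i.e.\ that they are bounded in $\H$ (respectively $\mfH$) by the difference norm times background quantities already known to be uniformly bounded, with no uncontrolled low frequency tail; this amounts to rerunning the estimates behind Proposition~\ref{LinQuadWP} and the error bounds of Section~\ref{s:ee} for differences rather than for a single linearization. Once this is in place the three bounds follow from routine Gronwall inequalities on $[-T,T]$. A secondary, purely bookkeeping, point is matching the $\H^k$ scale of this lemma with the $\norm_n$ scale of the energy estimates and tracking the $g$-weights, which is harmless since $g\lesssim 1$.
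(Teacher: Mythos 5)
Your treatment of \eqref{wrn-high} coincides with the paper's: feed the $\H^2$ data bound from Lemma~\ref{data-cut} into the higher order energy estimates of Proposition~\ref{t:en=large} and close with Gronwall. The gap is in the differencing-plus-Gronwall strategy you propose for \eqref{wrn-low} and \eqref{wrn-diff}. When you subtract the two $(\W,R)$ systems, the source terms are not ``bilinear in the difference and uniformly bounded background quantities'': the differenced transport and coefficient terms contain expressions such as $(b_{n+1}-b_n)\,\W_{n,\alpha}$, and the only $L^2$-type control on $\W_{n,\alpha}$ available is through the $\H^2$ norm, which by \eqref{wrn-high} grows like $2^n c_n$ (the pointwise norm $B$ only controls $\la D\ra^{\frac12}\W$ in $\bmo$, half a derivative short of $\W_\alpha$). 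The resulting Gronwall constant therefore grows with $n$, and the target rate $2^{-n}c_n$ does not close. Similarly, for \eqref{wrn-low} the difference of two genuine solutions solves the linearized equation only up to terms quadratic in the difference, e.g.\ $\delta F\cdot \delta W_\alpha$; after interpolating $\|\delta R\|_{L^\infty}$ between the $\H$ and $\H^2$ bounds these are of size about $2^{-n}c_n^2$, which exceeds the claimed $2^{-2n}$ whenever the envelope $c_n$ decays slowly (it is only assumed slowly varying and square summable).

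The paper sidesteps both issues by treating $n$ as a continuous parameter: $(w,q)=\frac{d}{dn}(W_n,Q_n)$ solves the linearized equation \emph{exactly}, with no quadratic errors, and its data $P_n(W,Q)(0)$ yields diagonal variables of size $2^{-2n}c_n$ in $\H$; Theorem~\ref{thrm:LinQuadWP} propagates this bound, and integrating in $n$ over $[n,n+1]$ gives \eqref{wrn-low}. For \eqref{wrn-diff} no energy estimate on a differenced $(\W,R)$ system is run at all: the high frequencies are handled by Bernstein from \eqref{wrn-high}, $\|P_{>n}(\W_n,R_n)\|_{\H}\lesssim 2^{-2n}\cdot 2^{n}c_n = 2^{-n}c_n$, while the low frequencies are handled by expressing $\frac{d}{dn}(\W_n,R_n)$ in terms of $(w,r)$, which costs exactly one derivative, i.e.\ a factor $2^n$ on $P_{<n}$, landing again at $2^{-n}c_n$. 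To salvage your argument you would need to reproduce this high/low splitting and replace the quadratic difference errors by the exact linearized flow; as written, the coupled Gronwall does not give the stated rates.
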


\begin{proof}
Given the $\H^2$ bound for the initial data $(W_n,Q_n)(0)$ in the previous lemma,
the bound \eqref{wrn-high} is a direct consequence of our higher order 
energy bounds.

For \eqref{wrn-low} we will use instead the linearized equation. Precisely,
we now interpret $n$ as a continuous parameter. Then the functions
\[
(w,q)= \frac{d}{dn} (W_n,Q_n)
\]
solve the linearized equation, and have initial data $(w,q)(0)= P_n(W,Q)(0)$
localized at frequency $2^n$. Considering now the diagonalized variables
$(w,r)= (w,q+R\Til^2 w)$, an argument similar to the proof of Lemma~\ref{data-cut}
shows  that their data satisfies 
\begin{equation}\label{lin-est-n}
\| (w,r)(0)\|_{\H} \lesssim 2^{-2n} c_n.
\end{equation}

Applying the bounds for the linearized equation Theorem~\ref{LinQuadWP}
we extend the estimate \eqref{lin-est-n} along the flow,
\begin{equation}\label{lin-est-nt}
\| (w,r)\|_{\H} \lesssim 2^{-2n} c_n
\end{equation}

For the estimate \eqref{wrn-diff} we first bound the high frequency part using the \(\H^2\) bound \eqref{wrn-high}. Precisely,
\[
\|P_{>n+1}(\W_{n+1},R_{n+1})\|_{\H} + \|P_{>n}(\W_n,R_n)\|_{\H} \lesssim 2^{-n}c_n,
\]
where the constant is independent of \(n\).

To bound the low frequency part we define
\[
(w_1,r_1) = \frac{d}{dn}(\W_n,R_n).
\]
We observe that in terms of \((w,r)\) we have
\[
(w_1,r_1) = (w_\alpha,r_\alpha + R_n(1+\Til^2)w_\alpha + (R_n)_\alpha\Til^2 w).
\]
A simple application of the usual Littlewood-Paley trichotomy then yields the estimate
\begin{equation}\label{dlin-est-nt}
\|P_{< n}(w_1,r_1)\|_{\H}\lesssim_A2^n\|(w,r)\|_{\H}.
\end{equation}

From the estimate \eqref{lin-est-nt} we gain an $\mfH$ bound for $(w,q)$,
which integrated between $[n,n+1]$ yields \eqref{wrn-low}. On the other 
hand, we have
\[
\frac{d}{dn}P_{<n}(\W_n,R_n) = P_{<n}(w_1,r_1) + P_n(\W_n,R_n),
\]
where the second term may again be bounded using \eqref{wrn-high}. Integrating \eqref{dlin-est-nt} we obtain \eqref{wrn-diff}.

\end{proof}

The bounds in the last lemma insure not only that the sequence $(W_n,Q_n)$
converges strongly to a solution $(W,Q)$ in the sense that
\[
(W_n,Q_n) \to (W,Q) \qquad \text{uniformly in } H^2
\]
\[
(\W_n,R_n) \to (\W,R) \qquad \text{uniformly in } \H^1
\]
but also that $(\W,R)$ inherits the same frequency envelope $\{c_n\}$ in $\H^1$. 

Once we have constructed the rough solutions $(W,Q)$ as the unique limit of the 
regularized problems, the frequency envelope bounds easily lead to continuous dependence with respect to data. This is a standard argument; for which we refer
the reader to \cite{HIT}.

\end{proof}

\begin{proof}[Proof of Theorem~\ref{t:long}, outline]
 Using the spatial scaling , it suffices to assume that
$h = 1$. Given the  initial data $(W, Q)(0)$ for \eqref{FullSystem-re} satisfying 
 \[
  g^{-1} \| (W,Q)(0)\|_{\H} + g^{-1} \| (\W,R)(0)\|_{\H} +
   \| (\W_\alpha,R_\alpha)(0)\|_{\H}  \leq \epsilon,
  \]
we consider the solutions on a time interval $[0,\, T]$ and seek to prove the estimate
\begin{equation}
\label{prove}
  g^{-1} \| (W,Q)(t)\|_{\H} + g^{-1} \| (\W,R)(t)\|_{\H} +
   \| (\W_\alpha,R_\alpha)(t)\|_{\H}  \leq C\epsilon, \quad t\in [0, \ T],
  \end{equation}
provided that $T$ is much smaller than $\epsilon^{-2}$. In view of our local well-posedness result this shows that the
solutions can be extended up to time $T_{\epsilon} =C\epsilon^{-2}$, concluding the proof of the theorem.

In order to prove \eqref{prove} we use a bootstrap argument; we make the bootstrap assumption 
\begin{equation}
\label{bootstrap}
 g^{-1} \| (W,Q)(t)\|_{\H} + g^{-1} \| (\W,R)(t)\|_{\H} +
   \| (\W_\alpha,R_\alpha)(t)\|_{\H}  \leq 2C\epsilon, \quad t\in [0, \ T].
\end{equation}
From \eqref{bootstrap}, and by Sobolev embedding theorem, \eqref{sobolev-A} and \eqref{sobolev-B}, our control norms $A$ and $B$ satisfy
\[
A, B \lesssim C\epsilon.
\]

To bound $(W, Q)$ in time we  directly use the conserved energy $\mathcal{E}$. Using the expression \eqref{reprezentare} for $\mathcal{E}$ we see that 
\[
\mathcal{E}=(1+O(A))E_{0}(W, Q).
\]
Hence, using the bootstrap assumption \eqref{bootstrap} we obtain
\[
\Vert (W, Q)\Vert_{\mathcal{H}}\lesssim g( \epsilon +C\epsilon^2).
\]

The bound for $(\W, R)$ can be obtained from the cubic energy estimates already established 
for the differentiated equation in Proposition~\ref{t:en=small}. To obtain such a bound we first need to recall that the cubic energy estimate in there is in terms of the control norm $\norm_1$, which is now taken uniformly in time.
Explicitly, we integrate \eqref{dt-cubic} in time 
\begin{equation}
\label{oly1}
E^{1, (3)}(\W, R)(t)\lesssim E^{1, (3)}(\W, R)(0)+TAB\norm_1 ^2,
\end{equation}
and use \eqref{en-cubic} to obtain
\begin{equation}
\label{oly2}
E_0(\W, R)(t)\lesssim E_{0}(\W, R)(0)+TAB\norm_1 ^2 +A \norm_1 ^2.
\end{equation}
We further need control of $\norm_1$ norm, and this follows from
\begin{equation}
\label{oly3}
\norm_1^2\lesssim_A E_0(W,Q)+E_0(\W, R),
\end{equation}
where the first term on the right is needed in order to account for the low frequencies in $(\W, R)$.
Thus, we arrive at
\[
\begin{aligned}
\Vert (\W, R)\Vert ^2_{L^{\infty}(0, T), \mathcal{H}}\lesssim &E_0(\W, R)(0)+TAB \sup_{t\in [0,T]} (E_0 (W,Q)(t) +E_0 (\W, R)(t))\\
&+A\sup_{t\in [0,T]} ( (E_0 (W, Q)(t)+E_0(\W, R)(t)),
\end{aligned}
\]
and using the bootstrap assumptions \eqref{bootstrap} we get
\[
\Vert (\W, R)\Vert_{\mathcal{H}}\lesssim g(\epsilon +TC^2\epsilon^3+C\epsilon^2).
\]

The bound $(\W_{\alpha}, R_{\alpha})$ is obtained in the same way as above 
\[
\begin{aligned}
\Vert (\W_{\alpha}, R_{\alpha})\Vert^2 _{L^{\infty}(0, T), \mathcal{H}}\lesssim &\ E_0(\W, R)(0)\\
&\ +TAB \sup_{t\in [0,T]} (E_0 (W,Q)(t) +E_0 (\W, R)(t) +E_{0}(\W_{\alpha}, R_{\alpha})(t))\\
&\ +A \sup_{t\in [0,T]} ((E_0 (W,Q)(t) +E_0 (\W, R)(t) +E_{0}(\W_{\alpha}, R_{\alpha})(t)),
\end{aligned}
\]
and using the bootstrap assumptions \eqref{bootstrap} we get
\[
\Vert (\W_{\alpha}, R_{\alpha})\Vert_{\mathcal{H}}\lesssim \epsilon +TC^2\epsilon^3+C\epsilon^2.
\]
Hence, the estimate in \eqref{prove} follows provided that $C\gg 1$ and $T \ll C^{-1}\epsilon^{-2}$.
Similar bootstrap  argument applies for higher derivatives.
\end{proof}


\begin{appendix}

\section{Multilinear estimates}

\subsection{Some harmonic analysis results}
In this section we collect a number of elementary estimates that will allow us to adapt the estimates established in infinite depth case \cite{HIT} to the finite depth setting.

We take an inhomogeneous Littlewood-Paley decomposition \(I = S_0 + \sum_{j\geq1}P_j\) and denote
\[
f_0 = S_0f,\qquad f_j = P_jf,\quad j\geq1.
\]
We define the inhomogeneous Besov space \(B^{s,p}_q\) with norm
\[
\|f\|_{B^{s,p}_q}^q = \sum\limits_{j\geq0}\|\langle D\rangle^sf_j\|_{L^p}^q,
\]
with the usual modification when \(q=\infty\).
We also define the inhomogeneous space \(\bmo\) of functions of bounded mean oscillation with norm
\[
\|f\|_{\bmo} = \|f\|_{\BMO} + \|f_0\|_{L^\infty},
\]
where
\[
\|f\|_{\BMO} = \sup\limits_Q \frac{1}{|Q|}\int_Q|f - f_Q|\,d\alpha,\qquad f_Q = \int_Q f\,d\alpha,
\]
and the supremum is taken over all intervals \(Q\subset\Rl\). We recall that \(B_2^{0,\infty}\subset\bmo\subset B^{0,\infty}_\infty\). We define the corresponding \(\bmo\)-Sobolev spaces by
\[
\|u\|_{\bmo^s} = \|\langle D\rangle^su\|_{\bmo}.
\]

We define the paraproduct operators
\[
T_fg = \sum\limits_{j>4}f_{<j-4}g_j,\qquad \Pi[f,g] = \sum\limits_{\substack{|j-k|\leq 4\\j,k\geq0}}f_jg_k,
\]
and the associated product decomposition
\[
fg = T_fg + T_gf + \Pi[f,g].
\]
We then have the following estimates (see for example \cite[Propositions 2.2, 2.6]{HIT}):

\medskip
\begin{lemma}[Paraproduct bounds]\label{lem:Paraproducts}~

a) Coifman-Meyer paraproduct estimates. For \(1<p<\infty\) and \(s,\sigma\geq0\),
\begin{equation}\label{Para1}
\begin{aligned}
\|\langle D\rangle^sT_{\langle D\rangle^\sigma u}f\|_{L^p} &\lesssim \|f\|_{\bmo^{s+\sigma}}\|u\|_{L^p},\\
\|\langle D\rangle^s\Pi[f,\langle D\rangle^\sigma u]\|_{L^p} &\lesssim\|f\|_{\bmo^{s+\sigma}}\|u\|_{L^p}.
\end{aligned}
\end{equation}

b) Besov endpoint estimates. For \(s\geq0\),
\begin{equation}\label{Para2}
\begin{aligned}
\|\langle D\rangle^sT_{\langle D\rangle^\sigma u}f\|_{L^\infty} &\lesssim \|f\|_{B^{s+\sigma,\infty}_2}\|u\|_{B^{0,\infty}_2},\qquad \sigma>0\\
\|\langle D\rangle^s\Pi[f,\langle D\rangle^\sigma u]\|_{L^\infty} &\lesssim \|f\|_{B^{s+\sigma,\infty}_2}\|u\|_{B^{0,\infty}_2},\qquad \sigma\geq0.
\end{aligned}
\end{equation}

c) \(\BMO\) endpoint estimates. For \(s\geq0\),
\begin{equation}\label{Para3}
\begin{aligned}
\|\langle D\rangle^sT_{\langle D\rangle^\sigma u}f\|_{\bmo} &\lesssim \|f\|_{\bmo^{s+\sigma}}\|u\|_{\bmo},\qquad \sigma>0\\
\|\langle D\rangle^s\Pi[f,\langle D\rangle^\sigma u]\|_{\bmo} &\lesssim \|f\|_{\bmo^{s+\sigma}}\|u\|_{\bmo},\qquad \sigma\geq0.
\end{aligned}
\end{equation}
\end{lemma}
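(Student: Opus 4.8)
The plan is to derive all three parts from the classical Coifman--Meyer paraproduct calculus, following \cite{HIT}; the argument is entirely standard, so I will only describe its structure.

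First I would carry out a symbol reduction common to all three parts. In $\langle D\rangle^s T_{\langle D\rangle^\sigma u}f = \sum_{j>4}\langle D\rangle^s\big((\langle D\rangle^\sigma u)_{<j-4}\,f_j\big)$ the $j$-th summand is frequency localized at $|\xi|\sim 2^j$, so $\langle D\rangle^s$ may be distributed across the product and, together with $\langle D\rangle^\sigma$, absorbed onto a fattened copy of $f_j$ and of the low-frequency factor, at the cost of replacing the sharp cutoffs by symbols from a fixed bounded family of Coifman--Meyer symbols. This reduces \eqref{Para1} to the $L^p\to L^p$ boundedness of a generic paraproduct $g\mapsto T_a g$ (and of $\Pi[a,\cdot]$) with coefficient $a$ controlled only in $\bmo$, and reduces \eqref{Para2}, \eqref{Para3} to the same statement with the output measured in $L^\infty$, respectively $\bmo$. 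The $\Pi$ versions are handled identically, the only difference being that there the two input frequencies are comparable; this is what allows $\sigma=0$ for $\Pi$ but forces $\sigma>0$ for $T$.

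For part (a) I would use that $T_a$ has a standard Calder\'on--Zygmund kernel, normalized by $\|a\|_{\bmo}$, together with its $L^2$ boundedness; the latter is the statement that the blocks $a_j$ generate a Carleson measure of mass $\lesssim\|a\|_{\bmo}^2$ in the usual sense, which combined with the square function characterization of $L^2$ gives the bound, and the full range $1<p<\infty$ then follows from Calder\'on--Zygmund theory and duality. For parts (b) and (c) I would argue dyadically: writing the output as $\sum_j g_j$ with $g_j$ localized at frequency $2^j$ and using Bernstein's inequality, one is left with a bilinear sum over pairs of dyadic frequencies, which is organized by parametrizing by the frequency gap $m=j-k$, applying Cauchy--Schwarz in the common index to produce the two $\ell^2$-Besov norms, and summing the resulting geometric series in $m$ --- this last step converges precisely because $\sigma>0$ (for $T$) or because the gap ranges over finitely many values (for $\Pi$). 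For part (c) the $L^\infty$ norm on the output is replaced by the Carleson-measure (square function) characterization of $\bmo$, consistent with the embeddings $B^{0,\infty}_2\subset\bmo\subset B^{0,\infty}_\infty$, so that the $\bmo$ bound on the output is reduced, via the Carleson-measure properties of the inputs, to the same dyadic bilinear sum.

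The one place that requires genuine care, rather than bookkeeping, is the $L^2$ bound underlying part (a): since $a$ lies only in $\bmo$ and not in $L^\infty$, the naive pointwise maximal-function estimate for the paraproduct fails, and one must invoke the Fefferman--Stein characterization of $\bmo$ by Carleson measures. All remaining estimates are routine Littlewood--Paley computations.
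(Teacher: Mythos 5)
Your sketch is correct and is essentially the standard Littlewood--Paley/Coifman--Meyer argument; the paper itself gives no proof of this lemma but simply cites Propositions 2.2 and 2.6 of \cite{HIT}, whose proofs proceed by the same reduction to normalized paraproducts, the Carleson-measure/square-function argument for the $L^p$ bound, and the gap-parametrized Cauchy--Schwarz summation for the Besov and $\bmo$ endpoints. One small imprecision: your blanket remark that comparability of frequencies in $\Pi$ ``allows $\sigma=0$ for $\Pi$ but forces $\sigma>0$ for $T$'' applies only to parts (b) and (c); in part (a) the statement permits $\sigma=0$ for $T$ as well, and your own argument there never sums a geometric series in the frequency gap (the low-frequency factor is handled wholesale via the square-function duality), so no restriction arises.
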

\medskip

The following bounds, which are direct consequences
of the classical Coifman-Meyer estimates, are closely related:

\medskip
\begin{lemma}[Commutator bounds]\label{l:CMComm}~

a) Let \(\mc M \in S^1\) be a smooth Fourier multiplier with principal symbol homogeneous of order \(1\). Then for \(1<p<\infty\) we have the estimate
\begin{equation}\label{est:CMComm}
\|[\mc M,f]u\|_{L^p}\lesssim \|f_\alpha\|_{L^\infty}\|u\|_{L^p}.
\end{equation}

b) Let \(\mc M \in S^s\) be a smooth Fourier multiplier with principal symbol 
homogeneous of order \( s\) with $0\leq s < 1$. Then for \(1<p<\infty\) we have the  estimate
\begin{equation}\label{est:CMComm0}
\|[\mc M,f]u\|_{L^p}\lesssim \|\Til f\|_{\bmo^s}\|u\|_{L^p}.
\end{equation}
\end{lemma}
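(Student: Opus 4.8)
\textbf{Proof plan for Lemma~\ref{l:CMComm}.}

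The plan is to reduce both estimates to the classical Coifman--Meyer multilinear multiplier theorem by means of a paraproduct decomposition of the commutator. For part (a), I would first write $[\mc M, f]u = T_{[\mc M,f]}$-type terms via the trichotomy $fu = T_fu + T_uf + \Pi[f,u]$ applied inside the commutator; more precisely, I expand $\mc M(fu) - f\,\mc M u$ and sort the output by which factor carries the high frequency. When $u$ is at high frequency and $f$ at low frequency (the $T_f$ piece), the commutator $[\mc M, T_f]u$ is the standard Coifman--Meyer commutator: the symbol of the bilinear operator is $m(\xi+\eta) - m(\xi)$ with $\eta$ the low frequency, which by the mean value theorem is $\eta\, m'(\xi+\theta\eta)$ for some $\theta$, i.e. a bilinear multiplier that is order $0$ in the high frequency after extracting one derivative from $f$. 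This gives exactly the gain of one derivative on $f$, and Coifman--Meyer yields the $L^p$ bound by $\|f_\alpha\|_{L^\infty}\|u\|_{L^p}$ — though one should be slightly careful and use $\|f_\alpha\|_{\bmo}$ or $\|f_\alpha\|_{L^\infty}$ here, which are comparable for the paraproduct piece after Bernstein. When $f$ is at high frequency (the $T_u + \Pi$ pieces), there is no cancellation to exploit, but then $\mc M$ falls on a term of the same frequency as $f$, so $\mc M(T_uf)$ and $f\,\mc M(\text{low})$ are each separately bounded: $\|\mc M T_u f\|_{L^p} \lesssim \|T_u \langle D\rangle f\|_{L^p} \lesssim \|u\|_{L^\infty}\|f_\alpha\|_{L^p}$, and since we are estimating $u$ in $L^p$ and $f_\alpha$ in $L^\infty$, one instead writes it as $\|T_{f_\alpha} u\|$-type terms. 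I would organize this carefully using Lemma~\ref{lem:Paraproducts}, placing the high-frequency factor in $L^p$ and the low-frequency derivative of $f$ in $L^\infty$ throughout.

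For part (b), the structure is identical: decompose $[\mc M, f]u$ into paraproduct pieces. The $T_f$ piece is again the genuine commutator, where the bilinear symbol $m(\xi+\eta)-m(\xi)$ with $m$ order $s$, $0\le s<1$, behaves like $|\xi|^{s-1}\eta$ near the high frequency $\xi$, so the operator is order $s-1 < 0$ in the high frequency; one then needs only $|\eta|\,\hat f(\eta)$, i.e. $\|Df\|$ at low frequency, which is controlled by $\|\Til f\|_{\bmo^s}$ since $\Til$ is elliptic of order $0$ and $\|\langle D\rangle^s Df\|_{\bmo} \approx \|f\|_{\bmo^{1+s}}$ — wait, here we want $\|\Til f\|_{\bmo^s}$, so I should instead observe that for the low-frequency factor the relevant norm after extracting the bilinear symbol is $\|\langle D\rangle^s f\|_{\bmo}$ on the appropriate frequency range, which is exactly $\|\Til f\|_{\bmo^s}$ up to the elliptic factor $\Til$ (the symbol $-i\tanh\xi$ of $\Til$ is bounded above and below away from zero frequency and is harmless at low frequency after the inhomogeneous truncation). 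For the $T_u + \Pi$ pieces, where $f$ carries the high frequency, one bounds $\mc M(T_uf + \Pi[u,f])$ and $f_{\text{high}}\cdot(\mc M u)_{\text{low}}$ separately by the Coifman--Meyer estimate \eqref{Para1}, placing $\langle D\rangle^s$ on $f$ and keeping $u$ in $L^p$; this produces $\|f\|_{\bmo^s}\|u\|_{L^p}$ which is $\lesssim \|\Til f\|_{\bmo^s}\|u\|_{L^p}$ by ellipticity of $\Til$ at frequencies $\gtrsim 1$ together with the observation that the low frequency part of $f$ contributes harmlessly (the inhomogeneous $\bmo^s$ norm tolerates a bounded constant).

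The main obstacle I anticipate is the bookkeeping at low frequencies: $\Til$ has symbol $-i\tanh\xi$ which vanishes linearly at $\xi = 0$, so $\Til^{-1}$ is singular there, and one must check that in all the pieces where $\|\Til f\|$ appears the relevant frequencies are bounded away from zero (by the structure of the bilinear interaction: in the genuine commutator piece the low frequency factor is differentiated once, killing the constant mode; in the other pieces $f$ is at frequency $\gtrsim 1$). A clean way to handle this is to split $f = f_{<1} + f_{\ge 1}$ at the outset and note that $[\mc M, f_{<1}]u$ can be treated directly as a kernel estimate (the commutator with a smooth bounded-frequency function of a nice multiplier has rapidly decaying kernel), while for $f_{\ge 1}$ the operator $\Til$ is invertible with bounded inverse. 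Modulo this low-frequency care, everything reduces to the classical Coifman--Meyer theorem applied to explicitly computed bilinear symbols, so no genuinely new difficulty arises beyond what is already standard in \cite{HIT}.
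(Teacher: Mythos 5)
Your plan is correct and is essentially the route the paper intends: the paper states this lemma without proof, remarking only that both bounds are ``direct consequences of the classical Coifman--Meyer estimates,'' and your paraproduct decomposition of the commutator (symbol cancellation $m(\xi+\eta)-m(\xi)=O(|\eta|\,|\xi|^{s-1})$ in the $T_f$ piece, the paraproduct bounds \eqref{Para1} when $f$ carries the high frequency, and the separate treatment of $f_{<1}$ to account for the degeneracy of $\Til$ at frequency zero) is precisely the standard way of carrying that reduction out. No gap.
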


We also need the following more involved estimate:

\begin{lemma}
The following double commutator bound holds:
\begin{equation}\label{double}
\|[[\Til \D, b],\D]\|_{L^2 \to L^2} \lesssim \| b_\alpha\|_{\bmo}.
\end{equation}
\end{lemma}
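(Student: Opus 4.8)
The plan is to reduce the double commutator $[[\Til \D, b],\D]$ to a sum of ordinary (single) commutators of the type already handled in Lemma~\ref{l:CMComm}, together with genuinely bilinear error terms with integrable kernels. First I would expand the double commutator algebraically. Writing $[\Til \D, b] = \Til[\D,b] + [\Til,b]\D$, one gets
\[
[[\Til \D, b],\D] = [\Til[\D,b],\D] + [[\Til,b]\D,\D] = [\Til,\D][\D,b] + \Til[[\D,b],\D] + [[\Til,b],\D]\D + [\Til,b][\D,\D].
\]
The last term vanishes since $[\D,\D]=0$. Thus we are left with three contributions: $[\Til,\D][\D,b]$, $\Til[[\D,b],\D]$, and $[[\Til,b],\D]\D$. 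The first is the product of the \emph{fixed} bounded operator $[\Til,\D]$ (a smooth order-zero multiplier, since both $\Til$ and $\D$ are functions of $D$, so in fact $[\Til,\D]=0$ after regularization — here one must be slightly careful about the low-frequency behaviour of $\D=(-\Til^{-1}\partial_\alpha)^{1/2}$, but the commutator of two Fourier multipliers is zero) and $[\D,b]$, which by \eqref{est:CMComm0} with $s=\tfrac12$ is bounded on $L^2$ by $\|\Til b\|_{\bmo^{1/2}}\lesssim \|b_\alpha\|_{\bmo}$, using that $\Til$ is $L^2$-bounded and $\|\Til f\|_{\bmo^{1/2}}\approx \|f\|_{\dot\bmo^{3/2}} \lesssim \|f_\alpha\|_{\bmo^{1/2}}$... actually the cleaner route: $\Til\D$ has symbol of order $1$, so treat $[\Til\D,b]$ by \eqref{est:CMComm} at the top order and extract the structure below.

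A cleaner organization, which I would actually carry out, is this. The operator $\Til\D$ is a Fourier multiplier $\M_1$ whose symbol $m_1(\xi) = -i\tanh(\xi)\cdot(\operatorname{sgn}\xi\,\sqrt{\xi\coth\xi})$... let me instead note $\Til\D$ has symbol $-i\tanh\xi \cdot |\xi\coth\xi|^{1/2}\operatorname{sgn}(\cdots)$; the key point is that $m_1$ is smooth, with $m_1(\xi) - c\,|\xi|$ (for the appropriate constant $c$, arising from $\tanh\xi\to\pm1$ and $\coth\xi\to\pm1$) decaying together with all derivatives like a Schwartz function, and $m_1$ has a zero of order $3/2$... no: near $\xi=0$, $\tanh\xi\sim\xi$ and $\xi\coth\xi\sim 1$, so $m_1(\xi)\sim -i\xi$, i.e. $m_1$ is of order $1$ at infinity and order $1$ at zero, smooth and symbol-like. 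Write $m_1(\xi) = c_0\,(i\xi) + r_1(\xi)$ where $r_1\in S^0$ is a smooth \emph{bounded} symbol whose derivatives decay. Then
\[
[[\Til\D,b],\D] = c_0[[i\partial_\alpha,b],\D] + [[r_1(D),b],\D].
\]
For the first term, $[i\partial_\alpha,b] = ib_\alpha$ (multiplication), so $[[i\partial_\alpha,b],\D] = [ib_\alpha,\D] = -[\D,ib_\alpha]$, which by \eqref{est:CMComm0} with $s=\tfrac12$ is bounded on $L^2$ by $\|\Til b_\alpha\|_{\bmo^{1/2}}\lesssim\|b_\alpha\|_{\bmo}$ — here one uses that $\|\Til f\|_{\bmo^{1/2}} = \|\langle D\rangle^{1/2}\Til f\|_{\bmo}\lesssim\|\langle D\rangle^{1/2} f\|_{\bmo} \le \|f\|_{\bmo}$ after absorbing the bounded multiplier $\langle D\rangle^{1/2}\Til\langle D\rangle^{-1/2}$, though one should double-check this at low frequency where $\Til\sim -i\partial_\alpha$; a safe alternative is to split $b_\alpha$ into low and high frequencies and treat the low part by the $S_0$ part of the $\bmo$ norm. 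For the second term, $r_1(D)$ is a bounded Fourier multiplier with Schwartz-type symbol decay, so $[r_1(D),b]$ has an integrable kernel with $L^1$ norm controlled by a translate-average of $b_\alpha$, hence $\lesssim\|b_\alpha\|_{\bmo}$; commuting once more against $\D$ (order $1/2$) via a paraproduct decomposition and \eqref{est:CMComm0} again yields the bound.

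The main obstacle I anticipate is the low-frequency behaviour: $\D = (-\Til^{-1}\partial_\alpha)^{1/2}$ and $\Til^{-1}$ involve the symbol $\coth\xi$ which blows up as $\xi\to 0$, so one must verify that after taking the relevant commutators and products all apparent singularities at $\xi=0$ cancel — which they do, because the combinations $\Til\D$, $\D^2\Til = -\partial_\alpha$, etc., that actually appear are smooth at zero. I would handle this by always working with the \emph{composite} multipliers ($\Til\D$ of order $1$, $\D\Til\D$, $\D^2 = -\Til^{-1}\partial_\alpha$ of order... which is smooth at zero since $-\Til^{-1}\partial_\alpha$ has symbol $\xi\coth\xi\to 1$) rather than with $\D$ in isolation, and by performing the usual frequency-localized paraproduct splitting $b = T_b\cdot + T_\cdot b + \Pi(b,\cdot)$, estimating the low-frequency-of-$b$ paraproduct $T_b$ by $\|b_{<0}\|_{L^\infty}\le\|b_\alpha\|_{\bmo}$ via the $S_0$ part of the $\bmo$-norm, and the high-frequency pieces by the homogeneous Coifman–Meyer bounds of Lemma~\ref{l:CMComm}. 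The bookkeeping of which half-integer order operator sits where is tedious but routine once the algebraic identity above is fixed; no new analytic input beyond Lemma~\ref{l:CMComm} and the classical Coifman–Meyer theorem is needed.
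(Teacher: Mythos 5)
Your central reduction rests on a miscomputation of the symbol of $\Til\D$. Since $\D$ has symbol $(\xi\coth\xi)^{1/2}$, the product $\Til\D$ has symbol $-i\tanh\xi\,(\xi\coth\xi)^{1/2}=-i\sgn(\xi)\sqrt{\xi\tanh\xi}$, which behaves like $-i\xi$ only near $\xi=0$; at infinity it grows like $|\xi|^{1/2}$, so $\Til\D$ is an operator of order $\tfrac12$, not $1$. Consequently the decomposition $m_1(\xi)=c_0(i\xi)+r_1(\xi)$ with $r_1$ bounded is false, and the reduction of $[[\Til\D,b],\D]$ to the single commutator $c_0[ib_\alpha,\D]$ collapses. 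The order count also shows why no single-commutator estimate can suffice: $\Til\D$ and $\D$ have total order exactly $1$, so the double commutator places two derivatives on $b$ while gaining one full order, which is precisely borderline for an $L^2$ bound by $\|b_\alpha\|_{\bmo}$. If $\Til\D$ really had order $1$, the right-hand side would have to be $\|b_\alpha\|_{\bmo^{1/2}}$ --- and indeed your appeal to \eqref{est:CMComm0} produces $\|\Til b_\alpha\|_{\bmo^{1/2}}$, which you then try to absorb via the inequality $\|\langle D\rangle^{1/2}f\|_{\bmo}\le\|f\|_{\bmo}$; that inequality is backwards, since $\bmo^{1/2}$ is the stronger norm.

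The argument that works must exploit both commutations. After the paraproduct splitting, the $T_ub$ and $\Pi[u,b]$ pieces can be discarded without using any commutator structure, because the maps $u\mapsto T_ub$ and $u\mapsto\Pi[u,b]$ send $H^{-1}\to L^2$ and $L^2\to H^1$ with norm $\|b_\alpha\|_{\bmo}$, which absorbs the total order $1$ carried by $\Til\D$ and $\D$. For the $T_b$ piece one writes $[T_b,\Til\D]u=\sum_k[b_{<k-4},\Til\D]u_k=\sum_k2^{-k/2}B_k(\partial_\alpha b_{<k-4},u_k)$ with $B_k$ having uniformly integrable kernels --- the factor $2^{-k/2}$ being exactly the half-order of $\Til\D$ --- and then commuting once more with $\D$ yields $\sum_k2^{-k}C_k(\partial_\alpha^2b_{<k-4},u_k)$, which sums against $\|\partial_\alpha^2b_{<k-4}\|_{L^\infty}\lesssim2^k\|b_\alpha\|_{B^{0,\infty}_\infty}$. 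Your closing suggestion to bound the $T_b$ contribution by $\|b_{<0}\|_{L^\infty}$ misses this mechanism entirely (and $\|b_{<0}\|_{L^\infty}$ is not controlled by $\|b_\alpha\|_{\bmo}$ in any case). Your opening algebraic identity $[[\Til\D,b],\D]=\Til[[\D,b],\D]+[[\Til,b],\D]\D$ is correct but does not help, since $[[\D,b],\D]$ is a double commutator of two half-order multipliers presenting exactly the original difficulty.
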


\begin{proof}
We consider the paradifferential decomposition of the multiplication by $b$. For the map
\[
u \to T_u b,
\]
we have the bounds
\[
\| T_u b\|_{H^1} \lesssim \|u\|_{L^2}\|b_\alpha\|_{\bmo},
\qquad \| T_u b\|_{L^2} \lesssim \|u\|_{H^{-1}}\|b_\alpha\|_{\bmo},
\]
which follow from the first estimate in \eqref{Para1}.
Thus we can neglect the commutator structure.

Similarly, for  the map
\[
u \to \Pi[u ,b],
\]
we have the bounds
\[
\| \Pi[u, b]\|_{H^1} \lesssim \|u\|_{L^2}\|b_\alpha\|_{\bmo},
\qquad \| \Pi[u,b]\|_{L^2} \lesssim \|u\|_{H^{-1}}\|b_\alpha\|_{\bmo},
\]
from the second estimate in \eqref{Para1}, and again we can neglect the commutator structure.

It remains to consider the contribution of $T_b$. For this we write
\[
[T_b, \Til \D] u = \sum_{k} [b_{< k-4},\Til \D] u_k = 
\sum_k 2^{-\frac{k}2} B_k(\partial_\alpha b_{< k-4}, u_k ),
\]
where $B_k$ are translation invariant bilinear operators with 
uniformly integrable kernels. Commuting again we have
\[
[[T_b, \Til \D],\D] u = \sum_{k}  
\sum_k 2^{-\frac{k}2} B_k([\partial_\alpha b_{< k-4},\D], u_k )
=\sum_k 2^{-k} C_k(\partial_\alpha^2 b_{< k-4}, u_k ),
\]
where again $C_k$ are translation invariant bilinear operators with 
uniformly integrable kernels. Then we can bound
\[
\| [[T_b, \Til \D],\D] u\|_{L^2} \lesssim \sum_{k} 
2^{-k}  \| \partial_\alpha^2 b_{< k-4}\|_{L^\infty} \|u_k\|_{L^2}
\lesssim \|b_\alpha\|_{B^{0,\infty}_\infty} \|u\|_{L^2},
\]
which suffices.
\end{proof}

\medskip

We will make use of the
following estimates for rapidly decaying Fourier multipliers:

\medskip
\begin{lemma}\label{Smoothing}
Let \(S\) be a Fourier multiplier with Schwartz symbol. Then
for  all real \(s,\sigma\), \(1\leq p\leq\infty\) and \(N\geq0\) we have the estimate
\begin{equation}\label{est:StrongLoHi}
\|\langle D\rangle^sST_f\langle D\rangle^{\sigma}u\|_{L^p} + \|\langle D\rangle^sT_fS\langle D\rangle^\sigma u\|_{L^p}\lesssim_N \|f\|_{B^{-N,\infty}_\infty}\|u\|_{L^p}.
\end{equation}

Further, we have the commutator estimate
\begin{equation}\label{est:WeakLoHi}
\|\langle D\rangle^s [S,T_f]\langle D\rangle^\sigma u\|_{L^p}\lesssim_N \|\Til f\|_{B^{-N,\infty}_\infty}\|u\|_{L^p}.
\end{equation}
\end{lemma}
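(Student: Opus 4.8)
The plan is to prove both bounds by freezing the high (output) frequency at a dyadic scale $2^j$ and exploiting the single fact that a Fourier multiplier with Schwartz symbol, when restricted to frequencies $|\xi|\sim 2^j$, is bounded on every $L^p$, $1\le p\le\infty$, with operator norm $\lesssim_M 2^{-jM}$ for any $M$; by rescaling this reduces to the statement that the corresponding convolution kernel has $L^1$ norm $\lesssim_M 2^{-jM}$, which is immediate from the symbol bounds of a Schwartz function. Since $\langle D\rangle^s S$ and $\langle D\rangle^\sigma S$ again have Schwartz symbols, the same applies to them with constants depending on $s,\sigma$. Throughout I will use the crude estimates $\|f_{<j-4}\|_{L^\infty}\lesssim_N 2^{(N+1)j}\|f\|_{B^{-N,\infty}_\infty}$ (summing $\|f_k\|_{L^\infty}\lesssim 2^{Nk}\|f\|_{B^{-N,\infty}_\infty}$, with a harmless logarithmic factor when $N=0$) and $\|(\langle D\rangle^\sigma u)_j\|_{L^p}\lesssim 2^{|\sigma|j}\|u\|_{L^p}$; the gain $2^{-jM}$ will absorb all such polynomial losses once $M$ is chosen large.

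For \eqref{est:StrongLoHi} I would write $T_f\langle D\rangle^\sigma u=\sum_{j>4}f_{<j-4}(\langle D\rangle^\sigma u)_j$, note that each summand is frequency-localized in a fixed dyadic annulus around $2^j$, so that $\langle D\rangle^s S$ acts on it as a multiplier of $L^p$ operator norm $\lesssim_M 2^{-jM}$, and estimate $\|f_{<j-4}(\langle D\rangle^\sigma u)_j\|_{L^p}\lesssim 2^{|\sigma|j}\|f_{<j-4}\|_{L^\infty}\|u_j\|_{L^p}$ by H\"older; summing the geometric series in $j$ with $M$ large gives the bound. The term $\|\langle D\rangle^sT_fS\langle D\rangle^\sigma u\|_{L^p}$ is handled the same way, now placing the gain on the factor $(S\langle D\rangle^\sigma u)_j=P_jS\langle D\rangle^\sigma\tilde P_j u$ (with $\tilde P_j$ a fattened projector), which has $L^p$ norm $\lesssim_M 2^{-jM}\|u\|_{L^p}$, while $\langle D\rangle^s$ contributes only a factor $2^{|s|j}$ on the frequency-localized product $f_{<j-4}(S\langle D\rangle^\sigma u)_j$.

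For the commutator estimate \eqref{est:WeakLoHi} one must genuinely use the commutator structure, since the dependence is on $\Til f$ rather than $f$ — and indeed $\|f_0\|_{L^\infty}$ is \emph{not} controlled by $\|\Til f\|_{B^{-N,\infty}_\infty}$ because $\Til$ has symbol $-i\tanh\xi$, which degenerates at zero frequency. I would write $[S,T_f]\langle D\rangle^\sigma u=\sum_{j>4}\sum_{k<j-4}[S,f_k](\langle D\rangle^\sigma u)_j$ and pass to the Fourier side: each block is the integral of $\big(s(\xi)-s(\xi-\eta)\big)$ against $\hat f_k(\eta)$ and $\widehat{(\langle D\rangle^\sigma u)_j}(\xi-\eta)$, over the region $|\eta|\lesssim 2^k\ll 2^j\sim|\xi-\eta|\sim|\xi|$. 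On that region Taylor expansion yields $s(\xi)-s(\xi-\eta)=\eta\,\tilde s_j(\xi,\eta)$, with $\tilde s_j$ a symbol still rapidly decaying at scale $2^j$ (it is built from $s'$ evaluated near $\xi$); and one rewrites $\eta\,\hat f_k(\eta)=\big(i\eta\coth\eta\big)\widehat{\Til f_k}(\eta)$, where on $|\eta|\lesssim 2^k$ the multiplier $i\eta\coth\eta$ is a smooth symbol of size $\lesssim\langle 2^k\rangle$. The resulting bilinear operator then has a Coifman--Meyer symbol of size $\lesssim_M\langle 2^k\rangle 2^{-jM}$, so — having a kernel which is integrable (Schur-bounded) in the low-frequency variable — it obeys $\|\cdot\|_{L^p}\lesssim_M\langle 2^k\rangle 2^{-jM}\|\Til f_k\|_{L^\infty}\|(\langle D\rangle^\sigma u)_j\|_{L^p}$ for all $1\le p\le\infty$. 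Since $2^k\le 2^j$, applying $\langle D\rangle^s$ to the frequency-localized output and absorbing the polynomial factors into $M$, the double sum over $k<j-4$ and $j>4$ closes and gives \eqref{est:WeakLoHi}. Morally this is just \eqref{est:StrongLoHi} with $\Til f$ in place of $f$, the extra $\langle D\rangle$ being harmless against the $2^{-jM}$ gain.

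The endpoints $p=1,\infty$ cause no trouble: the Littlewood--Paley projectors, the frequency-localized multipliers of size $2^{-jM}$, and the bilinear operators above all have kernels whose relevant $L^1$/Schur norms are $\lesssim_M 2^{-jM}$, hence they are bounded on every $L^p$. I expect the only real work to be the bookkeeping in the last paragraph: carefully checking that $\tilde s_j(\xi,\eta)$ together with $i\eta\coth\eta$ indeed constitutes an admissible Coifman--Meyer symbol on the relevant region, uniformly in $j$ and $k$, so that the bilinear bound with one $L^\infty$ and one $L^p$ factor applies; once that is in hand everything reduces to summing geometric series, exactly as in the treatment of the analogous lemmas in \cite{HIT}.
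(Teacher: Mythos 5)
Your argument is correct and follows essentially the same route as the paper, whose own "proof" is only a two-sentence sketch asserting that the frequency localization of $T_f$ lets the rapid decay of the symbol of $S$ be converted into derivative gains on $f$ and $u$. Your write-up supplies the details the paper omits — in particular the Taylor expansion of $s(\xi)-s(\xi-\eta)$ combined with the factorization $\eta\,\hat f_k(\eta)=(i\eta\coth\eta)\,\widehat{\Til f_k}(\eta)$, which is exactly what is needed to see why the commutator bound \eqref{est:WeakLoHi} depends on $\Til f$ rather than $f$, including at the zero-frequency block $k=0$ where the distinction actually matters.
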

\bpf
This is a standard argument based on the classical Littlewood-Paley trichotomy. 
Due to the frequency localization of the paraproduct operator $T_f$, the 
rapid decay in the symbol of $S$ transfers to both the input $u$, the factor $f$
and to the output. This directly leads to the derivative gains in the Lemma.
\epf
\medskip

The next result serves to bound commutators with the Tilbert transform $\Til$:
\medskip
\begin{lemma}
  Let \(\mc M\) be a Fourier multiplier whose symbol $m(x)$ is bounded
  with $m'(\xi)$ in the Schwartz class.  Then for \(1<p<\infty\) and
  \(s\geq0\) we have the commutator estimates
\begin{equation}\label{est:TilComm}\begin{alignedat}{3}
\|\langle D\rangle^s[\mc M,f]\langle D\rangle^{\sigma}u\|_{L^2} &\lesssim \|\Til f\|_{\bmo^{s+\sigma}}\|u\|_{L^2},\qquad &\sigma\geq0&\\
\|\langle D\rangle^s[\mc M,f]\langle D\rangle^{\sigma}u\|_{L^\infty} &\lesssim \|\Til f\|_{B^{s+\sigma,\infty}_2}\|u\|_{B^{0,\infty}_2},\qquad &\sigma>0&\\
\|\langle D\rangle^s[\mc M,f]\langle D\rangle^{\sigma}u\|_{\bmo} &\lesssim \|\Til f\|_{\bmo^{s+\sigma}}\|u\|_{\bmo},\qquad &\sigma>0&.
\end{alignedat}
\end{equation}
\end{lemma}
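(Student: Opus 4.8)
The plan is to reduce the commutator bounds involving the multiplier $\mc M$ to the corresponding estimates for the Tilbert transform $\Til$ itself, which in turn reduce to the paraproduct bounds of Lemma~\ref{lem:Paraproducts}. The key observation is that the hypothesis on $\mc M$ — namely that its symbol $m(\xi)$ is bounded with $m'(\xi)$ Schwartz — means that $m$ differs from a suitable multiple of $-i\tanh\xi$ (i.e.\ of the symbol of $\Til$, extended by its sign behavior at infinity) by a symbol with a rapidly decaying derivative. More precisely, writing $m_\pm = \lim_{\xi\to\pm\infty} m(\xi)$ (which exist since $m'$ is Schwartz), one can decompose $\mc M = c_1 I + c_2 \Til + S$, where $c_1 = \tfrac12(m_+ + m_-)$, $c_2 = \tfrac{i}2(m_+ - m_-)$ so that $c_1 - ic_2\tanh\xi$ matches $m$ at $\pm\infty$, and $S$ is a Fourier multiplier whose symbol $m(\xi) - c_1 + ic_2\tanh\xi$ together with all its derivatives is rapidly decaying — indeed its derivative is Schwartz (a finite linear combination of $m'$ and $\mathrm{sech}^2$-type terms) and it tends to zero at $\pm\infty$, hence it is itself Schwartz. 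Thus $S$ has a Schwartz symbol.

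With this decomposition the commutator splits as $[\mc M, f] = c_2[\Til, f] + [S, f]$, since $[c_1 I, f] = 0$. For the $[S,f]$ piece, first I would record the elementary identity $[S,f]g = S(fg) - f\,Sg$ and decompose the product $fg = T_fg + T_gf + \Pi[f,g]$; the only term not already in paraproduct-localized form is $T_gf$, but $S$ has Schwartz symbol, so applying \eqref{est:StrongLoHi} and \eqref{est:WeakLoHi} (with the roles of $f$ and the input arranged appropriately, and with arbitrarily large $N$) shows that $\langle D\rangle^s[S,f]\langle D\rangle^\sigma$ maps into every $L^p$, $L^\infty$ or $\bmo$ bound we need with constant $\lesssim_N \|\Til f\|_{B^{-N,\infty}_\infty} \lesssim \|\Til f\|_{\bmo^{s+\sigma}}$ (resp.\ the Besov or $\bmo$ variant), since the $B^{-N,\infty}_\infty$ norm is controlled by any of the positive-regularity norms on the right. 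So the $S$ contribution is in fact a lower-order, perturbative term.

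The main work is therefore the $[\Til, f]$ commutator, and here the argument is the standard paradifferential one. Using $[\Til,f]g = \Til(fg) - f\,\Til g$ and expanding $fg$ and $(\Til f)g$, the low-high paraproduct pieces $T_{\Til f} g$ versus $\Til T_f g$ cancel to leading order, with the error being a smoother operator (the symbol of $\Til$ is smooth and bounded away from $\xi=0$ on the relevant frequency annuli, so $[\Til, T_{f_{<j-4}}]$ acting on $g_j$ gains a derivative via a Coifman–Meyer argument); the high-low pieces $T_g(\Til f)$ are directly of the required form; and the balanced pieces $\Pi[f,g]$ and $\Pi[\Til f, g]$ are estimated by \eqref{Para1}, \eqref{Para2}, \eqref{Para3} respectively, after noting that in the high-high regime $\Til$ acts as a bounded (indeed smoothing on each annulus, harmless) multiplier so that $\Pi[f,g]$ and $\Pi[\Til f,g]$ differ by something controlled by $\|\Til f\|$ in the appropriate norm. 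Assembling the three cases yields exactly the three inequalities in \eqref{est:TilComm}: the $L^2$ bound from \eqref{Para1} with $p=2$, the $L^\infty$ bound from \eqref{Para2} (which forces $\sigma>0$ since the $T$-part of the endpoint estimate requires a positive gain), and the $\bmo$ bound from \eqref{Para3} (again requiring $\sigma>0$). The condition $s\ge 0$ is needed so that $\langle D\rangle^s$ can be absorbed into the paraproduct structure.

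\textbf{Main obstacle.} I expect the only genuinely delicate point to be the cancellation in the low-high term $\Til T_f g - T_{\Til f} g$: one must check that the symbol $-i\tanh(\xi+\eta) + i\tanh\eta$ (with $\eta$ the high frequency, $\xi$ the low one, $|\xi|\ll|\eta|$) is, after the obvious rescaling, a symbol of order $-1$ in the high frequency — equivalently that $\tanh(\xi+\eta)-\tanh\eta = O(\xi\,\mathrm{sech}^2\eta)$ decays rapidly in $\eta$ — so that this contribution is actually better than required and can be folded into the lower-order errors handled via \eqref{est:WeakLoHi}. Everything else is bookkeeping with the Littlewood–Paley trichotomy and the already-cited Coifman–Meyer bounds.
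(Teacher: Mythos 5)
Your proposal is correct and takes essentially the same route as the paper: both split $\mc M$ into the part carrying its limits at $\pm\infty$ plus a Schwartz-symbol remainder (the paper writes $\mc M=m(\infty)P_{>10}+m(-\infty)P_{<-10}+S$, you write $c_1I+c_2\Til+S$), handle the remainder via the paradifferential trichotomy together with \eqref{est:StrongLoHi}, \eqref{est:WeakLoHi} and Lemma~\ref{lem:Paraproducts}, and check that the surviving low-high commutator is negligible — identically zero for the paper's sharp projections, rapidly decaying in your version via $\tanh(\xi+\eta)-\tanh\eta=O(|\xi|\sech^2\eta)$. The only things to tidy are bookkeeping: the terms that actually arise are $\Til T_fg-T_f\Til g$, $\Til T_gf$, $T_{\Til g}f$, $\Til\Pi[f,g]$ and $\Pi[f,\Til g]$ (you repeatedly place $\Til$ on $f$ where it belongs on $g$ or outside the product), the high-low and balanced pieces are controlled by $\|\Til f\|_{\bmo^{s+\sigma}}$ through Lemma~\ref{lem:Paraproducts} rather than by a $B^{-N,\infty}_\infty$ norm, and the low-low block $f_0u_{\leq4}$ must be kept in commutator form (as in the paper's $[S,f_0]u_{\leq 4}$ term) so that only $\Til f_0$, not $f_0$ itself, appears on the right.
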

\bpf
By hypothesis we can split the multiplier $\mc M$ as 
\[
\mc M = {m(\infty) }P_{> 10} + m(-\infty) P_{<-10} + S,
\]
where $P_{>10}$ and $P_{<-10}$ are multipliers whose symbols are
smooth cutoff functions selecting the indicated frequency regions,
and $S$ has Schwartz kernel. 
For the commutator with $P_{> 10}$ (and similarly with $P_{<-10}$) we have
\[
\langle D\rangle^s[P_{>10},T_f]\langle D\rangle^{\sigma}u_{>20}\equiv0,\qquad \langle D\rangle^s[P_{>10},f_0]\langle D\rangle^{\sigma}u_0 \equiv0.
\]
The estimates then follow from Lemma \ref{lem:Paraproducts} as in the infinite 
depth case \cite{HIT}.

For the second term we write
\begin{align*}
\langle D\rangle^s[S,f]\langle D\rangle^{\sigma}u &= \langle D\rangle^s[S,T_f]\langle D\rangle^{\sigma}u + \langle D\rangle^s[S,f_0]\langle D\rangle^{\sigma}u_{\leq 4}  + \langle D\rangle^sS T_{\langle D\rangle^{\sigma}u}f\\
&\quad  - \langle D\rangle^sT_{S \langle D\rangle^{\sigma}u}f + \langle D\rangle^sS \Pi[f_{\geq1},\langle D\rangle^{\sigma}u] - \langle D\rangle^s\Pi[f_{\geq1},S \langle D\rangle^{\sigma}u].
\end{align*}
The first and second terms term may be estimated using \eqref{est:WeakLoHi}. 
The remaining terms may be estimated using Lemma \ref{lem:Paraproducts}.
\epf
\medskip

Finally we recall two Moser estimates, the first of which is classical, and the second from \cite{HIT}.
\medskip
\begin{lemma}
Let \(F\) be a smooth function such that \(F(0) = 0\) then for \(s\geq0\) and \(u\in L^\infty\cap H^s\) we have the Moser estimate
\begin{equation}\label{MoserL2}
\|F(u)\|_{H^s}\lesssim_{\|u\|_{L^\infty}}\|u\|_{H^s}.
\end{equation}
Similarly, for $u \in \bmo^s$ we have
\begin{equation}\label{Moserbmo}
\|F(u)\|_{\bmo^s}\lesssim_{\|u\|_{L^\infty}}\|u\|_{\bmo^s}.
\end{equation}
\end{lemma}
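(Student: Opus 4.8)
The plan is to prove both bounds by the classical Littlewood--Paley telescoping argument, running the $\bmo^s$ case in exact parallel with the $H^s$ case. Using the inhomogeneous decomposition $S_ju=\sum_{k<j}f_k$ and the fact that $S_{j+1}u-S_ju=f_j$, I would first write
\[
F(u)=F(f_0)+\sum_{j\geq1}\bigl(F(S_{j+1}u)-F(S_ju)\bigr)=F(f_0)+\sum_{j\geq1}m_j\,f_j,\qquad m_j:=\int_0^1F'(S_ju+tf_j)\,dt,
\]
so that $\|m_j\|_{L^\infty}\leq C_1(\|u\|_{L^\infty}):=\sup_{|v|\leq C\|u\|_{L^\infty}}|F'(v)|$, since the $S_j$ are uniformly bounded on $L^\infty$.

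The one genuine harmonic-analysis input is a frequency-localization estimate for $m_j$. The function $g_{j,t}=S_ju+tf_j$ is spectrally supported in $\{|\xi|\lesssim2^j\}$ with $\|g_{j,t}\|_{L^\infty}\lesssim\|u\|_{L^\infty}$, uniformly in $t$, so by the chain rule and Bernstein's inequality $\|\partial_\alpha^N g_{j,t}\|_{L^\infty}\lesssim2^{Nj}\|u\|_{L^\infty}$ and hence $\|\partial_\alpha^N F'(g_{j,t})\|_{L^\infty}\lesssim_NC_N(\|u\|_{L^\infty})2^{Nj}$; integrating in $t$ and applying Bernstein once more gives
\[
\|P_\ell m_j\|_{L^\infty}\lesssim_NC_N(\|u\|_{L^\infty})\,2^{-N(\ell-j)_+}.
\]
Splitting $m_j$ into its pieces above and below frequency $2^j$ and using this decay, I obtain, for every $N$,
\[
\|P_k(m_jf_j)\|_{X}\lesssim_NC_N(\|u\|_{L^\infty})\,2^{-N(k-j)_+}\|f_j\|_{X},
\]
with $X=L^2$ in the first case; in the second case the same bound holds with $X=L^\infty$, which is the quantity that enters the square-function (Carleson-measure) characterization of $\bmo$.

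To finish, I would insert the weight $2^{js}$ defining $H^s$ (resp.\ $\bmo^s$, $s>0$) and sum in $j$. Because $P_k(m_jf_j)$ is negligible for $k\gg j$, the resulting matrix $K(k,j)=2^{(k-j)s}2^{-N(k-j)_+}$ has uniformly bounded row and column sums once $N>s\geq0$, so by Schur's test it is bounded on $\ell^2_k$ (resp.\ on the sequence space underlying the $\bmo$ square function); together with the trivial estimate $\|F(f_0)\|_X\lesssim_{\|u\|_{L^\infty}}\|u\|_X$ for the low-frequency term (whose high frequencies decay rapidly by the same Bernstein argument), this yields $\|F(u)\|_{H^s}\lesssim_{\|u\|_{L^\infty}}\|u\|_{H^s}$ and, identically, $\|F(u)\|_{\bmo^s}\lesssim_{\|u\|_{L^\infty}}\|u\|_{\bmo^s}$. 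The case $s=0$ of $H^s$ is the trivial bound $\|F(u)\|_{L^2}\leq\|F'\|_{L^\infty}\|u\|_{L^2}$, and for $\bmo^0=\bmo$ one uses directly $\frac1{|Q|}\int_Q|F(u)-F(u_Q)|\leq\mathrm{Lip}(F)\frac1{|Q|}\int_Q|u-u_Q|$.

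The main obstacle is precisely the $\bmo^s$ estimate: the $\bmo^s$ norm is not an honest $\ell^2$-sum of dyadic blocks, so the naive square-function bound must be replaced by the Fefferman--Stein/Carleson characterization of $\bmo$, and one must check that the telescoping decomposition $F(u)=F(f_0)+\sum_jm_jf_j$ is compatible with it — this is the only place the two cases genuinely diverge, and it is routine once the localization estimate for $m_j$ above is available. Alternatively, for $s>0$ one may paralinearize, $F(u)=T_{F'(u)}u+\bigl(F(u)-T_{F'(u)}u\bigr)$, bound the paraproduct term by \eqref{Para3}, and absorb the smoother remainder into $\bmo^s$ using $B^{2\sigma,\infty}_\infty\hookrightarrow B^{s,\infty}_2\hookrightarrow\bmo^s$ for $s/2<\sigma<s$; this is essentially the argument in \cite{HIT}.
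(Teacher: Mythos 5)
The paper does not actually prove this lemma: it declares \eqref{MoserL2} classical and refers to \cite{HIT} for \eqref{Moserbmo}, so there is no in-paper argument to compare against line by line. Your proposal is a correct reconstruction of the standard proof. The telescoping decomposition $F(u)=F(f_0)+\sum_j m_jf_j$ with $m_j=\int_0^1F'(S_ju+tf_j)\,dt$, the Bernstein/Fa\`a di Bruno bound $\|P_\ell m_j\|_{L^\infty}\lesssim_N 2^{-N(\ell-j)_+}$, and the Schur summation for $s>0$ (with $s=0$ handled trivially) constitute Meyer's classical argument for \eqref{MoserL2}, and you correctly isolate the only genuine issue, namely that $\bmo^s$ is not an $\ell^2$ superposition of dyadic blocks. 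Of your two suggested fixes for that, the paralinearization route is the one consistent with the paper's own toolbox: the paraproduct $T_{F'(u)}u$ is bounded on $\bmo^s$ with an $L^\infty$ bound on the symbol exactly as in the estimate $\|T_{\bar Y}R_\alpha\|_{\bmo}\lesssim\|Y\|_{L^\infty}\|R_\alpha\|_{\bmo}$ used in the proof of Lemma~\ref{l:d} (note that \eqref{Para3} itself does not directly apply to $T_{F'(u)}u$ since the rough factor sits in the low-frequency slot), and the remainder is absorbed via $B^{2\sigma,\infty}_\infty\hookrightarrow B^{s,\infty}_2\hookrightarrow\bmo^s$ for $s/2<\sigma\le s$, which requires $s>0$ as you say. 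Two small caveats: the Carleson-measure alternative is asserted rather than executed, so if you keep it you should carry it out; and at $s=0$ your oscillation bound controls only the $\BMO$ seminorm, while the inhomogeneous norm also contains $\|S_0F(u)\|_{L^\infty}$, which needs a separate (easy, but not entirely trivial) word since $\|u\|_{L^\infty}$ is not dominated by $\|u\|_{\bmo}$. Neither point affects the uses of the lemma in the paper, which all occur at positive $s$.
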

\medskip

\subsection{Holomorphic functions on the strip}
We recall the projection to holomorphic functions is given by
\[
\P u = \frac12\left[(1 - i\Til)\Re u + i(1+i\Til^{-1})\Im u\right]=\frac14\left[(2 - i\Til + i\Til^{-1})u - i(\Til+\Til^{-1})\bar u\right].
\]
As a consequence,
\begin{align*}
\Re\P u &= \frac12\left[\Re u - \Til^{-1}\Im u\right]=\frac14\left[(1+i\Til^{-1})u +(1 - i\Til^{-1})\bar u\right],\\
\Im\P u &= -\frac12\left[\Til\Re u - \Im u\right]=\frac{1}{4i}\left[(1 - i\Til)u - (1+i\Til) \bar u\right].
\end{align*}
We also recall the definition of the inner product, which is given by
\begin{align*}
\langle u,v\rangle &= \int \Til\Re u\cdot\Til\Re v + \Im u\cdot\Im v\,d\alpha\\
&= \frac12\Re\int \left(\Til u\cdot\Til\bar v + u\cdot\bar v\right) + \left(\Til u\cdot\Til v - u\cdot v\right)\,d\alpha.
\end{align*}
It is useful to understand 
the adjoints of multiplication operators with respect to this inner product:

\medskip
\begin{lemma}\label{lem:Adjoint}
  Let \(f\) be a complex-valued function. With respect the inner
  product \(\langle\cdot,\cdot\rangle\) the adjoint of the operator
  \(\M_f\) is
\begin{equation}\label{Adjoint}
\begin{aligned}
\M_f^*u &= \Til^{-1}(\P - \bar\P)\left[\bar f\Til\P[u]\right].
\end{aligned}
\end{equation}
\end{lemma}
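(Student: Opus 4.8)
The plan is to reduce the statement to the computation of the $\mfH$-adjoint of pointwise multiplication, followed by a bookkeeping identity. First, since $\M_f g = \P(fg)$ and $\P$ is the orthogonal projection of $\mfH$ onto its holomorphic subspace, for any holomorphic $u,v$ we have $\langle \M_f u, v\rangle = \langle \P(fu), v\rangle = \langle fu, \P v\rangle = \langle fu, v\rangle$. Thus it suffices to show that, for holomorphic $v$, the function $\Til^{-1}(\P-\bar\P)[\bar f\,\Til v]$ represents the functional $u\mapsto\langle fu,v\rangle$ on the holomorphic subspace of $\mfH$. Equivalently, writing $M_f^\sharp$ for the $\mfH$-adjoint of pointwise multiplication by $f$ — which is well defined on Schwartz functions and extends by density since $f$ is taken bounded with bounded derivative — we must identify $M_f^\sharp v$ when $v$ is holomorphic.

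Second, I would compute $M_f^\sharp$ directly from \eqref{InnerProduct}. The two algebraic facts used are that $\Til$ has a real, odd convolution kernel — hence it commutes with complex conjugation, and $\Til^2$ is symmetric with respect to both the bilinear pairing $\int uw\,d\alpha$ and the sesquilinear pairing $\int u\bar w\,d\alpha$ — and that pointwise multiplication by $f$ has sesquilinear adjoint pointwise multiplication by $\bar f$. Pushing $1\mp\Til^2$ and $f$ out of the first slot in \eqref{InnerProduct} and then matching the resulting real-linear functionals of $u$ (testing against real and against purely imaginary $u$ pins the representing function down), one is led to the relation $i\,\Im W + \Til^2\Re W = f\,(i\,\Im v + \Til^2\Re v)$ for $W = M_f^\sharp v$, that is,
\[
M_f^\sharp v = \Til^{-2}\big((\Re f)\,\Til^2\Re v - (\Im f)\,\Im v\big) + i\big((\Re f)\,\Im v + (\Im f)\,\Til^2\Re v\big).
\]

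Third, I would specialize to holomorphic $v$ and match with the claimed formula. For holomorphic $v$ one has $\Im v = -\Til\Re v$, so the displayed expression becomes $\Til^{-2}\big((\Re f)\Til^2\Re v + (\Im f)\Til\Re v\big) + i\big(-(\Re f)\Til\Re v + (\Im f)\Til^2\Re v\big)$. On the other hand, from the explicit formulas for $\P,\bar\P$ recalled just before the lemma one obtains $(\P-\bar\P)G = -\Til^{-1}\Im G - i\,\Til\Re G$; expanding $\bar f\,\Til v$ into real and imaginary parts (using $\Til v = \Til\Re v - i\,\Til^2\Re v$ for holomorphic $v$) and then applying $\Til^{-1}$ reproduces exactly the same expression, which yields \eqref{Adjoint} once we note $\Til v = \Til\P v$. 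Throughout, all identities are read modulo real constants, which are invisible to $\|\cdot\|_{\mfH}$, so the regularized choice of $\Til^{-1}$ fixed earlier causes no difficulty. The only genuine obstacle is the bookkeeping — keeping the $\Til^{\pm1}$'s, the conjugations, and the real and imaginary parts straight — together with a small amount of care that the manipulations of unbounded multipliers are legitimate on the quotient spaces set up earlier.
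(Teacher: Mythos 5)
Your proposal is correct and follows essentially the same route as the paper: both compute the adjoint directly from the definition of the inner product \eqref{InnerProduct} by transposing the multiplication by $f$ and the $\Til$'s, and then identify the resulting expression $-\Til^{-2}\Im\bigl(\bar f\Til\P[v]\bigr)-i\Re\bigl(\bar f\Til\P[v]\bigr)$ with $\Til^{-1}(\P-\bar\P)\bigl[\bar f\Til\P[v]\bigr]$ via the explicit formulas for $\P$ and $\bar\P$. Your intermediate formula for the adjoint of bare multiplication and its specialization to holomorphic $v$ both check out against the paper's computation.
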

\bpf
Using that \(\Til\Re\P[v] = -\Im\P[v]\) and that \(\Til\) is skew-symmetric we may write the inner product as
\begin{align*}
\la\M_fu,v\ra &= \int \Re[fu]\cdot\Til\Im\P[v] - \Im[fu]\cdot\Til\Re\P[v]\\
&= - \int \Til\Re u\cdot \Til^{-1}\Im(\bar f\Til\P[v]) + \Im u\cdot \Re( \bar f\Til \P[v]).
\end{align*}
As a consequence we have
\[
\M_f^*v = - \Til^{-2}\Im(\bar f\Til\P[v]) - i\Re(\bar f\Til\P[v]).
\]
Comparing this to the expression for \(\P\) we obtain the formula \eqref{Adjoint}.
\epf
\medskip

The following immediate consequence of the above Lemma is very handy to use:

\medskip
\begin{corollary}\label{c:adj}
If $u$ and $v$ are holomorphic functions in $\mfH$ then we have
\begin{equation}\label{ceq:adj}
\langle f \Til u, v \rangle = - \langle u,  \bar f \Til v \rangle.
\end{equation}
\end{corollary}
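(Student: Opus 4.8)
The plan is to read this off directly from Lemma~\ref{lem:Adjoint}, where $\M_f$ denotes the holomorphic multiplication operator $\M_f g = \P(fg)$. As a first step I would rewrite the left-hand side as $\langle f\Til u, v\rangle = \langle \P(f\Til u), v\rangle = \langle \M_f(\Til u), v\rangle$; here the only input is that $v$ is holomorphic while $\bar\P(f\Til u)$ is antiholomorphic, so that the two are orthogonal in $\mfH$. Using the definition of the adjoint with respect to $\langle\cdot,\cdot\rangle$, this equals $\langle \Til u, \M_f^* v\rangle$, and by Lemma~\ref{lem:Adjoint} together with $\P v = v$ (as $v$ is holomorphic) one has $\M_f^* v = \Til^{-1}(\P - \bar\P)[\bar f\Til v]$. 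It therefore remains to show that
\[
\langle \Til u, \Til^{-1}(\P - \bar\P)[\bar f\Til v]\rangle = -\langle u, \bar f\Til v\rangle .
\]

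For this I would use two elementary facts about the Tilbert transform. First, $\Til$ is skew-adjoint with respect to the inner product $\langle\cdot,\cdot\rangle$, which is immediate from the representation \eqref{InnerProduct} since $\Til$ is a real, $L^2$-skew-adjoint Fourier multiplier commuting with $\Til^2$ and with complex conjugation; consequently $\Til^{-1}$ is also skew-adjoint. Second, $\Til$ and $\Til^{-1}$ preserve both the holomorphic and the antiholomorphic subspaces of $\mfH$, hence commute with $\P$ and $\bar\P$. Granting these, and writing $g = \bar f\Til v$, I would compute
\[
\langle \Til u, \Til^{-1}(\P - \bar\P) g\rangle = \langle \Til u, (\P - \bar\P)\Til^{-1} g\rangle = \langle \Til u, \P \Til^{-1} g\rangle = \langle \Til u, \Til^{-1} \P g\rangle = - \langle u, \P g\rangle = -\langle u, g\rangle ,
\]
where the second equality drops the $\bar\P$-term because $\Til u$ is holomorphic, the third commutes $\P$ and $\Til^{-1}$, the fourth uses skew-adjointness of $\Til^{-1}$, and the last uses that $u$ is holomorphic so $\langle u, \bar\P g\rangle = 0$. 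Combining the two displays gives $\langle f\Til u, v\rangle = -\langle u, \bar f\Til v\rangle$, which is \eqref{ceq:adj}.

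Since the statement is essentially a repackaging of Lemma~\ref{lem:Adjoint}, there is no real obstacle; the only points needing (minor) care are the bookkeeping with the projections $\P,\bar\P$ and verifying that $\Til^{\pm1}$ commute with them and are skew-adjoint for $\langle\cdot,\cdot\rangle$, both of which follow at once from the Fourier-multiplier description. One should also note the implicit standing assumption that $f\Til u$ and $\bar f\Til v$ lie in $\mfH$ so that all pairings are well defined; in every application of the corollary in the paper this holds because $f$ is bounded and $u,v\in\mfH$.
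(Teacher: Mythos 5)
Your proof is correct and follows exactly the route the paper intends: the paper states Corollary~\ref{c:adj} as an ``immediate consequence'' of Lemma~\ref{lem:Adjoint} without further argument, and your derivation (pairing with $\M_f^*v$, then using that $\Til^{\pm1}$ commute with $\P,\bar\P$, are skew-adjoint for $\langle\cdot,\cdot\rangle$, and that holomorphic and antiholomorphic functions are $\mfH$-orthogonal) is precisely the bookkeeping that justifies that claim.
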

\medskip

As our function spaces \(\mfH\), \(\H\) lose a derivative at low
frequency in the real component, for a space \(X\) of complex-valued
functions we define the norm
\[
\|f\|_{\mfH X}^2 = \|\Til\Re f\|_X^2 + \|\Im f\|_X^2,
\]
with the shorthand \(\mfH = \mfH L^2\).
We will frequently use the following estimate for the commutator 
with the projection to holomorphic functions:

\medskip
\begin{lemma}\label{lem:MainCommutatorBound}
For \(s\geq0\) we have the estimates
\begin{equation}\label{PComm}
\begin{alignedat}{3}
\|\langle D\rangle^s [\P,f] \langle D\rangle^\sigma \Til g\|_{\mfH} &\lesssim \|f\|_{\mfH\bmo^{s+\sigma}}\|g\|_{\mfH},\qquad &\sigma\geq0&\\
\|\langle D\rangle^s[\P,f] \langle D\rangle^\sigma \Til  g \|_{\mfH L^\infty} &\lesssim \|f\|_{\mfH B^{s+\sigma,\infty}_2}\|g\|_{\mfH B^{0,\infty}_2},\qquad &\sigma>0.&
\end{alignedat}
\end{equation}
\end{lemma}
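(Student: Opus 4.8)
\textbf{Proof proposal for Lemma~\ref{lem:MainCommutatorBound}.}

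The plan is to reduce the bound on $[\P, f]$ to the already-established commutator bounds for the Tilbert transform $\Til$ and for the paraproduct cutoffs. Recall from the formulas collected just above that
\[
\P u = \tfrac14\left[(2 - i\Til + i\Til^{-1})u - i(\Til + \Til^{-1})\bar u\right],
\]
so that $\P$ is a fixed linear combination of the identity, $\Til$, $\Til^{-1}$, complex conjugation, and compositions thereof. Consequently $[\P, f]$, applied to $\Til g$, expands into a fixed finite sum of terms of the two shapes $[\mc M, f]\,\Til g$ and $[\mc M, \bar f\,]\,\overline{\Til g}$, where $\mc M \in \{\,\mathrm{Id},\ \Til,\ \Til^{-1}\,\}$. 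The crucial point is that each $\mc M$ here satisfies the hypothesis of the commutator lemma \eqref{est:TilComm}: its symbol is bounded, and its derivative is Schwartz (for $\Til$ the symbol $-i\tanh\xi$ has derivative $-i\,\mathrm{sech}^2\xi$, which is Schwartz; for $\Til^{-1}$ the symbol $i\coth(\xi+i0)$ differs from a constant by $i(\coth(\xi+i0) - \sgn\xi)$, whose derivative is again Schwartz; the identity is trivial). The terms with $\mc M = \mathrm{Id}$ commute and vanish identically.

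First I would split the $\mfH$ norm according to its definition $\|h\|_{\mfH}^2 = \|\Til \Re h\|_{L^2}^2 + \|\Im h\|_{L^2}^2$ (and the analogous $\mfH L^\infty$, $\mfH B^{0,\infty}_2$ norms), and likewise decompose the coefficient $f$ into real and imaginary parts, so that the $\mfH \bmo^{s+\sigma}$ norm of $f$ controls $\|\Til \Re f\|_{\bmo^{s+\sigma}}$ and $\|\Im f\|_{\bmo^{s+\sigma}}$. Next, for each term $[\mc M, f]\langle D\rangle^\sigma \Til g$ I would apply \eqref{est:TilComm} directly: this gives $\langle D\rangle^s$-bounds of the form $\lesssim \|\Til f\|_{\bmo^{s+\sigma}}\|g\|_{L^2}$, respectively $\lesssim \|\Til f\|_{B^{s+\sigma,\infty}_2}\|g\|_{B^{0,\infty}_2}$ in the $L^\infty$ case. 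Since $\Til$ acts nicely on the holomorphic structure — $\Til$ maps $\Re f$ and $\Im f$ into the components whose norms are measured by $\mfH\bmo^{s+\sigma}$ — summing the finitely many terms yields exactly the stated right-hand sides $\|f\|_{\mfH\bmo^{s+\sigma}}\|g\|_{\mfH}$ and $\|f\|_{\mfH B^{s+\sigma,\infty}_2}\|g\|_{\mfH B^{0,\infty}_2}$. The low-frequency pieces of $\mc M$ (the $P_{>10}$, $P_{<-10}$ parts appearing in the proof of \eqref{est:TilComm}) commute with $f_0$ on the relevant frequency-localized pieces, exactly as in the infinite-depth argument of \cite{HIT}, so no new low-frequency subtlety arises.

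The one point requiring a little care — and what I expect to be the main (though modest) obstacle — is bookkeeping the interplay between complex conjugation and the $\mfH$ norm: the terms involving $\bar f$ and $\overline{\Til g}$ must be rewritten so that their contribution is still measured by $\|f\|_{\mfH\bmo}$ and $\|g\|_{\mfH}$, using that conjugation is an isometry on $L^2$ but intertwines $\Til$ with $-\Til$. This is handled by noting $\overline{\Til g} = -\Til \bar g$ and $\overline{\Re g} = \Re g$, $\overline{\Im g} = -\Im g$, so the conjugated terms reduce to the non-conjugated case after harmless sign changes; the $\bmo^s$, $B^{0,\infty}_2$ norms are all conjugation-invariant. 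Once this translation is in place, everything follows by direct appeal to \eqref{est:TilComm}, the paraproduct bounds of Lemma~\ref{lem:Paraproducts}, and the smoothing estimate \eqref{est:StrongLoHi}, exactly paralleling the corresponding bound in \cite{HIT}.
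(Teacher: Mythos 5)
Your reduction has a genuine gap: the operator $\Til^{-1}$ does \emph{not} satisfy the hypotheses of the commutator lemma \eqref{est:TilComm}. Its symbol $i\coth(\xi+i0)$ blows up like $i/\xi$ at $\xi=0$, so it is not bounded, and $\coth(\xi+i0)-\sgn\xi$ still has a $1/\xi$ singularity at the origin (its derivative behaves like $-1/\xi^2$ there), so it is nowhere near Schwartz. Consequently you cannot apply \eqref{est:TilComm} to the terms $[\Til^{-1},f]$ arising from your expansion of $\P$; the whole difficulty of this lemma is the low-frequency unboundedness of $\Til^{-1}$, and the statement is engineered precisely so that every $\Til^{-1}$ coming from $\P$ is composed with a compensating $\Til$ — either the $\Til$ applied to $g$ in the argument, or the $\Til$ applied to the real part in the output $\mfH$ norm. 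The paper's proof makes this cancellation explicit by computing $\Til\Re[\P,f]\langle D\rangle^\sigma\Til g$ and $\Im[\P,f]\langle D\rangle^\sigma\Til g$ in closed form: only the commutators $[\Til,\Re f]$, $[\Til,\Im f]$ (to which \eqref{est:TilComm} does apply) and a product term $\Im f\,(1+\Til^2)(\cdots)$ survive, and the latter is handled by the paraproduct bounds together with the fact that $1+\Til^2$ has Schwartz symbol.

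A secondary inaccuracy: your claim that $[\P,f]$ expands into a finite sum of pure commutators $[\mc M,f]\Til g$ and $[\mc M,\bar f]\overline{\Til g}$ is not quite right. The conjugate-linear part of $\P$ produces $(\Til+\Til^{-1})(\bar f\bar v)-f(\Til+\Til^{-1})\bar v$, which differs from the commutator $[(\Til+\Til^{-1}),\bar f]\bar v$ by the non-commutator term $(\bar f - f)(\Til+\Til^{-1})\bar v = -2i\Im f\cdot\Til^{-1}(1+\Til^2)\bar v$. This is exactly the $\Im f\,(1+\Til^2)\langle D\rangle^\sigma\Til\Re g$ term in the paper's identity; it is harmless once the $\Til^{-1}$ is cancelled and $1+\Til^2$ is recognized as a Schwartz multiplier, but it must be isolated and estimated as a product, not a commutator. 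To repair your argument you would essentially have to carry out the same real/imaginary-part bookkeeping as the paper, verifying term by term that no bare $\Til^{-1}$ remains.
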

\bpf
We may write the real and imaginary parts of the commutator as
\begin{align*}
\Til\Re[\P,f]\langle D\rangle^\sigma \Til g &= \frac12[\Til,\Re f]\langle D\rangle^\sigma\Im g - \frac12[\Til,\Im f]\langle D\rangle^\sigma\Til^2 \Re g\\
&\quad -\frac12\Im f (1 + \Til^2)\langle D\rangle^\sigma \Til \Re g,\\
\Im[\P,f]\langle D\rangle^\sigma \Til g &= - \frac12[\Til,\Re f]\langle D\rangle^\sigma\Til \Re g + \frac12[\Til,\Im f]\langle D\rangle^\sigma\Til\Im g\\
&\quad + \frac12\Im f\langle D\rangle^\sigma(1+\Til^2)\Im g.
\end{align*}
The estimates then follow from the commutator estimate \ref{est:TilComm} and the paraproduct estimates \eqref{Para1}, \eqref{Para2} and \eqref{est:StrongLoHi}, using that the operator \(1+\Til^2\) has Schwartz symbol.
\epf
\medskip

Finally we prove the following lemma that allows us to estimate the product of two holomorphic functions in negative Sobolev spaces:
\medskip
\begin{lemma}
If \(f,g\) are holomorphic then for \(s>0\) and \(2\leq p,q\leq\infty\) satisfying \(\frac1p + \frac1q = \frac12\) we have the estimate
\begin{equation}\label{e:HolomProductBound}
\|fg\|_{H^{-s}}\lesssim \|f\|_{L^p}\|g\|_{B^{-s,q}_2}.
\end{equation}
\end{lemma}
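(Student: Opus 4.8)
The plan is to run a Littlewood--Paley paraproduct decomposition $fg = T_f g + T_g f + \Pi[f,g]$ and estimate the three pieces separately, isolating exactly where holomorphy is needed. The first step is a harmless reduction to the case where $\hat f$ and $\hat g$ are spectrally supported in $\{\xi\le 1\}$. For holomorphic $h$ one has $\hat h(\xi) = \frac{2}{e^{2\xi}+1}\widehat{\Re h}(\xi)$, and $\frac{2}{e^{2\xi}+1} = \chi_{\le1}(\xi)\frac{2}{e^{2\xi}+1} + e^{-2\xi}\tilde\chi(\xi)$ where $\tilde\chi$ is smooth, bounded, and supported in $\{\xi>\tfrac12\}$; the second multiplier has a Schwartz kernel, so $h = h^-+h^+$ with $\hat{h^-}$ supported in $\{\xi\le1\}$ and $h^+\in H^N$ for every $N$, all norms being controlled by the relevant norm of $h$. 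The contributions of $f^+$ or $g^+$ to $fg$ are acceptable in every paraproduct piece since one factor is then super-regular. Thus we may assume $f,g$ are spectrally supported in $\{\xi\le1\}$, which makes each piece $f_j,g_j$ with $j\ge1$ supported in a single negative interval $[-2^{j+1},-2^{j-1}]$; in particular a product $f_jg_l$ with $|j-l|\le4$ and $j$ large is spectrally supported near frequency $-2^{j+1}$.

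Now the three terms. For $T_f g = \sum_k f_{<k-4}g_k$: since $\tfrac1p+\tfrac1q=\tfrac12$, Hölder gives $\|P_k(f_{<k-4}g_k)\|_{L^2}\lesssim \|f\|_{L^p}\|g_k\|_{L^q}$, and the $2^{-sk}$-weighted $\ell^2$ sum reproduces $\|f\|_{L^p}^2\|g\|_{B^{-s,q}_2}^2$ exactly; holomorphy is not used. For $\Pi[f,g]=\sum_{|j-l|\le4}f_jg_l$ holomorphy is essential: by the spectral localization above, $f_jg_l$ (with $j\sim l$) lives at frequency $\sim -2^{\max(j,l)}$, so $P_k\Pi[f,g] = \sum_{j\sim k}P_k(f_jg_j)$ is a resolved sum, whence $\|P_k\Pi[f,g]\|_{L^2}\lesssim \|f_k\|_{L^p}\|g_k\|_{L^q}\lesssim\|f\|_{L^p}\|g_k\|_{L^q}$, which again sums correctly in $\ell^2_k$. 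For $T_g f = \sum_k g_{<k-4}f_k$: each summand is spectrally localized at frequency $\sim 2^k$, so by almost orthogonality $\|\langle D\rangle^{-s}T_g f\|_{L^2}^2\lesssim \sum_k 2^{-2sk}\|g_{<k-4}\|_{L^q}^2\|f_k\|_{L^p}^2$; setting $\beta_j:=\|\langle D\rangle^{-s}g_j\|_{L^q}$ (so $\|\beta\|_{\ell^2}=\|g\|_{B^{-s,q}_2}$) one has $2^{-sk}\|g_{<k-4}\|_{L^q}\lesssim\sum_{j<k}2^{-s(k-j)}\beta_j$, and since $s>0$ the kernel $(2^{-sm})_{m>0}$ lies in $\ell^1$, so Young's inequality closes this term using only $\|f_k\|_{L^p}\le\|f\|_{L^p}$.

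The main obstacle here is conceptual rather than computational. One must recognize that holomorphy is needed \emph{only} to eliminate the high--high$\to$low interactions in $\Pi[f,g]$ --- for general $L^p$ functions that term genuinely fails to land in $H^{-s}$ --- while the a priori worrisome ``low--high'' term $T_g f$ is in fact controlled for free by the geometric gain $2^{-s(k-j)}$ coming from the strict positivity of $s$. Beyond that, the only mildly delicate point is the preliminary splitting off of the positive-frequency tail, which is routine once one exploits the exponential decay of $\frac{2}{e^{2\xi}+1}$ as $\xi\to+\infty$.
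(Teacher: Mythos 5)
Your proof is correct and follows essentially the same route as the paper's: a dyadic/paraproduct decomposition in which holomorphy enters only to eliminate the high--high$\to$low interactions, via the rapid decay of $\hat h(\xi)=\tfrac{2}{e^{2\xi}+1}\widehat{\Re h}(\xi)$ at positive frequencies (the paper phrases this as harmlessly applying $\P$ to each factor of the balanced term). One cosmetic caveat: when $p=\infty$ (or $q=\infty$) the positive-frequency tail $h^+$ lies in $W^{N,\infty}$ rather than $H^N$, but that is still ample regularity for the error terms, so the argument stands.
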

\bpf
For each \(j\geq0\) we decompose
\[
\|P_j[f g]\|_{L^2} = \|P_j[f g_{\leq j+10}]\|_{L^2} + \|P_j[f g_{>j+10}]\|_{L^2}.
\]
The first term may now be estimated using dyadic decomposition. For the second term both \(f,g\) must be localized at comparable dyadic frequencies \(\gg 2^j\). In particular, one term must be localized at negative wavenumbers and the other at positive wavenumbers. However, as both terms are holomorphic we may harmlessly apply the projection \(\P\) to each term, which is rapidly decaying on positive wavenumbers.
\epf
\medskip

\subsection{Water wave related bounds}

We begin with the following result for the function \(Y\) which follows directly  from \cite[Lemma 2.5]{HIT} and Moser type estimates:

\medskip
\begin{lemma} The function $Y = \dfrac{\W}{1+\W}$  satisfies the bounds
\begin{equation}\label{est:Ybmo}
\|Y\|_{\bmo^{\frac12}}\lesssim_A g^{-\frac12} B,
\end{equation}
respectively 
\begin{equation}\label{est:Y2}
\|Y\|_{H^{n-1}}\lesssim_A g^{-\frac12}\norm_n, \qquad n \geq 1.
\end{equation}
\end{lemma}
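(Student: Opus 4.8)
The plan is to write $Y$ as a smooth nonlinear function of $\W$, globalize that function by a cutoff so that the Moser estimates apply verbatim, and then read off the two bounds from the definitions of $\norm_n$ and $B$. First I would record the elementary pointwise fact that $\W$ stays away from the singularity of $z\mapsto z/(1+z)$: since $1+\W=(1-Y)^{-1}$ we have, pointwise,
\[
|1+\W| = |1-Y|^{-1} \geq \big(1+\|Y\|_{L^\infty}\big)^{-1} \geq (1+A)^{-1} > 0,
\qquad |\W|\leq \|\W\|_{L^\infty}\leq A,
\]
so that $\W$ takes values in the compact set $K_A=\{z\in\Rl^2 : |z|\leq A,\ |1+z|\geq (1+A)^{-1}\}$, on which $F(z):=z/(1+z)$ is smooth. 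Since $0\in K_A$ and $F(0)=0$, I would choose $\tilde F\in C_c^\infty(\Rl^2)$ with $\tilde F(0)=0$ and $\tilde F=F$ on a neighbourhood of $K_A$; then $Y=\tilde F(\W)$ pointwise, where $\tilde F$ depends only on $A$.

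For \eqref{est:Y2}, since $n-1\geq 0$, the composition estimate \eqref{MoserL2} (this is the step carried out as in \cite[Lemma 2.5]{HIT}) gives
\[
\|Y\|_{H^{n-1}} = \|\tilde F(\W)\|_{H^{n-1}} \lesssim_{\|\W\|_{L^\infty}} \|\W\|_{H^{n-1}} \lesssim_A \|\W\|_{H^{n-1}},
\]
the constant depending only on $A$ through $\|\W\|_{L^\infty}\le A$ and through $\tilde F$. Recalling that under the normalisation $h=1$ one has $\norm_n = \|(g^{\frac12}\W,R)\|_{H^{n-1}\times H^{n-\frac12}}\gtrsim g^{\frac12}\|\W\|_{H^{n-1}}$, this yields $\|Y\|_{H^{n-1}}\lesssim_A g^{-\frac12}\norm_n$. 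For \eqref{est:Ybmo} I would argue identically with the $\bmo$ Moser estimate \eqref{Moserbmo} at $s=\tfrac12$, obtaining $\|Y\|_{\bmo^{\frac12}}\lesssim_A \|\W\|_{\bmo^{\frac12}}$, and then use that with $h=1$ the control norm satisfies $B = g^{\frac12}\|\W\|_{\bmo^{\frac12}}+\|R\|_{\bmo^1}\geq g^{\frac12}\|\W\|_{\bmo^{\frac12}}$, so $\|Y\|_{\bmo^{\frac12}}\lesssim_A g^{-\frac12}B$.

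There is no genuine obstacle here; the only point requiring a little care is the globalisation of the nonlinearity, i.e. checking that the a priori pointwise control of both $\W$ and $Y$ by $A$ really keeps $\W$ uniformly away from $z=-1$, so that the modified symbol $\tilde F$ — and hence every constant produced by the Moser estimates — depends on $A$ alone. The remaining bookkeeping is simply tracking the single power of $g^{\frac12}$ sitting on the $\W$-component of $\norm_n$ and of $B$ after setting $h=1$.
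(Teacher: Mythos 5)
Your proof is correct and is essentially the argument the paper has in mind: the paper gives no written proof, simply asserting that the lemma ``follows directly from [HIT, Lemma 2.5] and Moser type estimates,'' and your truncation of $z\mapsto z/(1+z)$ away from $z=-1$ (using the pointwise control of $\W$ and $Y$ by $A$) followed by \eqref{MoserL2} and \eqref{Moserbmo} and the bookkeeping of the $g^{\frac12}$ weight is precisely that argument spelled out. No gaps.
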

\medskip

Next we consider the advection velocity \(b\):
\medskip
\begin{lemma}\label{l:b}
The the advection velocity \(b\) satisfies  the estimates
\begin{align}
\|\Til b\|_{\bmo^{\frac12}} \lesssim_A g^\frac12 A,\qquad \|\Til b\|_{\bmo^1} \lesssim_A B.\label{est:bBMO}
\end{align}
respectively 
\begin{equation}\label{est:bL2}
\|\Til b\|_{H^{n-\frac12}} \lesssim_A \norm_n, \qquad n \geq 1.
\end{equation}
\end{lemma}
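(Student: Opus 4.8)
The plan is to start from the formula \eqref{def:b} for $b$ and pass to $\Til b$, for which the low-frequency behaviour is tamed. Since $R$ is holomorphic one has $b = 2\Re\big(R - \P[R\bar Y]\big)$, and applying $\Til$ (a smooth, bounded Fourier multiplier that commutes with complex conjugation) gives
\[
\Til b = 2\Re\Til R - 2\Re\,\Til\P[R\bar Y].
\]
Two structural facts drive the proof. First, $\Til$ is bounded on each $H^s$, $\bmo^s$ and $L^p$, $1<p<\infty$, and the composition $\Til\P$ is again a bounded Fourier multiplier, because the outer $\Til$ exactly cancels the $\Til^{-1}$ occurring in $\P$ (explicitly, $\Til\P u = \tfrac14[(2\Til - i\Til^2 + i)u - i(\Til^2+1)\bar u]$). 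Second, $R$ holomorphic yields $\Re\Til R = \Til\Re R = -\Im R$, so the linear term is $-2\Im R$ and satisfies $\|\Im R\|_X\le\|R\|_X$; combined with \eqref{A-def}, \eqref{B-def} and the embedding $B^{0,\infty}_2\subset\bmo$, this disposes of the linear term in all three norms: $\|\Im R\|_{\bmo^{1/2}}\lesssim g^{1/2}A$, $\|\Im R\|_{\bmo^1}\lesssim\|\langle D\rangle R\|_{\bmo}\lesssim B$, and $\|\Im R\|_{H^{n-1/2}}\le\|R\|_{H^{n-1/2}}\lesssim\norm_n$. It remains to estimate $\P[R\bar Y]$ in $\bmo^{1/2}$, $\bmo^1$ and $H^{n-1/2}$, with respective right-hand sides $g^{1/2}A$, $B$, $\norm_n$ up to constants depending on $A$.

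For this I would decompose with paraproducts, $R\bar Y = T_{\bar Y}R + \Pi[R,\bar Y] + T_R\bar Y$, using throughout that $R$ is holomorphic and $Y$ — hence $\bar Y$ — is antiholomorphic. In $T_{\bar Y}R$ the output sits at the frequency of the holomorphic factor $R$, so all derivatives fall on $R$ and $\|T_{\bar Y}R\|_X\lesssim\|Y\|_{L^\infty}\|R\|_X\lesssim_A\|R\|_X$; moreover $T_{\bar Y}R$ is supported away from frequency $0$, so $\P$ acts boundedly on it. For $\Pi[R,\bar Y]$ I would again place the derivatives on $R$ and invoke the balanced paraproduct bounds of Lemma~\ref{lem:Paraproducts} together with $\|Y\|_{L^\infty}\lesssim A$ (and, where an extra gain is needed, \eqref{est:Ybmo} and \eqref{est:Y2}); the only point of care is the low-frequency output of $\Pi$, where — after applying $\P$ — one uses the smoothing bound $\P S_0\colon L^1\to L^\infty$ exactly as in \cite{HIT}, splitting each $R_j\bar Y_j$ in $L^2\cdot L^2$ and summing against the $B^{0,\infty}_2$ and Sobolev controls contained in $A$ and $\norm_n$.

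The genuinely delicate term is $T_R\bar Y$, where the output sits at the frequency of $\bar Y$, so a naive bound would cost a derivative on $Y$ that is not available ($Y$ lies only in $\bmo^{1/2}$, resp.\ $H^{n-1}$, not in $\bmo^1$, resp.\ $H^{n-1/2}$). Here the holomorphic structure rescues us, just as in \cite{HIT} and in the proof of \eqref{e:HolomProductBound}: since $R$ is concentrated at negative frequencies while the dyadic block $\bar Y_j$ is concentrated at positive frequencies $\sim 2^j$, the product $R_{<j-4}\bar Y_j$ is essentially supported at positive frequencies $\sim 2^j$, where the symbol of $\P$ decays exponentially (recall the $\cosech$-type kernel of $\Til$, equivalently the factor $1-\tanh\xi$ describing holomorphic functions). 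Hence for $j\gg1$ the contribution of $R_{<j-4}\bar Y_j$ to $\P[T_R\bar Y]$ is $O(e^{-c2^j})$ in every norm and sums trivially, whereas the finitely many low-frequency blocks are controlled directly — there $\langle D\rangle^s\sim 1$, and one needs only $\|R\|_{L^\infty}\lesssim g^{1/2}A$ together with $\sum_j\|Y_j\|_{L^\infty}\lesssim_A g^{-1/2}B$ from \eqref{est:Ybmo}, resp.\ $\sum_j\|Y_j\|_{L^2}\lesssim_A g^{-1/2}\norm_n$ from \eqref{est:Y2}. Collecting the three paraproduct pieces, applying $\Til\P$, and adding the linear term then gives \eqref{est:bBMO} and \eqref{est:bL2}. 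The step I expect to cost the most effort is this exponential-decay argument for $T_R\bar Y$; structurally it is identical to the infinite-depth case, the only new point being to verify that the finite-depth projection $\P$ keeps the same exponential decay on positive wavenumbers.
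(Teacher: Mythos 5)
Your proposal is correct in substance, but it takes a different route from the paper for the nonlinear term. The paper also splits $b=2\Re R-2\Re\P[R\bar Y]$ and dispatches the linear part exactly as you do; for the second part, however, it uses the identity $\P[R\bar Y]=[\P,R]\bar Y$ (valid since $\bar Y$ is antiholomorphic, so $\P\bar Y=0$), expands the commutator into $[\Til,\Re R]\Re\bar Y$, $[\Til,\Im R]\Im\bar Y$ and a term $\Im R\,(1+\Til^2)\Re\bar Y$, and then invokes the Tilbert commutator bounds \eqref{est:TilComm} together with the Schwartz symbol of $1+\Til^2$. That commutator machinery places all derivatives on $R$ in every frequency regime at once, using only $\|Y\|_{L^\infty}\lesssim A$. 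Your paraproduct trichotomy achieves the same end by hand: $T_{\bar Y}R$ and $\Pi[R,\bar Y]$ via Lemma~\ref{lem:Paraproducts}, and the delicate piece $T_R\bar Y$ via the exponential decay of $\Til\P$ on positive wavenumbers applied to the product (holomorphic low frequency)$\times$(antiholomorphic high frequency) --- the same mechanism the paper uses in the proof of \eqref{e:HolomProductBound}. What the paper's route buys is brevity and the automatic absence of any low-frequency bookkeeping; what yours buys is that the role of holomorphy is made completely explicit and the Coifman--Meyer commutator theorem is not needed for the hard interaction. Two small corrections to your accounting. First, since every block of $T_R\bar Y$ has $j>4$ and each contributes $O(e^{-c2^j})$, the sum converges using only $\sup_j\|Y_j\|_{L^\infty}\lesssim\|Y\|_{L^\infty}\lesssim A$; your proposed use of $\sum_j\|Y_j\|_{L^\infty}\lesssim_A g^{-1/2}B$ would pollute the first estimate in \eqref{est:bBMO} (whose right-hand side is $g^{1/2}A$, with no $B$), so you should not, and need not, invoke it there. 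Second, the appeal to $\P S_0\colon L^1\to L^\infty$ for the low-frequency output of $\Pi$ is superfluous once you have observed that $\Til\P$ is a bounded multiplier with no singularity at $\xi=0$: the $\Pi$ term is already controlled by Lemma~\ref{lem:Paraproducts} with $\sigma=0$ together with $\|Y\|_{L^\infty}\lesssim A$ (and, for \eqref{est:bL2}, the standard balanced-frequency $L^\infty\times H^{n-1/2}\to H^{n-1/2}$ bound), and $L^1\to L^\infty$ smoothing would in any case not yield the $L^2$-based bound needed for \eqref{est:bL2}.
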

\bpf
We write \(b = b_1 + b_2\) where
\[
b_1 = 2\Re R,\qquad b_2 = -2\Re\P[R\bar Y].
\]

For \(b_1\) we have the estimate
\[
\|\Til b_1\|_{\bmo^s} \leq \|R\|_{\bmo^s}.
\]
For \(b_2\) we may write \(\P[R\bar Y] = [\P,R]\bar Y\) so
\[
\|\Til b_2\|_{\bmo^s} = \|\Til\Re[\P,R]\bar Y\|_{\bmo^s}.
\]
As \(Y\) is antiholomorphic we have
\[
\Til\Re[\P,R]\bar Y = \frac12[\Til,\Re R]\Re\bar Y - \frac12[\Til,\Im R]\Im\bar Y -\frac12\Im R (1 + \Til^2)\Re\bar Y.
\]
For the first two terms we may use the commutator estimate \eqref{est:TilComm} to obtain
\[
\|[\Til,\Re R]\Re\bar Y\|_{\bmo^s} + \|[\Til,\Im R]\Im\bar Y\|_{\bmo^s}\lesssim A\|R\|_{\bmo^s}.
\]
For the final term we simply use that \(1 + \Til^2\) has Schwartz symbol to estimate
\[
\|\Im R (1 + \Til^2)\Re\bar Y\|_{\bmo^s}\lesssim A \|R\|_{\bmo^s}.
\]

The proof of the $L^2$-type bound follows in a similar manner.
\epf

Next we prove a number of estimates for the real frequency shift \(\mfa\). Our estimates are similar to 
\cite[Proposition 2.6]{HIT} although the present case is slightly more involved due to the different projector \(\P\),
as well as the extra term in $\mfa$.

\medskip
\begin{lemma}\label{l:a}
The following bounds hold for the frequency shift \(\mfa\):
\begin{equation}
\|\mfa\|_{L^\infty} \lesssim_A gA,\qquad \|\mfa\|_{\bmo^{\frac12}}\lesssim_A g^\frac12 B,\label{est:a}
\end{equation}
\begin{equation}
\| \mfa\|_{H^{n-1}} \lesssim_A g^\frac12 \norm_n\label{est:a2}
\end{equation}
\begin{equation}
\|\mfa_t + b\mfa_\alpha + g(1 + \Til^2)\Re R_\alpha\|_{L^\infty} \lesssim  gAB\label{est:aTransport}.
\end{equation}
\end{lemma}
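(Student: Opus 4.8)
The plan is to decompose $\mfa = a + \ao$ with $a = 2\Im\P[R\bar R_\alpha]$ and $\ao = g(1 + \Til^2)\Re\W$, and to estimate the transport derivative of each piece along the flow of \eqref{DiagonalQuasiSystem-re}.

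For the linear piece $\ao$, since $1 + \Til^2$ commutes with $\partial_\alpha$ one can write $(\partial_t + b\partial_\alpha)\ao = g(1+\Til^2)\Re\!\big[(\partial_t + b\partial_\alpha)\W\big] + g[b,\,1+\Til^2]\Re\W_\alpha$, where $[b,\,1+\Til^2]$ denotes the commutator of the Fourier multiplier with multiplication by $b$. The first equation of \eqref{DiagonalQuasiSystem-re} gives $(\partial_t + b\partial_\alpha)\W = -\tfrac{1+\W}{1+\bar\W}R_\alpha + (1+\W)M$, whose leading term is $-R_\alpha$; this produces exactly $-g(1+\Til^2)\Re R_\alpha$, cancelling the third term in \eqref{est:aTransport}. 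It then remains to bound three contributions: (i) $g(1+\Til^2)\Re\!\big[(1 - \tfrac{1+\W}{1+\bar\W})R_\alpha\big]$, (ii) $g(1+\Til^2)\Re[(1+\W)M]$, and (iii) $g[b,\,1+\Til^2]\Re\W_\alpha$. The key structural fact is that $1+\Til^2$ has a Schwartz symbol, hence maps $L^\infty$ (and any negative-order Besov space) boundedly into $L^\infty$ and allows one to freely redistribute derivatives. Then (ii) is $\lesssim_A gAB$ from $\|M\|_{L^\infty}\lesssim AB$ (Lemma~\ref{l:M}); (i) is $\lesssim_A gAB$ since $\|1 - \tfrac{1+\W}{1+\bar\W}\|_{L^\infty}\lesssim_A A$ while $R_\alpha$ is controlled in $\bmo$ by $B$, via a smoothing estimate of the type \eqref{est:StrongLoHi}; and (iii) is $\lesssim_A gAB$ by a Coifman--Meyer commutator estimate which, thanks to the Schwartz symbol, splits a half derivative between $b$ and $\W_\alpha$ and pairs $\|\Til b\|_{\bmo^{\frac12}}\lesssim_A g^{\frac12}A$ (Lemma~\ref{l:b}) with $\|\langle D\rangle^{\frac12}\W\|_{\bmo}\lesssim_A g^{-\frac12}B$.

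For the quadratic piece $a$, the computation parallels \cite[Proposition~2.6]{HIT}. Writing $(\partial_t + b\partial_\alpha)a = 2\Im\P\big[(\partial_t + b\partial_\alpha)(R\bar R_\alpha)\big] + 2\Im[b\partial_\alpha,\,\P][R\bar R_\alpha]$ and distributing the (derivation) transport operator over the two factors, one uses the second equation of \eqref{DiagonalQuasiSystem-re}, namely $(\partial_t + b\partial_\alpha)R = i\tfrac{g\W - \mfa}{1+\W}$, whose leading term is $ig\W$. Hence each transport derivative landing on an $R$-factor produces a factor $g\W$ --- this is precisely the mechanism supplying the extra power $g$ and the factor $\W$ (hence $A$). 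Up to cubic errors coming from the $\tfrac{\mfa}{1+\W}$ and $\tfrac{\W^2}{1+\W}$ corrections (which are $O(gAB)$ using \eqref{est:a}) and from the $b_\alpha\bar R_\alpha$ term generated when commuting $\partial_\alpha$ through the transport operator, the surviving terms are of the schematic form $g\,\P[\W\,\bar R_\alpha]$, $g\,\P[R\,\bar\W_\alpha]$ together with the projection commutator $g\,[b\partial_\alpha,\,\P][R\bar R_\alpha]$. Since in each of these $\W$ is holomorphic and $\bar R_\alpha,\bar\W_\alpha$ antiholomorphic, one writes $\P$ of such a product as a commutator and applies the holomorphic commutator bound \eqref{PComm}: the decisive point is that the $L^\infty$ endpoint of \eqref{PComm} is matched exactly by the $B^{0,\infty}_2$ component built into the control norm $A$, which together with $\|\langle D\rangle^{\frac12}\W\|_{\bmo}\lesssim_A g^{-\frac12}B$ yields $\lesssim g\cdot(g^{\frac12}A)\cdot(g^{-\frac12}B) = gAB$, and similarly for the commutator with $b$ using Lemma~\ref{l:b}.

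The main obstacle is bookkeeping at the $L^\infty$ endpoint: one must keep the $g$-weights consistent through the mismatched normalizations of $A$ (which controls $g^{-\frac12}\langle D\rangle^{\frac12}R$) and $B$ (which controls $g^{\frac12}\langle D\rangle^{\frac12}\W$ and $\langle D\rangle R$), verify that the leading transport term of $\W$ cancels $g(1+\Til^2)\Re R_\alpha$ exactly rather than merely to leading order, and exploit throughout that $\P$ is bounded on $L^\infty$ only on products of one holomorphic and one antiholomorphic factor, where it acts as a commutator with rapidly decaying off-diagonal kernel. No new analytic input beyond the commutator and paraproduct estimates of the Appendix is required.
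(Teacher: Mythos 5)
Your outline only engages with the transport bound \eqref{est:aTransport}; the lemma also asserts the $L^\infty$ and $\bmo^{\frac12}$ bounds \eqref{est:a} and the Sobolev bound \eqref{est:a2}, and you prove neither. This is not merely an omission of easy material: your own argument (and the paper's) invokes \eqref{est:a} inside the transport estimate — you use it to absorb the cubic corrections coming from $\tfrac{\mfa}{1+\W}$, and the paper needs $\|\mfa\|_{L^\infty}$ and $\|\mfa\|_{B^{1/4,\infty}_2}$ to bound $\Im\P[R_t+bR_\alpha]=\tfrac12(g+\mfa)\Re Y-\tfrac12\mfa+\tfrac12\Til[(g+\mfa)\Im Y]$. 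So \eqref{est:a} must be established first (via $a=2\Im[\P,R]\bar R_\alpha$, the commutator bound \eqref{PComm}, and the Schwartz symbol of $1+\Til^2$ for $\ao$), and your proposal is circular without it.

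Within the transport bound, your treatment of $\ao$ matches the paper's (cancellation of $g(1+\Til^2)\Re R_\alpha$ against the leading term of the $\W$ equation, plus a $[\Til,b]\W_\alpha$ commutator handled by \eqref{est:TilComm} and interpolation). The genuine gap is in the quadratic piece $a$: the term you write schematically as ``$g[b\partial_\alpha,\P][R\bar R_\alpha]$'' is actually the Leibniz defect
\[
2\Im\Bigl(b\partial_\alpha\P[R\bar R_\alpha] - \P\bigl[bR_\alpha\bar R_\alpha\bigr] - \P\bigl[R\partial_\alpha\bar\P(b\bar R_\alpha)\bigr]\Bigr),
\]
which carries no factor of $g$ and cannot be dispatched by a single Coifman--Meyer commutator. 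Writing it as $[b,\P]\partial_\alpha(R\bar R_\alpha)$ loses a full derivative onto $\bar R_{\alpha\alpha}$, which is not controlled by $A$ or $B$; and the low frequencies of $b$ are genuinely uncontrolled (only $\Til b$ is bounded in Lemma~\ref{l:b}, a consequence of the gauge choice), so $b$ must never appear undifferentiated at low frequency. The paper resolves this by decomposing dyadically in the frequency $2^j$ of the holomorphic factor $R_j$, splitting $b=b_{\leq j}+b_{>j}$, and rewriting the $b_{\leq j}$ part as $\partial_\alpha[b_{\leq j},\P](R_j\bar R_\alpha)-b_{\leq j,\alpha}[\P,R_j]\bar R_\alpha+\P[R_j\partial_\alpha[\P,b_{\leq j}]\bar R_\alpha]$ before summing. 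This step is the crux of the proof and is absent from your argument. (The $g$-bookkeeping for this term still closes, but only because it is cubic in $(R,b)$ and each $R$-factor contributes $g^{1/2}A$ through the normalization of $A$ — not because of a prefactor $g$.)
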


\bpf
We recall that \(\mfa = a + \ao\) where
\[
a = 2\Im\P[R\bar R_\alpha],\qquad \ao = g(1+\Til^2)\Re\W.
\]
We will prove the bounds in the Lemma separately for $a$ and for $a_1$.
\medskip

{\em 1. \(L^\infty\), \(\bmo^{\frac12}\) and \(H^{n-1}\) bounds.}
For \(\ao\) we use that \(1+\Til^2\)  has Schwartz symbol to obtain
\[
\|\ao\|_{L^\infty}\lesssim g\|\W\|_{L^\infty},\qquad \|\ao\|_{\bmo^{\frac12}}\lesssim g\|\W\|_{\bmo^{\frac12}},\qquad \|\ao\|_{H^{n-1}}\lesssim g\|\W\|_{H^{n-1}}.
\]

For \(a\) we use that \(\P\bar R_\alpha = 0\) to write \(\Im\P[R\bar R_\alpha ] = \Im[\P,R]\bar R_\alpha\). We then apply the commutator estimate \eqref{PComm} to obtain
\[
\|a\|_{L^\infty}\lesssim \|R\|_{B^{\frac12,\infty}_2}^2,\qquad \|a\|_{\bmo^{\frac12}}\lesssim \|\langle D\rangle^{\frac12}a\|_{L^\infty}\lesssim \|R\|_{B^{\frac34,\infty}_2}^2,
\]
and for the second of these we apply the interpolation estimate
\[
\|R\|_{B^{\frac34,\infty}_2}^2\lesssim \|\langle D\rangle^{\frac12}R\|_{L^\infty}\|R\|_{\bmo^1}.
\]

For the \(H^{n-1}\) estimate we first differentiate
\[
\partial^{n-1}a = 2\Im\sum\limits_{k=0}^{n-1}\P[R^{(k)}\bar R^{(n-k)}].
\]
If \(k\geq1\) then we estimate by interpolation and if \(k=0\) then we apply the commutator bound \eqref{PComm}.
\medskip

{\em 2. Transport equation bounds.}
For \(\ao\) we calculate
\[
(\partial_t + b\partial_\alpha)\ao + g(1+\Til^2)\Re R_\alpha = g(1 + \Til^2)\Re\left[\W_t + b\W_\alpha + \R_\alpha\right] - ig[\Til,b]\W_\alpha.
\]
The first term may be bounded using Lemmas \ref{lem:Paraproducts}, \ref{Smoothing}, the estimate \eqref{M-infty} for \(M\) and that \(1+\Til^2\) has Schwartz symbol. For the second term we apply the commutator estimate \eqref{est:TilComm} to obtain
\[
\|g[\Til,b]\W_\alpha\|_{L^\infty}\lesssim g\|\Til b\|_{B^{\frac34,\infty}_2}\|\W\|_{B^{\frac14,\infty}_2}.
\]
By interpolation,
\[
\|\Til b\|_{B^{\frac34,\infty}_2}\lesssim \|\Til b\|_{\bmo^{\frac12}}^{\frac12}\|\Til b\|_{\bmo^1}^{\frac12},\qquad \|\W\|_{B^{\frac14,\infty}_2}\lesssim \|\W\|_{L^\infty}^{\frac12}\|\W\|_{\bmo^{\frac12}}^{\frac12}.
\]
and we may then apply the estimate \eqref{est:bBMO} for \(b\).

For \(a\) we have
\begin{align*}
(\partial_t + b\partial_\alpha)a
&= 2\Im [\P,\P\left[R_t + bR_\alpha\right]]\bar R_\alpha + 2\Im [\P,R]\partial_\alpha\bar\P\left[\bar R_t + b\bar R_\alpha\right]\\
&\quad + 2\Im\left(b\partial_\alpha\P[R\bar R_\alpha] - \P\left[bR_\alpha\bar R_\alpha\right] - \P\left[R\partial_\alpha\bar \P(b\bar R_\alpha)\right]\right).
\end{align*}
For the first two terms we apply the commutator estimate \eqref{PComm} to obtain
\begin{align*}
&\left\|2\Im [\P,\P\left[R_t + bR_\alpha\right]]\bar R_\alpha\right\|_{L^\infty} + \left\|2\Im [\P,R]\partial_\alpha\bar\P\left[\bar R_t + b\bar R_\alpha\right]\right\|_{L^\infty}\\
&\qquad \lesssim \|\Im\P\left[R_t + bR_\alpha\right]\|_{B^{\frac14,\infty}_2}\|R\|_{B^{\frac34,\infty}_2}.
\end{align*}
We observe that
\[
\Im\P\left[R_t + bR_\alpha\right] = \frac12(g+\mfa)\Re Y - \frac12\mfa + \frac12\Til\left[(g+\mfa)\Im Y\right],
\]
and hence
\[
\|\Im\P\left[R_t + bR_\alpha\right]\|_{B^{\frac14,\infty}_2}\lesssim \|\mfa\|_{B^{\frac14,\infty}_2}(1 + \|Y\|_{L^\infty}) + (g + \|\mfa\|_{L^\infty}) \|Y\|_{B^{\frac14,\infty}_2}.
\]
and the estimate follows from interpolation and the estimates \eqref{est:a} for \(\mfa\) and \eqref{est:Ybmo} for \(Y\).

For the final term appearing in \(a_t + ba_\alpha\) we must ensure that \(b\) does not appear undifferentiated at low frequency. We start by dividing up dyadically according to the frequency of the holomorphic term \(R\):
\[
b\partial_\alpha\P[R\bar R_\alpha] - \P\left[bR_\alpha\bar R_\alpha\right] - \P\left[R\partial_\alpha\bar \P(b\bar R_\alpha)\right] = \sum\limits_{j\geq0}f_j,
\]
where
\[
f_j = b\partial_\alpha\P[R_j\bar R_\alpha] - \P\left[bR_{\alpha,j}\bar R_\alpha\right] - \P\left[R_j\partial_\alpha\bar \P(b\bar R_\alpha)\right].
\]
We then decompose each \(f_j = f_j^\high + f_j^\low\) according to the frequency balance of \(b\) and \(R\),
\begin{align*}
f_j^\high &= b_{>j}\partial_\alpha\P[R_j\bar R_\alpha] - \P\left[b_{> j}R_{\alpha,j}\bar R_\alpha\right] - \P\left[R_j\partial_\alpha\bar \P(b_{> j}\bar R_\alpha)\right],\\
f_j^\low &= b_{\leq j}\partial_\alpha\P[R_j\bar R_\alpha] - \P\left[b_{\leq j}R_{\alpha,j}\bar R_\alpha\right] - \P\left[R_j\partial_\alpha\bar \P(b_{\leq j}\bar R_\alpha)\right].
\end{align*}

When \(b\) is at high frequency we write
\[
f_j^\high = b_{>j}\partial_\alpha[\P,R_j]\bar R_\alpha - [\P,b_{>j}R_{\alpha,j}]\bar R_{\alpha} - [\P,R_j]\partial_\alpha \bar\P(b_{>j}\bar R_\alpha).
\]
Taking the imaginary part and applying the commutator estimate \eqref{PComm} we obtain
\[
\|\Im f_j^\high\|_{L^\infty} \lesssim 2^j\|b_{>j}\|_{B^{\frac14,\infty}_2}\|R_j\|_{L^\infty}\|R\|_{B^{\frac34,\infty}_2}.
\]
Summing over \(j\geq0\) we obtain
\[
\sum\limits_{j\geq0}\|\Im f_j^\high\|_{L^\infty}\lesssim \|b_{>0}\|_{B^{\frac34,\infty}_2}\|R\|_{B^{\frac12,\infty}_2}\|R\|_{B^{\frac34,\infty}_2},
\]
and the estimate follows from interpolation and the estimate \eqref{est:bBMO} for \(b\).

When \(b\) is at low frequency we write
\[
f_j^\low = \partial_\alpha[b_{\leq j},\P](R_j\bar R_\alpha) - b_{\leq j,\alpha}[\P,R_j]\bar R_\alpha + \P[R_j\partial_\alpha[\P,b_{\leq j}]\bar R_\alpha].
\]
Again we apply the commutator estimate \eqref{PComm}, using that \(b_{\leq j,\alpha}\) is real-valued, to obtain
\[
\|\Im f_j^\low\|_{L^\infty}\lesssim 2^{\frac 38j}\|\Til b_{\leq j}\|_{B^{\frac 78,\infty}_2}\|R_j\|_{L^\infty}\|R\|_{B^{\frac34,\infty}_2}.
\]
Summing over \(j\geq0\) we obtain
\[
\sum\limits_{j\geq0}\|\Im f_j^\low\|_{L^\infty}\lesssim \|\Til b\|_{B^{\frac34,\infty}_2}\|R\|_{B^{\frac12,\infty}_2}\|R\|_{B^{\frac34,\infty}_2}\lesssim_A AB,
\]
which completes the proof of \eqref{est:aTransport}.
\epf
\medskip

We now estimate some of the secondary auxiliary functions $d$ and $M$:

\begin{lemma}\label{l:d}
We have the estimate
\begin{equation}\label{est:d}
\|d\|_{\bmo}\lesssim_A B.
\end{equation}
\end{lemma}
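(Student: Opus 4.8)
The plan is to peel off the ``good'' term, $R_\alpha$, and estimate the remaining product by a paraproduct decomposition; the bulk of the work is then a careful bookkeeping of derivatives.

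Write $Y=\W/(1+\W)$, which is holomorphic, so that $\dfrac{1}{1+\bar\W}=1-\bar Y$ and hence
\[
d=\frac{R_\alpha}{1+\bar\W}=R_\alpha(1-\bar Y)=R_\alpha-R_\alpha\bar Y .
\]
The first term is immediate: since the multiplier $\partial_\alpha\langle D\rangle^{-1}$ has symbol in $S^0$ it is bounded on $\bmo$, so $\|R_\alpha\|_{\bmo}\lesssim\|\langle D\rangle R\|_{\bmo}\le B$ by the definition of $B$. So the whole point is to show $\|R_\alpha\bar Y\|_{\bmo}\lesssim_A B$.

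For this product I would use the usual paraproduct decomposition $R_\alpha\bar Y=T_{\bar Y}R_\alpha+T_{R_\alpha}\bar Y+\Pi[R_\alpha,\bar Y]$. In the first term $\bar Y$ sits at low frequency, so a paraproduct with a bounded coefficient is bounded on $\bmo$ and, since $Y$ appears undifferentiated, $\|T_{\bar Y}R_\alpha\|_{\bmo}\lesssim\|Y\|_{L^\infty}\|R_\alpha\|_{\bmo}\lesssim_A B$, using $\|Y\|_{L^\infty}\le A$. For the other two terms $R_\alpha$ is at frequency comparable to or below that of $\bar Y$; here I would write the $R_\alpha$ factor as $\langle D\rangle^{1/2}$ applied to $\langle D\rangle^{-1/2}R_\alpha$ and invoke the $\bmo$ paraproduct bounds \eqref{Para3}, which give
\[
\|T_{R_\alpha}\bar Y\|_{\bmo}+\|\Pi[R_\alpha,\bar Y]\|_{\bmo}\lesssim \|\bar Y\|_{\bmo^{1/2}}\,\|\langle D\rangle^{-1/2}R_\alpha\|_{\bmo}.
\]
Now $\|\langle D\rangle^{-1/2}R_\alpha\|_{\bmo}\lesssim\|\langle D\rangle^{1/2}R\|_{\bmo}\lesssim\|\langle D\rangle^{1/2}R\|_{B^{0,\infty}_2}\lesssim g^{1/2}A$, where I used the inclusion $B^{0,\infty}_2\subset\bmo$ and the definition of the control norm $A$; and $\|Y\|_{\bmo^{1/2}}\lesssim_A g^{-1/2}B$ by \eqref{est:Ybmo}. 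Hence these terms are $\lesssim_A g^{1/2}A\cdot g^{-1/2}B=AB\lesssim_A B$, and collecting the pieces yields $\|d\|_{\bmo}\lesssim_A B$.

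The only real obstacle is this balancing of regularity. A naive estimate of $R_\alpha\bar Y$ using only $\|R_\alpha\|_{\bmo}\lesssim B$ together with $\|Y\|_{\bmo^{1/2}}\lesssim_A g^{-1/2}B$ would produce a quadratic, $g$-weighted bound $g^{-1/2}B^2$; recovering the linear-in-$B$ estimate forces one to transfer half a derivative from $R_\alpha$ onto $R$ and to exploit that $\langle D\rangle^{1/2}R$ is controlled at the $A$-scale (through $B^{0,\infty}_2\subset\bmo$ and the definition of $A$), with the single paraproduct piece $T_{\bar Y}R_\alpha$ treated separately precisely because there $Y$ enters undifferentiated and is bounded by $A$ in $L^\infty$. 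Everything else — the Moser-type estimate behind \eqref{est:Ybmo} and the elementary $\bmo$-boundedness of paraproducts with bounded symbols — is routine and parallels the infinite-depth treatment in \cite{HIT}.
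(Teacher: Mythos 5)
Your proposal is correct and follows essentially the same route as the paper: the identity $d=R_\alpha(1-\bar Y)$, the paraproduct splitting of $R_\alpha\bar Y$, the relaxation to $\|Y\|_{L^\infty}\|R_\alpha\|_{\bmo}$ for $T_{\bar Y}R_\alpha$, and the use of \eqref{Para3} together with \eqref{est:Ybmo} for the remaining pieces. The only (immaterial) difference is that for $T_{R_\alpha}\bar Y$ and $\Pi[R_\alpha,\bar Y]$ you shift half a derivative onto $R$ and pass through $B^{0,\infty}_2\subset\bmo$, whereas the paper bounds these via $\|\langle D\rangle^{1/2}R\|_{L^\infty}\|Y\|_{\bmo^{1/2}}$ and $\|R_\alpha\|_{\bmo}\|Y\|_{\bmo}$ respectively; both give the same $AB$ bound.
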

\bpf
We recall that
\[
d = R_\alpha(1 - \bar Y).
\]
As a consequence it suffices to show that
\[
\|R_\alpha\bar Y\|_{\bmo}\lesssim AB.
\]

Decomposing using paraproducts we have
\[
R_\alpha\bar Y = T_{R_\alpha}\bar Y + T_{\bar Y}R_\alpha + \Pi[R_\alpha,\bar Y].
\]
We then use \eqref{Para3} to estimate
\[
\|T_{R_\alpha}\bar Y\|_{\bmo}\lesssim \|\langle D\rangle^{\frac12}R\|_{L^\infty}\|Y\|_{\bmo^{\frac12}},\qquad \|\Pi[R_\alpha,\bar Y]\|_{\bmo}\lesssim \|R_\alpha\|_{\bmo}\|Y\|_{\bmo}.
\]

For the remaining term we are unable to use \eqref{Para3}, but we can obtain a similar estimate by relaxing \(\bmo\) to \(L^\infty\) for the low frequency term (see \cite[Proposition 2.2]{HIT}),
\[
\|T_{\bar Y}R_\alpha\|_{\bmo}\lesssim \|Y\|_{L^\infty}\|R_\alpha\|_{\bmo}.
\]
The estimate \eqref{est:d} then follows.

\epf

\begin{lemma}\label{l:M}
The function $M$ satisfies the  pointwise bounds
\begin{equation}\label{M-infty}
\| M\|_{L^\infty} \lesssim AB,
\end{equation}
as well as the Sobolev bounds for \(n\geq1\)
\begin{equation}\label{M-L2}
\| M\|_{H^{k-\frac32}} \lesssim_A A\norm_k,\qquad \Vert M \Vert_{H^{k-1}}\lesssim_A g^{-\frac12}B\norm_k.
\end{equation}
\end{lemma}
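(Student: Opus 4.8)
The plan is to reduce all three bounds on $M = 2\Re\P[R\bar Y_\alpha - \bar R_\alpha Y]$ to the commutator estimates of Lemma~\ref{lem:MainCommutatorBound}, together with the paraproduct and $\Til$-commutator bounds of the Appendix and the previously established bounds for $Y$, $R$ and $b$. The first step is to exploit holomorphy: $R$ and $Y=\W/(1+\W)$ are both holomorphic, so $\bar Y_\alpha$ and $\bar R_\alpha$ are antiholomorphic and $\P$ annihilates them modulo real constants (irrelevant since these factors are differentiated). Hence
\[
M = 2\Re\big([\P,R]\bar Y_\alpha - [\P,Y]\bar R_\alpha\big).
\]
This representation already encodes the key cancellation: neither $R$ nor $Y$ appears undifferentiated at low frequency, so — despite the lack of low frequency pointwise control — no low frequency difficulty arises. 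Everything that follows is bookkeeping with \eqref{PComm}, \eqref{est:TilComm}, \eqref{est:StrongLoHi}, interpolation, and the bounds \eqref{est:Ybmo}, \eqref{est:Y2} for $Y$ (and the analogues for $R$ built into the control norms $A$, $B$, $\norm_n$).

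For the $L^\infty$ bound \eqref{M-infty} I would apply the second (Besov endpoint) estimate in \eqref{PComm} to each term, writing e.g. $\bar Y_\alpha = \langle D\rangle^{\sigma}\Til g$ with $g$ a bounded multiple of $\langle D\rangle^{1-\sigma}\bar Y$ (as $\Til^{-1}\partial_\alpha$ is a bounded multiple of $\langle D\rangle$). Choosing $\sigma$ strictly between $0$ and $1$ and interpolating each factor between its $L^\infty$-type ($A$-level) and $\bmo$-type ($B$-level) bounds — exactly as in the proof of Lemma~\ref{l:a} — puts both factors in the required intermediate spaces $B^{s,\infty}_2$, and the powers of $g$ (a factor $g^{1/2}$ from the $A$-level norm of $R$ against a factor $g^{-1/2}$ from the $B$-level norm of $Y$, or vice versa) cancel, giving $\|M\|_{L^\infty}\lesssim_A AB$. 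The cubic-and-higher corrections coming from expanding $Y,\bar Y_\alpha$ in $\W$ (the $(1+\W)^{-1}$ factors) are handled by the same estimates after Moser bounds, and absorbed since $A\lesssim 1$.

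For the Sobolev bounds \eqref{M-L2} I would apply $\partial^{n-1}$, commute it past $\P$, and expand by Leibniz; a generic term is $[\P, R^{(j)}]\bar Y^{(n-j)}$ (or the analogue with $R$ and $Y$ exchanged), and again every antiholomorphic factor is differentiated so each term is genuinely a commutator. The main terms, with all derivatives on the holomorphic factor ($j=0$ or $j=n-1$), are estimated via the first estimate in \eqref{PComm}, placing the holomorphic commutator-factor in an $L^\infty$-type space at the $A$-level (for $\|M\|_{H^{n-3/2}}$) or a $\bmo$-type space at the $B$-level (for $\|M\|_{H^{n-1}}$), and the other factor in $L^2$ at the $\norm_n$-level via $\|Y\|_{H^{n-1}}\lesssim_A g^{-1/2}\norm_n$ and the definition of $\norm_n$; the $g$-weights again match ($g^{1/2}A\cdot g^{-1/2}\norm_n = A\norm_n$, resp. $B\cdot g^{-1/2}\norm_n$). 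The remaining terms, with derivatives split between the two factors, are strictly lower order; I would control them using the holomorphic product estimate \eqref{e:HolomProductBound} together with the $\Pi$-type, high-high-to-low frequency structure forced by holomorphy, distributing derivatives so one factor sits at the $A$- or $B$-level and the other carries the $\norm_n$ norm, and invoking the smoothing bound \eqref{est:StrongLoHi} wherever a $1+\Til^2$-type Schwartz multiplier appears after reorganizing the expansion.

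The main — and essentially only — difficulty is the bookkeeping: choosing, for each Leibniz term and each of the three target norms, the distribution of derivatives between the two factors so that (i) the commutator estimate \eqref{PComm} with its built-in $\Til$ applies, (ii) the required $B^{s,\infty}_2$ regularities are reached by interpolating between the $L^\infty$ and $\bmo$ bounds, and (iii) all powers of $g$ cancel, so the estimates are uniform for $g\lesssim 1$. There is no genuinely new analytic input — the argument closely parallels the treatment of $\mfa$ in Lemma~\ref{l:a} and of $b$, $d$, $Y$ immediately above — so once the bookkeeping is fixed the estimates follow routinely.
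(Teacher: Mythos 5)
Your opening reduction $M = 2\Re\bigl([\P,R]\bar Y_\alpha - [\P,Y]\bar R_\alpha\bigr)$ is correct and is exactly the paper's first step, and your treatment via \eqref{PComm} does yield the \emph{high-frequency} part of all three bounds. But there is a genuine gap in your claim that ``no low frequency difficulty arises.'' The commutator estimate \eqref{PComm} controls the output only in the $\mfH L^\infty$ (resp.\ $\mfH$) norm, i.e.\ it bounds $\Til\Re$ of the commutator, not $\Re$ itself. Since $M$ is real-valued, what you obtain is $\|\Til M\|_{L^\infty}\lesssim AB$, which controls $M_{\geq 1}$ but says nothing about $S_0 M$, because $\Til^{-1}$ is unbounded at frequency zero. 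Your heuristic that ``neither $R$ nor $Y$ appears undifferentiated'' addresses the low-frequency behaviour of the \emph{inputs}, whereas the obstruction here is the low-frequency unboundedness of $\Re\P$ acting on the \emph{output} (the $\Til^{-1}\Im$ piece in the formula for $\Re\P$). The paper explicitly flags this as the new difficulty relative to the infinite-depth case.

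The missing ingredient is a null-structure identity for the low-frequency part: using that $\Im(R\bar Y_\alpha - Y\bar R_\alpha)=\Im\,\partial_\alpha(R\bar Y)$, one writes
\[
M = \Re\bigl[R\bar Y_\alpha - Y\bar R_\alpha\bigr] - \Til^{-1}\partial_\alpha\,\Im(R\bar Y),
\]
where the multiplier $\Til^{-1}\partial_\alpha$ (symbol $\xi\coth\xi$) is bounded near frequency zero, so the singular $\Til^{-1}$ has been tamed by the derivative. Applying $S_0$ to each term, the product of a holomorphic and an antiholomorphic factor can only produce low-frequency output through balanced high-high interactions, so $S_0$ of each term reduces to $\Pi[R,\bar Y_\alpha]$, $\Pi[Y,\bar R_\alpha]$, $\Pi[R,\bar Y]$, which are then estimated by \eqref{Para2}. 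Without this step your argument does not close, either for \eqref{M-infty} or for the inhomogeneous Sobolev bounds \eqref{M-L2}, both of which see the low frequencies of $M$.
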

\begin{proof}
We start with the proof of \eqref{M-infty}. We first decompose \(M = M_0 + M_{\geq1}\) into a low and high frequency part.

For the high frequency part we first write \(M\) in the form
\[
M = 2\Re [\P,R]\bar Y_\alpha - 2\Re[\P,Y]\bar R_\alpha,
\]
and then use \eqref{PComm} to obtain
\[
\|M_{\geq1}\|_{L^\infty}\lesssim \|\Til M\|_{L^\infty}\lesssim \|R\|_{B^{\frac34,\infty}_2}\|Y\|_{B^{\frac14,\infty}_2}\lesssim AB.
\]
For the low frequency part we face an additional difficulty compared 
to the infinite depth case, which is due to the low frequenct unboundedness of the projector $\P$. To address this 
we observe that \(M\) has a certain null structure, by writing
\[
M = 2\Re\P[R\bar Y_\alpha - Y\bar R_\alpha] = \Re[R\bar Y_\alpha - Y\bar R_\alpha] - \Til^{-1}\partial_\alpha\Im(R\bar Y).
\]
Applying the projection \(S_0\) we obtain
\[
\|M_0\|_{L^\infty}\lesssim \|\Pi[R,\bar Y_\alpha]\|_{L^\infty} + \|\Pi[Y,\bar R_\alpha]\|_{L^\infty} + \|\Pi[R,\bar Y]\|_{L^\infty}.
\]
We may then estimate these terms using \eqref{Para2} to complete the proof of \eqref{M-infty}. The proof of \eqref{M-L2} is similar.
\end{proof}

\begin{lemma}\label{l:dt-rw}
The following estimates hold:
\begin{equation}
\| \Poly{\geq 2}  (\partial_t + T_b \partial_\alpha) \W\|_{B^{0,\infty}_\infty} 
+g^{-\frac12} \| \Poly{\geq 2}  (\partial_t + T_b \partial_\alpha) R\|_{B^{\frac12,\infty}_\infty} 
\lesssim_A AB ,
\end{equation}
respectively the $L^2$ bounds 
\begin{equation}
g^{-\frac12} \| \Poly{\geq 2}  (\partial_t + T_b \partial_\alpha) R\|_{H^{n-1}} 
\lesssim_A A \norm_n, \qquad n \geq 1, 
\end{equation}
and
\begin{equation}
\| \Poly{\geq 2}  (\partial_t + T_b \partial_\alpha) \W\|_{H^{n-\frac32}} 
\lesssim_A A \norm_n, \qquad n \geq 2. 
\end{equation}
If instead $n = 1$ then for each $k$ there is a decomposition
\[
P_{<k} \Poly{\geq 2}  (\partial_t + T_b \partial_\alpha) \W = F^1_k + F^2_k,
\]
so that 
\begin{equation}
\| F^1_k\|_{L^2} \lesssim_A B \norm_1, \qquad \| F^2_k\|_{L^2} \lesssim_A 2^{\frac{k}2} A \norm_1.
\end{equation}
\end{lemma}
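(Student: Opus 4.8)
The plan is to reduce the four estimates to bounds for a handful of explicit multilinear expressions in the basic quantities $\W,R,Y,b,M,\mfa$, and then to grind these through the paraproduct and commutator toolkit of this appendix. Using the diagonal system \eqref{DiagonalQuasiSystem-re} together with the splitting $bf = T_bf + T_fb + \Pi[b,f]$, one has
\[
(\partial_t + T_b\partial_\alpha)\W = (1+\W)M - \frac{1+\W}{1+\bar\W}R_\alpha - T_{\W_\alpha}b - \Pi[b,\W_\alpha],
\]
\[
(\partial_t + T_b\partial_\alpha)R = i\frac{g\W - \mfa}{1+\W} - T_{R_\alpha}b - \Pi[b,R_\alpha].
\]
The linear part of the first right hand side is $-R_\alpha$ and of the second is $ig\W - i\ao$, so applying $\Poly{\geq 2}$ just removes these; the outcome is in each case a finite sum of bilinear-and-higher terms, each a product (possibly after one commutator with $\P$) of factors from $\{\W,Y,R,b,M,a\}$ and their derivatives, carrying at most one derivative more than the solution itself. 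Once in this form, $\Poly{\geq2}$ plays no further role and the work is entirely in the appendix.

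For the pointwise ($B^{0,\infty}_\infty$, $B^{\frac12,\infty}_\infty$) bounds with constant $AB$, I would run each term through the Besov endpoint paraproduct estimates \eqref{Para2} and the smoothing estimate \eqref{est:StrongLoHi}, using that $\W,Y$ lie in the relevant $B^{\cdot,\infty}_2$ spaces via $B$ and \eqref{est:Ybmo}, that $R$ and $\Til b$ do so via the bounds of Lemmas~\ref{l:b},~\ref{l:M}, and that $1+\Til^2$ has Schwartz symbol; the null structure $M = \Re(R\bar Y_\alpha - Y\bar R_\alpha) - \Til^{-1}\partial_\alpha\Im(R\bar Y)$ is invoked exactly as in the proof of \eqref{M-infty} to keep the low frequencies under control. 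For the $L^2$ bounds ($H^{n-1}$ for $R$, $n\geq1$, and $H^{n-\frac32}$ for $\W$, $n\geq2$) with constant $A\norm_n$, the same terms are estimated instead by Coifman--Meyer \eqref{Para1} and by \eqref{est:TilComm}, putting the highest-frequency factor in its Sobolev space ($R$ in $H^{n-\frac12}$, $\W$ in $H^{n-1}$, $\Til b$ in $H^{n-\frac12}$) and the remaining, low-frequency factors in $L^\infty$ via $A$; the commutator bound \eqref{PComm} is what lets the $\P$-terms (as in $a = 2\Im[\P,R]\bar R_\alpha$ and in $M$) be handled despite $\P$ not being bounded on $L^2$ at low frequency, and every cubic term always has a spare $L^\infty$ factor of $\W$ or $Y$ to furnish the extra $A$. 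The $g$-weights appearing in the statement come out automatically from the $g$-weighted definitions of $A$, $B$ and $\norm_n$, and tracking them is routine.

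The one delicate point, and the main obstacle, is the estimate for $\Poly{\geq2}(\partial_t + T_b\partial_\alpha)\W$ when $n=1$: here $\norm_1$ controls $R$ only in $H^{\frac12}$, so $R_\alpha$ and $\Til b_\alpha$ lie only in $H^{-\frac12}$, and the balanced-frequency interactions --- principally $\Pi[b,\W_\alpha]$ and the piece $\frac{\W-\bar\W}{1+\bar\W}R_\alpha$ in which $R_\alpha$ carries the top frequency --- just fail to land in $H^{-\frac12}$ because $\W$ is available in $L^2$ and not $H^{\frac12}$. I would therefore collect those borderline terms into $F^2_k$: estimating them (as the other $\W$-terms, but now only in $H^{-\frac12}$) gives an $H^{-\frac12}$ bound of size $A\norm_1$, and then $\|P_{<k}(\cdot)\|_{L^2}\lesssim 2^{k/2}\|(\cdot)\|_{H^{-\frac12}}$ yields $\|F^2_k\|_{L^2}\lesssim_A 2^{k/2}A\norm_1$; the remaining terms, those already controlled in $L^2$ above with a clean factor of $\|\Til b\|_{\bmo^1}$ or $\|\langle D\rangle R\|_{\bmo}$, i.e.\ of $B$, constitute $F^1_k$ and satisfy $\|F^1_k\|_{L^2}\lesssim_A B\norm_1$. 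Throughout, the bookkeeping hinges on taking the correct factor to be low frequency (using $\|S_jf\|_{L^\infty}\lesssim\|f\|_{L^\infty}$, not $\sum_{l<j}\|f_l\|_{L^\infty}$) and on the fact that $b$ and $R$ need not belong to $L^2$, so that $b$ in particular never appears at very low frequency outside the paraproduct transport term that has already been removed.
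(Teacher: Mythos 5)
Your reduction of all four estimates to multilinear bounds via the diagonal system, the subtraction of the paradifferential transport term, and the use of \eqref{M-infty}, \eqref{M-L2}, \eqref{est:Ybmo}, \eqref{est:a}, \eqref{est:a2}, \eqref{est:bBMO}, \eqref{est:bL2} together with the paraproduct/commutator toolkit is exactly the paper's argument for the pointwise bounds and for the $H^{n-1}$, $H^{n-\frac32}$ bounds; that part is fine.

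There is, however, a genuine gap in your treatment of the $n=1$ decomposition: you propose to place the balanced interactions, ``principally $\Pi[b,\W_\alpha]$,'' into $F^2_k$ via an $H^{-\frac12}$ bound of size $A\norm_1$, but no such bound holds. Testing with $\W = \W_J$ concentrated at a single high frequency $2^J$, the balanced piece $b_J\W_{J,\alpha}$ has output spread over all frequencies $\leq 2^J$, and the best available input bounds ($\|b_J\|_{L^\infty}\lesssim 2^{-J/2}g^{\frac12}A$, $\|\W_{J,\alpha}\|_{L^2}= 2^J\|\W_J\|_{L^2}$) give $\|\Pi[b,\W_\alpha]\|_{H^{-\frac12}}\sim 2^{J/2}g^{\frac12}A\|\W_J\|_{L^2}$, which exceeds $A\norm_1$ by the factor $\|\W\|_{H^{1/2}}/\|\W\|_{L^2}$ --- precisely the loss you correctly identified as the obstruction. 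The same applies to $\Pi[\W,R_\alpha]$. These balanced pieces must instead be routed to $F^1_k$, where the Coifman--Meyer bound \eqref{Para1} (not a naive $\ell^1$ summation over dyadic blocks, which also fails) gives $\|\Pi[b_{\geq1},\W_\alpha]\|_{L^2}\lesssim \|\Til b\|_{\bmo^1}\|\W\|_{L^2}\lesssim_A B\,g^{-\frac12}\norm_1$, and similarly $\|\Pi[\W,R_\alpha]\|_{L^2}\lesssim\|R_\alpha\|_{\bmo}\|\W\|_{L^2}$. What belongs in $F^2_k$ is only the genuinely low$\times$high contribution in which the differentiated factor sits at output frequency: after applying $P_{<k}$ these are $(b_{<k+4}-T_b)\W_\alpha$ and $\frac{\W-\bar\W}{1+\bar\W}R_{<k+4,\alpha}$, for which $\|R_{<k+4,\alpha}\|_{L^2}\lesssim 2^{k/2}\|R\|_{H^{1/2}}$ and the $L^\infty$ bound on the undifferentiated factor yield $2^{k/2}A\norm_1$ directly --- this is how the paper arranges the splitting. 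So the architecture of your $F^1_k/F^2_k$ split is right, but the balanced paraproducts are on the wrong side of it, and the $H^{-\frac12}$ mechanism you invoke for them does not close.
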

\begin{proof}
We recall the equations for $(\W,R)$:
\[
 \begin{cases}
    \W_t + b\W_\alpha + \dfrac{1+\W}{1+\bar \W}R_\alpha = (1+\W)M\vspace{0.1cm}\\
    R_t + bR_\alpha = i\dfrac{g\W - \mfa}{1+\W}.
  \end{cases}
\]
We begin with the pointwise bounds. For the  $M$ term we use \eqref{M-infty}. Next we 
estimate
\[
\left\| \dfrac{\W -\bar \W}{1+\bar \W}R_\alpha \right\|_{\bmo} \lesssim \| R_\alpha\|_{\bmo} \left\| \dfrac{\W-\bar \W}{1+\bar \W} \right\|_{L^\infty}
+ \| R\|_{B^{\frac34,\infty}_2} \left\| \dfrac{\W-\bar \W}{1+\bar \W} \right\|_{B^{\frac14,\infty}_2} \lesssim AB,
\]
which is akin to the $\bmo$ bound for $d$.
For the $Y$ term in the second equation we use \eqref{est:Ybmo} as well as the algebra property for $\bmo^\frac12$.
The same applies for the $a$ term in combination with \eqref{est:a}.

It remains to bound the $b$ terms, where we carefully note that no low frequencies of $b$ are included here.
Then using   \eqref{est:bBMO} we have 
\[
\| (b-T_b)\W_\alpha\|_{L^\infty} \lesssim \| \Til b \|_{B^{\frac34,\infty}_2} \| \W \|_{B^{\frac14,\infty}_2} \lesssim AB,
\]
respectively 
\[
\| (b-T_b)R_\alpha\|_{\bmo^\frac12} \lesssim \| \Til b \|_{B^{\frac34,\infty}_2} \| R \|_{B^{\frac34,\infty}_2} \lesssim AB.
\]

Next we consider the $L^2$ bounds. For the $M$ term we use a standard Littlewood-Paley decomposition 
together with \eqref{M-infty} and \eqref{M-L2}. For the $a$ term we similarly use \eqref{est:a} and \eqref{est:a2}.
For the $b$ paradifferential remainder we use \eqref{est:bBMO} and \eqref{est:bL2}. The other terms follow 
in standard  bilinear fashion. 

In the case $n = 1$ the same method applies once we have produced 
a convenient decomposition of $(\partial_t + T_b \partial_\alpha) \W$. 
Precisely, all contributions go to $F^1_k$ except for those 
arising from the terms $(b_{< k+4} - T_b) \W_\alpha$, respectively
$\dfrac{1+\W}{1+\bar\W} R_{<k+4,\alpha}$.

\end{proof}

\end{appendix}

\section*{Acknowledgments}

This material is based in part upon work supported by the NSF under grant DMS-1440140 while the authors were in residence at the MSRI in Berkeley, California during the Fall 2015 semester. The last author was
also supported by the NSF grant DMS-1266182. In addition, all three authors were supported in part by the Simons Foundation.

\bibliographystyle{abbrv}
\bibliography{refs}

\end{document}